\documentclass{amsart}

\usepackage{amsthm,amssymb,latexsym,amsmath}
\usepackage[all]{xy}
\usepackage[english,russian]{babel}
\usepackage[utf8x]{inputenc}
\usepackage{xcolor}
\usepackage[colorlinks,linkcolor=blue,citecolor=blue,filecolor=blue,urlcolor=blue]{hyperref}
\usepackage{graphicx}
\graphicspath{{pictures/}}

\newcommand{\Coh}{\operatorname{Coh}}

\DeclareMathOperator{\Hom}{Hom}

\DeclareMathOperator{\Aut}{Aut}
\DeclareMathOperator{\coker}{coker}
\DeclareMathOperator{\im}{im}

\DeclareMathOperator{\rk}{{rk}}

\newcommand{\into}{\hookrightarrow}
\newcommand{\onto}{\twoheadrightarrow}

\newlength{\rrrr}

\newcommand{\intoo}[1]{\:
	\xymatrix@1{\ar@{^(->}[r]^{#1}&}\:}
\newcommand{\ontoo}[1]{\:
	\xymatrix@1{\ar@{->>}[r]^{#1}&}\:}

\def\Gr{\mathop{\mathrm{Gr}}\nolimits}

\def\id{\mathop{\mathrm{id}}\nolimits}

\def\R{\ensuremath{\mathbb{R}}}
\def\Z{\ensuremath{\mathbb{Z}}}

\def\FF{\ensuremath{\mathcal F}}

\def\TT{\ensuremath{\mathcal T}}

\makeatletter

\newtheorem{theorem}{Theorem}[section]

\newtheorem{proposition}[theorem]{Proposition}
\newtheorem{lemma}[theorem]{Lemma}

\newtheorem{corollary}[theorem]{Corollary}

\newtheorem{remark}[theorem]{Remark}

\usepackage{float}

\title[Rank Two Sheaves With Maximal Third Chern Class on Fano 3-fold $X_5$]{Rank Two Sheaves With Maximal Third Chern Class on the Fano Threefold of Index 2 and Degree 5}
\author{Danil A. Vassiliev}
\thanks{This work was supported by the Russian Science Foundation under grant no. 25-11-00214, https://rscf.ru/en/project/25-11-00214/.}

\begin{document}
	\maketitle
	\selectlanguage{english}
	\begin{abstract}
	We obtain a complete classification of rank two semistable sheaves with maximal third Chern class on the Fano threefold $X_5$ of index 2 and degree 5. Also, we describe moduli spaces of such sheaves in the general case of big second Chern class. These moduli spaces are smooth rational varieties. The study uses the theory of tilt-stability and Bridgeland stability conditions.
	\end{abstract}
	
	\section{Introduction}	
	In this work we finish the study started in \cite{Vass}. Namely, we obtain a complete description of Gieseker semistable rank two sheaves with maximal third Chern class $c_3$ on the Fano variety $X_5$ of index 2 and degree 5. Also, we obtain a description of moduli spaces of such sheaves in the general case (for $c_2$ big enough). These moduli spaces turn out to be smooth rational varieties.
	
	In general, moduli spaces of semistable sheaves with fixed Chern classes are difficult to describe. For example, the decomposition of the moduli scheme of semistable rank 2 sheaves on $\mathbb P^3$ with Chern classes $c_1=0,c_2=k,c_3=0$ into irreducible components is only known for $k=1, 2$, while for $k=3$ this scheme contains at least 11 irreducible components \cite[abstract]{Lavrov}. Benjamin Schmidt noted that in some cases the theory of stability conditions on derived categories allows to solve this problem. Namely, in \cite{Sch18} he classified rank two semistable sheaves on $\mathbb P^3$ with maximal $c_3$, and then in \cite{Sch23} generalized this result to sheaves of rank up to four. Also he described moduli spaces of such sheaves, which are always irreducible. Moreover, for any rank from one to four, any $c_1$, any $c_2\gg 0$ and maximal $c_3$ the moduli spaces are smooth and rational. Indeed, they are described either as locally trivial fibrations over a smooth rational base with a smooth rational fiber, or as a blow up of a smooth rational variety in a smooth locus.
	
    In a joint work with A.~S.~Tikhomirov \cite{Fano} we applied methods of Schmidt to describe rank two semistable sheaves with maximal $c_3$ on a smooth quadric threefold $X_2$. Moduli spaces of such sheaves are also smooth and rational in the general case. In both cases of $\mathbb P^3$ and $X_2$ the argument is based on the existence of full strong exceptional collections in derived categories $\mathrm D^b(\mathbb P^3)$ and $\mathrm D^b(X_2)$. Also the argument uses the \textit{generalized Bogomolov--Gieseker inequality}, which is proven for all Fano threefolds with Picard number one \cite{Li}. There are only two types of such threefolds besides $\mathbb P^3$ and $X_2$, which admit a full exceptional collection. Namely, these are the Fano threefold $X_5$ of index two and degree 5 and Fano threefolds $V_{22}$ of index one and degree 22 (see the proof of \cite[Lemma 3.5]{NVdB}). In \cite{Vass} and in the present work we consider the case of the variety $X_5$, which from now on will be denoted as $X$.
	
	The variety $X=X_5$ is a smooth Fano threefold of index 2 and degree 5, which determines it uniquely up to isomorphism \cite{Исковских}. Its Picard group is isomorphic to $\mathbb Z$. The variety $X$ can be constructed as a linear section of Grassmannian $\mathrm{Gr}(2,5)\subset\mathbb P^9$ (embedded by Pl\"ucker) by a general linear subspace of codimension 3. Denote by $\mathcal U$ the restriction to $X$ of the tautological rank 2 subbundle on $\mathrm{Gr}(2,5)$ and by $\mathcal Q$ the restriction to $X$ of the tautological rank 3 quotient bundle on $\mathrm{Gr}(2,5)$. The class of a divisor corresponding to the ample generator of the Picard group of $X$ will be denoted by $H$. By abuse of notation a hyperplane section of $X$ will also be denoted by $H$. 
	
	We denote by $L$ a line on $X$, and by $C$ a conic on $X$. If $L\subset H$, we denote by $\mathcal I_{L,H}$ the ideal sheaf of $L$ on $H$, which is included into a short exact sequence $0\to\mathcal I_{L,H}\to\mathcal O_H\to\mathcal O_L\to 0$, and analogously for $C\subset H$ we define the ideal sheaf $\mathcal I_{C,H}$. We also use the derived dualizing functor $\mathbb D(E):=\mathbf{R}\mathcal{H}om(E,\mathcal O_X)[1]$ for $E\in\mathrm D^b(X)$. In particular, if $E\cong \mathcal I_{L,H}$ or $E\cong \mathcal I_{C,H}$, then $\mathbb D(E)\cong\mathcal Ext^1(E,\mathcal O_X)$ is also a sheaf.
	
	The main result of the present article, partially proven in \cite{Vass}, is the classification of rank 2 semistable sheaves on $X$ with maximal $c_3$. Note that the cohomology groups $H^{2i}(X,\mathbb Z)$ are isomorphic to $\mathbb Z$ and the Chern classes of objects can be considered as integers. Let us define the following error term:
	
	\begin{equation}\label{error}
		\varepsilon(d)=
		\begin{cases}
			0, & \text{if } \mathrm{frac}(d)\in\{\frac 12,\frac 35\}\\
			\frac{2}{25}, & \text{if } \mathrm{frac}(d)\in\{0,\frac{1}{10},\frac 15,\frac{9}{10}\}\\
			\frac{3}{25}, & \text{if } \mathrm{frac}(d)\in\{\frac{3}{10},\frac 25,\frac{7}{10},\frac 45\},
		\end{cases}
	\end{equation}
	where $\mathrm{frac}(d)$ denotes the fractional part of a number $d\in\mathbb R$.
		
	\begin{theorem}\label{intro}Let $E$ be a Gieseker semistable sheaf of rank 2 on $X$ with Chern classes $c_1,c_2,c_3$. \\
	(1) If $c_1=-1$, then $c_2\ge 2$. \\
	(1.1) If $c_2=2$, then $c_3\le 0$. In case of equality we have $E\cong\mathcal U$. \\
	(1.2) If $c_2=3$, then $c_3\le 1$. In case of equality $E$ is included into an exact triple $0\to \mathcal U(-1)\to\mathcal O_X(-1)^{\oplus 4}\to E\to 0.$ \\
	(1.3) If $c_2=4$, then $c_3\le 2$. In case of equality $E$ is included into an exact sequence
	$0\to\mathcal O_X(-2)\to\mathcal O_X(-1)^{\oplus 3}\to E\to\mathcal O_L(-2)\to 0.$ \\
	(1.4) If $c_2=5$, then $c_3\le 5$. In case of equality $E$ is included into an exact triple $0\to \mathcal O_X(-2)\to\mathcal O_X(-1)^{\oplus 3}\to E\to 0.$ \\
	(1.5) If $c_2\ge 6$, then $c_3\le\frac{c_2^2}{5}-4c_2-10\varepsilon(\frac 12-\frac{c_2}{5}).$
	In case of equality $E$ is included into one of the following exact triples:\\
	(1.5.1) $0\to \mathcal O_X(-1)^{\oplus 2}\to E\to\mathcal I_{C,H}(\frac{2-c_2}{5})\to 0,$ if $c_2 \equiv 2 \pmod 5$;\\
	(1.5.2) $0\to \mathcal O_X(-1)^{\oplus 2}\to E\to\mathcal I_{L,H}(\frac{1-c_2}{5})\to 0,$ if $c_2 \equiv 1 \pmod 5$;\\
	(1.5.3) $0\to \mathcal O_X(-1)^{\oplus 2}\to E\to\mathcal O_H(-\frac{c_2}{5})\to 0,$ if $c_2 \equiv 0 \pmod 5$;\\
	(1.5.4) $0\to \mathcal O_X(-1)^{\oplus 2}\to E\to \mathbb D(\mathcal I_{L,H})(\frac{-6-c_2}{5})\to 0,$ if $c_2 \equiv 4 \pmod 5$;\\
	(1.5.5) $0\to \mathcal O_X(-1)^{\oplus 2}\to E\to \mathbb D(\mathcal I_{C,H})(\frac{-7-c_2}{5})\to 0,$ if $c_2 \equiv 3 \pmod 5$.\\
	(2) If $c_1=0$, then $c_2\ge 0$. \\
	(2.1) If $c_2=0$, then $c_3\le 0$. In case of equality $E\cong\mathcal O_X^{\oplus 2}$. \\
	(2.2) If $c_2=1$, then $c_3\le -2$, and in case of equality $E$ is included into an exact triple $0\to E\to\mathcal O_X^{\oplus 2}\to\mathcal O_L(1)\to 0$. \\
	(2.3) If $c_2=2$, then $c_3\le 0$. In case of equality $E$ is included into an exact triple $0\to \mathcal Q(-1)^{\oplus 2}\to\mathcal U^{\oplus 4}\to E\to 0.$ \\
	(2.4) If $c_2=3$, then $c_3\le 0$. In case of equality $E$ is the cohomology sheaf of a monad $\mathcal Q(-1)^{\oplus 3}\to\mathcal U^{\oplus 6}\to\mathcal O_X.$ \\
	(2.5) If $c_2=4$, then $c_3\le 4$. In case of equality $E$ is included into an exact triple $0\to\mathcal O_X(-1)^{\oplus 2}\to\mathcal U^{\oplus 2}\to E\to 0.$ \\
	(2.6) If $c_2=6$, then $c_3\le 8$. In case of equality $E$ is included into one of the following exact triples:
	$0\to\mathcal U\to E\to \mathbb D(\mathcal I_{L,H})(-2)\to 0, 0\to\mathcal U(-1)^{\oplus 2}\to\mathcal O_X(-1)^{\oplus 6}\to E\to 0$. \\
	(2.7) If $c_2=8$, then $c_3\le 14$. In case of equality $E$ is included into one of the following exact triples: $0\to F\to E\to \mathcal O_H(-1)\to 0, 0\to\mathcal U\to E\to \mathcal I_{L,H}(-1)\to 0, 0\to\mathcal O_X(-2)\to\mathcal Q(-1)\to E\to 0,$ where $F$ is the cokernel of a monomorphism of sheaves $\mathcal U(-1)\hookrightarrow\mathcal O_X(-1)^{\oplus 4}$ (from item (1.2) of this theorem). \\
	(2.8) If $c_2\in\{5,7\}$ or $c_2\ge 9$, then $c_3\le\frac{c_2^2+c_2+4}{5}-10\varepsilon(-\frac{c_2}{5})$. In case of equality $E$ is included into one of the following exact triples:\\
	(2.8.1) $0\to\mathcal U\to E\to \mathbb D(\mathcal I_{C,H})(\frac{-5-c_2}{5})\to 0$, if $c_2 \equiv 0 \pmod 5$; \\
	(2.8.2) $0\to\mathcal U\to E\to \mathcal I_{C,H}(\frac{4-c_2}{5})\to 0$, if $c_2 \equiv 4 \pmod 5$; \\
	(2.8.3) $0\to\mathcal U\to E\to \mathcal I_{L,H}(\frac{3-c_2}{5})\to 0$, if $c_2 \equiv 3 \pmod 5$; \\
	(2.8.4) $0\to\mathcal U\to E\to \mathcal O_H(\frac{2-c_2}{5})\to 0$, if $c_2 \equiv 2 \pmod 5$; \\
	(2.8.5) $0\to\mathcal U\to E\to \mathbb D(\mathcal I_{L,H})(\frac{-4-c_2}{5})\to 0$, if $c_2 \equiv 1 \pmod 5$.
	\end{theorem}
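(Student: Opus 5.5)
Following Schmidt's method for $\mathbb P^3$ and the quadric, the plan is to pass from Gieseker stability to tilt-stability. We use the tilt-stability conditions $\nu_{\alpha,\beta}$ on $\mathrm D^b(X)$ together with the generalized Bogomolov--Gieseker inequality of \cite{Li}. A rank two Gieseker semistable sheaf $E$ is $\nu_{\alpha,\beta}$-semistable for $\beta<\mu(E)$ and $\alpha\gg0$. We then move $\alpha$ down along the vertical line $\beta=\mu(E)-\delta$, or along the numerical walls. Two things can happen. Either $E$ stays tilt-semistable down to a point where the generalized Bogomolov--Gieseker inequality applies; this gives an upper bound on $\mathrm{ch}_3(E)$, which is quadratic in $c_2$ but far from the claimed one when $c_2$ is large. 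Or $E$ is destabilized along some wall. Twisting by $\mathcal O_X(1)$ and dualizing reduces us to $c_1\in\{-1,0\}$, so the cases (1) and (2) cover everything. The bounds $c_2\ge2$ and $c_2\ge0$ are simply the Bogomolov inequality $\Delta\ge0$, sharpened by integrality on $X$.

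For $c_2$ large we argue by induction on $\Delta$. The key step is to show that the largest wall for $E$ in the $\beta<\mu$ region is induced by a sub-object of rank $\le 2$. Walls with destabilizing rank $\ge3$ are excluded by the bound $\Delta\ge0$ for both factors and by the structure theorem for walls. We then analyse the destabilizing sequence $0\to A\to E\to B\to0$.
\begin{itemize}
\item The sub-object $A$ is a semistable object with small discriminant. By the small cases, proven first by direct use of the full exceptional collection $(\mathcal O(-1),\mathcal U(-1)\text{ or }\mathcal Q^*, \mathcal O,\mathcal U^*)$, $A$ is $\mathcal O_X(-1)^{\oplus2}$ (case $c_1=-1$) or $\mathcal U$ (case $c_1=0$).
\item The quotient $B$ is then a rank $0$ tilt-semistable object supported on a hyperplane section.
\item One checks that $\mathrm{ch}_3$ of a rank zero object with $\mathrm{ch}_1=H$ and given $\mathrm{ch}_2$ is maximized exactly by $\mathcal O_H(k)$, $\mathcal I_{L,H}(k)$, $\mathcal I_{C,H}(k)$, or their duals $\mathbb D(\cdot)$, according to the residue of $\mathrm{ch}_2$ modulo $\frac15H^3$-lattice steps.
\end{itemize}
This explains the error term $\varepsilon$ and the five subcases in (1.5) and (2.8). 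Indeed, torsion sheaves on $H$ are a degree-$5$ del Pezzo computation, where curves of degree $1,2$ give the minimal $\mathrm{ch}_2$ corrections. Adding $\mathrm{ch}_3(A)+\mathrm{ch}_3(B)$ yields the bound, and equality forces $B$ to be exactly one of the extremal sheaves and $E$ to be the extension. We must also check that equality excludes the torsion-free "curve" part of $B$, i.e.\ that the quotient is indeed a sheaf.

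The low cases $c_2\le 5$ (resp.\ $c_2\le 8$ and $c_2\in\{5,7\}$ for $c_1=0$) are handled individually. For these we use Beilinson-type resolutions from the exceptional collection, computing $\chi$ and vanishings via Serre duality and Kodaira vanishing; this produces the monads and triples in (1.1)–(1.4) and (2.1)–(2.7). The anomalous values $c_2=6,8$ in case (2) are where a second wall (from $\mathcal O_X(-1)$ or $\mathcal U(-1)$-type sub-objects) competes with the generic one. This must be verified by comparing the two candidate bounds.

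The main obstacle will be the wall analysis: proving that no higher wall occurs and that along the actual wall the sub-object is forced to be $\mathcal O_X(-1)^{\oplus 2}$ or $\mathcal U$. This requires a careful numerical region computation combined with the generalized Bogomolov--Gieseker inequality on both sides of the wall. I would first try to bound the rank and $\mathrm{ch}_1$ of destabilizers by the standard hyperbola argument, and then eliminate the finitely many leftover cases by hand.
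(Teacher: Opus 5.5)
\textbf{Overall.} Your outline follows the paper's route: Proposition~\ref{2-stability}, then an induction on $\overline{\Delta}_H$ through walls, ending in extensions of rank-zero objects by $\mathcal O_X(-1)^{\oplus 2}$ or $\mathcal U$. The gaps are in the steps that actually produce the bound.

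\textbf{The main gap: identifying the rank-two destabilizer.} You assert that the rank-two destabilizer $A$ has small discriminant and is therefore identified by the low cases. Nothing forces this. The wall pins down $\mathrm{ch}_1(A)$ (to $-2H$, resp.\ $-H$). But the $\mathrm{ch}_2$-parameter $y$ of $A$ ranges over an interval of length roughly $|d|/4$, so $\overline{\Delta}_H(A)$ is of the same order as $\overline{\Delta}_H(E)$. If you only mean $\overline{\Delta}_H(A)<\overline{\Delta}_H(E)$, the induction bounds $\mathrm{ch}_3(A)$ but does not identify $A$. The missing idea is the optimisation in the proof of Theorem~\ref{main}, which runs as follows.
\begin{enumerate}
\item Assume $e$ is at least the claimed bound up to the error term. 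Then $W_Q(E)$ is large, and every actual wall lies outside it, so $s(E,A)\le s_Q(E)$; this bounds $y$ from below.
\item Bound $\mathrm{ch}_3(A)$ by the inductive rank-two bound for the other parity of $c_1$, and $\mathrm{ch}_3(E/A)$ by Corollary~\ref{0 1}.
\item This gives an upper bound for $e$ that is quadratic in $y$, with vertex at or below the lower end of the admissible range. So the bound is largest at the top value: $y=0$ ($A\cong\mathcal O_X(-1)^{\oplus 2}$), resp.\ $y=\frac{1}{10}$ ($A\cong\mathcal U$). A few small $d$ are checked by hand.
\end{enumerate}
The same large-$W_Q(E)$ input is what excludes rank $\ge 3$ walls, via the radius bound of Proposition~\ref{actual}~(1); $\overline{\Delta}_H\ge 0$ for the factors alone does not do it.

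You also omit several checks:
\begin{itemize}
\item Rank-one walls genuinely occur for $c_1=-1$. Lemma~\ref{one} shows the factor is $\mathcal O_X(-1)$ and then splits off a second copy from the quotient.
\item You must show $A$ is a subobject rather than a quotient. This holds because $E$ cannot be semistable below its unique wall.
\item You must show $E/A$ is semistable above its own wall, so that the rank-zero lemmas apply to it.
\end{itemize}

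\textbf{The supporting steps cannot be done with the tools you name.}

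\emph{Rank-zero quotient.} $B$ is only a tilt-semistable object of $\mathrm{Coh}^\beta(X)$ with $\mathrm{ch}_0=0$, $\mathrm{ch}_1=H$. It need not be a sheaf, and its support may be a singular hyperplane section. A del Pezzo computation therefore gives candidates, not a bound. The paper proves Corollary~\ref{0 1} by wall-crossing separately for each residue of $d$:
\begin{itemize}
\item for $d=-\frac12$, no wall meets $\beta=0$, so $Q_{0,0}\ge 0$;
\item for $d=-\frac{7}{10},-\frac{9}{10}$, walls through $\beta=-1$ force $H^0=H^2=0$, hence $\chi\le 0$;
\item $d=-\frac{1}{10},-\frac{3}{10}$ follow from these by Proposition~\ref{duality};
\item equality cases are identified via Lemmas~\ref{subobject}, \ref{quotient} and the heart $\mathfrak C$.
\end{itemize}

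\emph{Low cases.} A Beilinson resolution plus Kodaira vanishing cannot supply the vanishing of $\mathrm{Ext}^i$ between an arbitrary semistable $E$ and the exceptional objects. In the paper these vanishings come from walls: a nonzero map $E\to\mathcal O_X(-3)[1]$, $E\to\mathcal U(-2)[1]$, etc.\ would destabilize $E$. The exceptional collection enters only through $E[1]\in\mathfrak C$. This is obtained from Lemma~\ref{nu-lambda} and Proposition~\ref{heart} after showing $E$ survives into the region $D$. For example, in (2.4) the multiplicity $a=-5e\ge 0$ gives $e\le 0$.

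\emph{The collection itself.} The collection you wrote is not exceptional: $\mathrm{Ext}^1(\mathcal U^*,\mathcal Q^*)\cong\mathrm{End}(\mathcal U)\neq 0$. The paper uses $(\mathcal O_X(-1),\mathcal Q(-1),\mathcal U,\mathcal O_X)$.
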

	
	Also, we describe moduli spaces of these rank 2 sheaves with maximal $c_3$ for the general case when $c_2$ is big enough. In this case the sheaves $E$ are constructed as extensions, in which the right term is one of sheaves $\mathcal I_{C,H},\mathcal I_{L,H},\mathcal O_H,\mathbb D(\mathcal I_{L,H})$ or $\mathbb D(\mathcal I_{C,H})$, twisted by $\mathcal O_X(m)$ for some $m\in\mathbb Z$. In order to construct the moduli spaces of the rank 2 sheaves $E$, we firstly need to construct moduli spaces of these torsion sheaves. We denote by $\mathcal M_X(v)$ the Gieseker--Maruyama moduli scheme of semistable sheaves on $X$ with Chern character $v$. We prove the following theorem.
	
	\begin{theorem}\label{torsion thm}
		\begin{enumerate}
			\item If $v=\mathrm{ch}(\mathcal I_{C,H})$, then $\mathcal M_X(v)=B_1=\mathrm{Gr}(2,5)$.
			\item If $v=\mathrm{ch}(\mathcal I_{L,H})$, then $\mathcal M_X(v)=B_2=\mathbb F$, which is a certain locally trivial $\mathbb P^4$-fibration over $\mathbb P^2$ defined by (\ref{bbF}).
			\item If $v=\mathrm{ch}(\mathcal O_H)$, then $\mathcal M_X(v)=B_3=\mathbb P^6$.
			\item If $v=\mathrm{ch}(\mathbb D(\mathcal I_{L,H}))$, then $\mathcal M_X(v)=B_4=B_2=\mathbb F$.
			\item If $v=\mathrm{ch}(\mathbb D(\mathcal I_{C,H}))$, then $\mathcal M_X(v)=B_5=B_1=\mathrm{Gr}(2,5)$.
		\end{enumerate}
	\end{theorem}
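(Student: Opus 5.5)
Each moduli space will be identified by showing two things: every semistable sheaf with the given Chern character has a fixed explicit shape, and the data determining that shape is parametrized by the claimed variety. Let $E$ be Gieseker semistable with $\mathrm{ch}(E)=v$, where $v$ is one of the five characters. Then $E$ is a pure sheaf of dimension two whose first Chern class is $H$, so its scheme-theoretic support is a single hyperplane section $S\in|\mathcal O_X(1)|\cong\mathbb P^6$. Semistability together with $c_1=H$ should force $E$ to be a rank one torsion-free sheaf on $S$. Since $S$ may be singular, I would argue directly: $E$ is an $\mathcal O_S$-module of multiplicity one along the reduced-or-not divisor $S$. Because $H$ is primitive and generates $\mathrm{Pic}\,X$, the support is reduced and irreducible, so $E$ has rank one on $S$.

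For $\mathcal O_H$, any such sheaf embeds into its reflexive hull on $S$. Comparing $\mathrm{ch}_2$ and $\mathrm{ch}_3$ and using the fact that the Hilbert polynomial is minimal among rank one sheaves with $c_1=H$, one gets $E\cong\mathcal O_S$. This gives $\mathcal M_X(\mathrm{ch}(\mathcal O_H))\cong|\mathcal O_X(1)|=\mathbb P^6$.

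For $\mathcal I_{C,H}$ and $\mathcal I_{L,H}$, the plan is to consider the natural map $E\to E^{\vee\vee}$ on $S$ (or to use $\mathbb D$). From $\mathrm{ch}$ one should deduce that $E=\mathcal I_{Z,S}$ with $Z\subset S$ a subscheme whose Hilbert polynomial is that of a conic (resp.\ line). Thus $E$ is determined by the pair $(Z,S)$ with $Z\subset S$.
\begin{itemize}
\item Conics: conics on $X$ are parametrized by $\mathbb P^4$, and each conic spans a plane. The hyperplane sections containing a fixed conic $C$ form $\mathbb P(H^0(\mathcal I_C(1)))\cong\mathbb P^3$, so the pairs form a $\mathbb P^3$-bundle over $\mathbb P^4$. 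Over $\mathrm{Gr}(2,5)$ there is the classical correspondence through $\mathbb P(V_5)$ and $\mathbb P(V_5^*)$. I would instead identify the pair directly: $\mathcal I_{C,S}$ has $h^0(\mathcal I_{C,S}(1))$ of a fixed size, and $\mathrm{Hom}(\mathcal U,-)$ or $\mathrm{Hom}(\mathcal Q^*,-)$ should produce a 2-dimensional subspace of $V_5$. This yields an isomorphism with $\mathrm{Gr}(2,5)$, and I would check that it is bijective and étale by deformation theory.
\item Lines: lines form $\mathbb P^2$. The hyperplanes containing a fixed line form $\mathbb P(H^0(\mathcal I_L(1)))\cong\mathbb P^4$, which gives the $\mathbb P^4$-fibration $\mathbb F$ over $\mathbb P^2$ defined in (\ref{bbF}).
\end{itemize}
Smoothness and the identification of scheme structures (not only points) would follow from computing $\mathrm{Ext}^1(E,E)$ and checking $\mathrm{Ext}^2(E,E)=0$ via Serre duality, using $E\otimes\omega_X=E(-2)$ and semistability to kill $\mathrm{Hom}(E,E(-2))$. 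The tangent dimension then equals the dimension of the family, so the moduli space coincides with the family.

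Parts (4) and (5) follow from the dualities. $\mathbb D$ is an involutive antiequivalence on pure two-dimensional sheaves (given by $\mathcal Ext^1(-,\mathcal O_X)$), and it preserves Gieseker semistability after the appropriate twist, since the Hilbert polynomial transforms by $\chi(t)\mapsto\pm\chi(-t-2)$, which preserves the ordering of slopes. Hence $\mathbb D$ induces isomorphisms $\mathcal M_X(v)\cong\mathcal M_X(\mathbb D v)$, and therefore $B_4\cong B_2$ and $B_5\cong B_1$.

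The main obstacle is the first step over singular hyperplane sections: proving that every semistable sheaf in the class is exactly $\mathcal I_{Z,S}$ with $Z$ a conic or line, rather than, for example, a sheaf that is not locally free at singular points of $S$ or a subscheme with embedded points. I would handle this with Euler characteristic and $h^0$ bounds: semistability bounds $\chi$ of the torsion part of $E^{\vee\vee}/E$, and lines and conics are the only subschemes with the required small Hilbert polynomials.
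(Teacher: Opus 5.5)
The step you single out as the ``main obstacle'' is the actual content of the theorem, and the tools you propose for it do not work. The reduction to $E=i_*\widetilde E$, with $\widetilde E$ torsion-free of rank one on an integral section $S$, is fine. But $S$ may be singular or even non-normal. Then the reflexive hull of $\widetilde E$ need not be invertible, $\widetilde E$ need not embed in $\mathcal O_S$, and there is no Hodge index inequality on $S$ to compare $\mathrm{ch}_3$. So ``Euler characteristic and $h^0$ bounds'' on $E^{\vee\vee}/E$ do not produce $E\cong\mathcal O_S$, $\mathcal I_{L,S}$ or $\mathcal I_{C,S}$.

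The paper never looks at $S$. A pure sheaf with $c_1=H$ is tilt-semistable above the relevant wall, because no wall can meet an integral $\beta$ (Lemma~\ref{minimal}, Proposition~\ref{actual}~(3)). This gives vanishings such as $H^2(E)\cong\Hom(E,\mathcal O_X(-2)[1])^\vee=0$ and $\mathrm{Ext}^3(E,\mathcal O_X(-1))=0$. Riemann--Roch then gives $\mathrm{hom}(\mathcal O_X,E)\ge 1$ for $\mathrm{ch}(\mathcal O_H)$ and $\mathrm{hom}(E,\mathcal O_X(-1)[1])\ge 2$ for $\mathrm{ch}(\mathcal I_{C,H})$. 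Lemmas~\ref{subobject}, \ref{quotient} and the wall analysis of Lemmas~\ref{0 1 -1/2}--\ref{0 1 -9/10} turn this into the resolutions $0\to\mathcal O_X(-1)\to\mathcal O_X\to E\to 0$, (\ref{triple15}) and $0\to\mathcal O_X(-1)\to\mathcal I_L\to E\to 0$, uniformly in $S$. These resolutions are also what the paper uses to build the families $\mathbb I_i$ and to compute $\mathrm{ext}^1(E,E)=6$.

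Several further steps are wrong as stated.

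(i) For conics, the pair $(C,S)$ is not an invariant of the sheaf. $C$ moves in a pencil on $S$, and $\mathcal I_{C,S}\cong\mathcal I_{C',S}$ for $C'$ in that pencil. Concretely, in (\ref{triple15}) every nonzero vector of the $2$-plane $W\subset\Hom(\mathcal O_X(-1),\mathcal U)\cong\mathbb C^5$ cuts out such a conic. Your $\mathbb P^3$-bundle over $\mathbb P^4$ is $7$-dimensional and maps onto the $6$-dimensional $\mathrm{Gr}(2,5)$ with $\mathbb P^1$-fibres. The Grassmannian coordinate is $W$, which only appears once (\ref{triple15}) is established; for instance $\Hom(\mathcal U,\mathcal I_{C,H})\cong\mathbb C$, so it does not supply a $2$-plane.

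(ii) Serre duality gives $\mathrm{Ext}^2(E,E)^\vee\cong\mathrm{Ext}^1(E,E(-2))$. Killing $\Hom(E,E(-2))$ therefore only gives $\mathrm{Ext}^3(E,E)=0$. Since $\chi(E,E)=-H^3=-5$, you only get $\mathrm{ext}^1=6+\mathrm{ext}^2$, so $\mathrm{ext}^1=6$ still requires a computation from the resolutions. Identifying $\mathcal M_X(v)$ with $B_i$ also needs bijectivity of the modular map, which rests on the missing classification, plus Zariski's main theorem.

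(iii) $\mathbb D$ is not an anti-equivalence on pure two-dimensional sheaves. For $E=\mathcal I_{p,S}$ one has $\mathcal Ext^2(E,\mathcal O_X)\cong\mathcal O_p$, so $\mathbb D(E)$ is not a sheaf. For (4) and (5) you must first show that every member of $\mathcal M_X(\mathrm{ch}\,\mathbb D(\mathcal I_{L,H}))$, resp. $\mathcal M_X(\mathrm{ch}\,\mathbb D(\mathcal I_{C,H}))$, is Cohen--Macaulay on its support. This is again a maximal-$\mathrm{ch}_3$ statement. The paper gets it from Proposition~\ref{duality}, which gives $\widetilde E\to\mathbb D(E)(-1)\to T[-1]$, together with the bounds of Lemmas~\ref{0 1 -7/10}, \ref{0 1 -9/10}, which force $T=0$. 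It then constructs $\mathbb I_4$ and $\mathbb I_5$ explicitly.
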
 
	
	Now we come to the description of moduli spaces of rank 2 semistable sheaves on $X$. If such a sheaf $E$ has $c_1=-1,c_2\ge 6$ and maximal $c_3=\frac{c_2^2}{5}-10\varepsilon(\frac 12-\frac{c_2}{5})$, then by Theorem \ref{intro} it can be included into an exact sequence of the form
	\begin{equation}\label{extension2}0\to\mathcal O_X(-1)^{\oplus 2}\to E\to G_i\to 0,\quad 1\le i\le 5,
	\end{equation}
	where $G_1=\mathcal I_{C,H}(m)$ with $m\le -1$, $G_2=\mathcal I_{L,H}(m)$ with $m\le -1$, $G_3=\mathcal O_H(m)$ with $m\le -2$, $G_4=\mathbb D(\mathcal I_{L,H})(m)$ with $m\le -3$, $G_5=\mathbb D(\mathcal I_{C,H})(m)$ with $m\le -3$. Using the results of \cite{BPS} and \cite{Lan} we show that there are locally free sheaves $\mathcal A_i$ over $B_i$, such that the fiber of $\mathcal A_i$ over a point $p\in B_i$ is naturally isomorphic to the space $\mathrm{Ext}^1(G_i,\mathcal O_X(-1))$ for the sheaf $G_i$ corresponding to the point $p$ (see (\ref{A}) for a precise definition). We obtain the following theorem.
	
	\begin{theorem}
		\label{c=-1}(i) The Gieseker--Maruyama moduli scheme $\mathcal M_X(v)$ of Gieseker semistable sheaves $E$ on $X$ with Chern character defined from exact triple (\ref{extension2}) is isomorphic to the Grassmanization $\Pi:\mathcal Gr(\mathcal A_i^\vee,2)\to B_i$ of two-dimensional quotient spaces of fibers of $\mathcal A_i^\vee$. In particular, it is a smooth rational variety. 
		\\
		(ii) The dimension of $\mathcal M_X(v)$ equals $5m^2-9m+6$ for $i=1$, $5m^2-7m+4$ for $i=2$, $5m^2-5m+4$ for $i=3$, $5m^2+7m+4$ for $i=4$ and $5m^2+9m+6$ for $i=5$.
	\end{theorem}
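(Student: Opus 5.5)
Theorem~\ref{intro} (1.5) tells us that every semistable $E$ with this Chern character is an extension (\ref{extension2}) with $G_i$ in the family parametrized by $B_i$. So the plan is to build a universal family of such extensions over $\mathcal Gr(\mathcal A_i^\vee,2)$ and to check that the induced classifying morphism to $\mathcal M_X(v)$ is an isomorphism.

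\textbf{Fiberwise correspondence.} An extension $0\to \mathcal O_X(-1)^{\oplus 2}\to E\to G\to 0$ is the same as an element $\xi=(\xi_1,\xi_2)$ with $\xi_j\in\mathrm{Ext}^1(G,\mathcal O_X(-1))$. Changing the basis of $\mathcal O_X(-1)^{\oplus 2}$ acts on $\xi$ through $GL_2$.
\begin{enumerate}
\item I will first show that $E$ is torsion free, and hence (by the stability arguments behind Theorem~\ref{intro}) semistable, exactly when $\xi_1,\xi_2$ are linearly independent. If they are dependent, then after a change of basis $\xi_2=0$, so $\mathcal O_X(-1)$ splits off as a direct summand. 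That summand destabilizes $E$, since $\mu(\mathcal O_X(-1))=-1>-\tfrac12$. If they are independent, a torsion subsheaf $T\subset E$ would map injectively to $G$. Because $G$ is pure of dimension 2, the extension would then split over $T$. This contradicts independence once I use that $\mathrm{Hom}(G,\mathcal O_X(-1))=0$ and that $G$ is stable, so its subsheaves of the same dimension have controlled quotients.
\item Next I will show that $\mathrm{Hom}(\mathcal O_X(-1),E)\cong\mathbb C^2$ and $\mathrm{Hom}(E,G)\cong\mathbb C$. This follows from $\mathrm{Hom}(\mathcal O_X(-1),G)=0$ in the range $m\le\dots$ and $\mathrm{Hom}(\mathcal O_X(-1),\mathcal O_X(-1)^{\oplus2})=\mathbb C^2$. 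As a consequence, the subsheaf $\mathcal O_X(-1)^{\oplus 2}\subset E$ is canonical, namely the image of the evaluation map, and $G$ is recovered from $E$.
\end{enumerate}
Together these show that isomorphism classes of semistable $E$ correspond bijectively to pairs $(G,\langle\xi_1,\xi_2\rangle)$, where $\langle\xi_1,\xi_2\rangle$ is a 2-dimensional subspace of $\mathrm{Ext}^1(G,\mathcal O_X(-1))$. Such a subspace is the same as a 2-dimensional quotient of the fiber of $\mathcal A_i^\vee$. Along the way I also get that $E$ is simple, in fact stable.

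\textbf{Family version.}
\begin{itemize}
\item Let $\mathcal G$ be the universal sheaf on $B_i\times X$. Its existence follows from the explicit constructions of Theorem~\ref{torsion thm}, or after twisting it descends by \cite{BPS}, \cite{Lan}.
\item Base change gives $\mathcal A_i=\mathcal Ext^1_{p}(\mathcal G,\mathcal O(-1))$. It is locally free because $\mathrm{Hom}$ and $\mathrm{Ext}^2(G,\mathcal O_X(-1))$ vanish for $m$ in the given ranges. By Serre duality these become $H^1$ and $H^0$ vanishings of $G(\cdot)$, which I will check using the sequences defining $\mathcal I_{L,H}$, $\mathcal I_{C,H}$ and $\mathbb D(\cdot)$.
\item On $\mathcal Gr(\mathcal A_i^\vee,2)$ the tautological rank-2 subbundle $\mathcal S\subset\Pi^*\mathcal A_i$ gives a tautological extension $0\to \mathcal S^\vee\boxtimes\mathcal O_X(-1)\to\mathcal E\to\Pi^*\mathcal G\to 0$, and hence a morphism $\Phi:\mathcal Gr\to\mathcal M_X(v)$.
\item By the previous paragraph, $\Phi$ is bijective on points.
\end{itemize}

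\textbf{Isomorphism and dimension.} The main obstacle is upgrading this bijection to an isomorphism. The plan is to prove that $\mathcal M_X(v)$ is smooth of dimension $\dim B_i+2(\mathrm{ext}^1-2)$. Since $E$ is stable, $\mathrm{Ext}^3(E,E)=\mathrm{Hom}(E,E(-2))^*=0$. I then need $\mathrm{Ext}^2(E,E)=0$, which I will obtain by applying $\mathrm{Hom}(-,E)$ and $\mathrm{Hom}(\mathcal O_X(-1)^{\oplus2},-)$ to (\ref{extension2}). This reduces to the vanishings $\mathrm{Ext}^2(G,G)=0$ (smoothness of $B_i$, already known from Theorem~\ref{torsion thm}), $\mathrm{Ext}^2(G,\mathcal O_X(-1))=0$ and $H^2(G(1))=0$. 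After that, $\dim\mathrm{Ext}^1(E,E)=1-\chi(E,E)$ is computed by Riemann--Roch. A bijective morphism onto a smooth (hence normal) variety of the same dimension is an isomorphism by Zariski's Main Theorem, provided it is separable, which holds in characteristic 0.

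Rationality follows because the Grassmann bundle is Zariski-locally trivial over the rational base $B_i$. For (ii), Riemann--Roch gives $\mathrm{ext}^1(G_i,\mathcal O_X(-1))=-\chi(G_i,\mathcal O_X(-1))$, a quadratic in $m$. Then $\dim=\dim B_i+2(\mathrm{ext}^1-2)$ with $\dim B_i=6,6,6,6,6$. For example, for $i=3$ this should come out as $\mathrm{ext}^1=\tfrac{5m^2-5m}{2}+1$, which gives $5m^2-5m+4$; the other cases are analogous.
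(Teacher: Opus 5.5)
Your architecture matches the paper's: a bijection between stable $E$ and pairs $(G_i,\text{2-plane in }\mathrm{Ext}^1(G_i,\mathcal O_X(-1)))$, a tautological family over $\mathcal Gr(\mathcal A_i^\vee,2)$, then smoothness of $\mathcal M_X(v)$ and Zariski's Main Theorem. Your formula in (ii) is also correct. However, two steps fail as written, and the second one breaks your route to smoothness.

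\textbf{Semistability of the extensions is not proved.} ``Torsion free, hence semistable'' is a non sequitur, and the claimed equivalence is false. For $\xi=(\xi_1,0)$ with $\xi_1\neq 0$, the sheaf $E=\mathcal O_X(-1)\oplus E'$ is torsion free, as your own purity argument applied to $E'$ shows, but it is unstable. Also note that $\mu(\mathcal O_X(-1))=-1<-\frac12$. So the summand $\mathcal O_X(-1)$ destabilizes as a quotient, or equivalently $E'$, of slope $0$, destabilizes as a subsheaf.

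What you need is that linear independence of $\xi_1,\xi_2$ forces semistability. Without it, $\Phi$ is not even defined on all of $\mathcal Gr(\mathcal A_i^\vee,2)$. The paper gets this from tilt-stability. $E$ is semistable on $W(E,\mathcal O_X(-1))$, which is its only wall, and above it the only possible destabilizing quotients are $\mathcal O_X(-1)$ and $\mathcal O_X(-1)^{\oplus 2}$. A nonzero map $E\to\mathcal O_X(-1)$ is nonzero on $\mathcal O_X(-1)^{\oplus 2}$, because $\mathrm{Hom}(G_i,\mathcal O_X(-1))=0$. Then (TR3) turns it into a linear relation between $\xi_1$ and $\xi_2$.

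A direct slope argument also works. A saturated rank-one $F\subset E$ with $c_1(F)\ge 0$ forces $F\cap\mathcal O_X(-1)^{\oplus 2}\cong\mathcal O_X(-1)$ and $E/F\cong\mathcal O_X(-1)$, which gives the same relation.

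\textbf{The smoothness step rests on $\mathrm{Ext}^2(E,E)=0$, which is false.} Your own sequences give $\mathrm{Ext}^2(E,E)\cong\mathrm{Ext}^2(E,G_i)\cong H^2(G_i(1))^{\oplus 2}$. This uses $\mathrm{Ext}^2(E,\mathcal O_X(-1))=0$, $\mathrm{Ext}^3(E,\mathcal O_X(-1))\cong\mathrm{Hom}(\mathcal O_X(1),E)^\vee=0$ and $\mathrm{Ext}^2(G_i,G_i)=\mathrm{Ext}^3(G_i,G_i)=0$. But $H^2(G_i(1))$ does not vanish. For $i=3$ it equals $H^2(\mathcal O_H(m+1))\cong H^0(\mathcal O_H(-m-2))^\vee$, which is nonzero for every allowed $m\le -2$.

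Numerically, $\chi(E,E)=20d-1=20m+9$. So $1-\chi(E,E)=-20m-8$ is linear in $m$ and falls short of $5m^2-5m+4$ by exactly $5m^2+15m+12=\mathrm{ext}^2(E,E)$. Riemann--Roch therefore cannot compute the tangent space, and you cannot get smoothness from vanishing obstructions.

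The fix is the paper's: compute $\mathrm{ext}^1(E,E)$ directly from the sequence $0\to\mathrm{Ext}^1(E,\mathcal O_X(-1)^{\oplus 2})\to\mathrm{Ext}^1(E,E)\to\mathrm{Ext}^1(E,G_i)\to 0$. It is exact because $\mathrm{Hom}(E,\mathcal O_X(-1))=0$, the map $\mathrm{Hom}(E,E)\to\mathrm{Hom}(E,G_i)$ is an isomorphism of one-dimensional spaces, and $\mathrm{Ext}^2(E,\mathcal O_X(-1))=0$.
\begin{itemize}
\item On the left, $\mathrm{ext}^1(E,\mathcal O_X(-1)^{\oplus 2})=2(\mathrm{ext}^1(G_i,\mathcal O_X(-1))-2)$.
\item On the right, $\mathrm{Ext}^1(E,G_i)\cong\mathrm{Ext}^1(G_i,G_i)$ is $6$-dimensional, using $H^0(G_i(1))=H^1(G_i(1))=0$.
\end{itemize}
So $\mathrm{ext}^1(E,E)=\dim\mathcal Gr(\mathcal A_i^\vee,2)=\dim\mathcal M_X(v)$, the last equality by bijectivity. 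This proves smoothness even though the obstruction space is nonzero.

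Incidentally, $\mathrm{Ext}^2(G_i,G_i)=0$ does not follow from smoothness of $B_i$. It follows from $\chi(G_i,G_i)=-5$ together with $\hom=1$, $\mathrm{ext}^1=6$ and $\mathrm{ext}^3=0$.
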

	
	Analogously we deal with the case of even determinant. If a Gieseker semistable rank 2 sheaf $E$ on $X$ has $c_1=0,c_2\in\{5, 7\}\cup\mathbb Z_{\ge 9}$ and maximal $c_3=\frac{c_2^2+c_2+4}{5}-10\varepsilon(-\frac{c_2}{5})$, then by Theorem \ref{intro} it can be included into an exact sequence of the form
	\begin{equation}\label{extension3}0\to\mathcal U\to E\to G'_i\to 0,\quad 1\le i\le 5,
	\end{equation}
	where $G'_1=\mathcal I_{C,H}(m)$ with $m\le -1$, $G'_2=\mathcal I_{L,H}(m)$ with $m\le -2$, $G'_3=\mathcal O_H(m)$ with $m\le -1$, $G'_4=\mathbb D(\mathcal I_{L,H})(m)$ with $m\le -3$, $G'_5=\mathbb D(\mathcal I_{C,H})(m)$ with $m\le -2$ (here conditions on $m$ are different from those in (\ref{extension2}), therefore we use the notation $G'_i$ instead of $G_i$). Now we can form locally free sheaves $\widetilde{\mathcal A}_i$ over $B_i$ such that the fiber of $\widetilde{\mathcal A}_i$ over an arbitrary point $p\in B_i$ is naturally isomorphic to the space $\mathrm{Ext}^1(G'_i,\mathcal U)$ for the sheaf $G'_i$ corresponding to the point $p$ (see (\ref{tilde A})). We get the following theorem.
	
	\begin{theorem}
		\label{c=0}(i) The Gieseker--Maruyama moduli scheme $\mathcal M_X(v)$ of Gieseker semistable sheaves $E$ on $X$ with Chern character defined from exact triple (\ref{extension3}) is isomorphic to the projectivization $\widetilde{\Pi}:\mathbb P(\widetilde{\mathcal A}_i^\vee)\to B_i$. In particular, it is a smooth rational variety. 
	\\	
	(ii) The dimension of $\mathcal M_X(v)$ equals $5m^2-14m+14$ for $i=1$, $5m^2-12m+11$ for $i=2$, $5m^2-10m+10$ for $i=3$, $5m^2+2m+4$ for $i=4$ and $5m^2+4m+5$ for $i=5$.
	\end{theorem}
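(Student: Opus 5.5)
The proof follows the pattern of Theorem \ref{c=-1}, with $\mathcal U$ in place of $\mathcal O_X(-1)^{\oplus 2}$. The difference is that $\mathcal U$ is a simple rank two bundle rather than a sum of two copies of a line bundle, so extensions by $\mathcal U$ are parametrized by lines in $\mathrm{Ext}^1(G'_i,\mathcal U)$ instead of two-dimensional subspaces. The first step is to establish the following for every point $p\in B_i$ and the corresponding sheaf $G'_i$ with $m$ in the stated range:
\begin{itemize}
\item $\mathrm{Hom}(G'_i,\mathcal U)=0$, which is automatic since $G'_i$ is torsion and $\mathcal U$ is torsion free;
\item $\mathrm{Ext}^2(G'_i,\mathcal U)=0$;
\item $\dim\mathrm{Ext}^1(G'_i,\mathcal U)$ is constant over $B_i$.
\end{itemize}
For the vanishing I will use Serre duality, $\mathrm{Ext}^2(G'_i,\mathcal U)\cong \mathrm{Hom}(\mathcal U,G'_i(-2))^\vee = H^0(\mathcal U^\vee\otimes G'_i(-2))^\vee$. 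Restricting to $H$ and using the sequences defining $\mathcal I_{C,H}$, $\mathcal I_{L,H}$ and their duals, I expect this to vanish by the negativity bounds on $m$. The value of $\chi(G'_i,\mathcal U)$ then comes from Riemann--Roch on $X$. Constancy of $\dim \mathrm{Ext}^1$ yields, by cohomology and base change (as in \cite{BPS}, \cite{Lan}), the locally free sheaf $\widetilde{\mathcal A}_i$ of (\ref{tilde A}) with fibres $\mathrm{Ext}^1(G'_i,\mathcal U)$. The dimension in (ii) will then be $\dim B_i+\dim\mathrm{Ext}^1-1$, where $\dim B_i$ is $6$, $6$, $6$, $6$, $6$ for $\mathrm{Gr}(2,5)$, $\mathbb F$, $\mathbb P^6$ by Theorem \ref{torsion thm}. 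I will check that this reproduces the stated quadratics in $m$.

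Next, I construct the universal family on $\mathbb P(\widetilde{\mathcal A}_i^\vee)$. I take the universal torsion sheaf $\mathcal G$ on $B_i\times X$ from Theorem \ref{torsion thm}, or a twisted version if a universal family exists only étale-locally. Since the relative $\mathcal Ext^1$ sheaf is $\widetilde{\mathcal A}_i$, the tautological line subbundle $\mathcal O(-1)\subset\widetilde\Pi^*\widetilde{\mathcal A}_i$ gives a universal extension $0\to \mathcal U\boxtimes\mathcal O(1)\to\mathcal E\to\widetilde\Pi^*\mathcal G\to 0$ on $\mathbb P(\widetilde{\mathcal A}_i^\vee)\times X$. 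I then show that every nonsplit extension $E$ is Gieseker semistable, in fact stable. A saturated destabilizing subsheaf $F$ of rank one would map to $G'_i$ either with nonzero image or inside $\mathcal U$. The stability of $\mathcal U$ rules out the second case, and in the first a slope comparison using $c_1(G'_i)=H$ and the twist $m$ works, together with $\mathrm{Hom}(G'_i,E)=0$, which follows from the extension being nonsplit, the sheaf $G'_i$ being simple, and $\mathrm{Hom}(G'_i,\mathcal U)=0$. This gives a morphism $\mathbb P(\widetilde{\mathcal A}_i^\vee)\to\mathcal M_X(v)$.

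For the inverse direction, Theorem \ref{intro} shows that every semistable $E$ with this Chern character sits in a triple (\ref{extension3}). I must show this triple is unique. The subsheaf $\mathcal U$ is unique since $\mathrm{Hom}(\mathcal U,E)$ is one-dimensional: apply $\mathrm{Hom}(\mathcal U,-)$ to the triple and use $\mathrm{Hom}(\mathcal U,\mathcal U)=\mathbb C$ together with $\mathrm{Hom}(\mathcal U,G'_i)=H^0(\mathcal U^\vee\otimes G'_i)=0$ for $m$ negative enough. The quotient $G'_i$ is then determined, hence so is the extension class up to scalar, which makes the morphism bijective on points. The same vanishing makes the morphism injective on tangent spaces. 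Concretely, $\mathrm{Ext}^1(E,E)$ is computed from the triple and matches $T_p B_i\oplus \mathrm{Ext}^1(G'_i,\mathcal U)/\mathbb C$. Moreover $\mathrm{Ext}^2(E,E)=0$, or at least $\dim\mathrm{Ext}^1(E,E)$ equals the dimension in (ii), so $\mathcal M_X(v)$ is smooth and the bijective morphism between smooth varieties is an isomorphism. Rationality follows because $\mathbb P(\widetilde{\mathcal A}_i^\vee)$ is Zariski-locally trivial over the rational base $B_i$.

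The main obstacle is the family of cohomology vanishings for each of the five types of $G'_i$ at the boundary values of $m$: $\mathrm{Ext}^2(G'_i,\mathcal U)=0$, $\mathrm{Hom}(\mathcal U,G'_i)=0$, and the vanishings needed in the deformation computation. This is especially delicate for $\mathbb D(\mathcal I_{L,H})$ and $\mathbb D(\mathcal I_{C,H})$, where I would pass through duality to the sequences on $H$ and use the known cohomology of $\mathcal U|_L$ and $\mathcal U|_C$.
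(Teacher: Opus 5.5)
Your outline (a unique triple, Lange's universal extension over $\mathbb P(\widetilde{\mathcal A}_i^\vee)$, smoothness from $\mathrm{ext}^1(E,E)=\dim$, then Zariski's Main Theorem) is the paper's. You correctly flag the Ext-vanishings as the main obstacle, but you do not supply them, and the one you propose to check is the wrong group.

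With $\omega_X=\mathcal O_X(-2)$, Serre duality gives $\mathrm{Ext}^2(G'_i,\mathcal U)\cong\mathrm{Ext}^1(\mathcal U,G'_i(-2))^\vee=H^1(\mathcal U^\vee\otimes G'_i(-2))^\vee$. Your $\mathrm{Hom}(\mathcal U,G'_i(-2))^\vee$ is $\mathrm{Ext}^3(G'_i,\mathcal U)$, which vanishes for free. What the argument actually needs is $\mathrm{Ext}^1(\mathcal U,G'_i(k))=0$ for two values of $k$:
\begin{itemize}
\item $k=-2$: for the base change producing $\widetilde{\mathcal A}_i$, for $\mathrm{ext}^1(G'_i,\mathcal U)=-\chi(G'_i,\mathcal U)$ to be constant (hence for (ii)), and for $\mathrm{Ext}^2(E,\mathcal U)=0$;
\item $k=0$: so that $\mathrm{Ext}^1(E,G'_i)\cong\mathrm{Ext}^1(G'_i,G'_i)$ in the tangent count.
\end{itemize}
This $H^1$-vanishing does not follow from negativity of $m$; it is sharp at the edge of the ranges. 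Twist $0\to\mathcal Q\to\mathcal O_X(1)\oplus\mathcal U(1)\to\mathbb D(\mathcal I_{L,H})\to0$ by $-2$ and use that $\mathcal U$ and $\mathcal U\otimes\mathcal U$ are ACM. This gives $\mathrm{Ext}^1(\mathcal U,\mathbb D(\mathcal I_{L,H})(-2))\cong\mathrm{Ext}^2(\mathcal U,\mathcal Q(-2))\cong\mathrm{Ext}^1(\mathcal Q,\mathcal U)^\vee\cong\mathbb C$. This is consistent with the triple $0\to\mathbb D(\mathcal I_{L,H})(-2)\to E\to\mathcal U\to0$ in case (2.6). If you instead restrict to $L$ as you suggest, you meet $H^1(\mathcal U^\vee|_L(k))\neq0$ and must control a connecting homomorphism.

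The missing idea is to take these vanishings from the wall analysis in Theorem~\ref{main}. In the general range, $W(E,\mathcal U)$ is the only wall and no object of this class is tilt-semistable below it, so $\mathcal U$ always occurs as a subobject. A nonsplit extension $0\to G'_i\to E'\to\mathcal U\to0$ would be tilt-stable just below that wall, which is impossible; hence $\mathrm{Ext}^1(\mathcal U,G'_i)=0$. Since the admissible ranges of $m$ are closed under $m\mapsto m-2$, this also gives $\mathrm{Ext}^2(G'_i,\mathcal U)=0$. The paper uses exactly this input to get $\mathrm{ext}^1(\mathbb D(\mathcal I_{L,H})(m),\mathcal U)=5m^2+2m-1$.

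Two smaller corrections:
\begin{itemize}
\item $\mathrm{Ext}^2(E,E)=0$ is false in general. The number $1-\chi(E,E)=4c_2-3$ is linear in $c_2$, while the dimensions in (ii) are quadratic; for $i=3$, $m=-2$, $c_2=12$ one gets $45$ versus $50$. So only your fallback, the count $\mathrm{ext}^1(E,E)=\dim$, is available.
\item Your direct argument that nonsplit extensions are stable is a reasonable substitute for the paper's tilt-stability argument, but a slope comparison does not close it. A saturated rank-one $F\subset E$ can have $c_1(F)=0$ with $c_1(F\cap\mathcal U)=-1$. You then need $\mathrm{ch}_2(F)\le\frac12H^2+\mathrm{ch}_2(G'_i)=(d+\frac25)H^2<\frac d2H^2$, which holds since $d\le-1$. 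Torsion-freeness of $E$ also needs purity of $G'_i$, not only $\mathrm{Hom}(G'_i,E)=0$.
\end{itemize}
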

	
	Finally, we prove a conjecture that we posed in \cite[Conjecture 4.5]{Vass}. It states that, additionally to Theorem \ref{intro}, a general sheaf $E$ with maximal $c_3$ from the case (1.5) (resp. (2.10)) of this theorem corresponds to an extension \ref{extension2} (resp. \ref{extension3}), in which the sheaf $G_i$ (resp. $G'_i$) is a line bundle on a smooth hyperplane section $S\subset X$. Since $S$ is a del Pezzo surface of degree 5, we have $\mathrm{Pic}(S)\cong\mathbb Z^5$, and these line bundles can be given explicitly.
	
	In the proof of our main theorem \ref{intro} we follow Schmidt (\cite{Sch18}, \cite{Sch23}). Namely, we apply the theory of tilt-stability and Bridgeland stability conditions. In this theory we replace the category of coherent sheaves on $X$ by another heart of a bounded t-structure on the derived category $\mathrm D^b(X)$. We mostly work with a more simple notion of tilt-stability, in which this heart is the category $\mathrm{Coh}^\beta(X)$, consisting of certain two-term complexes. Its definition depends on a parameter $\beta\in\mathbb R$. Tilt-stability conditions, also called $\nu_{\alpha,\beta}$-stability conditions, depend also on a positive real parameter $\alpha$. We obtain a locally finite wall-and-chamber structure in the upper $(\beta,\alpha)$-half-plane, such that the set of $\nu_{\alpha,\beta}$-semistable objects of $\mathrm{Coh}^\beta(X)$ does not change when $(\beta,\alpha)$ varies within a chamber. 
	It turns out that Gieseker semistable sheaves $E$ of positive rank belong to the category $\mathrm{Coh}^\beta(X)$ and are $\nu_{\alpha,\beta}$-semistable for $\beta<\mu(E),\alpha\gg 0$ (see Proposition \ref{2-stability}). Any wall $W$, at which the stability of an object $E$ changes, is induced by a short exact sequence
	$$0\to F\to E\to G\to 0,$$
	in $\mathrm{Coh}^\beta(X)$, such that $\nu_{\alpha,\beta}(E)=\nu_{\alpha,\beta}(F)$ for $(\beta,\alpha)\in W$. In most cases from Theorem \ref{intro} the short exact sequences, in which $E$ is included, are obtained from the classification of walls, at which $E$ can be destabilized. In the general case when $c_2$ is big enough, there is only one wall for $E$, and generalized Bogomolov--Gieseker inequality (Proposition \ref{BMT}) implies that there are no tilt-semistable objects below this wall. In several remaining special cases with low $c_2$ one also needs to use the full strong exceptional collection in $\mathrm D^b(X)$ to describe semistable sheaves.
	
	The structure of the paper is as follows. In Section \ref{derived category} we recall some of the notions and results from the theory of stability conditions on derived categories that we will use, as well some facts about the variety $X$. Also, in Lemmas \ref{subobject} and \ref{quotient} we provide a justification and a slight generalization of a technique used by Schmidt in \cite{Sch23}. In Section \ref{r=1} we give a characterization of ideal sheaves of lines and conics on $X$ as tilt-semistable objects with maximal third Chern character $\mathrm{ch}_3$. Then in Section \ref{rank 0} we describe rank zero objects with $c_1=1$ and maximal $\mathrm{ch}_3$. In Section \ref{rank 2} we prove Theorem \ref{main} on rank 2 tilt-semistable objects with maximal $\mathrm{ch}_3$ and deduce our main Theorem \ref{intro} as a corollary. Then in Section \ref{special} we consider the question of existence of semistable sheaves with maximal $c_3$ from Theorem \ref{intro} in some special cases. In Section \ref{torsion} we prove Theorem \ref{torsion thm} on moduli spaces of torsion sheaves. In Sections \ref{section -1} and \ref{section 0} we prove Theorems \ref{c=-1} and \ref{c=0} on moduli spaces of rank two sheaves on $X$ with maximal $c_3$ in general cases and deduce that bounds from Theorem \ref{intro} are exact. Finally, in Section \ref{conjecture} we obtain a proof of a conjecture that we posed in \cite{Vass}.
	
	\textbf{Acknowledgments.} The author thanks God for the strength to complete this work. Also the author thanks Alexander S. Tikhomirov for encouragement and attention to the work, and for his help with proving Proposition \ref{conj proof}.
	
	\textbf{Notation.}
	\begin{center}
		\begin{tabular}{ r l }
			$\mathbb C$ & base field\ \\
			$X=X_5$ & smooth section of the Grassmannian $\mathrm{Gr}(2,5)$ embedded by\\
			& Pl\"ucker into the space $\mathbb P^9$, by a linear subspace $\mathbb P^6$\\
			$\mathcal U$ & restriction to $X$ of tautological subbundle on $\mathrm{Gr}(2,5)$\\
			$\mathcal Q$ & restriction to $X$ of tautological quotient bundle on $\mathrm{Gr}(2,5)$\\
			$H$ & positive generator of the Picard group $\mathrm{Pic}\ X\simeq\Z$ --\\ &
			class of hyperplane section of $X\hookrightarrow
			\mathbb P^6$,\\
			$\mathrm{Coh}(X)$ & category of coherent sheaves on $X$\\
			$\mathrm D^b(X)$ & bounded derived category of coherent sheaves on
			$X$\\
			$\mathcal H^{i}(E)$ & $i$th cohomology sheaf of the complex $E\in\mathrm
			D^b(X)$ \\
			$H^{i}(E)$ & $i$th hypercohomology group $\mathbf R^i\Gamma(E)$ of $E\in\mathrm
			D^b(X)$\\
			$\mathrm{ch}(E)$ & Chern character of the object $E \in\mathrm D^b(X)$ \\
			$\mathrm{ch}_{\le m}(E)$ & $\mathrm{ch}_0(E)+ \ldots+ \mathrm{ch}_m(E)$ \\
			$\mathbb D(E)$ & derived dual $\mathbf{R}\mathcal{H}om(E,\mathcal O_X)[1]$ of $E\in\mathrm
			D^b(X)$ \\
			$\mathrm{hom}(E,F)$ & $\mathrm{dim} \Hom(E,F)$ for $E,F\in\mathrm D^b(X)$ \\
			$\mathrm{ext}^i(E,F)$ & $\mathrm{dim} \Hom(E,F[i])$ for $E,F\in\mathrm D^b(X),i\in\mathbb Z$ \\
			$\mathcal M_X(v)$ & moduli scheme of Gieseker semistable sheaves on $X$ with Chern character $v$
		\end{tabular}
	\end{center}
	
	\section{General Notions}
	\label{derived category}
	For convenience of the reader we include here some of the general notions and results, which we already cited in our earlier articles.
	
	Recall that $X=X_5$ is the unique smooth Fano threefold of Picard number one, index 2 and degree 5. The variety $X$ can be constructed as a codimension 3 linear section of Grassmannian $\mathrm{Gr}(2,5)$ embedded by Pl\"ucker in $\mathbb P^9$. Cohomology groups of $X$ are 
	$$H^*(X,\mathbb Z)=H^0(X,\mathbb Z)\oplus H^2(X,\mathbb Z)\oplus H^4(X,\mathbb Z)\oplus H^6(X,\mathbb Z)=$$
	$$=\Z[X]\oplus\Z[H]\oplus\Z[L]\oplus\Z[P],$$
	where $H$ denotes the class of a hyperplane section of $X$, $L$ is the class of a projective line on $X$ and $P$ is the class of a point. We have $H^2=5L,H^3=5P$.
	
	The \textit{slope} of a coherent sheaf $E\in\Coh(X)$ is defined as $\mu(E)=\frac{
		H^2\cdot\mathrm{ch}_1(E)}{H^3\cdot\mathrm{ch}_0(E)}$, division by 0 is understood as giving $+\infty$. A coherent sheaf $E$ is called
	$\mu$-\textit{stable} (resp. $\mu$\textit{-semistable}) or \textit{slope stable} (resp. \textit{slope semistable}) if for any proper
	subsheaf $0\ne F\hookrightarrow E$ the inequality $\mu(F)<\mu(E/F)$ (resp. $\mu(F)\le\mu(E/F)$) holds.
	
	Following \cite{BMT} and \cite{Sch18}, we obtain a new heart of a bounded t-structure on $\mathrm{D}^b(X)$ using the process of tilting. 
	For an arbitrary real number $\beta$ we have the following torsion pair in the category of coherent sheaves:
	$$\mathcal T_\beta=\{E\in\Coh(X)\colon\mathrm{any}\; \mathrm{quotient}\;E\to G\;\mathrm{satisfies}\;\mu(G)>\beta\},$$
	$$\mathcal F_\beta=\{E\in\Coh(X)\colon\mathrm{any}\;\mathrm{subsheaf}\;0\neq F\to E\;\mathrm{satisfies}\;\mu(F)\le\beta\}.$$
	
	Slope stability has the Harder-Narasimhan property, meaning that for any coherent sheaf $E$ there is a filtration
	$$0=E_0\subset E_1\subset\ldots\subset E_n=E,$$
	such that each $F_i:=E_i/E_{i-1}$ is slope semistable and 
	$\mu(F_i)>\mu(F_{i+1})$ for all $i$. 
	Denote $\mu_{\min}(E):=\mu(F_n),\mu_{\max}(E):=\mu(F_1)$. 
	The category $\mathcal T_\beta$ consists precisely of those $E\in\Coh(X)$ for which $\mu_{\min}(E)>\beta$. Respectively, $\mathcal F_\beta$ consists of those $E\in\Coh(X)$ for which $\mu_{\max}(E)\le\beta$ (\cite{BMT}).
	
	The new heart $\Coh^\beta(X)$ is defined as the extension closure $\langle\mathcal F_\beta[1],\mathcal T_\beta\rangle$. Objects of $\Coh^\beta(X)$ can be identified with two-term complexes $E^{-1}\overset{d}{\to} E^0$, such that $\ker d\in\mathcal F_\beta,\coker d\in\mathcal T_\beta$ (\cite[Section 2.3]{BMT}).
	
	Recall the notion of tilt-stability. The twisted Chern character is defined as $\mathrm{ch}^\beta=e^{-\beta H}\cdot\mathrm{ch}$. Explicitly, for $E\in \mathrm D^b(X)$ with $\mathrm{ch}(E)=r+cH+dH^2+eH^3$ we have
	$$\mathrm{ch}_0^\beta(E)=r,\; \mathrm{ch}_1^\beta(E)=(c-\beta r)H,\; \mathrm{ch}_2^\beta(E)=(d-\beta c+\frac{\beta^2}{2} r)H^2,$$
	$$\mathrm{ch}_3^\beta(E)=(e-\beta d+\frac{\beta^2}{2}c-\frac{\beta^3}{6}r)H^3.$$
	
	Let $\alpha>0$ be a positive real number. Following Schmidt (\cite{Sch14,Sch19}), for an object $E\in\Coh^\beta(X)$ we define a central charge function
	$$Z^{\mathrm{tilt}}_{\alpha,\beta}(E)=Z^{\mathrm{tilt}}_{\alpha,\beta}(\mathrm{ch}_0(E),\mathrm{ch}_1(E),\mathrm{ch}_2(E))=-H\cdot\mathrm{ch}_2^\beta(E)+\frac{\alpha^2}{2}H^3\cdot\mathrm{ch}_0^\beta(E)+i(H^2\cdot\mathrm{ch}_1^\beta(E)),$$
	which is a $\mathbb Z$-linear map $Z^{\mathrm{tilt}}_{\alpha,\beta}:K(\mathrm D^b(X))=K(\Coh^\beta(X))\to\mathbb C$.
	
	The tilt-slope of $E$ is defined by
	$$\nu_{\alpha,\beta}(E)=-\frac{\Re(Z^{\mathrm{tilt}}_{\alpha,\beta}(E))}{\Im(Z^{\mathrm{tilt}}_{\alpha,\beta}(E))},$$
	where division by 0 is interpreted as giving $+\infty$. The object $E$ is called \textit{tilt-(semi)stable} (or $\nu_{\alpha,\beta}$-\textit{(semi)stable}) if for any non-trivial proper subobject $F\hookrightarrow E$ in $\mathrm{Coh}^\beta(X)$ the inequality $\nu_{\alpha,\beta}(F)<(\le)\,\nu_{\alpha,\beta}(E/F)$ holds.
	
	The \textit{Bogomolov--Gieseker inequality} holds for tilt-stability (see \cite[Theorem 2.1]{MS18}):
	\begin{proposition}\label{BG} For any $\nu_{\alpha,\beta}$-semistable object $E\in\Coh^\beta(X)$ or a slope semistable sheaf $E\in\mathrm{Coh}(X)$ we have
		\begin{multline}\overline{\Delta}_H(E):=
			(H^2\cdot\mathrm{ch}_1(E))^2-2(H^3\cdot\mathrm{ch}_0(E))(H\cdot\mathrm{ch}_2(E))=\\=(H^2\cdot\mathrm{ch}_1^\beta(E))^2-2(H^3\cdot\mathrm{ch}_0^\beta(E))(H\cdot\mathrm{ch}_2^\beta(E))\ge 0.
		\end{multline}
	\end{proposition}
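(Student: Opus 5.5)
The statement is Proposition \ref{BG}: the Bogomolov--Gieseker inequality for slope semistable sheaves and for $\nu_{\alpha,\beta}$-semistable objects. The plan is to treat the two cases separately. The sheaf case is classical, and the tilt case is reduced to it by a deformation argument in $\alpha$.

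\emph{Invariance under twisting.} First check directly from the formulas for $\mathrm{ch}^\beta$ that $\overline{\Delta}_H$ does not change when $\mathrm{ch}$ is replaced by $\mathrm{ch}^\beta$. Write $\mathrm{ch}(E)=r+cH+dH^2+\dots$ and use $H^3=5$. Then
$$(5(c-\beta r))^2-2\cdot 5r\cdot 5\Bigl(d-\beta c+\tfrac{\beta^2}{2}r\Bigr)=25c^2-50rd .$$
The $\beta$-terms cancel, which proves the displayed equality.

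\emph{Slope semistable sheaves.} If $E$ is torsion free and $\mu$-semistable, the inequality $\overline{\Delta}_H(E)\ge 0$ follows from the classical Bogomolov inequality $(c_1^2-2c_0\,\mathrm{ch}_2)\cdot H\ge 0$ combined with the Hodge index theorem, which gives $(H^2c_1)^2\ge H^3\,(H c_1^2)$. If $E$ is torsion, it has $\mathrm{ch}_0=0$, so $\overline{\Delta}_H(E)=(H^2\mathrm{ch}_1)^2\ge 0$ trivially.

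\emph{Tilt-semistable objects.} For $E\in\Coh^\beta(X)$ that is $\nu_{\alpha,\beta}$-semistable, I would follow the argument of Bayer--Macr\`i--Toda.
\begin{enumerate}
\item Use the local finiteness of walls for fixed $\mathrm{ch}_{\le 2}$ and the fact that walls are nested semicircles. Increase $\alpha$ while keeping $E$ semistable, until either $\alpha\gg0$ is reached or a wall is hit.
\item At a wall, $E$ has Jordan--H\"older factors $E_i$ whose $Z^{\mathrm{tilt}}_{\alpha,\beta}$ lie on a common ray. Since $\overline{\Delta}_H$ is a quadratic form of signature $(1,2)$ on $(\mathrm{ch}_0,H^2\mathrm{ch}_1,H\mathrm{ch}_2)$, the standard lemma for such forms gives $\overline{\Delta}_H(E)\ge\sum\overline{\Delta}_H(E_i)$ when the factors have aligned central charges. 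Each factor has strictly smaller $\overline{\Delta}_H$ or lies on a smaller wall, and this set of values is discrete because of the integrality of Chern classes. So an induction on $\overline{\Delta}_H$ reduces to objects that are semistable for all $\alpha\gg0$.
\item Semistability for $\alpha\gg0$ forces one of the following: $\mathcal H^{-1}(E)=0$ and $E$ is a slope semistable torsion-free sheaf, or a sheaf supported in dimension $\le 1$; or $\mathcal H^{-1}(E)$ is slope semistable and $\mathcal H^0(E)$ is supported in dimension $0$. This is the large volume limit argument of \cite{BMT}. In each case the sheaf case above applies to give $\overline{\Delta}_H\ge0$; for the zero-dimensional part, $\mathrm{ch}_{\le 2}$ equals that of the shifted torsion-free sheaf, adjusted in a direction that only increases $\overline{\Delta}_H$.
\end{enumerate}

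\emph{Main obstacle.} The hard step is the large volume limit identification in (3), together with the well-foundedness of the induction. Rather than redoing these, I would cite \cite[Theorem 2.1]{MS18} or \cite{BMT}, which handle them for any smooth projective threefold. None of the steps uses anything specific to $X$.
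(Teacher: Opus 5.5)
The paper gives no argument for this statement beyond citing \cite[Theorem 2.1]{MS18}. Your check that $\overline{\Delta}_H$ is unchanged under $\mathrm{ch}\mapsto\mathrm{ch}^\beta$ is correct, and so is your treatment of slope semistable sheaves (Bogomolov plus Hodge index, trivial when $\mathrm{ch}_0=0$). So if you really fall back on the citation, the proof is complete and agrees with the paper's.

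The outline you give for the tilt case, however, contains a step that would fail if you tried to carry it out: the induction on $\overline{\Delta}_H$. Before the statement is proved, $\overline{\Delta}_H$ is not known to be bounded below, so discreteness of its values does not make the induction well-founded. Moreover, the strict decrease $\overline{\Delta}_H(E_i)<\overline{\Delta}_H(E)$ at a wall (Proposition~\ref{actual}~(2)) is itself deduced from the inequality for the factors. The standard argument inducts on $H^2\cdot\mathrm{ch}_1^\beta(E)$ instead. This quantity is non-negative on $\mathrm{Coh}^\beta(X)$ and takes values in a discrete set when $\beta\in\mathbb{Q}$. It also strictly drops for the Jordan--H\"older factors at a point of a non-vertical wall on the ray $\beta=\mathrm{const}$. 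The reason is that these factors lie in $\mathrm{Coh}^\beta(X)$ and have the same finite tilt-slope as $E$, hence positive $H^2\cdot\mathrm{ch}_1^\beta$, and these values add up to that of $E$. Irrational $\beta$ needs a separate argument. For the same reason, step (1) cannot simply invoke the local finiteness of walls that the paper quotes from \cite[Proposition B.5]{BMS16}. That result is derived from the support property, i.e.\ from Proposition~\ref{BG} itself.

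Two smaller points. First, the form $\overline{\Delta}_H$ has signature $(2,1)$, not $(1,2)$, so $\{\overline{\Delta}_H\ge 0\}$ is not a double cone. The superadditivity lemma works because $\overline{\Delta}_H$ is negative definite on $\ker Z^{\mathrm{tilt}}_{\alpha,\beta}$, where it equals $-\alpha^2(H^3\cdot\mathrm{ch}_0^\beta)^2$. The lemma also requires $\overline{\Delta}_H(E_i)\ge 0$ for the factors as input, which is exactly what the induction must provide.

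Second, in step (3), Lemma~\ref{minimal} allows $\mathcal H^0(E)$ of dimension $\le 1$, not just $0$, when $\mathcal H^{-1}(E)\neq 0$. A zero-dimensional $\mathcal H^0(E)$ changes nothing, while a one-dimensional one gives
\[
\overline{\Delta}_H(E)=\overline{\Delta}_H(\mathcal H^{-1}(E))+2\,(H^3\cdot\mathrm{ch}_0(\mathcal H^{-1}(E)))\,(H\cdot\mathrm{ch}_2(\mathcal H^0(E)))\ge\overline{\Delta}_H(\mathcal H^{-1}(E)).
\]
This is the adjustment you describe.
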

	
	In order to construct Bridgeland stability conditions on $X$ we make another tilt (as in \cite{BMT}). Let
	$$\mathcal T'_{\alpha,\beta}=\{E\in\Coh^\beta(X)\colon\text{any quotient}\, E\onto G\;\mathrm{satisfies}\;\nu_{\alpha,\beta}(G)>0\},$$
	$$\mathcal F'_{\alpha,\beta}=\{E\in\Coh^\beta(X)\colon\text{any non-trivial subobject}\, F\into E\;\mathrm{satisfies}\;\nu_{\alpha,\beta}(F)\le 0\},$$
	and set $\mathcal A^{\alpha,\beta}(X)=\langle \mathcal F'_{\alpha,\beta}[1],\mathcal T'_{\alpha,\beta}\rangle$. For any $s\in\mathbb R$ define
	$$Z_{\alpha,\beta,s}:=-\mathrm{ch}_3^\beta+s\alpha^2H^2\cdot\mathrm{ch}_1^\beta+i (\alpha H\cdot\mathrm{ch}_2^\beta-\frac{\alpha^3H^3}{2}\cdot\mathrm{ch}_0^\beta),$$
	$$\lambda_{\alpha,\beta,s}:=-\frac{\Re(Z_{\alpha,\beta,s})}{\Im(Z_{\alpha,\beta,s})},$$
	division by 0 is again interpreted as giving $+\infty$.
	
	An object $E \in \mathcal A^{\alpha, \beta}(X)$ is called \textit{$\lambda_{\alpha, \beta, s}$-(semi)stable}, if for any non-trivial subobject $F \hookrightarrow E$ we have $\lambda_{\alpha, \beta, s}(F) < (\le)\  \lambda_{\alpha, \beta, s}(E)$.	
	
	It is known that for $s>\frac 16$ $(\mathcal A^{\alpha,\beta}(X),Z_{\alpha,\beta,s})$ is a Bridgeland stability condition \cite[Corollary 0.2]{Li}. This result follows from the fact that the following \textit{generalized Bogomolov-Gieseker inequality} holds for $X$ (\cite[Theorem 0.1]{Li}, \cite[Theorem 5.4]{BMS16}):
	\begin{proposition}\label{BMT} Assume that $E$ is $\nu_{\alpha,\beta}$-semistable. Then
		$$Q_{\alpha,\beta}(E)=\alpha^2\overline{\Delta}_H(E)+4(H\cdot\mathrm{ch}_2^\beta)^2-6(H^2\cdot\mathrm{ch}_1^\beta(E))\mathrm{ch}_3^\beta(E)\ge 0.$$ \end{proposition}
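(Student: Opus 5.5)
The plan is to deduce the inequality from the original Bayer--Macr\`i--Toda form of the conjecture, which Li proved for Fano threefolds of Picard number one, by the reduction in \cite[Theorem 5.4]{BMS16}. Concretely, \cite[Theorem 0.1]{Li} gives, for $X$, the following statement. For every $\nu_{\alpha,\beta}$-semistable object $E$ with $\nu_{\alpha,\beta}(E)=0$ we have
$$\mathrm{ch}_3^\beta(E)\le\frac{\alpha^2}{6}H^2\cdot\mathrm{ch}_1^\beta(E).$$
This is the input we take as known. The task is then to pass from this ``$\nu=0$'' inequality to the quadratic inequality $Q_{\alpha,\beta}(E)\ge 0$ at an arbitrary point $(\beta,\alpha)$, with no condition on the tilt-slope.

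\textbf{Step 1 (reduction to $\nu=0$).} Fix a $\nu_{\alpha_0,\beta_0}$-semistable $E$. The numerical walls for $\mathrm{ch}_{\le 2}(E)$ form a nested family of semicircles, together with a vertical line when $\mathrm{ch}_0\neq0$. Let $W$ be the numerical wall through $(\beta_0,\alpha_0)$; along $W$ the slope $\nu_{\alpha,\beta}(E)$ stays equal to the slope of any destabilizing subobject. A direct computation with the explicit formulas for $\mathrm{ch}^\beta$ shows two things:
\begin{itemize}
\item $W$ meets the curve $\{\nu_{\alpha,\beta}(E)=0\}$, say at $(\beta_1,\alpha_1)$;
\item the sign of $Q_{\alpha,\beta}(E)$ is constant along $W$, because $Q_{\alpha,\beta}(E)\ge0$ is equivalent to the wall $W$ lying ``inside'' a fixed circle determined by $\mathrm{ch}(E)$.
\end{itemize}
At a point with $\nu=0$, Li's inequality is exactly $Q_{\alpha_1,\beta_1}(E)\ge0$, since there $H\cdot\mathrm{ch}_2^\beta=\frac{\alpha^2}{2}H^3\mathrm{ch}_0$. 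Hence, if $E$ remains semistable along $W$ up to $(\beta_1,\alpha_1)$, we are done.

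\textbf{Step 2 (induction on $\overline{\Delta}_H$).} Otherwise $E$ is destabilized at some actual wall crossing $W$ between the two points. There $E$ is strictly semistable, with Jordan--H\"older factors $E_i$ having the same $\nu$. By Proposition \ref{BG} and the standard fact that $\overline{\Delta}_H$ strictly drops for such factors (\cite[Lemma A.6]{BMS16}), we may argue by induction on $\overline{\Delta}_H$, which takes values in a discrete set. This gives $Q_{\alpha,\beta}(E_i)\ge0$ at that point. The remaining piece is a linear-algebra lemma: on the subspace of classes $v$ with $Z^{\mathrm{tilt}}_{\alpha,\beta}(v)\in\mathbb R_{>0}\cdot Z^{\mathrm{tilt}}_{\alpha,\beta}(E)$, the quadratic form $Q_{\alpha,\beta}$ has signature making the cone $\{Q\ge0\}$ convex. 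Consequently $Q_{\alpha,\beta}(\sum v_i)\ge0$ whenever each $Q_{\alpha,\beta}(v_i)\ge0$, and therefore $Q_{\alpha,\beta}(E)\ge0$ at the wall point. By the constancy of sign from Step 1, the inequality holds at $(\beta_0,\alpha_0)$. The base case of the induction is $\overline{\Delta}_H=0$, which is handled directly as in \cite{BMS16}.

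\textbf{Main obstacle.} The delicate part is the signature/convexity lemma in Step 2, together with the bookkeeping showing that the wall segment from $(\beta_0,\alpha_0)$ actually reaches the $\nu=0$ curve inside the region where $E$ lies in the heart. Both are handled in \cite[Section 5 and Appendix A]{BMS16}. Since $X$ has Picard number one, that argument applies verbatim once Li's inequality is available. In the paper we therefore cite \cite[Theorem 0.1]{Li} combined with \cite[Theorem 5.4]{BMS16}.
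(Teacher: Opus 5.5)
Your proposal is correct and matches the paper, which gives no argument beyond citing \cite[Theorem 0.1]{Li} together with \cite[Theorem 5.4]{BMS16}; your Step~1 (Li's inequality at the top point of the numerical wall $W$ through $(\beta_0,\alpha_0)$, where it is equivalent to $Q_{\alpha,\beta}(E)\ge 0$, plus constancy of the sign of $Q_{\alpha,\beta}(E)$ along $W$) is already the whole reduction. Step~2 is vacuous: distinct numerical walls for $\mathrm{ch}_{\le 2}(E)$ never meet, so no actual wall crosses $W$, and semistability at $(\beta_0,\alpha_0)$ persists along all of $W$. Also, $Q_{\alpha,\beta}(E)<0$ exactly on the semidisk bounded by $W_Q(E)$, so $Q\ge 0$ means $W$ lies outside that semidisk, not inside.
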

	
	Fix $v\in\Lambda=\mathbb Z^2\oplus\frac{1}{10}\mathbb Z$. There is a locally finite wall-and-chamber structure in the upper half-plane $\mathbb H:=\{(\beta,\alpha)\in\mathbb R^2\ |\ \alpha>0\}$ such that for an object $E\in\mathrm{Coh}^\beta(X)$ with $\mathrm{ch}_{\le 2}(E)=v$ tilt-stability of $E$ does not change when $(\beta,\alpha)$ vary within a chamber (\cite[Proposition B.5]{BMS16}). A \textit{numerical wall} with respect to $v\in\Lambda$ is a non-trivial proper subset $W$ of $\mathbb H$ given by an equation of the form $\nu_{\alpha,\beta}(v)=\nu_{\alpha,\beta}(w)$ for another element $w\in\Lambda$. We denote this numerical wall by $W(v,w)$. A numerical wall $W$ is called an \textit{actual wall} (or simply a \textit{wall}) if the set of semistable objects with class $v$ changes at $W$. The structure of walls in tilt-stability is well understood. We have the following Proposition.
	
	\begin{proposition}[Numerical properties of walls]\label{numerical} Let $v=(v_0,v_1,v_2)\in\Lambda$ be a fixed class with $\overline{\Delta}_H(v)\ge 0$. All numerical walls in the following statements are with respect to $v$.
		\begin{enumerate} 
			\item The numerical wall $W(v,w)$ is given by
			$$x(\alpha^2+\beta^2)+y\beta+z=0,$$
			where $x=v_0w_1-v_1w_0,y=2(v_2w_0-v_0w_2),z=2(v_1w_2-v_2w_1)$. In particular, numerical walls are either semicircles with center on the $\beta$-axis or rays parallel to the $\alpha$-axis. 
			\item If $v_0\neq 0$, there is exactly one numerical vertical wall given by $\beta = v_1/v_0$. If $v_0 = 0$, there are no vertical walls.
			\item The curve $\nu_{\alpha, \beta}(v) = 0$ intersects all semicircular walls at their highest point.
			\item If $v_0\neq 0$, then the curve $\nu_{\alpha, \beta}(v) = 0$ is a hyperbola, which may be degenerate if $\overline{\Delta}_H(v)=0$. Its asymptotes are the lines $\beta-\alpha=v_1/v_0$ and $\beta+\alpha=v_1/v_0$. The semicircular numerical walls are nested along one of the two branches of the hyperbola. 
			\item If there exist semicircular actual walls, then there is a largest such wall.
			\item If $\overline{\Delta}_H(v)>0$, then the set of $(\beta,\alpha)\in\mathbb H$ for which $Q_{\alpha,\beta}(v_0,v_1H,v_2H^2,v_3H^3)=0$ is the semicircular numerical wall $W((v_0,v_1,v_2),(5v_1,10v_2,15v_3))$. The locus where $Q_{\alpha,\beta}(v_0,v_1H,v_2H^2,v_3H^3)<0$ is the semidisk bounded by\linebreak $Q_{\alpha,\beta}(v_0,v_1H,v_2H^2,v_3H^3)=0$ and $\beta$-axis.
		\end{enumerate}
	\end{proposition}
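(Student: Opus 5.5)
This is a list of standard computations, which I would verify item by item from the definition $\nu_{\alpha,\beta}=\frac{H\cdot\mathrm{ch}^\beta_2-\frac{\alpha^2}{2}H^3\mathrm{ch}^\beta_0}{H^2\cdot\mathrm{ch}^\beta_1}$. I would first rescale. Write $\mathrm{ch}_{\le 2}(v)$ as $(v_0,v_1,v_2)$ with $H^3=5$ factored out, so that
\[
Z^{\mathrm{tilt}}_{\alpha,\beta}\;\propto\;-\Bigl(v_2-\beta v_1+\tfrac{\beta^2}{2}v_0\Bigr)+\tfrac{\alpha^2}{2}v_0+i\,(v_1-\beta v_0).
\]

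For item (1), the wall $W(v,w)$ is the locus where $Z(v)$ and $Z(w)$ are $\mathbb R$-linearly dependent, i.e.\ $\Re Z(v)\,\Im Z(w)-\Re Z(w)\,\Im Z(v)=0$. I would expand this determinant. The $\beta^3$ terms cancel, and the $\beta^2$ and $\alpha^2$ terms combine into the same coefficient, because $\alpha^2$ and $\beta^2$ enter only through $\tfrac12(\alpha^2-\beta^2)v_0+\beta v_1-v_2$ multiplied by $(w_1-\beta w_0)$. After multiplying by $2$, collecting coefficients gives $x(\alpha^2+\beta^2)+y\beta+z=0$ with the stated $x,y,z$. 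If $x\neq 0$ this is a circle centred on the $\beta$-axis. If $x=0$ it is a vertical line, unless it is empty or everything, which the nontriviality hypothesis excludes. This gives (1) and (2). In (2), when $v_0\neq0$ and $x=0$, we have $w$ proportional to $v$ in the first two coordinates, and the equation reduces to $\beta=v_1/v_0$. When $v_0=0$, $x=-v_1w_0$ vanishes only if $w_0=0$ (or $v_1=0$, which is degenerate), and then $y=0$, so the equation is constant and there is no wall.

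For item (3), the curve $\nu(v)=0$ is $\Re Z(v)=0$. On that curve the wall equation becomes $\Im Z(v)\Re Z(w)=0$, so there both $\nu(v)=\nu(w)=0$. The highest point of the circle $x(\alpha^2+\beta^2)+y\beta+z=0$ is at $\beta=-y/(2x)$. I would check directly that the relation $\Re Z(v)=0$, i.e.\ $v_0(\alpha^2-\beta^2)+2\beta v_1-2v_2=0$, picks out this point. Indeed, the semicircular walls with respect to $v$ are, by (1), a pencil-type family $\{x(\alpha^2+\beta^2)+y\beta+z=0\}$ with $(x,y,z)$ orthogonal to a fixed vector determined by $v$, namely $2v_2x+v_1y+v_0z=0$. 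Substituting and using the tangency condition $\partial_\beta=0$ on the circle then gives the claim.

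For item (4), I would complete the square: $v_0(\alpha^2-(\beta-v_1/v_0)^2)=2v_2-v_1^2/v_0=-\overline\Delta/(5v_0)$ up to normalisation. This is a hyperbola with the stated asymptotes, degenerate exactly when $\overline\Delta=0$. Nestedness follows because two distinct walls for $v$ cannot intersect: at an intersection point $Z(v),Z(w),Z(w')$ would all be collinear, forcing the walls to coincide. Combined with (3), the walls are nested along one branch, the one where $\Im Z>0$ is compatible.

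Item (5) is the only non-formal point, and I expect it to be the main obstacle. I would use local finiteness of walls (\cite[Proposition B.5]{BMS16}) together with the Bogomolov inequality, Proposition \ref{BG}. The destabilizing subobjects $F$ have $0\le\overline\Delta(F)<\overline\Delta(v)$ and bounded $\mathrm{ch}^\beta_1$. This gives finitely many classes for walls meeting any region bounded away from $\alpha=0$, and all actual semicircular walls lie inside a bounded region, so a largest one exists.

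For item (6), I would substitute into $Q_{\alpha,\beta}$ and compare with (1) applied to $w=(5v_1,10v_2,15v_3)$. Expanding $Q$ and using $\mathrm{ch}^\beta$ formulas, $Q$ is a polynomial in $\alpha^2$, with the $\beta^3$ and $\beta^4$ terms cancelling. I expect it to equal a positive multiple of $x(\alpha^2+\beta^2)+y\beta+z$ for that $w$, with $x=5(v_0v_2\cdot2-v_1^2)\cdot(\ldots)$ proportional to $-\overline\Delta<0$. Hence the locus $Q<0$ is the interior of the semidisk. This is a direct but tedious calculation.
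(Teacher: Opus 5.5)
Your verification of items (1)--(4) and (6) is the standard direct computation, and your formulas are correct. The paper itself gives no proof; it quotes the structure theorem for walls from the literature, where these items are checked exactly this way. In your normalisation the cross-multiplied determinant equals $\tfrac12[x(\alpha^2+\beta^2)+y\beta+z]$. The relation $2v_2x+v_1y+v_0z=0$ is precisely what gives $\Re Z_{\alpha,\beta}(v)=0$ at the top point $\beta=-y/(2x)$, $\alpha^2-\beta^2=-z/x$.

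Two small repairs are needed. In (6) your sign is reversed: for $w=(5v_1,10v_2,15v_3)$ one gets $Q_{\alpha,\beta}=-5\,[x(\alpha^2+\beta^2)+y\beta+z]$ with $x=5(2v_0v_2-v_1^2)<0$. The fact that $\{Q<0\}$ is the inside of the semidisk comes from the $\alpha^2$-coefficient $\overline{\Delta}_H(v)>0$ of $Q$. A positive multiple of an expression with $x<0$ would instead give the outside. In (4), disjointness of distinct walls does not by itself give nesting, since disjoint semicircles could lie side by side. What rules this out is that the tops lie on one branch of $\alpha^2=(\beta-v_1/v_0)^2-k$, with $k=\overline{\Delta}_H(v)/(25v_0^2)$. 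On that branch $|d\alpha/d\beta|\ge 1$, so the radius grows at least as fast as the centre moves away from the vertical wall.

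The genuine gap is in (5). The sentence ``all actual semicircular walls lie inside a bounded region'' is the entire content of that item, and you give no argument for it. Local finiteness cannot supply it on its own: for $v_0=0$ all walls are concentric about $\beta=v_2/v_1$ by (3), and large ones leave every compact subset of $\mathbb H$. Nor do your ingredients give finitely many classes, since $0\le\overline{\Delta}_H(F)<\overline{\Delta}_H(v)$ together with a bound on $\mathrm{ch}_1^\beta(F)$ does not bound $\mathrm{ch}_0(F)$.

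The missing idea is a rank bound for destabilizers on large walls. Take the top point $(s,\rho)$ of a wall induced by $0\to F\to E\to E/F\to 0$. There $\nu_{\rho,s}(F)=0$ by (3), so $H\cdot\mathrm{ch}_2^s(F)=\frac{\rho^2}{2}H^3\cdot\mathrm{ch}_0(F)$. The Bogomolov inequality for $F$ then becomes $\rho\, H^3|\mathrm{ch}_0(F)|\le H^2\cdot\mathrm{ch}_1^s(F)<H^2\cdot\mathrm{ch}_1^s(v)=5(v_1-sv_0)$, and the same holds for $E/F$ (compare Proposition \ref{actual}~(1)).
\begin{itemize}
\item If $v_0=0$, the right-hand side is $5v_1$. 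So for $\rho\ge v_1$ both ranks vanish and the wall is not semicircular.
\item If $v_0\neq 0$, substitute $(s-v_1/v_0)^2=\rho^2+k$. This gives $|\mathrm{ch}_0(F)|,|\mathrm{ch}_0(E/F)|\le|v_0|$ once $\rho^2>\overline{\Delta}_H(v)/(25(2|v_0|+1))$. Next apply Proposition \ref{actual}~(3) at a point of the wall with $\beta\in[v_1/v_0-\sqrt{k}-1,v_1/v_0)$; every large wall on the left side has such points, and symmetrically on the right. This leaves finitely many $\mathrm{ch}_1(F)$. Finally, $0\le\overline{\Delta}_H(F),\overline{\Delta}_H(E/F)<\overline{\Delta}_H(v)$ leaves finitely many $\mathrm{ch}_2(F)\in\frac{1}{10}\mathbb Z H^2$.
\end{itemize}
Hence only finitely many numerical walls of large radius can be actual, and the actual walls on each side are bounded. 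Local finiteness at the top point of the supremal wall then shows that the supremum is attained.
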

	
	If an object $E\in\mathrm{Coh}^\beta(X),$ $\mathrm{ch}_{\le 2}(E)=v$ is destabilized at the wall $W$ (that is, $W$ belongs to the boundary of the set of those $(\beta,\alpha)$, for which $E$ is $\nu_{\alpha,\beta}$-semistable), 
	then there exists a short exact sequence $0\to F\to E\to G\to 0$ in $\mathrm{Coh}^\beta(X)$ for $(\beta,\alpha)\in W$ such that $$W=\{(\beta,\alpha)\in\mathbb H\ |\ \nu_{\alpha,\beta}(E)=\nu_{\alpha,\beta}(F)\}$$ (see \cite[Proposition B.5]{BMS16}). In this situation we also say that $E$ \textit{is destabilized by the exact triple} $0\to F\to E\to G\to 0$. We denote a wall given by such an equation by $W(E,F)$. Note that the object $E$, and hence objects $F$ and $G$, are $\nu_{\alpha,\beta}$-semistable for $(\beta,\alpha)\in W(E,F)$. 
	
	If $W(E,F)$ is semicircular, we denote its radius by $\rho(E,F)$ 
	and the $\beta$-coordinate of its center by $s(E,F)$. The numerical wall with equation $Q_{\alpha,\beta}(E)=0$ will be denoted by $W_Q(E)$. If this wall is semicircular, then its radius and the $\beta$-coordinate of its center will be denoted by $\rho_Q(E)$ and $s_Q(E)$, respectively. 
	We will also use the properties of walls described in the following Proposition (see \cite[Proposition 2.4]{Vass}).
	
	\begin{proposition}[Properties of actual walls]\label{actual}Let $0\to F\to E\to G\to 0$ be an exact sequence in $\mathrm{Coh}^\beta(X)$ defining a wall $W$.
		\begin{enumerate}
			\item If $W$ is semicircular and $\mathrm{ch}_0(F)>\mathrm{ch}_0(E)\ge 0$, then
			$$\rho(E,F)^2\le\frac{\overline{\Delta}_H(E)}{4H^3\cdot\mathrm{ch}_0(F)(H^3\cdot(\mathrm{ch}_0(F)-\mathrm{ch}_0(E))}.$$
			\item If $W$ is semicircular, then $\overline{\Delta}_H(F)+\overline{\Delta}_H(G)<\overline{\Delta}_H(E).$
			\item For any point $(\beta,\alpha)\in W$, we have $0\le\mathrm{ch}_1^\beta(F)\le\mathrm{ch}_1^\beta(E)$. In case of equality on either end, the wall is vertical. In particular, if $H^2\cdot\mathrm{ch}_1^{\beta_0}(E)>0$ has the minimal positive value among objects of $\mathrm{Coh}^{\beta_0}(X)$, then there is no actual wall that intersects the ray $\beta=\beta_0$.
			\item If $\mathrm{ch}_0(E)>0$, then either $\mathrm{ch}_0(F)>0$ and $\mu(F)\le\mu(E)$ or $\mathrm{ch}_0(G)>0$ and $\mu(G)\le\mu(E)$. 
			\item Suppose that $W$ is semicircular. If $\mathrm{ch}_0(E)>0, \mathrm{ch}_0(F)>0$ and $\mu(F)\le\mu(E)$, then $\beta_-(E)<\beta_-(F)\le\mu(F)<\mu(E)$. Respectively, if $\mathrm{ch}_0(E)>0, \mathrm{ch}_0(G)>0$ and $\mu(G)\le\mu(E)$, then $\beta_-(E)<\beta_-(G)\le\mu(G)<\mu(E)$. 
		\end{enumerate}
	\end{proposition}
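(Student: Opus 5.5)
The plan is to prove the five items separately, using only three ingredients. The first is additivity of $\mathrm{ch}^\beta$ in the triple $0\to F\to E\to G\to 0$. The second is the fact that every object of $\mathrm{Coh}^\beta(X)$ has $H^2\cdot\mathrm{ch}_1^\beta\ge 0$. The third is the numerical structure of walls in Proposition \ref{numerical}. The triple is exact in $\mathrm{Coh}^\beta(X)$ for every $(\beta,\alpha)\in W$, since $E,F,G$ are $\nu_{\alpha,\beta}$-semistable along $W$. So all inequalities below may be evaluated at every $\beta$ in the open interval cut out by the semicircle (or on the ray if $W$ is vertical).

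\textbf{Item (3).} Since $F,G\in\mathrm{Coh}^\beta(X)$, we have $H^2\cdot\mathrm{ch}_1^\beta(F)\ge0$ and $H^2\cdot\mathrm{ch}_1^\beta(G)\ge0$, and additivity gives $0\le\mathrm{ch}_1^\beta(F)\le\mathrm{ch}_1^\beta(E)$. If equality holds at one end, one of $F,G$ has $\nu_{\alpha,\beta}=+\infty$, so $\nu_{\alpha,\beta}(E)=+\infty$ on $W$. This forces $\mathrm{ch}_1^\beta(E)=0$, which is exactly the vertical line $\beta=\mu(E)$. The ``in particular'' follows because a subobject with $0<\mathrm{ch}_1^{\beta_0}(F)<\mathrm{ch}_1^{\beta_0}(E)$ cannot exist when $\mathrm{ch}_1^{\beta_0}(E)$ is minimal positive.

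\textbf{Item (1).} I would run Schmidt's argument. The function $\beta\mapsto H^2\cdot\mathrm{ch}_1^\beta(F)-H^2\cdot\mathrm{ch}_1^\beta(E)$ is linear with slope $-H^3(\mathrm{ch}_0(F)-\mathrm{ch}_0(E))<0$ and is $\le0$ on the whole diameter $[s-\rho,s+\rho]$. Similarly $H^2\cdot\mathrm{ch}_1^\beta(F)\ge0$ on this diameter and decreases with slope $-H^3\mathrm{ch}_0(F)$. Comparing values at the two endpoints yields
$$2\rho\,H^3\mathrm{ch}_0(F)\le H^2\cdot\mathrm{ch}_1^{s-\rho}(E)\quad\text{and}\quad 2\rho\,H^3(\mathrm{ch}_0(F)-\mathrm{ch}_0(E))\le H^2\cdot\mathrm{ch}_1^{s+\rho}(E)\cdot(\text{suitable combination}).$$
I would then multiply the two bounds. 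I would express $\mathrm{ch}_1^{s\pm\rho}(E)$ through the wall equation of Proposition \ref{numerical}(1): the endpoints of the semicircle are the roots of $x\beta^2+y\beta+z$. The product then becomes $\overline{\Delta}_H(E)$ via the identity relating the endpoints to $\mathrm{ch}_2^\beta(E)$. Getting this algebra to produce exactly $\overline{\Delta}_H(E)$ is the main computational obstacle. For $\mathrm{ch}_0(E)=0$ the same computation degenerates and gives the stated bound directly.

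\textbf{Item (2).} Write $\overline{\Delta}_H(E)=\overline{\Delta}_H(F)+\overline{\Delta}_H(G)+2\overline{\Delta}_H(F,G)$, where $\overline{\Delta}_H(\cdot,\cdot)$ is the polarization. At a point of $W$, the vectors $Z^{\mathrm{tilt}}_{\alpha,\beta}(F)$ and $Z^{\mathrm{tilt}}_{\alpha,\beta}(G)$ are positively proportional. By the standard lemma (Bayer--Macr\`i--Stellari), $\overline{\Delta}_H$ is negative definite on $\ker Z^{\mathrm{tilt}}_{\alpha,\beta}$, hence has signature $(1,2)$. Together with $\overline{\Delta}_H(F),\overline{\Delta}_H(G)\ge0$ (Proposition \ref{BG}), this makes the cross term positive for a semicircular (non-vertical) wall. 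Strictness comes from $F,G$ not being proportional in $\Lambda$.

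\textbf{Items (4) and (5).} For (4), rank additivity forces $\mathrm{ch}_0(F)>0$ or $\mathrm{ch}_0(G)>0$. If both have positive rank, then $\mu(E)$ is a weighted average of $\mu(F)$ and $\mu(G)$, so one of them is $\le\mu(E)$. If, say, $\mathrm{ch}_0(F)\le0$, I would use item (3) as the wall endpoints approach $\mu(E)$ to get $\mu(G)\le\mu(E)$. For (5), recall $\beta_-(v)=\mu(v)-\sqrt{\overline{\Delta}_H(v)}/(H^3 v_0)$, the left root of $\mathrm{ch}_2^\beta=0$. By Proposition \ref{numerical}(3),(4), the top point of $W$ lies on the left branch of the hyperbola $\nu_{\alpha,\beta}(E)=0$, and also on $\nu_{\alpha,\beta}(F)=0$. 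So $W$ is also a numerical wall for $F$ and lies in the region $\beta<\mu(F)$. Since semicircular walls for $F$ on the left of $\mu(F)$ are nested around $\beta_-(F)$, we get $\beta_-(F)\le\mu(F)$. For $\beta_-(E)<\beta_-(F)$, I would compare the two hyperbola branches through the common top point. Using $\mu(F)<\mu(E)$ together with item (2), $\overline{\Delta}_H(F)<\overline{\Delta}_H(E)$, should show that the branch for $F$ meets the $\beta$-axis strictly to the right of that for $E$. The same argument applies to $G$.
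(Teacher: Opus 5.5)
The paper does not prove this proposition; it imports it from \cite[Proposition 2.4]{Vass}, which assembles it from standard lemmas of Schmidt and Bayer--Macr\`i--Stellari. Your items (1)--(3) follow those standard arguments.

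The genuine gap is in item (5), at its only nontrivial point, $\beta_-(E)<\beta_-(F)$. You say that $\mu(F)<\mu(E)$ together with $\overline{\Delta}_H(F)<\overline{\Delta}_H(E)$ ``should show'' it, but the discriminant inequality is the wrong quantity. Write $\beta_\pm=\mu\pm\delta$ with $\delta=\sqrt{\overline{\Delta}_H}/(H^3\cdot\mathrm{ch}_0)$. What matters is $\delta_F$ versus $\delta_E$, and $\overline{\Delta}_H(F)<\overline{\Delta}_H(E)$ is compatible with $\delta_F>\delta_E$ when, e.g., $\mathrm{ch}_0(F)=1$ and $\mathrm{ch}_0(E)=2$.

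Your geometric picture is right, and it closes with a single-crossing argument that uses no discriminant bound at all. The top point $(\beta_0,\alpha_0)$ of $W$, with $\alpha_0>0$, lies on the left branches of both hyperbolas $\nu_{\alpha,\beta}(E)=0$ and $\nu_{\alpha,\beta}(F)=0$. These branches are the graphs $\beta=\mu-\sqrt{\delta^2+\alpha^2}$. Hence $\sqrt{\delta_E^2+\alpha_0^2}-\sqrt{\delta_F^2+\alpha_0^2}=\mu(E)-\mu(F)>0$, so $\delta_E>\delta_F$. Then $t\mapsto\sqrt{\delta_E^2+t}-\sqrt{\delta_F^2+t}$ is strictly decreasing on $t\ge 0$, and evaluating at $t=0<\alpha_0^2$ gives $\delta_E-\delta_F>\mu(E)-\mu(F)$, i.e.\ $\beta_-(E)<\beta_-(F)$. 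Equivalently: the asymptotes put the branch of $E$ to the right of that of $F$ for $\alpha\gg 0$, and the two branches cross exactly once.

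Three further points on item (5):
\begin{itemize}
\item You use the strict inequality $\mu(F)<\mu(E)$ without deriving it from the hypothesis $\mu(F)\le\mu(E)$. If $\mu(F)=\mu(E)$, the coefficient $x=v_0w_1-v_1w_0$ in Proposition \ref{numerical}~(1) vanishes, so $W$ would not be semicircular.
\item $\beta_-(F)\le\mu(F)$ is immediate from the definition and Proposition \ref{BG} applied to $F$; no nesting argument is needed.
\item The same proof works for $G$.
\end{itemize}

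Smaller points on the other items:
\begin{itemize}
\item In (1) no ``suitable combination'' is needed. The second bound is simply $2\rho\, H^3(\mathrm{ch}_0(F)-\mathrm{ch}_0(E))\le H^2\cdot\mathrm{ch}_1^{s+\rho}(E)$. It follows from $\mathrm{ch}_1^{s+\rho}(F)\ge 0$ and the increase of $\mathrm{ch}_1^\beta(G)=\mathrm{ch}_1^\beta(E)-\mathrm{ch}_1^\beta(F)$ along the diameter.
\item The product identity in (1) is a one-line check. For $\mathrm{ch}_0(E)\ne 0$ every semicircular numerical wall satisfies $\rho^2=(s-\mu(E))^2-\overline{\Delta}_H(E)/(H^3\cdot\mathrm{ch}_0(E))^2$. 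Hence $H^2\cdot\mathrm{ch}_1^{s-\rho}(E)\cdot H^2\cdot\mathrm{ch}_1^{s+\rho}(E)=(H^3\cdot\mathrm{ch}_0(E))^2\bigl((\mu(E)-s)^2-\rho^2\bigr)=\overline{\Delta}_H(E)$. For rank zero both factors equal $H^2\cdot\mathrm{ch}_1(E)$.
\item In (4), replace the remark about endpoints approaching $\mu(E)$ with this: $E\in\mathrm{Coh}^\beta(X)$ with $\mathrm{ch}_0(E)>0$ forces $\beta\le\mu(E)$ at every point of $W$. Then $\mathrm{ch}_1^\beta(G)\le\mathrm{ch}_1^\beta(E)$ together with $\mathrm{ch}_0(G)\ge\mathrm{ch}_0(E)>0$ gives $\mu(G)\le\mu(E)$.
\item In (2), $\overline{\Delta}_H$ has two positive and one negative eigenvalue, so ``signature $(1,2)$'' is wrong in the usual convention. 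You only use that $\overline{\Delta}_H$ is negative on the line $\ker Z^{\mathrm{tilt}}_{\alpha,\beta}$, which is correct and suffices.
\end{itemize}
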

	
	We have the following lemma (\cite[Lemma 3.5]{Sch19}) that refines the last statement of Proposition \ref{actual}~(3).
	
	\begin{lemma}\label{minimal} If an object $E\in\mathrm{Coh}^\beta(X)$ is $\nu_{\alpha,\beta}$-semistable for $\alpha\gg 0$, then one of the following holds:\\
		(a) $\mathcal H^{-1}(E)=0$ and $\mathcal H^0(E)$ is a $\mu$-semistable pure sheaf of dimension $\ge 2$.\\
		(b) $\mathcal H^{-1}(E)=0$ and $\dim \mathcal H^0(E)\le 1$.\\
		(c) $\mathcal H^{-1}(E)$ is a torsion-free $\mu$-semistable sheaf and $\dim \mathcal H^0(E)\le 1$. Also, if $\nolinebreak{\beta>\mu(E)}$, then there are no non-zero morphisms from sheaves of dimension $\le 1$ to $E$.\\
		Moreover, if $H^2\cdot\mathrm{ch}_1^\beta(E)$ is either zero or has the minimal positive value among objects of $\mathrm{Coh}^\beta(X)$, then $E$ is $\nu_{\alpha,\beta}$-semistable if and only if it satisfies one of the three properties above.
	\end{lemma}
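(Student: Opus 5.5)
The plan is to work in the large volume limit, where the tilt-slope is governed by its leading term in $\alpha$. Write
$$\nu_{\alpha,\beta}=\frac{H\cdot\mathrm{ch}_2^\beta-\frac{\alpha^2}{2}H^3\mathrm{ch}_0}{H^2\cdot\mathrm{ch}_1^\beta}.$$
For $\alpha\to\infty$ the objects of $\mathrm{Coh}^\beta(X)$ split into four classes.
\begin{enumerate}
\item Objects with $\mathrm{ch}_1^\beta=0$ (sheaves of dimension $\le1$, or objects of the form $F[1]$ with $\mu(F)=\beta$) have $\nu=+\infty$.
\item Objects with $\mathrm{ch}_0<0$ and $\mathrm{ch}_1^\beta>0$ have $\nu\sim\frac{\alpha^2}{2(\beta-\mu)}\to+\infty$.
\item Torsion objects with $\mathrm{ch}_1^\beta>0$ have $\nu$ bounded.
\item Objects with $\mathrm{ch}_0>0$ have $\nu\sim-\frac{\alpha^2}{2(\mu-\beta)}\to-\infty$.
\end{enumerate}
Within classes 2 and 4, a slope closer to $\beta$ in class 2, or a larger $\mu$ in class 4, gives a larger $\nu$ for $\alpha\gg0$.

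The first step is to turn these asymptotics into a statement at a fixed large $\alpha$. By local finiteness of the wall-and-chamber structure (Proposition~\ref{numerical}, and \cite[Proposition B.5]{BMS16}), there is a largest wall for the class of $E$. Above it, semistability of $E$ is equivalent to the absence of subobjects $F\into E$ with $\nu_{\alpha,\beta}(F)>\nu_{\alpha,\beta}(E)$ for all $\alpha\gg0$. Moreover, $\mathrm{ch}_{\le2}(F)$ ranges over a finite set, since $0\le\mathrm{ch}_1^\beta(F)\le\mathrm{ch}_1^\beta(E)$ by Proposition~\ref{actual}(3) and Bogomolov--Gieseker gives bounds. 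So comparing leading terms is legitimate. I expect this uniformity to be the main technical point; everything else reduces to the slope comparisons above.

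For the forward direction, consider the triangle $\mathcal H^{-1}(E)[1]\to E\to\mathcal H^0(E)$.
\begin{itemize}
\item Suppose $\mathcal H^{-1}(E)\ne0$. Its shift is a subobject in class 1 or 2, so its slope tends to $+\infty$. Semistability forces the quotient $\mathcal H^0(E)$ to have $\nu=+\infty$ as well, so $\dim\mathcal H^0(E)\le1$.
\item Next, $\mathcal H^{-1}(E)\in\mathcal F_\beta$ is torsion-free. If it were not $\mu$-semistable, let $K$ be the first Harder--Narasimhan factor (so $\mu(K)$ is closest to $\beta$). Then $\mathcal H^{-1}(E)/K\in\mathcal F_\beta$, and $K[1]\into\mathcal H^{-1}(E)[1]\into E$ is a subobject. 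It has a larger leading coefficient, a contradiction.
\item Any nonzero map $T\to E$ from a sheaf of dimension $\le1$ has image with $\nu=+\infty$, because quotients of $T$ in $\mathrm{Coh}^\beta$ again have $\mathrm{ch}_1^\beta=0$. When $\beta>\mu(E)$ we have $\mathrm{ch}_1^\beta(E)>0$ and $\mathrm{ch}_0(E)<0$, and that image would destabilize.
\item Suppose $\mathcal H^{-1}(E)=0$, so $E$ is a sheaf in $\mathcal T_\beta$. A subsheaf of dimension $\le1$ destabilizes unless $E$ itself has dimension $\le1$.
\item If $E$ has positive rank, its torsion part (class 3) destabilizes it, since the quotient lies in class 4. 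This gives purity.
\item Finally, a maximal destabilizing subsheaf $F$ with $\mu(F)>\mu(E)>\beta$ lies in $\mathcal T_\beta$ and is a subobject with larger $\nu$. This gives $\mu$-semistability, which is automatic in the pure rank-zero case.
\end{itemize}

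For the converse, assume $H^2\cdot\mathrm{ch}_1^\beta(E)$ is zero or minimal positive. By Proposition~\ref{actual}(3), no actual wall meets the ray $\beta=\beta_0$. Hence it suffices to show that $E$ satisfying (a), (b) or (c) is not destabilized for $\alpha\gg0$. Any subobject $F$ with quotient $G$ has $\mathrm{ch}_1^\beta(F)\in\{0,\mathrm{ch}_1^\beta(E)\}$.
\begin{itemize}
\item If $\mathrm{ch}_1^\beta(G)=0$, then $\nu(G)=+\infty$ and $G$ cannot destabilize.
\item If $\mathrm{ch}_1^\beta(F)=0$, the long exact cohomology sequence shows that $F$ is a sheaf of dimension $\le1$ mapping to $E$, or $F=F'[1]$ with $\mu(F')=\beta$ and $F'\subset\mathcal H^{-1}(E)$.
\item In case (a) purity excludes both possibilities; in case (c) the no-morphism condition and $\mu$-semistability exclude them; in case (b) every object has $\nu=+\infty$ and the statement is trivial.
\end{itemize}
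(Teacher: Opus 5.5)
Your argument is correct. It is essentially the standard large-volume-limit proof of this result: test $\mathcal H^{-1}(E)[1]$, the first Harder--Narasimhan factor, torsion subsheaves and images of sheaves of dimension $\le 1$ as $\alpha\to\infty$, then use minimality of $H^2\cdot\mathrm{ch}_1^\beta$ for the converse. The paper gives no proof of its own and only cites Schmidt's lemma, whose proof runs along these lines.

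The one thing to drop is your ``uniformity'' paragraph. It is unnecessary, because $\mathrm{Coh}^\beta(X)$ does not depend on $\alpha$ and $E$ is semistable for every $\alpha$ beyond some $\alpha_0$, so each subobject you construct can be tested directly as $\alpha\to\infty$. Its finiteness claim is also not justified for arbitrary subobjects, since Proposition~\ref{actual}(3) and the Bogomolov--Gieseker inequality only constrain semistable destabilizers at walls.
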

	
	We will also use the following relation between tilt-stability and the more classical notions of stability. Let $f, g \in \R[m]$ be polynomials. If $\deg(f) < \deg(g)$, then we set
	$f > g$. If $\deg(f) = \deg(g)$ and $a$, $b$ are the leading coefficients in $f$,
	$g$ respectively, then we put $f < (\leq)\ g$ if $\frac{f(m)}{a} < (\leq)\
	\frac{g(m)}{b}$ for all $m \gg 0$.
	For an arbitrary sheaf $E\in\Coh(X)$ we define the numbers $a_i(E)$ for $i \in \{0,
	1, 2, 3\}$ through the Hilbert polynomial $P(E, m):=\chi(E(m))=a_3(E)m^3+a_2(E) m^2+
	a_1(E) m + a_0(E)$. In addition, we set $P_2(E,m):=a_3(E)m^2+a_2(E)m+a_1(E)$.
	The sheaf $E \in \Coh(X)$ is called \emph{Gieseker (semi)stable} or simply \emph{(semi)stable}, if for any
	its proper subsheaf $0\ne F \into E$ the inequality $P(F, m) < (\leq)\ P(E/F, m)$ holds. Respectively, $E$ is called \emph{2-(semi)stable}
	if for any subsheaf $0\ne F \into E$ the inequality $P_2(F, m) < (\leq)\ P_2(E/F, m)$ holds.
	Stability, 2-stability and $\mu$-stability of a sheaf are related by the following implications:
	\begin{equation*}
		\xymatrix{
			\text{$\mu$-stability} \ar@{=>}[r] & \text{$2$-stability} \ar@{=>}[r] & \text{stability} \ar@{=>}[d] \\
			\text{$\mu$-semistability} & \text{$2$-semistability} \ar@{=>}[l] &\text{semistability} \ar@{=>}[l]
		}
	\end{equation*}
	
	\begin{proposition}[{\cite[Proposition 2.7]{Vass}}]\label{2-stability}
		An object $E\in\mathrm D^b(X)$ belongs to $\mathrm{Coh}^\beta(X)$ and is $\nu_{\alpha,\beta}$-(semi)stable for $\beta<\mu(E)<+\infty$ 
		and $\alpha\gg0$ iff $E$ is a 2-(semi)stable sheaf of positive rank.
	\end{proposition}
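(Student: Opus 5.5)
Proposition \ref{2-stability} says: $E$ lies in $\mathrm{Coh}^\beta(X)$ and is $\nu_{\alpha,\beta}$-(semi)stable for $\beta<\mu(E)<+\infty$ and $\alpha\gg0$ if and only if $E$ is a 2-(semi)stable sheaf of positive rank. I would prove the two directions separately, following the pattern of \cite[Section 6]{BMS16} and \cite{Sch19}.

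The first step is to expand the tilt-slope asymptotically. Writing $\mathrm{ch}(E)=r+cH+dH^2+eH^3$, we get
$$\nu_{\alpha,\beta}(E)=\frac{5(d-\beta c+\frac{\beta^2}{2}r)-\frac{5\alpha^2}{2}r}{5(c-\beta r)}.$$
For a fixed $\beta<\mu(E)$ and $\alpha\to\infty$, comparing $\nu_{\alpha,\beta}$ of a subobject with that of $E$ first compares $-\frac{r}{c-\beta r}$ (the $\alpha^2$-term). Since $\mu=c/r$, this is equivalent to comparing $\mu$. On ties in $\mu$ one then compares $\frac{d-\beta c+\beta^2r/2}{c-\beta r}$, which amounts to comparing $d/r$.

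I would check that this lexicographic comparison, $(\mu,\ \mathrm{ch}_2/\mathrm{ch}_0)$, is exactly comparison of the reduced polynomials $P_2$. Indeed, by Riemann--Roch with $\mathrm{td}(X)=1+H+\dots$, the coefficients $a_3,a_2,a_1$ depend only on $\mathrm{ch}_{\le2}$, and the reduced $P_2$ orders first by $\mu$, then by $\mathrm{ch}_2/\mathrm{ch}_0$. Here $\mathrm{ch}_1\cdot\mathrm{td}_1$-type corrections cancel after normalization because $\mathrm{td}_1=\frac{1}{2}c_1(X)=H$ is a multiple of $H$.

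For the direction ($\Leftarrow$), take $E$ a 2-semistable torsion-free sheaf. Then $E$ is $\mu$-semistable, so $E\in\mathcal T_\beta$ for $\beta<\mu(E)$. Suppose $0\to F\to E\to G\to0$ is a sequence in $\mathrm{Coh}^\beta(X)$. The long exact sequence in cohomology shows that $F$ is a sheaf in $\mathcal T_\beta$, and that $\mathcal H^{-1}(G)\to F\to E\to \mathcal H^0(G)\to0$ is exact. To obtain semistability for all $\alpha\gg0$ uniformly, one cannot argue pointwise in $\alpha$; instead I would invoke the local finiteness of walls (\cite[Proposition B.5]{BMS16}), or directly Lemma \ref{minimal}'s proof, to conclude that there is a largest wall. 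Above that wall, stability is therefore determined by the leading-order comparison. One then shows that any destabilizing $F$ for large $\alpha$ yields a sheaf $F'=\im(F\to E)$ with $P_2(F')$ bigger than that of $E$, using $\mathcal H^{-1}(G)\in\mathcal F_\beta$ (so $\mu(\mathcal H^{-1}(G))\le\beta<\mu(E)$, which only makes $F'$ more destabilizing). This contradicts 2-semistability.

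For the direction ($\Rightarrow$), take $E$ tilt-semistable for $\alpha\gg0$ and $\beta<\mu(E)<+\infty$. By Lemma \ref{minimal}, $E$ falls into one of cases (a), (b), (c). Case (b) is excluded because its $\mu=+\infty$. In case (c), $\mathrm{ch}_0(E)<0$ would give $\mu(E)\le\beta$, contradicting $\beta<\mu(E)$; one needs to check this sign carefully. So $E$ is a pure $\mu$-semistable sheaf of dimension $\ge2$ with finite slope, hence of positive rank and torsion-free. Any subsheaf $F\subset E$ with $\mu(F)=\mu(E)$ lies in $\mathcal T_\beta$, and its quotient $E/F$ has HN factors of slope $\ge\mu(E)>\beta$ modulo torsion. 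Hence the sequence is exact in $\mathrm{Coh}^\beta(X)$, and the asymptotic comparison gives $P_2(F)\le P_2(E/F)$. The strict (stable) versions follow identically.

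I expect the main obstacle to be the uniformity in $\alpha$ in ($\Leftarrow$). The set of potentially destabilizing subobjects is not a priori finite, so one must use the locally finite wall structure, or the Bogomolov bound in Proposition \ref{actual}(1),(2), to bound the radii of walls for class $\mathrm{ch}_{\le2}(E)$ along the ray $\beta=\beta_0$.
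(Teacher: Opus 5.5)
The paper does not reprove this statement; it is quoted from \cite{Vass}, so I can only judge your argument on its own terms.

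\textbf{What works.} Your direction ($\Rightarrow$) is fine. In case (c), Lemma \ref{minimal} gives $\mu(E)=\mu(\mathcal H^{-1}(E))\le\beta$, so that case is excluded and only a torsion-free $\mu$-semistable sheaf remains. For subsheaves with $\mu(F)=\mu(E)$, the $\alpha^2$-terms cancel and what remains is exactly the $P_2$-comparison. In ($\Leftarrow$), your image argument with $\mathcal H^{-1}(G)\in\mathcal F_\beta$ also correctly rules out destabilizing subobjects with $\mu(F)\ge\mu(E)$.

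\textbf{The gap.} It is the remaining case $\mu(F)<\mu(E)$ in ($\Leftarrow$), i.e.\ uniformity in $\alpha$, and the tools you name for it do not work.
\begin{itemize}
\item Local finiteness of walls does not stop walls from escaping to $\alpha\to\infty$.
\item Proposition \ref{numerical}~(5) and Proposition \ref{actual}~(1),(2) are statements about \emph{actual} walls. Along such a wall $E$, $F$, $E/F$ are tilt-semistable, and this is what lets Bogomolov--Gieseker apply to $F$ and $E/F$.
\item In ($\Leftarrow$), $E$ is not yet known to be tilt-semistable anywhere. A subobject $F$ destabilizing $E$ at $(\beta,\alpha)$ need be neither semistable nor define an actual wall. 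It destabilizes exactly below the numerical wall $W(E,F)$, and nothing in your sketch bounds these numerical walls uniformly in $F$.
\end{itemize}
Your sentence ``above that wall, stability is therefore determined by the leading-order comparison'' is precisely the assertion that the unbounded chamber consists of 2-semistable sheaves. That is what you are trying to prove, so the argument is circular at this point.

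\textbf{The missing idea.} You need a uniform upper bound on $H\cdot\mathrm{ch}_2^\beta(F)$, as in Bridgeland's large-volume-limit argument. For $0\to F\to E\to G\to 0$ in $\mathrm{Coh}^\beta(X)$, put $K=\mathcal H^{-1}(G)\in\mathcal F_\beta$ and $F'=\im(F\to E)$. Note that $\mathrm{ch}_0(F)\ge 1$: if $F$ were torsion, then $K=0$ and $F\subset E$ would be zero.
\begin{enumerate}
\item The $\mu$-HN factors of $F'\subset E$ have slope $\le\mu(E)$, and those of $K$ have slope $\le\beta$. Hence
\[
0<H^2\cdot\mathrm{ch}_1^\beta(F)\le H^2\cdot\mathrm{ch}_1^\beta(F')\le H^2\cdot\mathrm{ch}_1^\beta(E).
\]
\item Summing $|H^2\cdot\mathrm{ch}_1^\beta(A)|$ over all $\mu$-HN factors $A$ of $F'$ and of $K$ gives at most $3H^2\cdot\mathrm{ch}_1^\beta(E)$.
\item Proposition \ref{BG}, applied to each $A$, then yields $H\cdot\mathrm{ch}_2^\beta(F)\le C$, with $C$ depending only on $E$ and $\beta$.
\item The inequality $\nu_{\alpha,\beta}(F)\le\nu_{\alpha,\beta}(E)$ is equivalent to
\[
\frac{\alpha^2}{2}\bigl(H^3\mathrm{ch}_0(F)\,H^2\mathrm{ch}_1(E)-H^3\mathrm{ch}_0(E)\,H^2\mathrm{ch}_1(F)\bigr)\ge H^2\mathrm{ch}_1^\beta(E)\,H\mathrm{ch}_2^\beta(F)-H^2\mathrm{ch}_1^\beta(F)\,H\mathrm{ch}_2^\beta(E).
\]
Here the bracket is a positive integer whenever $\mu(F)<\mu(E)$, and the right-hand side is bounded above independently of $F$.
\end{enumerate}
Hence a single $\alpha_0$ works for all subobjects simultaneously. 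Together with your leading-order analysis for $\mu(F)\ge\mu(E)$, this closes ($\Leftarrow$).
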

	
	In our arguments we also use the following simple property of tilt-stability.
	
	\begin{lemma}\label{tilt-stable}If $E\in\mathrm{Coh}^\beta(X)$ is $\nu_{\alpha,\beta}$-semistable for all points $(\beta,\alpha)$ in an open set $U\subset\mathbb H$, then either $E$ is $\nu_{\alpha,\beta}$-stable for all $(\beta,\alpha)$ in $U$, or $E$ is destabilized by a subobject $F\hookrightarrow E$, such that $\mathrm{ch}_{\le 2}(F)$ is proportional to $\mathrm{ch}_{\le 2}(E/F)$. If moreover the vector $\mathrm{ch}_{\le 2}(E)\in\Lambda$ is primitive, then $E$ is tilt-stable.
	\end{lemma}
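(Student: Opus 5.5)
Fix $(\beta_0,\alpha_0)\in U$. Since the set of $\nu_{\alpha,\beta}$-semistable objects with fixed $\mathrm{ch}_{\le 2}$ does not change inside a chamber, and walls form a locally finite family of semicircles and vertical rays, I would reduce to the following situation. Suppose $E$ is semistable on all of $U$ but strictly semistable at $(\beta_0,\alpha_0)$. Then there is a destabilizing short exact sequence $0\to F\to E\to G\to 0$ in $\mathrm{Coh}^{\beta_0}(X)$ with $\nu_{\alpha_0,\beta_0}(F)=\nu_{\alpha_0,\beta_0}(G)$ and $F,G\neq 0$.

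We have two cases. If $\mathrm{ch}_{\le 2}(F)$ is not proportional to $\mathrm{ch}_{\le 2}(E)$, then the equation $\nu_{\alpha,\beta}(F)=\nu_{\alpha,\beta}(E)$ defines a genuine numerical wall $W(E,F)$ through $(\beta_0,\alpha_0)$. By Proposition~\ref{numerical} this wall is a semicircle or a vertical ray, so it is a proper curve. Near $(\beta_0,\alpha_0)$ there are points of $U$ on the side where $\nu_{\alpha,\beta}(F)>\nu_{\alpha,\beta}(E)$.

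To get a contradiction I have to check that $F\hookrightarrow E$ stays a subobject in $\mathrm{Coh}^\beta(X)$ for $\beta$ near $\beta_0$. This is the main technical point. I would handle it by the standard argument: since $F$ and $G$ are tilt-semistable at $(\beta_0,\alpha_0)$, openness of tilt-semistability (for instance, \cite[Proposition B.5]{BMS16}, or the local finiteness of walls for $\mathrm{ch}_{\le2}(F)$ and $\mathrm{ch}_{\le2}(G)$) lets me shrink to a small neighbourhood. There $F$ and $G$ remain in the heart and remain semistable, except possibly across their own walls, which we may avoid by choosing a generic nearby point off the finitely many relevant curves. Hence the triple is still exact in $\mathrm{Coh}^\beta(X)$, and $F$ destabilizes $E$ at a point of $U$, contradicting semistability on $U$.

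Therefore every destabilizing subobject $F$, at every point of $U$ where $E$ is strictly semistable, has $\mathrm{ch}_{\le2}(F)$ proportional to $\mathrm{ch}_{\le2}(E)$, and hence to $\mathrm{ch}_{\le2}(E/F)$. This gives the dichotomy: either $E$ is stable at every point of $U$, or it is strictly semistable somewhere and is destabilized by such a proportional subobject.

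For the last claim, suppose $\mathrm{ch}_{\le 2}(E)$ is primitive in $\Lambda$ and $\mathrm{ch}_{\le2}(F)=t\,\mathrm{ch}_{\le2}(E)$. Because $0\le H^2\cdot\mathrm{ch}_1^\beta(F)\le H^2\cdot\mathrm{ch}_1^\beta(E)$, we get $0\le t\le 1$. Primitivity forces $t\in\{0,1\}$, so $\mathrm{ch}_{\le2}(F)=0$ or $\mathrm{ch}_{\le2}(G)=0$. An object of $\mathrm{Coh}^\beta(X)$ with vanishing $\mathrm{ch}_{\le 2}$ has $Z^{\mathrm{tilt}}=0$. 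Such an object is a sheaf supported in dimension $0$, with $\nu=+\infty$, and $\mathrm{ch}_3>0$ unless it is zero.

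If $G$ is such a nonzero $0$-dimensional sheaf, then $\nu(G)=+\infty>\nu(E)$, which does not destabilize. If $F$ is such a nonzero sheaf, then $\nu(F)=+\infty$, which contradicts semistability of $E$ unless $\nu(E)=+\infty$. When $\mathrm{ch}_{\le 2}(E)$ is primitive and nonzero, that last case would need $\mathrm{ch}_1^\beta(E)=0$ along an open set, which is impossible. Hence no proper destabilizing subobject exists, and $E$ is tilt-stable on $U$.
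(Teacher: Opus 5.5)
Your outline is the paper's: at a point of $U$ where $E$ is strictly semistable, a destabilizing $F$ with $\mathrm{ch}_{\le 2}(F)$ not proportional to $\mathrm{ch}_{\le 2}(E)$ cuts out a genuine numerical wall through the point and must destabilize $E$ on one side. Only proportional $F$ survive, and then $0\le t\le 1$ plus primitivity finishes.

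The gap is in the step you single out as the main technical point. Tilt-semistability is a closed condition, not an open one. Constancy of semistability inside chambers says nothing at a point that lies \emph{on} a wall for $\mathrm{ch}_{\le 2}(F)$ or $\mathrm{ch}_{\le 2}(G)$. For instance, if $F=A\oplus B$ with $\nu(A)=\nu(B)$ exactly along a numerical wall through $(\beta_0,\alpha_0)$, then $F$ is unstable at every nearby point off that wall, on both sides. Moving to a ``generic nearby point off the relevant curves'' therefore lands you where $F$ is unstable, not semistable. As written, nothing shows that $0\to F\to E\to G\to 0$ stays exact in $\mathrm{Coh}^\beta(X)$ on the destabilizing side.

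The repair is that semistability of $F$ and $G$ at nearby points is not needed at all. You only need $F,G\in\mathrm{Coh}^\beta(X)$: a triangle with all three vertices in the heart is a short exact sequence, and then $\nu_{\alpha,\beta}(F)>\nu_{\alpha,\beta}(E)$ already contradicts semistability of $E$. Heart membership is automatic for $\beta\in[\beta_0,\beta_0+\epsilon)$:
\begin{itemize}
\item $\mathcal F_{\beta_0}\subset\mathcal F_\beta$;
\item $\mu_{\min}(\mathcal H^0)>\beta_0$ keeps $\mathcal H^0$ of $F$, $E$, $G$ in $\mathcal T_\beta$ for $\beta$ close to $\beta_0$.
\end{itemize}
A semicircle through $(\beta_0,\alpha_0)$ with $\alpha_0>0$ has non-vertical tangent there. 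So both of its sides meet $\{\beta>\beta_0\}$ arbitrarily close to the point, inside $U$. The vertical wall $\beta=\mu(E)$ cannot meet $U$ at all, because $H^2\cdot\mathrm{ch}_1^\beta(E)$ changes sign across it while $E$ lies in the heart throughout $U$. The paper asserts the one-sided destabilization without argument, so with this fix your version is the more complete one.

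Separately, your claim that $\mathrm{ch}_1^\beta(E)=0$ on an open set is ``impossible'' is false: it happens exactly for sheaves supported in dimension $\le 1$. What excludes this case is integrality. For such sheaves $\mathrm{ch}_{\le 2}(E)=(0,0,d)$ with $d\in\frac15\mathbb Z$, which is divisible by $2$ in $\Lambda$ and hence not primitive. The paper's assertion $\mathrm{ch}_1^{\beta_0}(E)\neq 0$ rests on the same tacit exclusion.
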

	
	\begin{proof} Assume for the converse that there is such a point $(\beta_0,\alpha_0)$ in the interior of $U$, such that $E$ is strictly $\nu_{\alpha_0,\beta_0}$-semistable. If $(\beta_0,\alpha_0)$ lies on a semicircular wall $W$ for $E$, then $E$ is not tilt-semistable at least on one side of $W$, contradicting our assumptions. Hence, there is a subobject $F\hookrightarrow E$, such that $\nu_{\alpha,\beta}(F)=\nu_{\alpha,\beta}(E/F)$ for all $(\beta,\alpha)\in\mathbb H$. In particular,
	\begin{equation}\label{F E}\frac{H\cdot\mathrm{ch}_2^{\beta_0}(F)-\frac{\alpha^2}{2}H^3\cdot\mathrm{ch}_0^{\beta_0}(F)}{H^2\cdot\mathrm{ch}_1^{\beta_0}(F)}=\frac{H\cdot\mathrm{ch}_2^{\beta_0}(E/F)-\frac{\alpha^2}{2}H^3\cdot\mathrm{ch}_0^{\beta_0}(E/F)}{H^2\cdot\mathrm{ch}_1^{\beta_0}(E/F)}\end{equation}
	for all $\alpha>0$. Since $E$ is tilt-semistable in a neighborhood of $(\beta_0,\alpha_0)$, we have $\mathrm{ch}_1^{\beta_0}(E)\neq 0$, and (\ref{F E}) implies that $\mathrm{ch}_{\le 2}^{\beta_0}(F)$ is proportional to $\mathrm{ch}_{\le 2}^{\beta_0}(E/F)$ with a nonzero real coefficient. Hence, $\mathrm{ch}_{\le 2}(F)$ is proportional to $\mathrm{ch}_{\le 2}(E/F)$. Since $F$ and $E/F$ belong to $\mathrm{Coh}^{\beta_0}(X)$, the coefficient of proportionality cannot be negative. And if $\mathrm{ch}_{\le 2}(E)=\mathrm{ch}_{\le 2}(F)+\mathrm{ch}_{\le 2}(E/F)$ is primitive, then it cannot be positive, hence in this case $E$ is tilt-stable.
	\end{proof}
	
	For a tilt-semistable object $E$ with $\mathrm{ch}_0(E)\neq 0$, following \cite{BMSZ}, we define
	$$\beta_-(E)=\mu(E)-\sqrt{\frac{\overline{\Delta}_H(E)}{(H^3\cdot\mathrm{ch}_0(E))^2}},$$
	$$\beta_+(E)=\mu(E)+\sqrt{\frac{\overline{\Delta}_H(E)}{(H^3\cdot\mathrm{ch}_0(E))^2}}.$$
	
	The numbers $\beta_-(E)$ and $\beta_+(E)$ are the two solutions of the equation $\nu_{0,\beta}(E)=0$. We will use the following theorem (already used in \cite[Theorem 2.8]{Vass}) to bound the rank of destabilizing subobjects.
	
	\begin{theorem}\label{rank bound}Let $E$ be a tilt-stable object on $X$ with $\mathrm{ch}_0(E)\neq 0$, which is not a shift of a line bundle or a twisted ideal sheaf of points. If there is an integer $n\in\mathbb Z$ such that $\beta_-(E),\beta_+(E)\in[n,n+1)$, then
		$$\overline{\Delta}_H(E)\ge\mathrm{ch}_0(E)^2.$$
	\end{theorem}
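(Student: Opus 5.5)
We first normalize. Tensoring with $\mathcal O_X(-n)$ preserves tilt-stability (it maps $\mathrm{Coh}^\beta(X)$ to $\mathrm{Coh}^{\beta-n}(X)$ and shifts the wall picture by $-n$) and preserves $\overline{\Delta}_H$ and $\mathrm{ch}_0$. So we may assume $n=0$, i.e. $\beta_\pm(E)\in[0,1)$. Since $\beta_-(E)\le\mu(E)\le\beta_+(E)$, the slope lies in $[0,1)$.

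Replacing $E$ by a suitable shift of its derived dual $\mathbb D(E)$ sends tilt-stable objects to tilt-stable objects (up to the usual subtlety at $\alpha\to 0$) and $\beta_\pm$ to $-\beta_\mp$. Combined with a twist by $\mathcal O_X(1)$, this lets us assume $\mathrm{ch}_0(E)>0$ whenever convenient. The excluded cases (shifts of line bundles and twisted ideal sheaves of points) are exactly the tilt-stable objects with $\overline{\Delta}_H=0$ and $\mathrm{ch}_0=\pm1$. They are excluded because they would otherwise violate the inequality or the vanishing statements below.

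The core of the argument is a pair of $\mathrm{Hom}$-vanishings against $\mathcal O_X$ and $\mathcal O_X(1)$. By Proposition \ref{numerical}(4) and Proposition \ref{actual}, semicircular walls for $E$ are nested. Since $\beta_\pm(E)\in[0,1)$, no wall for $E$ meets the vertical rays $\beta=0$ and $\beta=1$ too low. The line bundles $\mathcal O_X$ and $\mathcal O_X(1)$, and their shifts, are tilt-stable everywhere (they satisfy Lemma \ref{minimal} with $\overline{\Delta}_H=0$).

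Comparing tilt-slopes at points $(\beta,\alpha)$ with $\beta$ slightly less than $0$ (respectively slightly less than $1$) and $\alpha\to 0$, we obtain:
\begin{itemize}
\item $\mathrm{Hom}(\mathcal O_X,E)=0$ and $\mathrm{Hom}(E,\mathcal O_X(-2)[1])=0$, hence $\mathrm{Ext}^2(\mathcal O_X,E)=0$ by Serre duality;
\item $\mathrm{Hom}(E,\mathcal O_X(1)[1])=0$ and $\mathrm{Hom}(\mathcal O_X(-1),E)=0$ (i.e. $\mathrm{Hom}(\mathcal O_X(3),E[2])^\ast$ after Serre duality), and similarly in the other degrees.
\end{itemize}
Here we use the sign of $\nu_{0,\beta}(E)$, which is negative strictly between $\beta_-$ and $\beta_+$ and positive outside. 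We also use that $\mathcal O_X(k)[1]\in\mathrm{Coh}^\beta(X)$ exactly when $k\le\beta$.

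This gives $\chi(\mathcal O_X,E)=-\mathrm{ext}^1-\mathrm{ext}^3\le 0$. With the opposite parity, the same reasoning applied to $\mathcal O_X(1)$ gives $\chi(\mathcal O_X(1),E)\ge 0$, or the analogous pair with $\mathcal O_X(-1)$, whichever the vanishing produces.

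By Riemann–Roch on $X$ (with $\mathrm{td}(X)=1+H+\tfrac{?}{}\ldots$ computed from $c_1=2H$, $c_2=12L$), the coefficient of $\mathrm{ch}_3(E)$ in $\chi(\mathcal O_X(a),E)$ is independent of $a$. Subtracting the two inequalities therefore eliminates $\mathrm{ch}_3$. This leaves an inequality in $\mathrm{ch}_0,\mathrm{ch}_1,\mathrm{ch}_2$ and $\mu\in[0,1)$. Rewritten via $\overline{\Delta}_H$ and using $\beta_\pm\in[0,1)$ to control the $\mu$-dependent terms, it should become $\overline{\Delta}_H(E)\ge\mathrm{ch}_0(E)^2$, after checking integrality on $X$ ($H^3=5$).

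The main obstacle is the vanishing step. We must make sure that stability of $E$ persists down to small $\alpha$ on the rays $\beta=0$ and $\beta=1$, i.e. that no wall for $E$ crosses there. This should follow from the nesting of walls along the hyperbola $\nu_{\alpha,\beta}(E)=0$: every semicircular wall is contained in the region bounded by $\beta_-$, $\beta_+$ and so has endpoints in $(\beta_-,\beta_+)\subset[0,1)$. A limiting argument $\alpha\to0$ is also needed to compare slopes with the line bundles. If the direct Riemann–Roch combination is not sharp enough, we would instead pass to the Bridgeland hearts $\mathcal A^{\alpha,\beta}(X)$ near $\beta=0,1$ and use that $\mathcal O_X,\mathcal O_X(1)$ are $\lambda$-stable there, as in Li's argument for Fano threefolds.
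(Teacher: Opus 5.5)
The paper does not prove this statement; it quotes it from \cite{Vass} (Theorem 2.8). Your argument therefore has to stand on its own, and its central vanishing step fails.

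\textbf{The vanishing step.} For $\mathrm{ch}_0(E)>0$ the semicircular walls are nested along the \emph{left} branch of $\nu_{\alpha,\beta}(E)=0$. Each of them encloses $\beta_-(E)$, has its left endpoint below $\beta_-(E)$, and has its right endpoint in $(\beta_-(E),\mu(E))$. They are not confined to $(\beta_-,\beta_+)$, so nothing gives you stability of $E$ near the integral points of the $\beta$-axis. Moreover $E\notin\mathrm{Coh}^\beta(X)$ at all for $\beta>\mu(E)$, so points with $\beta$ slightly below $1$ are unavailable.

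More seriously, the slope comparison goes the other way. Write $\mu=\mu(E)$ and $\delta=\mu-\beta_-(E)$. On the ray $\beta=-\epsilon$, as $\alpha\to 0$, one has $\nu_{\alpha,\beta}(\mathcal O_X)\to\frac{\epsilon}{2}$ and $\nu_{\alpha,\beta}(E)\to\frac{(\mu+\epsilon)^2-\delta^2}{2(\mu+\epsilon)}$. The latter is larger whenever $\beta_-(E)\ge 0$. So the comparison kills $\Hom(E,\mathcal O_X)$, not $\Hom(\mathcal O_X,E)$.

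A concrete counterexample is $E=\mathcal U^\vee\cong\mathcal U(1)$, with $\mathrm{ch}(E)=2+H+\frac{1}{10}H^2-\frac{1}{30}H^3$.
\begin{itemize}
\item It is slope stable with coprime rank and $c_1$, hence tilt-stable for $\beta<\frac12$, $\alpha\gg0$ by Proposition \ref{2-stability}.
\item It has $\overline{\Delta}_H(E)=15$ and $\beta_\pm(E)=\frac12\pm\sqrt{3/20}\in(0,1)$, so it satisfies all hypotheses.
\item Yet $\mathrm{hom}(\mathcal O_X,E)=5$ (the paper itself uses $\Hom(\mathcal O_X(-1),\mathcal U)\cong\mathbb C^5$) and $\chi(\mathcal O_X,E)=5>0$.
\end{itemize}
So your inequality $\chi(\mathcal O_X,E)\le 0$ is false.

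\textbf{The Riemann--Roch step.} Independently of the vanishings, this step cannot produce the bound. With $\mathrm{ch}(E)=r+cH+dH^2+eH^3$, one has $\chi(E)=r+\frac83c+5d+5e$. Hence $\chi(\mathcal O_X,E)-\chi(\mathcal O_X(1),E)=r+\frac52c+5d$, which is linear in $\mathrm{ch}_{\le2}$.

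For $r>0$ use $d/r=(\mu^2-\delta^2)/2$. Then the pair $\chi(\mathcal O_X,E)\le0\le\chi(\mathcal O_X(1),E)$ forces $\delta^2\ge\mu^2+\mu+\frac25>\mu^2$, i.e.\ $\beta_-(E)<0$, contradicting your normalization. So the two inequalities you aim for can never hold simultaneously under the hypotheses. If your scheme worked, it would prove that no such $E$ exists, which $\mathcal U^\vee$ refutes.

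There are further problems with the list of vanishings.
\begin{itemize}
\item $\chi(\mathcal O_X(1),E)\ge 0$ would require $\mathrm{Ext}^1(\mathcal O_X(1),E)=0$ and $\mathrm{Ext}^3(\mathcal O_X(1),E)=0$. An $\mathrm{Ext}^1$ is not controlled by any comparison of tilt-slopes of morphisms in $\mathrm{Coh}^\beta(X)$.
\item $\mathcal O_X(1)[1]\notin\mathrm{Coh}^\beta(X)$ for $\beta<1$.
\item Serre duality gives $\Hom(\mathcal O_X(-1),E)\cong\mathrm{Ext}^3(E,\mathcal O_X(-3))^\vee$, not $\Hom(\mathcal O_X(3),E[2])^\vee$.
\end{itemize}
The closing fallback to $\mathcal A^{\alpha,\beta}(X)$ is only a pointer, not an argument. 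What is missing is an input that does not presuppose stability of $E$ near the integral points of the $\beta$-axis and that yields a genuinely quadratic lower bound on $\overline{\Delta}_H(E)$, rather than a linear relation among $\mathrm{ch}_0,\mathrm{ch}_1,\mathrm{ch}_2$.
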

	
	The following lemma (\cite[Lemma 2.9]{Vass}) will allow us to classify exact triples which may destabilize a given tilt-semistable object using Theorem \ref{rank bound}.
	
	\begin{lemma}\label{stable}Let $E$ be a tilt-semistable object with $\mathrm{ch}_0(E)>0$. If $E$ is destabilized at a semicircular wall $W$, then $W$ is induced by a tilt-stable subobject or quotient $F$ with $\mathrm{ch}_0(F)>0,\mu(F)<\mu(E).$
	\end{lemma}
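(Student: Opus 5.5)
Let $E$ be tilt-semistable with $\mathrm{ch}_0(E)>0$, destabilized at a semicircular wall $W$. I would start from a destabilizing triple $0\to F\to E\to G\to 0$ in $\mathrm{Coh}^\beta(X)$ for $(\beta,\alpha)\in W$; here $F$ and $G$ are $\nu_{\alpha,\beta}$-semistable with $\nu_{\alpha,\beta}(F)=\nu_{\alpha,\beta}(E)=\nu_{\alpha,\beta}(G)$. By Proposition \ref{actual}(4), either $\mathrm{ch}_0(F)>0$ and $\mu(F)\le\mu(E)$, or $\mathrm{ch}_0(G)>0$ and $\mu(G)\le\mu(E)$. Since $W$ is semicircular, Proposition \ref{actual}(5) strengthens $\le$ to the strict inequality $\mu(\cdot)<\mu(E)$. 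So one of the two factors, call it $A$, already has positive rank and smaller slope. The remaining task is to replace $A$ by a tilt-stable object that still induces the wall $W$.

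Suppose first that $A=F$ is a subobject. Take a Jordan--Hölder filtration of $F$ with respect to $\nu_{\alpha,\beta}$ at a point $(\beta,\alpha)\in W$. Its factors $F_i$ are tilt-stable with $\nu_{\alpha,\beta}(F_i)=\nu_{\alpha,\beta}(E)$, and the first one, $F_1$, is a subobject of $F$ and hence of $E$. I would try to arrange that $F_1$ (or some stable factor that can be realized as a sub- or quotient of $E$) has positive rank and slope $<\mu(E)$.

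The key observation is the following. Every stable factor $F_i$ has the same $\nu_{\alpha,\beta}$ as $E$ at points of $W$, so the numerical wall $W(E,F_i)$ contains a point of $W$. Because numerical walls for $\mathrm{ch}_{\le 2}(E)$ are nested and disjoint (Proposition \ref{numerical}), this forces $W(E,F_i)=W$, unless $F_i$ has $\nu=\nu(E)$ at all points, i.e. $\mathrm{ch}_{\le2}(F_i)$ is proportional to $\mathrm{ch}_{\le2}(E)$. That case cannot occur for all factors, since their sum is $\mathrm{ch}_{\le 2}(F)$, which is not proportional.

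Next I would analyze the ranks and slopes of the factors. For a factor of positive rank with $W(E,F_i)=W$ semicircular, Proposition \ref{actual}(4),(5) applied to $F_i\hookrightarrow E$ gives $\mu(F_i)<\mu(E)$ directly if $F_i$ is a subobject. A factor might instead have rank $\le0$; then the quotient $E/F_i$ has positive rank. Choose a stable factor of the Jordan--Hölder filtration of $E$ itself at $W$: this filtration refines $F\subset E$, and its factors also define $W$. Since the ranks sum to $\mathrm{ch}_0(E)>0$ and $\sum \mathrm{ch}_1^\beta$ is preserved, some factor has positive rank. Using $\mu(E)$ as a weighted average, not all positive-rank factors can have $\mu\ge\mu(E)$ unless the nonpositive-rank factors compensate.

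\textbf{Main obstacle.} The hard step is converting a stable JH factor with positive rank and small slope into an honest subobject or quotient of $E$. The first factor $E_1$ is a subobject and the last factor $E_n$ is a quotient, so I would apply Proposition \ref{actual}(4) to the triple $0\to E_1\to E\to E/E_1\to 0$. If $E_1$ has positive rank and $\mu(E_1)<\mu(E)$, we are done. Otherwise $E/E_1$ has positive rank and $\mu(E/E_1)<\mu(E)$, and I would induct on the length of the filtration applied to $E/E_1$, which is semistable on $W$ with the same wall. At the end, the last factor $E_n$ is a stable quotient with positive rank and $\mu(E_n)<\mu(E)$, via Proposition \ref{actual}(5). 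Throughout, I expect the care needed is in checking that the wall of each intermediate object remains $W$, and in the case $\mathrm{ch}_{\le2}$ proportional, which is handled by discarding such factors as in Lemma \ref{tilt-stable}.
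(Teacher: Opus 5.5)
\textbf{There is a genuine gap: the reduction to a tilt-stable subobject or quotient fails.} Your opening step is fine: Proposition \ref{actual}~(4),(5) give a destabilizing factor of positive rank with slope $<\mu(E)$. Your nesting observation is also correct. The trouble starts with the induction, where only the first step tests a genuine subobject of $E$. After you pass to $E/E_1$, the factor you test is $E_2/E_1$. This is a subobject of $E/E_1$ but only a subquotient of $E$, so ``we are done'' no longer yields a subobject or quotient of $E$. Nothing forces the process to run all the way to $E_n$.

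The dichotomy ``otherwise $E/E_1$ has positive rank and $\mu(E/E_1)<\mu(E)$'' also fails when $\mathrm{ch}_{\le 2}(E_1)$ is proportional to $\mathrm{ch}_{\le 2}(E)$. In that case $\mu(E/E_1)=\mu(E)$, the triple $0\to E_1\to E\to E/E_1\to 0$ does not define $W$, and Proposition \ref{actual} says nothing. Likewise, (4) applied to $F_i\into E$ does not give $\mu(F_i)<\mu(E)$ ``directly'', because it may be the quotient that is the good one.

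More seriously, no argument that looks only at stable factors \emph{on} $W$ can work in general. Nothing in your setup excludes the following configuration:
\begin{itemize}
\item $E$ has Jordan--H\"older factors $P,S,T$ on $W$, with $\mathrm{ch}_{\le2}(P)$ proportional to $\mathrm{ch}_{\le2}(E)$, $\mathrm{ch}_0(S)>0$ and $\mu(S)<\mu(E)$;
\item they sit in a filtration $0\subset P\subset\widetilde S\subset E$ with $\widetilde S/P\cong S$ and $E/\widetilde S\cong T$;
\item both $\widetilde S$ and $E/P$ are non-split extensions.
\end{itemize}
Hom-vanishing between non-isomorphic stable objects of equal tilt-slope then shows three things. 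The only stable subobject of $E$ with the same tilt-slope on $W$ is $P$. The only stable quotient is $T$, and $T$ has $\mathrm{ch}_0(T)\le 0$ or $\mu(T)>\mu(E)$ (see below). And $E$ is semistable just above $W$ and destabilized below it by $\widetilde S$. Your procedure would output $S$, which is neither a subobject nor a quotient of $E$. The object that actually works is $\widetilde S$, which is strictly semistable on $W$ but stable just below it.

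\textbf{The missing idea} is to work on the side of $W$ where $E$ is unstable, and to track the sign of
$x_F:=\mathrm{ch}_0(E)\,H^2\cdot\mathrm{ch}_1(F)-H^2\cdot\mathrm{ch}_1(E)\,\mathrm{ch}_0(F)$, a positive multiple of the coefficient $x$ in Proposition \ref{numerical}~(1). A direct computation gives two facts.
\begin{itemize}
\item Inside the semicircle $W(E,F)$ the difference $\nu_{\alpha,\beta}(E)-\nu_{\alpha,\beta}(F)$ has the sign of $x_F$, and outside it has the opposite sign.
\item Since $x_F=\mathrm{ch}_0(E)\,H^2\cdot\mathrm{ch}_1^\beta(F)-H^2\cdot\mathrm{ch}_1^\beta(E)\,\mathrm{ch}_0(F)$, for $F\in\mathrm{Coh}^\beta(X)$ with $H^2\cdot\mathrm{ch}_1^\beta(F)>0$ and $\beta<\mu(E)$ we have $x_F<0$ if and only if $\mathrm{ch}_0(F)>0$ and $\mu(F)<\mu(E)$.
\end{itemize}
By additivity of $x$, in the configuration above $x_T=-x_S>0$, which is why $T$ fails.

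Now fix $(\beta_0,\alpha_0)\in W$ and move vertically, so the heart $\mathrm{Coh}^{\beta_0}(X)$ does not change.
\begin{itemize}
\item If $E$ is unstable at $(\beta_0,\alpha_0-\varepsilon)$, let $S$ be a stable subobject, of the same tilt-slope, of the first Harder--Narasimhan factor of $E$ there. For $\varepsilon$ small the HN filtration is constant and $S$ can be chosen independently of $\varepsilon$, by local finiteness of walls. Continuity and semistability of $E$ on $W$ then give $\nu(S)=\nu(E)$ on $W$. So $S\into E$ induces $W$, and $\nu(S)>\nu(E)$ inside $W$ forces $x_S<0$.
\item If instead $E$ is unstable just above $W$, take a stable quotient of the last Harder--Narasimhan factor at $(\beta_0,\alpha_0+\varepsilon)$. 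Here $\nu<\nu(E)$ outside $W$ again forces $x<0$.
\end{itemize}
This is where the ``subobject or quotient'' alternative really comes from. The stability you obtain is stability just off $W$ on the unstable side, together with semistability on $W$ by closedness. That is what later uses such as Theorem \ref{rank bound} need, rather than stability on $W$ itself.
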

	
	In this article we mostly work with tilt-stability. The following lemma (\cite[Lemma 2.10]{Vass}) will allow us to use the properties of Bridgeland stability.
	
	\begin{lemma}\label{nu-lambda}
		Let $E\in\mathrm{Coh}^{\beta_0}(X)$ be a $\nu_{\alpha_0,\beta_0}$-semistable object such that $\nu_{\alpha_0,\beta_0}(E)=0$ and fix some $s>0$. Then $\lambda_{{\alpha_0},{\beta_0},s}(E[1])=+\infty$ and $E[1]$ is $\lambda_{{\alpha_0},{\beta_0},s}
		$-semistable. If moreover $E$ is $\nu_{\alpha_0,\beta_0}$-stable, then there is a neighbourhood $U$ of $(\beta_0,\alpha_0)$ such that such that for all $(\beta,\alpha)\in U$ with $\nu_{\alpha,\beta}(E)
		>0$ the object $E$ is $\lambda_{\alpha,\beta,s}$-semistable.
	\end{lemma}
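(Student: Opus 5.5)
The first assertion is essentially a computation plus an unwinding of definitions. The plan is to compare the imaginary part of $Z_{\alpha,\beta,s}$ with the tilt central charge. Directly from the formulas,
$$\Im Z_{\alpha,\beta,s}(E)=\alpha\Bigl(H\cdot\mathrm{ch}_2^\beta(E)-\tfrac{\alpha^2}{2}H^3\cdot\mathrm{ch}_0^\beta(E)\Bigr)=-\alpha\,\Re Z^{\mathrm{tilt}}_{\alpha,\beta}(E)=\alpha\,\nu_{\alpha,\beta}(E)\,\bigl(H^2\cdot\mathrm{ch}_1^\beta(E)\bigr),$$
with the last equality holding whenever $H^2\cdot\mathrm{ch}_1^\beta(E)\neq 0$.

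\emph{Membership in the heart.} Since $E$ is $\nu_{\alpha_0,\beta_0}$-semistable with $\nu_{\alpha_0,\beta_0}(E)=0$, every nontrivial subobject $F\into E$ in $\mathrm{Coh}^{\beta_0}(X)$ satisfies $\nu_{\alpha_0,\beta_0}(F)\le 0$. Hence $E\in\mathcal F'_{\alpha_0,\beta_0}$ and $E[1]\in\mathcal A^{\alpha_0,\beta_0}(X)$.

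\emph{Slope and semistability.} We have $\Im Z(E[1])=0$, so $\lambda_{\alpha_0,\beta_0,s}(E[1])=+\infty$ by the convention on division by zero. The semistability inequality $\lambda(F)\le\lambda(E[1])=+\infty$ then holds trivially for every subobject $F$, so $E[1]$ is $\lambda_{\alpha_0,\beta_0,s}$-semistable.

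For the second assertion, assume $E$ is $\nu_{\alpha_0,\beta_0}$-stable. By local finiteness of walls (\cite[Proposition B.5]{BMS16}) and openness of tilt-stability, $E$ stays $\nu_{\alpha,\beta}$-stable in a neighbourhood $U$ of $(\beta_0,\alpha_0)$.

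\emph{Membership in $\mathcal T'$.} Take $(\beta,\alpha)\in U$ with $\nu_{\alpha,\beta}(E)>0$. Every quotient $E\onto G$ in $\mathrm{Coh}^\beta(X)$ then has $\nu_{\alpha,\beta}(G)\ge\nu_{\alpha,\beta}(E)>0$. Hence $E\in\mathcal T'_{\alpha,\beta}\subset\mathcal A^{\alpha,\beta}(X)$. Note that $\Im Z_{\alpha,\beta,s}(E)>0$ at such points and tends to $0$ as $(\beta,\alpha)\to(\beta_0,\alpha_0)$.

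\emph{Sign of $\Re Z(E)$.} At $(\beta_0,\alpha_0)$ we know $E[1]$ lies in the heart with $\Im Z=0$, so $Z(E[1])\in\mathbb R_{<0}$, hence $\Re Z(E)>0$. If one prefers not to rely on $s>\tfrac16$ here, this is where Proposition \ref{BMT} is used: it gives the right sign of $\mathrm{ch}_3^\beta$ at the point. Consequently, near $(\beta_0,\alpha_0)$ the slope $\lambda_{\alpha,\beta,s}(E)$ tends to $-\infty$, i.e. the phase of $E$ tends to $0$.

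\emph{Contradiction argument.} Suppose the claim fails. Then there are points $(\beta_n,\alpha_n)\to(\beta_0,\alpha_0)$ with $\nu_{\alpha_n,\beta_n}(E)>0$ and destabilizing short exact sequences $0\to A_n\to E\to B_n\to 0$ in $\mathcal A^{\alpha_n,\beta_n}(X)$ with $\lambda(A_n)>\lambda(E)$.

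\emph{Bounding the classes.} This is the main obstacle. The plan is to use the support property of the Bridgeland stability conditions $(\mathcal A^{\alpha,\beta},Z_{\alpha,\beta,s})$, which vary continuously in a small compact neighbourhood. The aim is to show that the classes $\mathrm{ch}(A_n)$ range over a finite set, so after passing to a subsequence we may fix $\mathrm{ch}(A_n)=a$. I expect to control the destabilizing subobjects by writing the long exact sequence of $\mathrm{Coh}^\beta$-cohomology,
$$0\to\mathcal H^{-1}_\beta(B_n)\to\mathcal H^0_\beta(A_n)\to E\to\mathcal H^0_\beta(B_n)\to0,$$
where $\mathcal H^{-1}_\beta(A_n)=0$ because $\mathcal H^{-1}_\beta(E)=0$. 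This bounds $\mathrm{ch}_1^\beta$ of the pieces by $\mathrm{ch}_1^\beta(E)$ and the other Chern characters.

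\emph{Passing to the limit.} The phase of $A_n$ lies in $(0,\phi(E))$, and $\phi(E)\to0$. Hence in the limit $Z_{\alpha_0,\beta_0,s}(a)$ is real with nonnegative real part, i.e. $\Im Z(a)=0$ at $(\beta_0,\alpha_0)$. By the identity above, this means $\nu_{\alpha_0,\beta_0}(\mathcal H^0_\beta(A_n))\ge 0$ for the image of $\mathcal H^0_\beta(A_n)$ in $E$, and this persists for the image computed in $\mathrm{Coh}^{\beta_0}(X)$ (the hearts agree on these objects for $n\gg0$ by local finiteness). Then $\nu_{\alpha_0,\beta_0}$-stability of $E$ forces the image to be all of $E$ or $0$. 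In either case one obtains $\lambda(A_n)\le\lambda(E)$ (respectively $B_n=0$ or $A_n=0$), a contradiction. The delicate points are the finiteness of classes and the compatibility of the hearts $\mathrm{Coh}^{\beta_n}$ with $\mathrm{Coh}^{\beta_0}$ along the sequence; the rest is bookkeeping.
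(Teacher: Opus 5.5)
\textbf{Verdict.} Your first assertion is proved correctly, but the second assertion has a genuine gap in the limiting argument.

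\textbf{What works.} Your argument for the first assertion is the standard one. Semistability with $\nu_{\alpha_0,\beta_0}(E)=0$ gives $E\in\mathcal F'_{\alpha_0,\beta_0}$, your identity $\Im Z_{\alpha,\beta,s}=-\alpha\,\Re Z^{\mathrm{tilt}}_{\alpha,\beta}$ gives $\Im Z_{\alpha_0,\beta_0,s}(E[1])=0$, and semistability of $E[1]$ is then vacuous by the $+\infty$ convention. The paper itself gives no proof to compare against; it quotes the lemma from \cite{Vass}. Your preliminary steps for the second assertion are also fine: openness of $\nu$-stability, $E\in\mathcal T'_{\alpha,\beta}$ when $\nu_{\alpha,\beta}(E)>0$, and the sequence $0\to K_n\to A_n\to E\to C_n\to 0$ in $\mathrm{Coh}^{\beta_n}(X)$ with $K_n=\mathcal H^{-1}_{\beta_n}(B_n)\in\mathcal F'$ and $C_n=\mathcal H^{0}_{\beta_n}(B_n)$.

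\textbf{Where the limiting argument fails.}
(i) The phases go the wrong way. $\lambda(A_n)>\lambda(E)$ means $\phi(A_n)\in(\phi(E),1]$, so nothing forces $\Im Z_{\alpha_0,\beta_0,s}(\mathrm{ch}(A_n))\to 0$. Only the quotient $B_n$ has phase tending to $0$.
(ii) Finiteness of classes is only announced. The sequence above bounds $H^2\cdot\mathrm{ch}_1^{\beta_n}$ and nothing else.
(iii) A fixed class is not a fixed object. $I_n=\im(A_n\to E)$ and $K_n$ live in the moving hearts $\mathrm{Coh}^{\beta_n}(X)$. The inequality $\Im Z(K_n)\le 0$ you would need holds at $(\beta_n,\alpha_n)$, not at the limit point. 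Nothing makes $I_n$ a subobject of $E$ in $\mathrm{Coh}^{\beta_0}(X)$.
(iv) The final dichotomy is false. If $I_n=E$ then $B_n=K_n[1]$, which need not vanish. This is exactly the hard case: a nonzero map $E\to K[1]$ with $K\in\mathcal F'_{\alpha,\beta}$. Objects of $\mathcal F'_{\alpha,\beta}[1]$ can have arbitrarily small phase (e.g.\ $\mathcal O_X(a)[2]$ for $a\ll\beta$), so this case needs a real argument.
(v) Your remark that Proposition \ref{BMT} removes the need for $s>\frac16$ is wrong. At $\nu=0$ it only gives $\mathrm{ch}_3^{\beta_0}(E)\le\frac{\alpha_0^2}{6}H^2\cdot\mathrm{ch}_1^{\beta_0}(E)$, so $\Re Z_{\alpha_0,\beta_0,s}(E)>0$ needs $s>\frac16$.

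\textbf{A way to close the gap.} Work in $\mathrm{Stab}(X)$ with a fixed filtration instead of sequences of subobjects.
\begin{itemize}
\item For $s>\frac16$ the $\sigma_{\alpha,\beta,s}$ satisfy the support property and vary continuously (\cite{Li}, \cite{BMS16}). Hence walls for $\mathrm{ch}(E)$ are locally finite, and you may shrink $U$ so that every wall meeting it passes through $\sigma_0=\sigma_{\alpha_0,\beta_0,s}$.
\item Suppose $E$ were unstable in some chamber of $U\cap\{\nu_{\alpha,\beta}(E)>0\}$. Its HN filtration $0\subset E_1\subset\dots\subset E$ is then constant on that chamber.
\item By closedness of semistability the factors are $\sigma_0$-semistable. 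Their $\sigma_0$-phases lie in $[0,1]$ and their central charges sum to $Z_{\sigma_0}(E)\in\mathbb R_{>0}$, so every phase is $0$ or $1$. Since $\Hom(E_1,E)\neq 0$, $E_1$ cannot have phase $1$. So all factors have phase $0$, and $E_1[1]\into E[1]$ in $\mathcal A^{\alpha_0,\beta_0}(X)$.
\item A phase-one object of $\sigma_0$ has $\mathcal H^0_{\beta_0}$ a zero-dimensional sheaf and $\mathcal H^{-1}_{\beta_0}$ either zero or $\nu_{\alpha_0,\beta_0}$-semistable of slope $0$.
\item The cohomology sequence then yields $F_1:=\mathcal H^{-1}_{\beta_0}(E_1[1])\into E$ with $E/F_1\into\mathcal H^{-1}_{\beta_0}(E[1]/E_1[1])\in\mathcal F'_{\alpha_0,\beta_0}$. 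So $\nu_{\alpha_0,\beta_0}$-stability of $E$ forces $F_1\in\{0,E\}$.
\item If $F_1=E$ you get $E_1=E$. If $F_1=0$ then $E_1=T[-1]$ with $T$ a zero-dimensional sheaf, and $T,T[-1]\in\mathcal A^{\alpha,\beta}(X)$ forces $E_1=0$. Either way this contradicts $E_1$ being a proper nonzero HN factor.
\end{itemize}
This is where $\nu$-stability, rather than semistability, is used. It also disposes of your case (iv), because every comparison takes place in the single heart $\mathcal A^{\alpha_0,\beta_0}(X)$.
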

	
	Denote by $\mathcal U$ and $\mathcal Q$ the restrictions to $X$ of tautological subbundle and tautological quotient bundle on the Grassmannian $\Gr(2,5)$, respectively, so that we have a short exact sequence
	\begin{equation}\label{tautological}0\to\mathcal U\to \mathcal O_{X}^{\oplus 5}\to\mathcal Q\to 0.\end{equation}
	
	It is known that $\mathcal O_X,\mathcal U$ and $\mathcal Q$ are arithmetically Cohen--Macaulay (ACM) sheaves (see \cite[Proposition 5.2]{Faenzi}, \cite[Proposition 5.7]{Faenzi}), that is, $H^i(\mathcal O_X(j))=H^i(\mathcal U(j))=H^i(\mathcal Q(j))=0$ for $0<i<3$ and any $j\in\mathbb Z$.
	
	In $\mathrm{D}^b(X)$ we have a full strong exceptional collection $(\mathcal O_X(-1),\mathcal Q(-1),\mathcal U,\mathcal O_X)$ (see \cite{Vass} for details). Chern characters of $\mathcal U$ and $\mathcal Q(-1)$ are calculated in \cite[Lemma 2.11]{Vass}: $\mathrm{ch}(\mathcal Q(-1))=3-2H+\frac 25H^2+\frac{1}{15}H^3,\mathrm{ch}(\mathcal U)=2-H+\frac{1}{10}H^2+\frac{1}{30}H^3$.
	
	Since $(\mathcal O_X(-1),\mathcal Q(-1),\mathcal U,\mathcal O_X)$ is a full strong exceptional collection, it follows that the extension closure $\mathfrak C:=\langle\mathcal O_{X}(-1)[3],\mathcal Q(-1)[2],\mathcal U[1],\mathcal O_{X}\rangle$ is the heart of a bounded t-structure on $\mathrm{D}^b(X)$ \cite[Lemma 3.14]{Curves}.
	
	Define the following region in the upper half-plane:
	\begin{equation}\label{D}D:=\{(\beta,\alpha)\in\mathbb H\ |\ \beta< -\frac 12,\alpha<\beta+1\}.\end{equation}
	
	For any	$\gamma\in\mathbb R$ we define a torsion pair
		\begin{equation}\label{T'' F''}
				\begin{split}
						& \TT''_{\gamma}=\{E\in\mathcal A^{\alpha,\beta}(X)\ |\ \text{any quotient}\ E\onto G
						\,\text{satisfies}\,\lambda_{\alpha,\beta, s}(G)>\gamma \}, \\
						& \FF''_{\gamma} = \{E \in \mathcal A^{\alpha, \beta}(X)\ |\ \text{any
								subobject}\ 0\ne F\into E\,\text{satisfies}\, \lambda_{\alpha,\beta, s}(F)\leq\gamma\}.
					\end{split}
			\end{equation}
	
	We have the following proposition (\cite[Corollary 2.15]{Vass}).
	
	\begin{proposition}\label{heart}Let $(\beta,\alpha)\in D$, and let $ \TT''_{\gamma}$, $\FF''_{\gamma}$ be the torsion pair on the category $\mathcal A^{\alpha,\beta}(X)$ defined in (\ref{T'' F''}). There are $s>\frac 16$ and $\gamma\in\R$ such that
		$\langle \mathcal T''_{\gamma},\mathcal F''_{\gamma}[1]\rangle =\mathfrak C.$
	\end{proposition}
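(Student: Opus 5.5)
The plan is to use the standard fact that a heart of a bounded t-structure contained in another one must coincide with it. Both $\mathfrak C$ and $\langle \mathcal T''_\gamma,\mathcal F''_\gamma[1]\rangle$ are hearts of bounded t-structures: the first by \cite[Lemma 3.14]{Curves}, the second as a tilt of the Bridgeland heart $\mathcal A^{\alpha,\beta}(X)$. Hearts are extension closed, so it suffices to show that the four generators $\mathcal O_X(-1)[3]$, $\mathcal Q(-1)[2]$, $\mathcal U[1]$, $\mathcal O_X$ lie in $\langle \mathcal T''_\gamma,\mathcal F''_\gamma[1]\rangle$. This would give $\mathfrak C\subseteq\langle\mathcal T''_\gamma,\mathcal F''_\gamma[1]\rangle$, hence equality. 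Concretely, for a suitable $s>\frac16$, I will show that each generator, up to the indicated shift, is a $\lambda_{\alpha,\beta,s}$-semistable object of $\mathcal A^{\alpha,\beta}(X)$ or of $\mathcal A^{\alpha,\beta}(X)[1]$. I will then choose $\gamma$ between the relevant slopes.

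First I would locate the objects in $\mathrm{Coh}^\beta(X)$. For $(\beta,\alpha)\in D$ we have $0<\alpha<\beta+1$, so $-1<\beta<-\frac12$. Hence $\mathcal O_X\in\mathcal T_\beta$ and $\mathcal U\in\mathcal T_\beta$ (as $\mu(\mathcal U)=-\frac12$), while $\mathcal O_X(-1)[1]\in\mathrm{Coh}^\beta(X)$. For $\mathcal Q(-1)$, with $\mu=-\frac23$, one gets $\mathcal Q(-1)$ or $\mathcal Q(-1)[1]$ according to the sign of $\beta+\frac23$; I expect the computation to go through uniformly, or else to split $D$ along $\beta=-\frac23$. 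All four are slope stable bundles with small $\overline\Delta_H$: $\overline\Delta_H=0$ for line bundles, and $\overline\Delta_H$ is small for $\mathcal U$ and $\mathcal Q(-1)$ by the Chern characters from \cite[Lemma 2.11]{Vass}. I would show they are $\nu_{\alpha,\beta}$-stable on all of $D$. For line bundles this is standard. For $\mathcal U$ and $\mathcal Q(-1)$, Proposition \ref{actual}(2) and (5), together with Theorem \ref{rank bound}, should exclude semicircular walls meeting $D$: the $\beta_\pm$ of any destabilizing subobject would have to lie in $(\beta_-(E),\mu(E))$, and there is no room for an integral class with smaller $\overline\Delta_H$ there.

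Next I would compute $\nu_{\alpha,\beta}$ of each object on $D$. The shape of $D$, namely $\alpha<\beta+1$, is chosen so that the hyperbolas $\nu_{\alpha,\beta}=0$ place the objects on definite sides. I expect $\nu(\mathcal O_X)>0$, so $\mathcal O_X\in\mathcal T'$. I expect $\nu(\mathcal O_X(-1)[1])\le0$, giving $\mathcal O_X(-1)[2]\in\mathcal A^{\alpha,\beta}(X)$. The objects $\mathcal U$ and $\mathcal Q(-1)$ should likewise land in $\mathcal A^{\alpha,\beta}(X)$ or its shift consistently with the shifts $[1]$ and $[2]$ in $\mathfrak C$. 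Then Lemma \ref{nu-lambda}, or the general principle that tilt-stable objects with $\overline\Delta_H$ and $Q$ controlled remain $\lambda$-stable near the relevant curve, gives $\lambda_{\alpha,\beta,s}$-semistability of these objects in $\mathcal A^{\alpha,\beta}(X)$.

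Finally I would compute the four values of $\lambda_{\alpha,\beta,s}$ explicitly from $\mathrm{ch}^\beta_{\le3}$. I would choose $s>\frac16$, possibly large, so that the objects destined for $\mathcal T''_\gamma$ have $\lambda>\gamma$ and those destined for $\mathcal F''_\gamma[1]$ have $\lambda\le\gamma$, and then take $\gamma$ in between. The main obstacle is the $\lambda$-stability step. Lemma \ref{nu-lambda} only gives stability near the curve $\nu=0$, while we need it at an arbitrary point of $D$. I would first try to show there are no Bridgeland walls for these exceptional objects in the relevant region. For this I would use that their $Q_{\alpha,\beta}$ is small together with the support property coming from Proposition \ref{BMT}, and, if needed, exceptionality: any destabilizing subobject would produce nonzero maps violating $\mathrm{Ext}$-vanishing in the strong exceptional collection.
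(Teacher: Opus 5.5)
Your reduction is sound. Both sides are hearts of bounded t-structures, so it is enough to put $\mathcal O_X(-1)[3]$, $\mathcal Q(-1)[2]$, $\mathcal U[1]$, $\mathcal O_X$ into $\langle\mathcal T''_\gamma,\mathcal F''_\gamma[1]\rangle$. Placing them in $\mathcal A^{\alpha,\beta}(X)$ or $\mathcal A^{\alpha,\beta}(X)[1]$ via the sign of $\nu_{\alpha,\beta}$ is also the right first step: $\nu_{\alpha,\beta}(\mathcal O_X)>0$ since $\alpha<-\beta$, and $\nu_{\alpha,\beta}(\mathcal O_X(-1)[1])<0$ precisely because $\alpha<\beta+1$.

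The gap is the step you flag yourself. Your argument needs each generator, suitably shifted, to be $\lambda_{\alpha,\beta,s}$-semistable at an arbitrary point of $D$, and you do not prove this. Lemma \ref{nu-lambda} only gives $\lambda$-semistability near the curve $\nu_{\alpha,\beta}(E)=0$, and only on the side where $\nu_{\alpha,\beta}(E)>0$. The paper has no wall theory for $\lambda_{\alpha,\beta,s}$ comparable to Proposition \ref{numerical}. Your proposed substitutes are not arguments either. The support property alone does not exclude Bridgeland walls for classes with $\overline{\Delta}_H>0$ such as those of $\mathcal U$ and $\mathcal Q(-1)$. A destabilizing subobject of, say, $\mathcal U[1]$ in $\mathcal A^{\alpha,\beta}(X)$ need not yield any morphism between members of the exceptional collection. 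As written, the proof does not close.

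The missing idea is that $\lambda$-stability of the generators is not needed; it is in fact a consequence of the proposition, since they are the simple objects of $\mathfrak C$. (The paper itself only quotes \cite[Corollary 2.15]{Vass}.) The argument goes as follows.

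\begin{enumerate}
\item Put $\mathcal A=\mathcal A^{\alpha,\beta}(X)$. Once the four generators lie in $\mathcal A\cup\mathcal A[1]$, extension-closedness of $\langle\mathcal A,\mathcal A[1]\rangle$ gives $\mathfrak C\subset\langle\mathcal A,\mathcal A[1]\rangle$.
\item By a standard fact, $\mathfrak C$ is then the tilt of $\mathcal A$ at the torsion pair $(\mathcal T,\mathcal F)=(\mathcal A\cap\mathfrak C,\mathcal A\cap\mathfrak C[-1])$.
\item Choose $s,\gamma$ so that $Z_{\alpha,\beta,s}$ sends the four generators into the half-open half-plane $\{re^{i\pi\phi}\colon r>0,\ \phi_\gamma<\phi\le\phi_\gamma+1\}$, where $-\cot(\pi\phi_\gamma)=\gamma$. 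This is exactly the numerical check in your last paragraph. By Remark \ref{complexes} it then holds for every nonzero object of $\mathfrak C$.
\item Let $E\in\mathcal T''_\gamma$ have $(\mathcal T,\mathcal F)$-decomposition $0\to T\to E\to F\to 0$ with $F\neq 0$. Then $F\in\mathcal T''_\gamma$ forces $\lambda_{\alpha,\beta,s}(F)>\gamma$, while $F[1]\in\mathfrak C$ forces $\lambda_{\alpha,\beta,s}(F)\le\gamma$. Hence $\mathcal T''_\gamma\subset\mathcal T$.
\item Dually, $\mathcal F''_\gamma\subset\mathcal F$, and two torsion pairs related by these inclusions coincide.
\end{enumerate}

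This uses only the $\nu$-computations you already planned plus an explicit central-charge check.

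One correction for that check: take $s$ slightly above $\frac16$, not large. At $s=\frac16$ one finds $\lambda(\mathcal O_X)=\frac{-\beta}{3\alpha}>0>-\frac{\beta+1}{3\alpha}=\lambda(\mathcal O_X(-1)[2])$, as required by $\lambda(\mathcal O_X)>\gamma\ge\lambda(\mathcal O_X(-1)[2])$. As $s\to\infty$ the first value tends to $-\infty$ and the second to $+\infty$, so the required order fails.
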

	
	\begin{remark}\label{complexes}
		Any object $E\in\mathfrak C$ is isomorphic to a complex of the form
		$$\mathcal O_{X}(-1)^{\oplus a}\to\mathcal Q(-1)^{\oplus b}\to\mathcal U^{\oplus c}\to\mathcal O_{X}^{\oplus d},$$
		concentrated in degrees $-3,-2,-1$ and $0$. The numbers $a,b,c,d$ are uniquely determined by the Chern character of $E$.
	\end{remark}
	
	The following lemma provides a justification of a technique used by Schmidt in \cite{Sch23} (see e.~g. \cite[Lemma 6.9 (i)]{Sch23}). This lemma and the following one can be applied to any smooth projective variety $X$ of Picard rank 1.
	
	\begin{lemma}\label{subobject}Suppose that $E\in\mathrm{Coh}^\beta(X)$ is tilt-semistable with $\mathrm{ch}_0(E)\ge 0$ and for some $a\in\mathbb Z$ we have $\mathrm{hom}(\mathcal O_X(a),E)=n>0,a<\mu(E)$. Suppose also that $W(\mathcal O_X(a),E)$ is nonempty and lies in the half-plane $\beta<a$, then for $\beta'<a$ and close enough to $a$ the evaluation morphism $s_{a,E}:\mathcal O_X(a)^{\oplus n}\to E$ is a monomorphism in the category $\mathrm{Coh}^{\beta'}(X)$.\end{lemma}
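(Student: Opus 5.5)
The idea is to exploit that $\mathcal O_X(a)$ is tilt-stable for every $(\beta,\alpha)$ with $\beta<a$, being a line bundle on a Picard rank one variety. We then run the standard argument that a morphism from a direct sum of copies of a stable object into a semistable object of the same phase is injective, provided the evaluation map is injective on $\Hom$ spaces.

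First we locate the relevant points. Since $a<\mu(E)$ and $\mathrm{ch}_0(E)\ge 0$, the wall $W=W(\mathcal O_X(a),E)$ is a semicircle (or, in the degenerate case, a vertical ray). By hypothesis it lies in $\beta<a$, and by Proposition \ref{numerical} numerical walls with respect to $\mathrm{ch}_{\le2}(\mathcal O_X(a))$ are nested semicircles. Taking $\beta'<a$ close to $a$, we use that $\mathcal O_X(a)\in\mathrm{Coh}^{\beta'}(X)$, that $H^2\cdot\mathrm{ch}_1^{\beta'}(\mathcal O_X(a))$ is small positive, and that $\mathcal O_X(a)$ is $\nu_{\alpha,\beta'}$-stable for all $\alpha$. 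We also need $E\in\mathrm{Coh}^{\beta'}(X)$ and $E$ tilt-semistable there. I read the hypothesis as $E$ being semistable along the relevant region above $W$, including the vertical line $\beta=\beta'$ near $a$. Indeed, the line $\beta=\beta'$ meets the hyperbola region outside $W$, and $E$ is semistable on the chamber containing that segment, which I take from the semistability assumption in the statement.

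Next we factor through the image. Set $K=\ker s_{a,E}$ and $I=\operatorname{im} s_{a,E}$ in $\mathrm{Coh}^{\beta'}(X)$, and consider the point $(\beta',\alpha)$ with $\alpha$ above the wall, so that $\nu_{\alpha,\beta'}(\mathcal O_X(a))>\nu_{\alpha,\beta'}(E)$. Here I expect the sign to be right, because walls for line bundles above which $\mathcal O_X(a)$ has larger slope accord with $a<\mu(E)$ and $\beta'\to a^-$. Then $\nu_{\alpha,\beta'}(\mathcal O_X(a))\to+\infty$ as $\beta'\to a$, since the imaginary part tends to $0$ while the real part stays positive: it equals $-\frac{\alpha^2}{2}H^3+\frac{(a-\beta')^2}{2}H^3\cdot\ldots$, and the sign needs checking. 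If instead the real part is negative the slope tends to $-\infty$, and we swap to the appropriate side using the wall.

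The cleaner route is to argue on the quotient side. Since $\mathcal O_X(a)^{\oplus n}$ is semistable with all JH factors equal to $\mathcal O_X(a)$, which is stable, any quotient $I$ of it satisfies $\nu(I)\ge\nu(\mathcal O_X(a))$. If $K\ne0$, then $K$ is a subobject of the polystable object $\mathcal O_X(a)^{\oplus n}$. Choose $\alpha$ on the side of $W$ where $E$ is semistable and close to the wall; at the wall itself $\nu(\mathcal O_X(a))=\nu(E)$. On the wall, $I\subset E$ gives $\nu(I)\le\nu(E)=\nu(\mathcal O_X(a))\le\nu(I)$, so equality holds and $K$ has the same slope. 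Then $K$ is a semistable subobject of $\mathcal O_X(a)^{\oplus n}$ of the same phase. Since $\mathcal O_X(a)$ is stable, $K\cong\mathcal O_X(a)^{\oplus k}$ as a direct summand, that is, $K$ is the image of some $\mathcal O_X(a)^{\oplus k}\hookrightarrow\mathcal O_X(a)^{\oplus n}$ given by a constant $n\times k$ matrix. Composing with $s_{a,E}$ gives a linear relation among the basis elements of $\Hom(\mathcal O_X(a),E)$ used in the evaluation map, a contradiction. So $s_{a,E}$ is a monomorphism at points of $W$.

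Finally we transport injectivity from $W$ to $\beta'$. The kernel in $\mathrm{Coh}^{\beta}(X)$ varies, and the main obstacle is passing from points of $W$ to the vertical line $\beta=\beta'$, which lies outside $W$ near $a$. I would handle it as follows. On $W$ the sequence $0\to\mathcal O_X(a)^{\oplus n}\to E\to G\to 0$ has all terms semistable of equal slope, and $\mathcal O_X(a)$ and $E$ lie in $\mathrm{Coh}^\beta$ for all $\beta$ between the wall and $a$. We must show that $G$ stays in $\mathrm{Coh}^{\beta'}(X)$, i.e. that $\mathcal H^{-1}(G)$ has $\mu_{\max}\le\beta'$ and $\mathcal H^0(G)$ has $\mu_{\min}>\beta'$. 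From the long exact sequence of cohomology sheaves, $\mathcal H^{-1}(G)$ embeds in $\mathcal O_X(a)^{\oplus n}$ modulo $\mathcal H^{-1}(E)$, so its HN factors have slopes $\le\mu_{\max}$ of sheaves in $\mathcal F$. The delicate part is excluding factors of slope in $(\beta',a]$. I would first try to show that $G$ stays semistable along the segment from $W$ to $(\beta',\alpha)$ by checking that no wall for $G$ crosses it, using Proposition \ref{actual}(3) and the smallness of $\mathrm{ch}_1^{\beta'}$ of $\mathcal O_X(a)$. Semistability of $G$ at $(\beta',\alpha)$ forces $G\in\mathrm{Coh}^{\beta'}$, and injectivity then follows.
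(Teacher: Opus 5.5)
Your see-saw argument on the wall is sound as far as it goes. At a point of $W=W(\mathcal O_X(a),E)$ where $E$ is $\nu$-semistable, the image $I$ of $s_{a,E}$ is squeezed to slope $\nu(E)=\nu(\mathcal O_X(a))$. A nonzero kernel would then be a same-slope subobject of $\mathcal O_X(a)^{\oplus n}$, hence $\mathcal O_X(a)^{\oplus k}$ embedded by a constant matrix of rank $k$, contradicting linear independence.

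The gap is that this needs $E$ to be semistable \emph{on} $W$, at the point over $\beta'$. You obtain this only by ``reading the hypothesis'' that way, but the hypothesis just says $E$ is tilt-semistable somewhere. A nonzero map from the stable object $\mathcal O_X(a)$ rules out semistability strictly inside $W$. From this one can only conclude, as the paper does, that $E$ is semistable on $W$ or on some larger wall $W'\supsetneq W$. In the second case nothing prevents $E$ from being unstable along all of $W$. At points of $W'$ you have $\nu(\mathcal O_X(a))<\nu(E)$: outside $W$ the line bundle has the \emph{smaller} slope, so the sign in your second paragraph is reversed. Hence $\nu(\mathcal O_X(a))\le\nu(I)\le\nu(E)$ gives no equality and no information about the kernel.

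The paper never needs semistability on $W$. It uses only $E\in\mathrm{Coh}^{\beta'}(X)$, which holds because every wall $W'\supseteq W$ has points over $\beta'$. It splits $E$ via $0\to\mathcal H^{-1}(E)[1]\to E\to\mathcal H^0(E)\to 0$ and reduces by a diagram chase to a sheaf or a shifted sheaf. There, slope stability of $\mathcal O_X(a)$ and linear independence give $\mu_{\max}<a$ for the sheaf-theoretic kernel $K$ (resp.\ for the extension $L$ of $\mathcal O_X(a)^{\oplus n}$ by $F$). The $\mathrm{Coh}^{\beta'}$-kernel lies in $\mathcal T_{\beta'}$ and factors through $K$ (resp.\ $L$), so it vanishes once $\mu_{\max}\le\beta'<a$. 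That is where ``$\beta'$ close to $a$'' genuinely enters.

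Your final paragraph rests on a false premise and is left as a plan. Substitute $(\beta,\alpha)=(a,0)$ into the equation of Proposition~\ref{numerical}~(1) with $v=\mathrm{ch}_{\le2}(\mathcal O_X(a))$. This shows every semicircular wall for $\mathcal O_X(a)$ has $(a,0)$ as an endpoint. Since $W$ lies in $\beta<a$, its right endpoint is $(a,0)$, so the line $\beta=\beta'$ \emph{does} meet $W$ for all $\beta'<a$ near $a$. Being a monomorphism in $\mathrm{Coh}^{\beta'}(X)$ does not depend on $\alpha$. So if you add the assumption that $E$ is semistable along $W$ near its right endpoint (true in every application in the paper), your argument at the point of $W$ over $\beta'$ already finishes the proof, and no ``transport'' via $G$ is needed. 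Without that assumption, the transport idea does not repair the gap above.
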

	
	\begin{proof}The existence of a nonzero morphism $\mathcal O_X(a)\to E$ implies that $E$ is tilt-unstable below $W(E,\mathcal O_X(a))$. Since the case in which $E$ is tilt-semistable only at the vertical wall is excluded by the assumption $\mathrm{ch}_0(E)\ge 0$, it follows that $E$ is tilt-semistable at $W(E,\mathcal O_X(a))$ or at some wall lying above $W(\mathcal O_X(a),E)$. In particular, $E\in\mathrm{Coh}^{\beta'}(X)$ for $\beta'<a$ and close enough to $a$, since $W(\mathcal O_X(a),E)$ contains points with such $\beta$-coordinates (note that the top point of $W(\mathcal O_X(a),E)$ has to lie on the degenerate hyperbola $\alpha^2-(\beta-a)^2=0$ given by $\nu_{\alpha,\beta}(\mathcal O_X(a))=0$). From now on we assume that $E\in\mathrm{Coh}^{\beta'}(X)$.
		
		We have an exact sequence
		\begin{equation}\label{cohomology}0\to\mathcal H^{-1}(E)[1]\to E\to\mathcal H^0(E)\to 0\end{equation}
		in the category $\mathrm{Coh}^{\beta'}(X)$. Applying the functor $\mathrm{Hom}(\mathcal O_X(a),-)$ and considering evaluation maps, we get a commutative diagram of the form
		$$\xymatrix{
			0 \ar[r] & \mathcal O_X(a)^{\oplus m} \ar[r]\ar[d]^f & \mathcal O_X(a)^{\oplus n}\ar[r]\ar[d]^{s_{a,E}} & \mathcal O_X(a)^{\oplus k}\ar[d]^g & \\
			0\ar[r] & \mathcal H^{-1}(E)[1]\ar[r] & E\ar[r] & \mathcal H^0(E)\ar[r] & 0 }$$
		
		for some $m,k\ge 0$. A simple diagram chase shows that in order to prove that $s_{a,E}$ is a monomorphism, it suffices to prove that $f$ and $g$ are monomorphisms. Therefore, we reduce to cases when $E$ is a sheaf or a sheaf shifted by 1 (here we can drop assumptions that $E$ is tilt-semistable and has $\mathrm{ch}_0(E)\ge 0$).
		
		Suppose that $E$ is a sheaf, then slope stability of $\mathcal O_X(a)$ and the fact that the map $s_{a,E}:\mathcal O_X(a)^{\oplus n}\to E$ corresponds to $n$ linearly independent morphisms $\mathcal O_X(a)\to E$ imply that the kernel $K$ of $s_{a,E}$ in the category $\mathrm{Coh}(X)$ has $\mu_{\max}(K)<a$. The kernel of $s_{a,E}$ in the category $\mathrm{Coh}^{\beta'}(X)$ belongs to $\mathcal T_{\beta'}$ and factors through $K$, hence for $\mu_{\max}(K)\le\beta'<a$ this kernel is zero.
		
		Suppose now that $E\cong F[1]$ for a sheaf $F$, then $F\in\mathcal F_{\beta'}$. The morphism $s_{a,E}$ can be included into an exact triangle $F\to L\to \mathcal O_X(a)^{\oplus n}\overset{s_{a,E}}\to F[1]$. We have $\mu_{\max}(L)\le a$ (since $L$ is an extension of $\mathcal O_X(a)^{\oplus n}$ by $F$). If $G\into L$ is a slope semistable subsheaf of slope $a$, then $G\into \mathcal O_X(a)^{\oplus n}$, so $G\cong\mathcal O_X(a)^{\oplus l}$ for some $l>0$. In this case one can show that $n$ morphisms defining the map $\mathcal O_X(a)^{\oplus n}\to F[1]$ are linearly dependent, so by contradiction $\mu_{\max}(L)<a$. The kernel of $s_{a,E}$ in $\mathrm{Coh}^{\beta'}(X)$ factors through $L$ (and belongs to $\mathcal T_{\beta'}$), hence for all $\mu_{\max}(L)\le\beta'<a$ the kernel is zero.
	\end{proof}
	
	We can give the following generalization of this technique.
	
	\begin{lemma}\label{quotient}Suppose that $E\in\mathrm{Coh}^\beta(X)$ is tilt-semistable with $\mathrm{ch}_0(E)\ge 0$ and for some $a\in\mathbb Z$ we have $\mathrm{hom}(E,\mathcal O_X(a)[1])=n>0,a<\mu(E)$. Suppose also that $W(\mathcal O_X(a)[1],E)$ is nonempty and lies in the half-plane $\beta>a$, then for $\beta'>a$ and close enough to $a$ the coevaluation morphism $s_{E,a}:E\to\mathcal O_X(a)^{\oplus n}[1]$ is an epimorphism in the category $\mathrm{Coh}^{\beta'}(X)$.\end{lemma}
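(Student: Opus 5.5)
The plan is to dualize the proof of Lemma \ref{subobject}. First we place $E$ in the right heart. The nonzero morphism $E\to\mathcal O_X(a)[1]$ destabilizes $E$ on the side of $W(\mathcal O_X(a)[1],E)$ where $\nu_{\alpha,\beta}(E)>\nu_{\alpha,\beta}(\mathcal O_X(a)[1])$. Note that $\mathcal O_X(a)[1]\in\mathrm{Coh}^{\beta}(X)$ exactly for $\beta\ge a$ and is tilt-stable there. Since $\mathrm{ch}_0(E)\ge 0$, the case where $E$ is semistable only along the vertical wall is excluded. Hence $E$ is tilt-semistable on $W(\mathcal O_X(a)[1],E)$ or on some wall above it. The top point of this semicircle lies on the degenerate hyperbola $\alpha=|\beta-a|$, and the semicircle lies in $\beta>a$, so it contains points with $\beta$-coordinate arbitrarily close to $a$ from the right. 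Therefore $E\in\mathrm{Coh}^{\beta'}(X)$ for $\beta'>a$ close to $a$, and we fix such a $\beta'$.

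Next we reduce to the cases of a sheaf and a shifted sheaf. Take the sequence $0\to\mathcal H^{-1}(E)[1]\to E\to\mathcal H^0(E)\to 0$ in $\mathrm{Coh}^{\beta'}(X)$ and apply $\mathrm{Hom}(-,\mathcal O_X(a)[1])$. This gives a left exact sequence $0\to\mathrm{Hom}(\mathcal H^0(E),\mathcal O_X(a)[1])\to\mathrm{Hom}(E,\mathcal O_X(a)[1])\to\mathrm{Hom}(\mathcal H^{-1}(E)[1],\mathcal O_X(a)[1])$. From it we get a commutative diagram whose vertical arrows are the coevaluation maps $g:\mathcal H^{-1}(E)[1]\to\mathcal O_X(a)^{\oplus k}[1]$, $s_{E,a}$, and $f:\mathcal H^0(E)\to\mathcal O_X(a)^{\oplus m}[1]$. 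The bottom row $0\to\mathcal O_X(a)^{\oplus m}[1]\to\mathcal O_X(a)^{\oplus n}[1]\to\mathcal O_X(a)^{\oplus k}[1]$ is dual to the Hom sequence. It is exact on the left, and we may choose the splitting so that it is split. A diagram chase (the dual of the one in Lemma \ref{subobject}) shows that if $f$ and $g$ are epimorphisms in $\mathrm{Coh}^{\beta'}(X)$, then so is $s_{E,a}$. The point is that the image of $s_{E,a}$ contains the image of $\mathcal O_X(a)^{\oplus m}[1]$ and surjects onto $\mathcal O_X(a)^{\oplus k}[1]$.

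Case $E=F[1]$ with $F\in\mathcal F_{\beta'}$. The map $s_{E,a}$ comes from $F\to\mathcal O_X(a)^{\oplus n}$, given by $n$ linearly independent morphisms. Its image $I$ is a subsheaf of $\mathcal O_X(a)^{\oplus n}$, and the cokernel $C$ of $F\to\mathcal O_X(a)^{\oplus n}$ in $\mathrm{Coh}(X)$ controls the cokernel in the tilt. The cone of $s_{E,a}$ has cohomology sheaves $\ker(F\to\mathcal O_X(a)^{\oplus n})$ and $C$. The cokernel of $s_{E,a}$ in $\mathrm{Coh}^{\beta'}(X)$ is then $Q[1]$, where $Q$ is the $\mathcal F_{\beta'}$-part of $C$. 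Linear independence prevents any quotient $\mathcal O_X(a)^{\oplus n}\to\mathcal O_X(a)$ from killing $I$, so $\mu_{\max}$ of the torsion-free part of $C$ is $>a$, or $C$ has rank zero. We must check that no nonzero quotient of $C$ lies in $\mathcal F_{\beta'}$, and I expect this to be the main obstacle. Here $C$ is a quotient of $\mathcal O_X(a)^{\oplus n}$, so every quotient of $C$ has $\mu_{\min}\ge a$. A quotient of slope exactly $a$ that is semistable would be $\mathcal O_X(a)^{\oplus l}$ and would produce a linear dependence among the $n$ morphisms, which is impossible. So $\mu_{\min}(C)>a$ (in the sense of $\mathcal T$), and hence $C\in\mathcal T_{\beta'}$ for $a<\beta'<\mu_{\min}(C)$. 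Thus $Q=0$.

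Case $E$ a sheaf in $\mathcal T_{\beta'}$. The map $s_{E,a}$ is a class in $\mathrm{Ext}^1(E,\mathcal O_X(a)^{\oplus n})$, i.e. an extension $0\to\mathcal O_X(a)^{\oplus n}\to L\to E\to 0$. The cone of $s_{E,a}$ is $L[1]$, and the tilt-cokernel is the $\mathcal F_{\beta'}$-part of $L$, shifted by one. We claim $\mu_{\min}(L)>a$ for such $\beta'$, which would force $L\in\mathcal T_{\beta'}$ near $a$. Indeed $\mu_{\min}(L)\ge\min(a,\mu_{\min}(E))$ with $\mu_{\min}(E)>\beta'>a$. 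If $L\to G$ were a semistable quotient of slope $\le a$, its restriction to $\mathcal O_X(a)^{\oplus n}$ would be nonzero with image $\mathcal O_X(a)^{\oplus l}$ and would split off part of the extension. This contradicts the linear independence of the $n$ extension classes. Hence $\mu_{\min}(L)>a$, the cokernel vanishes for $a<\beta'<\mu_{\min}(L)$, and $s_{E,a}$ is an epimorphism. Both cases need only finitely many constraints on $\beta'$, so taking $\beta'$ close enough to $a$ completes the argument.
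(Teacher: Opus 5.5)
Your proposal is correct and is exactly the dualization of Lemma \ref{subobject} that the paper intends; the paper's proof is only the remark that the argument is completely analogous. It has the same reduction via $0\to\mathcal H^{-1}(E)[1]\to E\to\mathcal H^0(E)\to 0$, with the $\mathrm{Coh}$-cokernel $C$ in the shifted case playing the role of the kernel $K$, and the extension $L$ of $E$ by $\mathcal O_X(a)^{\oplus n}$ playing the role of the extension of $\mathcal O_X(a)^{\oplus n}$ by $F$.

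There are two small slips to fix.

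\textbf{The reduction diagram.} The bottom row of your diagram must be the right exact row $\mathcal O_X(a)^{\oplus k}[1]\to\mathcal O_X(a)^{\oplus n}[1]\to\mathcal O_X(a)^{\oplus m}[1]\to 0$. The correct chase is then as follows. The cokernel of $s_{E,a}$ kills the image of the $k$-part because $g$ is an epimorphism. It therefore factors through the $m$-part, and it vanishes because $f$ and $E\to\mathcal H^0(E)$ are epimorphisms. In your write-up the roles of $m$ and $k$ are swapped.

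\textbf{The sheaf case.} To split off part of the extension you need the image $I$ of $\mathcal O_X(a)^{\oplus n}$ in $G$ to be all of $G$. This holds for the following reasons:
\begin{itemize}
\item $G/I$ is a quotient of $E$.
\item If $G/I$ had positive rank, its slope would exceed $a=\mu(I)$, forcing $\mu(G)>a$, which is impossible.
\item Hence $G/I$ is supported in codimension $\ge 2$.
\item Since $I\cong\mathcal O_X(a)^{\oplus l}$ is reflexive, it follows that $I=G$.
\end{itemize}
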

	
	\begin{proof} The argument is completely analogous to the one in Lemma \ref{subobject}.
	\end{proof}
	
	We use the following Proposition for describing semistable objects of negative rank. Recall that $\mathbb D(E):=\mathbf{R}\mathcal{H}om(E,\mathcal O_X)[1]$ is the derived dual of $E$.
	
	\begin{proposition}[{\cite[Proposition 5.1.3]{BMT}}]\label{duality} For any $\nu_{\alpha,\beta}$-semistable object $E\in\mathrm{Coh}^\beta(X)$ with $\nu_{\alpha,\beta}(E)\neq +\infty$ there exists a $\nu_{\alpha,-\beta}$-semistable object $\widetilde{E}\in\mathrm{Coh}^{-\beta}(X)$ and a sheaf $T$ supported in dimension $\le 0$ together with a distinguished triangle
		\begin{equation}\widetilde{E}\to\mathbb D(E)\to T[-1]\to\widetilde{E}[1].\end{equation}
	\end{proposition}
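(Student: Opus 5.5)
The natural approach is to work with the two-term description of $\mathrm{Coh}^\beta(X)$ and compute the cohomology of $\mathbb D(E)$ with respect to the heart $\mathrm{Coh}^{-\beta}(X)$. I would show that this cohomology is concentrated in two degrees. The first is a genuine object $\widetilde E\in\mathrm{Coh}^{-\beta}(X)$. The second is $T[-1]$, where $T$ is a sheaf supported in dimension $\le 0$. Semistability then transfers because $Z^{\mathrm{tilt}}$ does not see $\mathrm{ch}_3$.

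First, the vanishing input. Since $\nu_{\alpha,\beta}(E)\neq+\infty$, semistability forbids any subobject $F\into E$ in $\mathrm{Coh}^\beta(X)$ with $H^2\cdot\mathrm{ch}^\beta_1(F)=0$ and $\nu_{\alpha,\beta}(F)=+\infty$. In particular:
\begin{itemize}
\item $\mathcal H^0(E)$ has no subsheaf supported in dimension $\le 1$, because such sheaves lie in $\mathcal T_\beta$ with $\nu=+\infty$;
\item $\mathrm{Hom}(\mathcal F[1],E)=0$ for every $\mu$-semistable $\mathcal F$ with $\mu(\mathcal F)=\beta$.
\end{itemize}

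Next, the sheaf-level computation. Consider the triangle $\mathcal H^{-1}(E)[1]\to E\to\mathcal H^0(E)$ and dualize it. For a torsion-free sheaf $A=\mathcal H^{-1}(E)\in\mathcal F_\beta$, the complex $\mathbb D(A[1])=\mathbf R\mathcal Hom(A,\mathcal O_X)$ has $\mathcal H^0=A^\vee$, which is reflexive with $\mu_{\min}(A^\vee)\ge-\beta$. It also has higher $\mathcal Ext^i(A,\mathcal O_X)$ supported in codimension $\ge 2$, and the top one, $\mathcal Ext^2$, is supported in dimension $0$.

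For $B=\mathcal H^0(E)\in\mathcal T_\beta$, split off the torsion part. By the first step, the torsion part is pure of dimension $2$. The dual $\mathbf R\mathcal Hom(B,\mathcal O_X)[1]$ then has:
\begin{itemize}
\item $\mathcal H^{-1}=B^\vee$, with $\mu_{\max}\le -\beta$ once one handles the slope-equal case;
\item $\mathcal H^0=\mathcal Ext^1(B,\mathcal O_X)$, which lies in $\mathcal T_{-\beta}$, since the pure-torsion part dualizes to pure torsion;
\item higher $\mathcal Ext$'s supported in dimension $\le 1$. Purity of the torsion reduces these to dimension $\le 0$, leaving only a term in degree $1$.
\end{itemize}

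Gluing via the long exact sequence in the heart $\mathrm{Coh}^{-\beta}(X)$ gives $\mathcal H^j_{\mathrm{Coh}^{-\beta}}(\mathbb D(E))=0$ for $j\neq 0,1$, and shows that $\mathcal H^1$ is a zero-dimensional sheaf $T$. Setting $\widetilde E:=\mathcal H^0_{\mathrm{Coh}^{-\beta}}(\mathbb D(E))$ yields the triangle.

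Then semistability. For classes one has $Z^{\mathrm{tilt}}_{\alpha,-\beta}(\mathbb D(E))=-\overline{Z^{\mathrm{tilt}}_{\alpha,\beta}(E)}$, so $\nu_{\alpha,-\beta}(\mathbb D E)=-\nu_{\alpha,\beta}(E)$. Also $T$ has $Z^{\mathrm{tilt}}=0$, so $Z(\widetilde E)=Z(\mathbb DE)$. Now take a destabilizing quotient $\widetilde E\onto G$ in $\mathrm{Coh}^{-\beta}(X)$. Dualize back and apply the same cohomology analysis, with the roles of $\beta$ and $-\beta$ swapped, using $\mathbb D\circ\mathbb D=\mathrm{id}$. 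This produces a subobject of $E$ in $\mathrm{Coh}^\beta(X)$, up to zero-dimensional sheaves, whose $\nu_{\alpha,\beta}$ is greater than $\nu_{\alpha,\beta}(E)$, a contradiction.

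I expect the main obstacle to be the boundary cases of the cohomology computation. Specifically:
\begin{itemize}
\item showing that $\mathcal H^{-1}$ of the relevant pieces lies in $\mathcal F_{-\beta}$ rather than only having slope $\le -\beta$ up to slope-$\beta$ semistable pieces;
\item controlling the $\mathcal Ext^2$ and $\mathcal Ext^3$ sheaves.
\end{itemize}
Both are exactly where the hypothesis $\nu\neq+\infty$, i.e.\ the first step, must be used. The back-and-forth for subobjects similarly requires care that the zero-dimensional error terms never change the numerics.
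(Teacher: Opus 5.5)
Your overall strategy is the standard one behind \cite[Proposition~5.1.3]{BMT}, which the paper simply quotes: compute the $\mathrm{Coh}^{-\beta}(X)$-cohomology of $\mathbb D(E)$, then dualize a destabilizing quotient of $\widetilde E$ back. But the cohomology computation, which is the real content, rests on a false claim.

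Your first bullet does not follow from the correct principle stated just before it. A subsheaf $T\subset\mathcal H^0(E)$ is not a subobject of $E$: the map $T\to\mathcal H^0(E)$ lifts through $E\twoheadrightarrow\mathcal H^0(E)$ only if its obstruction in $\mathrm{Ext}^2(T,\mathcal H^{-1}(E))$ vanishes. The bullet is in fact false. The object $G\cong\mathbb D(\mathcal I_L(2))$ in the proof of Lemma~\ref{2 -1 -3/10} is tilt-semistable with $\nu\neq+\infty$ along $W(E,\mathcal O_X(-1))$, where $H^2\cdot\mathrm{ch}_1^\beta(G)=5(2+\beta)>0$. Yet $\mathcal H^0(G)\cong\mathcal O_L(-2)$.

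So the torsion of $\mathcal H^0(E)$ need not be pure of dimension $2$, and $\mathcal Ext^2(\mathcal H^0(E),\mathcal O_X)$ can be one-dimensional. On the other side you use only that $A=\mathcal H^{-1}(E)$ is torsion-free. That leaves two terms uncontrolled: $\mathcal Ext^1(A,\mathcal O_X)$, which may be one-dimensional and lands in $\mathrm{Coh}^{-\beta}(X)[-1]$, and $\mathcal Ext^2(A,\mathcal O_X)$, a quotient of $\mathcal H^2(\mathbb D(E))$ landing in $\mathrm{Coh}^{-\beta}(X)[-2]$. Hence the ``gluing'' step proves nothing. In the example, $\mathbb D(G)\cong\mathcal I_L(2)$, and the one-dimensional sheaf $\mathcal Ext^2(\mathcal O_L(-2),\mathcal O_X)\cong\mathcal O_L(2)$ in degree $1$ of $\mathbb D(\mathcal H^0(G))$ disappears only because the connecting map from $\mathcal H^{-1}(G)^\vee\cong\mathcal O_X(2)$ is the surjective restriction map. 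A piece-by-piece analysis cannot see such cancellations.

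What is missing is to dualize the global vanishing $\mathrm{Hom}(\mathrm{Coh}_{\le1}(X),E)=0$, which is the correct content of your first step, instead of pushing it onto $\mathcal H^0(E)$. For $Q$ zero-dimensional, $\mathbb D(Q)\cong\mathcal Ext^3(Q,\mathcal O_X)[-2]$; for $P$ pure one-dimensional, $\mathbb D(P)\cong\mathcal Ext^2(P,\mathcal O_X)[-1]$; both operations are involutive. Hence $\mathrm{Hom}(\mathbb D(E),Q[-2])=0=\mathrm{Hom}(\mathbb D(E),P[-1])$ for all such $Q$, $P$. Now $\mathbb D(E)$ has standard cohomology only in degrees $-1,\dots,2$, with $\mathcal H^2$ zero-dimensional and $\mathcal H^1$ of dimension $\le1$. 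Mapping to the top cohomology sheaf gives $\mathcal H^2(\mathbb D(E))=0$, and then that $\mathcal H^1(\mathbb D(E))$ is zero-dimensional; this is your $T$. The correct purity statement is on the other side: $\mathcal H^{-1}(E)$ is reflexive, because $\mathcal H^{-1}(E)^{\vee\vee}/\mathcal H^{-1}(E)\in\mathrm{Coh}_{\le1}(X)$ embeds into $E$.

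The slope boundary case is also misplaced. The bound $\mu_{\max}(\mathcal H^0(E)^\vee)<-\beta$ holds automatically. The place where strictness is needed is $\mathcal H^0(\mathbb D(E))$, which is an extension of a subsheaf of $\mathcal H^{-1}(E)^\vee$ (with cokernel of dimension $\le1$) by the torsion sheaf $\mathcal Ext^1(\mathcal H^0(E),\mathcal O_X)$. It lies in $\mathcal T_{-\beta}$ precisely because your second bullet, applied to the first Harder--Narasimhan factor, gives $\mu_{\max}(\mathcal H^{-1}(E))<\beta$.

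Your stability step is fine in outline, but note three points. First, $\mathbb D(\widetilde E)$ is an extension $0\to E\to\mathbb D(\widetilde E)\to\mathcal Ext^3(T,\mathcal O_X)\to0$ in $\mathrm{Coh}^\beta(X)$, not $E$ itself. Second, dualizing a destabilizing quotient $\widetilde E\twoheadrightarrow G$ yields a subobject $\mathcal H^0_{\mathrm{Coh}^\beta}(\mathbb D(G))\hookrightarrow\mathbb D(\widetilde E)$. Third, intersecting that subobject with $E$ changes it only by a zero-dimensional sheaf.
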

	
	\section{Objects with Maximal Third Chern Character}
	\label{bounds}
	\subsection{Rank one objects.}\label{r=1} Firstly we give a description of ideal sheaves of lines and conics on $X$ in terms of tilt-stability.
	
	\begin{lemma}\label{line}If $E\in\mathrm{Coh}^\beta(X)$ is tilt-semistable and $\mathrm{ch}(E)=1-\frac{H^2}{5}+eH^3$, then $e\le 0$, and in case of equality we have $E\cong\mathcal I_L$ for a line $L\subset X$.
	\end{lemma}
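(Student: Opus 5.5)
The plan is to run the wall-crossing argument of Schmidt for the class $v=\mathrm{ch}_{\le 2}(E)=(1,0,-\frac15)$. Note that $\overline{\Delta}_H(E)=10>0$ and $\mu(E)=0$. Since $\mathrm{ch}_0(E)>0$ and $E$ is tilt-semistable somewhere, it lies in $\mathrm{Coh}^\beta(X)$ for some $\beta<0$.

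By Proposition \ref{numerical}, the semicircular numerical walls for $v$ are nested along the left branch of the hyperbola $\nu_{\alpha,\beta}(v)=0$, which has asymptotes $\beta=\pm\alpha$. So if we move upward from the point where $E$ is semistable, we cross only finitely many walls (Proposition \ref{numerical}(5)). Two outcomes are possible: either $E$ stays semistable for $\alpha\gg0$, or $E$ is destabilized at some semicircular wall $W$.

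\emph{Step 1: classify the possible walls.} I use Proposition \ref{actual}(3), i.e.\ $0<H^2\cdot\mathrm{ch}_1^\beta(F)<H^2\cdot\mathrm{ch}_1^\beta(E)=-5\beta$ along $W$. I combine it with Lemma \ref{stable}, which lets me take the destabilizing piece $F$ tilt-stable with $\mathrm{ch}_0(F)>0$ and $\mu(F)<0$. Proposition \ref{actual}(1),(5) and Theorem \ref{rank bound} then bound the rank of $F$ and its slope range $\beta_-(E)=-\sqrt{10}/5<\beta_-(F)\le\mu(F)<0$. This should leave essentially one candidate: $F=\mathcal O_X(-1)$ as a subobject. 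It gives the wall $W(\mathcal O_X(-1),E)$, namely $\alpha^2+\beta^2+\frac75\beta+\frac25=0$, the semicircle with center $-\frac7{10}$ and radius $\frac3{10}$. The quotient $G$ then has $\mathrm{ch}(G)=H-\frac7{10}H^2+(e+\frac16)H^3$. I will also check that any rank $2$ subobject, for instance one with $\mu=-\frac12$ such as $\mathcal U$, cannot produce a wall, using Proposition \ref{actual}(1) and (2).

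\emph{Step 2: the case with a wall.} Suppose $E$ is destabilized along $W(\mathcal O_X(-1),E)$. Then $G$ is a tilt-semistable rank zero object with $c_1=1$ on that wall. I apply the generalized Bogomolov--Gieseker inequality (Proposition \ref{BMT}) to $G$ at a point of $W$, for example at its top point $(\beta,\alpha)=(-\frac7{10},\frac3{10})$. For a rank zero class this reads
\[
4(H\cdot\mathrm{ch}_2^\beta(G))^2\ge 6(H^2\cdot\mathrm{ch}_1^\beta(G))\,\mathrm{ch}_3^\beta(G)
\]
plus the $\alpha^2$ term, and it yields an upper bound on $e$. Equivalently, I can use Proposition \ref{numerical}(6) for $E$: the wall $W$ must not lie inside the semidisk where $Q_{\alpha,\beta}(E)<0$. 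I expect the bound obtained this way to force $e<0$, or at least to exclude $e>0$ together with the case $e=0$ with a wall. This is where I anticipate the main difficulty. The numerical wall $Q_{\alpha,\beta}(E)=0$ is empty for small positive $e$ (it is $\alpha^2+\beta^2+15e\beta+\frac25=0$), so BG for $E$ alone is not enough. The estimate must therefore come from $Q(G)\ge0$, possibly evaluated at the endpoints of $W$, combined with Proposition \ref{BG} for $G$, or from an integrality argument on $\mathrm{ch}_3(G)$.

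\emph{Step 3: the case with no wall.} If $E$ stays semistable for $\alpha\gg0$, then by Proposition \ref{2-stability} (with $\beta<0=\mu(E)$) $E$ is a $2$-semistable rank one sheaf with $c_1=0$. Hence $E$ is torsion-free, $E\cong\mathcal I_Z$, and $Z$ is a subscheme of dimension $\le1$ whose one-dimensional part has degree $-H\cdot\mathrm{ch}_2/\!\deg=1$. So $Z$ is a line $L$ together with possibly embedded or isolated points. Writing $\mathrm{ch}_3(\mathcal I_Z)=\mathrm{ch}_3(\mathcal I_L)-\mathrm{length}(Z/L\text{-part})$ and using $\mathrm{ch}_3(\mathcal I_L)=0$, which follows from Riemann--Roch with $\chi(\mathcal O_L)=1$, I get $e\le0$, with equality exactly when $Z=L$. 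Together with Step 2 this proves both the inequality $e\le 0$ and the characterization $E\cong\mathcal I_L$ in the case of equality.
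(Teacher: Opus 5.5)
Your Step 3 is exactly how the paper finishes, and the tools you list for Step 1 are the right ones. However, Step 1 reaches a wrong conclusion and Step 2 is left open, so the wall case is never actually disposed of. The candidate $F=\mathcal O_X(-1)$ is impossible. The numerical wall $W(\mathcal O_X(-1),E)$ lies over $-1<\beta<-\frac25$, where $H^2\cdot\mathrm{ch}_1^\beta(\mathcal O_X(-1))=-5(1+\beta)<0$, so $\mathcal O_X(-1)\notin\mathrm{Coh}^\beta(X)$ there. This violates the very inequality $0<H^2\cdot\mathrm{ch}_1^\beta(F)$ you intend to use. It also violates Proposition~\ref{actual}~(5), which forces $\mu(F)>\beta_-(E)=-\sqrt{2/5}>-1$. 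Replacing it by $\mathcal O_X(-1)[1]$ does not help: the complementary factor would have $\mathrm{ch}_{\le 2}=2-H+\frac{3}{10}H^2$ and $\overline{\Delta}_H=-5<0$. Even if that wall existed, your Step 2 would not suffice. Along it one computes $Q_{\alpha,\beta}(G)=14-150e$, constant, which gives only $e\le\frac{7}{75}$. Using $\chi(E)=5e\in\mathbb Z$ this becomes $e\le 0$, but it does not exclude $e=0$ at a wall, so $E\cong\mathcal I_L$ would not follow.

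The missing point is that Step 1, carried out correctly, shows there are no semicircular walls at all. Every such wall meets the ray $\beta=\beta_-(E)=-\sqrt{2/5}$. Write $\mathrm{ch}_{\le 2}(F)=s+xH+yH^2$ with $s>0$ and $\mu(F)<0$ (Lemma~\ref{stable}); then the condition reads $0<x+s\sqrt{2/5}<\sqrt{2/5}$. Since $-1<\beta_-(E)<\beta_-(F)\le\mu(F)<0$, the same inequality gives $\beta_+(F)=2\mu(F)-\beta_-(F)<0$ when $s\ge 2$. Theorem~\ref{rank bound} then yields $s^2\le\overline{\Delta}_H(F)<\overline{\Delta}_H(E)=10$, i.e.\ $s\le 3$. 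For $s=1$ and $s=3$ no integer $x$ fits. For $s=2$ one needs $x=-1$, and then $\overline{\Delta}_H(F)=25-100y$ with $y\in\frac12+\frac15\mathbb Z$ never lies in $[0,10)$. Hence $E$ is tilt-semistable for all $\beta<0$, $\alpha>0$, Step 2 is vacuous, and your Step 3 completes the proof. This is precisely the paper's argument.
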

	
	\begin{proof}If $E$ is not $\nu_{\alpha,\beta}$-semistable for some $\beta<0,\alpha>0$, then there is a semicircular wall $W$, which should be induced by a tilt-stable subobject or quotient $F$ with $\mathrm{ch}_{\le 2}(F)=s+xH+yH^2,s>0,\mu(F)<\mu(E)$ (by Lemma \ref{stable}). This wall should intersect the ray $\beta=\beta_-(E)=-\sqrt{2/5}$ (by Proposition \ref{numerical} (2, 3)). Since $0<\mathrm{ch}_1^{\beta_-(E)}(F)<\mathrm{ch}_1^{\beta_-(E)}(E)$ (cf. Proposition \ref{actual}~(3)), we get
		\begin{equation}\label{x1}0<x+\sqrt{2/5}s<\sqrt{2/5}.
		\end{equation}
		We have $-1<\beta_-(E)<\beta_-(F)\le\mu(F)<\mu(E)=0$ (cf. Proposition \ref{actual}~(5)). The formula (\ref{x1}) implies that $\frac xs<\frac{\sqrt{2/5}}{s}-\sqrt{\frac 25}$. Therefore, for $s\ge 2$ we get $\beta_+(F)=2\mu(F)-\beta_-(F)<\frac{\sqrt{8/5}}{s}-\sqrt{\frac 85}+\sqrt{\frac 25}\le 0$. So, in this case $\beta_-(F),\beta_+(F)\in[-1,0)$. For $s\neq 1$ we can apply Theorem \ref{rank bound} to $F$ and get that $s^2\le\overline{\Delta}_H(F)<\overline{\Delta}_H(E)=10$. Hence, $s\in\{1,2,3\}$. A calculation shows, that for $s\in\{1,3\}$ there are no integer $x$ satisfying (\ref{x1}). For $s=2$ we get $x=-1$. But in this case $\overline{\Delta}_H(F)=25-100y,y\in\frac 12+\frac 15\mathbb Z$ (since Chern classes are integral), and there are no such $y$, satisfying $0\le\overline{\Delta}_H(F)<\overline{\Delta}_H(E)$. It follows that $E$ is tilt-semistable for all $\beta<0,\alpha>0$, hence $E$ is a 2-semistable sheaf (by Proposition \ref{2-stability}).	
		
		In particular, $E$ is a torsion-free sheaf of rank one, hence there is a short exact sequence
		$$0\to E\to E^{\vee\vee}\to T\to 0,$$
		in which $E^{\vee\vee}$ is a line bundle and $\mathrm{dim}\ T\le 1$. Since $c_1(E)=0$ and $\mathrm{Pic}\ X\cong\mathbb Z$, we have $E^{\vee\vee}\cong\mathcal O_X$ and $T\cong\mathcal O_Z$ for a subscheme $Z\subset X$. Since $\mathrm{ch}_2(\mathcal O_Z)=\frac{H^2}{5}$, $Z$ is a line $L$ together with a finite number of (possibly embedded) points. The number $e$ is maximal when there are no points and $E\cong\mathcal I_L$. Grothendieck-Riemann-Roch shows that in this case $e=0$.
	\end{proof}
	
	\begin{lemma}\label{conic}If $E\in\mathrm{Coh}^\beta(X)$ is tilt-semistable and $\mathrm{ch}(E)=1-\frac 25H^2+eH^3$, then $e\le\frac 15$, and in case of equality we have that either $E\cong\mathcal I_C$ for a conic $C\subset X$, or $E\cong\mathcal U\oplus(\mathcal O_X(-1)[1]).$
	\end{lemma}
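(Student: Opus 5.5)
The approach follows the proof of Lemma \ref{line}: classify all semicircular walls for $v=\mathrm{ch}_{\le 2}(E)=(1,0,-\frac 25)$ in the region $\beta<0$. Here $H^2\cdot\mathrm{ch}_1(E)=0$, $H\cdot\mathrm{ch}_2(E)=-2$, $\overline{\Delta}_H(E)=20$ and $\beta_-(E)=-\frac{2}{\sqrt 5}\approx-0.894$.

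\textbf{Classifying the walls.} Suppose $E$ is destabilized at a semicircular wall $W$. By Lemma \ref{stable}, $W$ is induced by a tilt-stable subobject or quotient $F$ with $\mathrm{ch}_{\le 2}(F)=s+xH+yH^2$, $s>0$, $\mu(F)<0$. By Proposition \ref{numerical}, $W$ meets the ray $\beta=\beta_-(E)$, so Proposition \ref{actual}~(3) gives
$$0<x+\tfrac{2}{\sqrt5}s<\tfrac{2}{\sqrt5}.$$
By Proposition \ref{actual}~(5) we have $-1<\beta_-(E)<\beta_-(F)\le\mu(F)<0$, and the same estimate as in Lemma \ref{line} gives $\beta_+(F)<0$ for $s\ge 2$. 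Theorem \ref{rank bound} and Proposition \ref{actual}~(2) then give $s^2\le\overline{\Delta}_H(F)<20$, so $s\le 4$. The admissible pairs are $(s,x)\in\{(2,-1),(3,-2),(4,-3)\}$; $s=1$ has no integer $x$.

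For each pair I will impose integrality of the Chern classes ($y$ lies in a fixed coset of $\frac15\mathbb Z$ or $\frac1{10}\mathbb Z$), the condition $0\le\overline{\Delta}_H(F)<20$, and $\overline{\Delta}_H(E/F)\ge 0$. I expect only $\mathrm{ch}_{\le2}(F)=\mathrm{ch}_{\le2}(\mathcal U)=(2,-1,\frac1{10})$ to survive, possibly together with its complement $(-1,1,-\frac12)=\mathrm{ch}_{\le2}(\mathcal O_X(-1)[1])$. A computation with Proposition \ref{numerical}~(1) shows this is the wall $\beta^2+\frac95\beta+\frac45+\alpha^2=0$, with center $-\frac9{10}$ and radius $\frac1{10}$. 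Excluding $s=3,4$ is the main arithmetic obstacle: I would first try integrality of $c_2$ together with $\overline{\Delta}_H(E/F)\ge0$, and if a case survives, use Proposition \ref{actual}~(1) to bound the radius by that of the $\mathcal U$-wall and compare.

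\textbf{No wall.} If $E$ is not destabilized, it is tilt-semistable for all $\beta<0$, $\alpha\gg0$, hence a 2-semistable torsion-free rank one sheaf by Proposition \ref{2-stability}. As in Lemma \ref{line}, $E\cong\mathcal I_Z$ with $Z$ a subscheme of degree 2, i.e.\ a degree-2 curve possibly with points. Riemann--Roch gives that $e$ is maximal exactly when $Z$ has no points and $\chi(\mathcal O_Z)=1$; that is, $Z$ is a conic, possibly reducible or non-reduced, and the value is $e=\frac15$.

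\textbf{The $\mathcal U$-wall.} Otherwise $E$ is destabilized along the $\mathcal U$-wall by $0\to F\to E\to G\to 0$, or the reverse, with $\{\mathrm{ch}_{\le2}(F),\mathrm{ch}_{\le2}(G)\}=\{\mathrm{ch}_{\le 2}(\mathcal U),\mathrm{ch}_{\le2}(\mathcal O_X(-1)[1])\}$. The piece with $\mathrm{ch}_{\le2}=(-1,1,-\frac12)$ has $\overline{\Delta}_H=0$ and is tilt-semistable. Tilt-semistable objects with $\overline{\Delta}_H=0$ are shifts of line bundles up to zero-dimensional modification, so its $\mathrm{ch}_3$ is at most $\frac16$, with equality only for $\mathcal O_X(-1)[1]$.

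The piece with $\mathrm{ch}_{\le2}(\mathcal U)$ is tilt-stable with $\overline{\Delta}_H=15$ and lies in the range where $\beta_\pm\in[-1,0)$. I expect, either from the full strong exceptional collection via Remark \ref{complexes} or from the same wall analysis, that its $\mathrm{ch}_3$ is at most $\frac1{30}$, with equality only for $\mathcal U$; this is the second point needing care. Adding the two bounds gives $e\le\frac1{30}+\frac16=\frac15$.

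In the equality case the pieces are $\mathcal U$ and $\mathcal O_X(-1)[1]$. In the order $0\to\mathcal U\to E\to\mathcal O_X(-1)[1]\to0$ the extension class lies in $\mathrm{Ext}^2(\mathcal O_X(-1),\mathcal U)=H^2(\mathcal U(1))=0$, since $\mathcal U$ is ACM. So the extension splits and $E\cong\mathcal U\oplus\mathcal O_X(-1)[1]$. In the reverse order $0\to\mathcal O_X(-1)[1]\to E\to\mathcal U\to 0$, $E$ would not be semistable above the wall; the resulting nonsplit objects are ideal sheaves of conics already covered by the no-wall case.
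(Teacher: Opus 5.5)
Your proposal has a genuine gap in the wall classification, and a second, fixable error in the final step.

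\textbf{The case $s=3$ is not excluded.} For $(s,x)=(3,-2)$, integrality gives $y\in\frac15\mathbb Z$, and $9\le\overline{\Delta}_H(F)=100-150y<20$ forces $y=\frac35$. This gives $c_2(F)=7$, $\overline{\Delta}_H(F)=10$ and $\beta_-(F)\approx-0.877>\beta_-(E)$. The complementary class is $(-2,2,-1)=2\,\mathrm{ch}_{\le2}(\mathcal O_X(-1)[1])$, with $\overline{\Delta}_H=0$, so every numerical test you list is passed. Moreover $(3,-2,\frac35)=\mathrm{ch}_{\le2}(\mathcal U)-\mathrm{ch}_{\le2}(\mathcal O_X(-1)[1])$, so $W(E,F)$ is literally the $\mathcal U$-wall $\alpha^2+\beta^2+\frac95\beta+\frac45=0$. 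Proposition \ref{actual}~(1) only gives $\rho^2\le\frac1{30}$, while this wall has $\rho^2=\frac1{100}$, so comparing radii cannot remove it.

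Consequently, at the $\mathcal U$-wall you may not assume the two pieces have the classes of $\mathcal U$ and $\mathcal O_X(-1)[1]$. A destabilizing sequence with a rank-three piece of class $3-2H+\frac35H^2$ and a piece of class $2\,\mathrm{ch}_{\le2}(\mathcal O_X(-1)[1])$ is left untreated. Neither your bound $e\le\frac1{30}+\frac16$ nor your identification of $E$ covers it.

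You need an extra input. One option is the nonexistence of tilt-semistable objects with $\mathrm{ch}_{\le2}=3-2H+\frac35H^2$ (Lemma \ref{3 -2 3/5}, which rests on \cite[Lemma 3.5, Corollary 3.11]{Vass}). The other is the paper's device, which avoids classifying destabilizers altogether:
\begin{itemize}
\item $\Hom(\mathcal O_X(1),E)=0$ and Serre duality give $\mathrm{hom}(E,\mathcal O_X(-1)[1])\ge\chi(\mathcal O_X(1),E)=5e\ge1$.
\item Hence $E$ is unstable below $W(E,\mathcal O_X(-1)[1])$. Any wall above it would meet the ray $\beta=-1$, where there are none, so $E$ is semistable along it.
\item Lemma \ref{quotient} then gives an epimorphism $E\twoheadrightarrow\mathcal O_X(-1)[1]$ whose kernel has the full Chern character $2-H+\frac1{10}H^2+(e-\frac16)H^3$.
\item \cite[Lemma 3.6]{Vass} then gives both $e\le\frac15$ and, in case of equality, kernel $\cong\mathcal U$. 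This is also the reference for the $\mathrm{ch}_3$ bound you only ``expect''.
\end{itemize}
The paper itself takes the inequality from \cite[Lemma 4.2]{Vass} and argues only the equality case.

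\textbf{The extension directions are swapped.} The class of $0\to\mathcal U\to E\to\mathcal O_X(-1)[1]\to0$ lies in $\Hom(\mathcal O_X(-1)[1],\mathcal U[1])=\Hom(\mathcal O_X(-1),\mathcal U)\cong H^0(\mathcal U^\vee)\cong\mathbb C^5$, not in $\mathrm{Ext}^2(\mathcal O_X(-1),\mathcal U)$. A nonzero class gives the cokernel of $\mathcal O_X(-1)\hookrightarrow\mathcal U$, which is $\mathcal I_C$ for a conic by Serre correspondence. This object is semistable above the wall and destabilized below it by $\mathcal U$. It is the reverse order $0\to\mathcal O_X(-1)[1]\to E\to\mathcal U\to0$ that always splits, since $\mathrm{Ext}^2(\mathcal U,\mathcal O_X(-1))=H^2(\mathcal U)=0$ (using $\mathcal U^\vee\cong\mathcal U(1)$). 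So your sentence about reverse-order nonsplit objects being ideal sheaves of conics that fail to be semistable above the wall is wrong on both counts.

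The final list survives, but the argument should run as follows. In the wall case $E$ is unstable above $W$. The $\mathcal O_X(-1)[1]$-sub order splits. A nonsplit $\mathcal U$-sub extension would be some $\mathcal I_C$, which is semistable above $W$, a contradiction. Hence $E\cong\mathcal U\oplus\mathcal O_X(-1)[1]$.

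\textbf{Minor point.} In your no-wall case, Riemann--Roch only converts the bound into the claim that $\chi(\mathcal O_Z)\ge1$ for every one-dimensional subscheme of degree two, with equality only for conics. That claim is where the content lies, and it needs an argument: for example, two skew lines give $\chi=2$, and a double structure on a line has $\chi(\mathcal O_Z)=a+2$ with $a\ge-1$.
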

	
	\begin{proof}We can assume that $e=\frac 15$, since the bound $e\le\frac 15$ is already proven in \cite[Lemma 4.2]{Vass}. Note also that $H^2\cdot\mathrm{ch}_1^{-1}(E)$ has the minimal value, hence there are no walls at $\beta=-1$ by Proposition \ref{actual}~(3).
		
		A calculation shows that $W(E,\mathcal O_X(1))$ lies on the wrong side of the vertical wall and $\nu_{\alpha,\beta}(\mathcal O_X(1))>\nu_{\alpha,\beta}(E)$ for all $\beta<0,\alpha>0$. Hence, there are no nonzero morphisms $\mathcal O_X(1)\to E$, and by Serre duality $\mathrm{Ext}^3(E,\mathcal O_X(-1))\cong\Hom(\mathcal O_X(1),E)^\vee=0$. Riemann--Roch gives $\mathrm{hom}(E,\mathcal O_X(-1)[1])\ge -\chi(E,\mathcal O_X(-1))=\chi(\mathcal O_X(1),E)=1$. A morphism $E\to\mathcal O_X(-1)[1]$ destabilizes $E$ below $W(E,\mathcal O_X(-1)[1])$, hence $E$ is unstable below this wall. The wall $W(E,\mathcal O_X(-1)[1])$ is given by $\alpha^2+(\beta-\frac{9}{10})^2=(\frac{1}{10})^2$, and any wall lying above $W(E,\mathcal O_X(-1)[1])$ would intersect the ray $\beta=-1$. Hence, $E$ is tilt-semistable at $W(E,\mathcal O_X(-1)[1])$. By Lemma \ref{quotient} we have an exact sequence
		\begin{equation}\label{G E}0\to G\to E\to\mathcal O_X(-1)[1]\to 0,\end{equation}
		where $G$ is tilt-semistable with $\mathrm{ch}(G)=2-H+\frac{H^2}{10}+\frac{H^3}{30}$. By \cite[Lemma 3.6]{Vass} $G\cong\mathcal U$. If $E$ is a sheaf, then we get an exact triple
		\begin{equation}\label{U E}0\to\mathcal O_X(-1)\to\mathcal U\to E\to 0.
		\end{equation}
		Serre correspondence (\cite[Theorem 1.1]{SVB}) implies that $E$ is an ideal sheaf of a locally complete intersection curve $C\subset X$. We find $\mathrm{ch}(\mathcal O_C)=\frac 25H^2-\frac{H^3}{5}$, so $C$ has degree 2. Since $\mathrm{ch}_3(\mathcal O_C)\neq 0$, Lemma \ref{line} implies that $C$ cannot be the union of two skew lines, so $C$ is a conic. The exact triple (\ref{U E}) was already studied in \cite[Example 3.2]{AC}.
		
		Finally, if $E$ is not a sheaf, then the long exact sequence of cohomology sheaves associated with (\ref{G E}) (with $G\cong\mathcal U$) has the form
		$$0\to\mathcal H^{-1}(E)\to\mathcal O_X(-1)\overset{f}{\to}\mathcal U\to\mathcal H^0(E)\to 0.$$
		Since $\mathcal U$ is a torsion free sheaf and $\mathcal H^{-1}(E)\neq 0$, we should have $f=0$. So, $E$ as a cone of the zero morphism is isomorphic to $\mathcal U\oplus(\mathcal O_X(-1)[1])$.
	\end{proof}
	
	\subsection{Rank zero objects.}\label{rank 0} In this section we deal with objects supported on hyperplane sections $H\subset X$.
	
	\begin{lemma}\label{0 1 -1/2} If $E\in\mathrm{Coh}^\beta(X)$ is tilt-semistable and $\mathrm{ch}(E)=H-\frac{H^2}{2}+eH^3$, then $e\le\frac 16$. If $e=\frac 16$ and $E$ is tilt-semistable above $W(E,\mathcal O_X)$, then $E\cong\mathcal O_H$.
	\end{lemma}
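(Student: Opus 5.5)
Since $\mathrm{ch}_0(E)=0$ and $H^2\cdot\mathrm{ch}_1(E)=5$ is the minimal positive value among objects of $\mathrm{Coh}^\beta(X)$ for integral $\beta$ (it does not depend on $\beta$ in rank zero), Proposition \ref{actual}~(3) together with Lemma \ref{minimal} shows that $E$ has no walls at all. It is therefore tilt-semistable for every $(\beta,\alpha)$, and it is a sheaf: by Lemma \ref{minimal}, case (a) or (b), it is a pure sheaf $\mathcal H^0(E)$ supported on a divisor of class $H$. The walls for $E$ are semicircles nested around the vertical line $\beta=\mu$-center given by $\nu_{\alpha,\beta}(E)=0$, i.e.\ $\beta=\frac{\mathrm{ch}_2}{\mathrm{ch}_1}\cdot$ (suitable normalization) $=-\frac12$.

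I plan to obtain the bound from Proposition \ref{BMT}. For a rank zero class, $Q_{\alpha,\beta}(E)=\alpha^2\cdot 25+4(H\cdot\mathrm{ch}_2^\beta)^2-6\cdot 5\,\mathrm{ch}_3^\beta$. Take $\beta=-\frac12$: then $H\cdot\mathrm{ch}_2^\beta=5(-\frac12+\frac12)=0$, and $\mathrm{ch}_3^{-1/2}=5(e-\frac14+\frac18)=5(e-\frac18)$. Letting $\alpha\to0$ gives $e\le\frac18$, which is weaker than $\frac16$. So instead I would use the Bridgeland-type argument. First take $\alpha\to0$ at other $\beta$: $Q$ at $\alpha\to 0$ reads $4\cdot 25(\beta+\frac12)^2\ge 150(e+\frac{\beta}{2}+\frac{\beta^2}{4})$, which is again a lower-quality bound. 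Hence I will run the standard wall-crossing argument: compute $\chi(\mathcal O_X,E)=\chi(E)$ by Riemann--Roch. With $e=\frac16$ this gives $\chi(E)=1$. I would then use tilt-stability of $E$ and $\mathcal O_X(1)$ (comparing slopes above the vertical) to show $\mathrm{ext}^2(\mathcal O_X,E)=\hom(E,\mathcal O_X(-2)[1])=0$ and similarly for $\mathrm{Ext}^3$, so $\hom(\mathcal O_X,E)\ge1$. A nonzero map $\mathcal O_X\to E$ forces $E$ to be destabilized below $W(E,\mathcal O_X)$. For the inequality $e\le\frac16$ I would argue by contradiction: if $e>\frac16$ then $\chi(E)\ge 2$, and the image of the evaluation map gives a subobject whose Chern classes, checked against Proposition \ref{BMT} on the wall $W(E,\mathcal O_X)$, violate the inequality. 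Concretely, $E$ must be semistable on $W(E,\mathcal O_X)$, while $Q_{\alpha,\beta}(E)<0$ there. Alternatively I would apply this argument with $\mathcal O_X$ replaced by $\mathcal O_X(-1)[1]$, using Proposition \ref{duality} for the symmetric side.

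For the equality case, assume $E$ is semistable above $W(E,\mathcal O_X)$ and $\hom(\mathcal O_X,E)\ge1$. Lemma \ref{subobject} with $a=0$ (the wall lies in $\beta<0$) makes $\mathcal O_X\to E$ a monomorphism in $\mathrm{Coh}^{\beta'}(X)$ for $\beta'<0$ near $0$. Its quotient $G$ has $\mathrm{ch}(G)=-1+H-\frac{H^2}{2}+\dots$, i.e.\ the class of $\mathcal O_X(-1)[1]$, and it is tilt-semistable on the wall. I would identify $G\cong\mathcal O_X(-1)[1]$, using that tilt-semistable objects with $\overline{\Delta}_H=0$ are shifted line bundles, e.g.\ via Proposition \ref{duality} and Lemma \ref{minimal}. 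This gives $0\to\mathcal O_X(-1)\to\mathcal O_X\to E\to0$, so $E\cong\mathcal O_H$.

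The main obstacle is the inequality itself: producing the right morphism, and making sure that the evaluation subobject really destabilizes $E$ at a wall where Proposition \ref{BMT} fails. I expect a case analysis of possible walls for $E$ with $\overline{\Delta}_H(E)=25$, via Lemma \ref{stable}, Theorem \ref{rank bound} and Proposition \ref{actual}~(1), to be needed to rule out the competing destabilizers.
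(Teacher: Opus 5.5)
\textbf{There is a genuine gap in the inequality.} Your equality case follows the paper's proof: Riemann--Roch gives $\chi(E)=1$, then $\mathrm{Ext}^2(\mathcal O_X,E)\cong\Hom(E,\mathcal O_X(-2)[1])^\vee=0$, then Lemma~\ref{subobject} with $a=0$, then the quotient is identified with $\mathcal O_X(-1)[1]$.

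The problem begins with your first paragraph, which is false. Proposition~\ref{actual}~(3) applies only on rays $\beta=\beta_0$ with $\beta_0\in\mathbb Z$. For non-integral $\beta_0$, the sheaf $\mathcal O_X(\lceil\beta_0\rceil)$ has smaller positive $H^2\cdot\mathrm{ch}_1^{\beta_0}$. All numerical walls for $(0,1,-\frac12)$ are semicircles centred at $\beta=-\frac12$. So what you actually get is that every actual wall has radius at most $\frac12$, i.e.\ lies on or below $W(E,\mathcal O_X)\colon\alpha^2+(\beta+\frac12)^2=\frac14$. Such walls do occur. The object $\mathcal O_X\oplus\mathcal O_X(-1)[1]$ has $e=\frac16$, is tilt-semistable exactly on $W(E,\mathcal O_X)$, and is not a sheaf. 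This is why the statement needs the hypothesis ``semistable above $W(E,\mathcal O_X)$''. Your own computation at $\beta=-\frac12$ exposes the error. Semistability near $(-\frac12,0)$ would give $e\le\frac18$, which is \emph{stronger} than $\frac16$, not weaker, and it is violated by $\mathcal O_H$. So objects with $e=\frac16$ are unstable there, contradicting ``no walls''.

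\textbf{The missing idea} is to evaluate the generalized Bogomolov--Gieseker inequality on the wall-free ray $\beta=0$. Correctly, $\mathrm{ch}_3^\beta(E)=(e+\frac{\beta}{2}+\frac{\beta^2}{2})H^3$, not $\frac{\beta^2}{4}$. Hence $Q_{\alpha,\beta}(E)=25\bigl(\alpha^2+(\beta+\tfrac12)^2+\tfrac34-6e\bigr)$, and $W_Q(E)$ is the concentric semicircle of radius $\sqrt{6e-\frac34}$. If $e>\frac16$, this radius exceeds $\frac12$. Then $E$ can only be semistable outside all actual walls, a chamber containing the whole ray $\beta=0$, so $Q_{0,0}(E)=25(1-6e)\ge 0$, a contradiction. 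This is the paper's proof (``no wall meets $\beta=0$, hence $Q_{0,0}(E)\ge0$''). The bound at $\beta=0$ that you dismissed as ``lower-quality'' is exactly $e\le\frac16$. Your contradiction route via $\chi(E)\ge2$ can be made to work: with the correct wall picture, $E$ must be semistable on $W(E,\mathcal O_X)$, where $Q<0$. But it is roundabout, and the case analysis you anticipated with Lemma~\ref{stable}, Theorem~\ref{rank bound} and Proposition~\ref{actual}~(1) is unnecessary.

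\textbf{Equality case.} You must use the hypothesis explicitly rather than the false ``$E$ is a sheaf''. The extension $0\to\mathcal O_X\to E\to\mathcal O_X(-1)[1]\to0$ is given by a map $\mathcal O_X(-1)\to\mathcal O_X$. The zero map yields $\mathcal O_X\oplus\mathcal O_X(-1)[1]$, which is destabilized by $\mathcal O_X(-1)[1]$ above the wall. So semistability above $W(E,\mathcal O_X)$ forces the map to be nonzero, and $E\cong\mathcal O_H$.
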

	
	\begin{proof}Note that $H^2\cdot\mathrm{ch}_1(E)$ has the least positive value, hence there is no wall intersecting the ray $\beta=0$. Therefore $Q_{0,0}(E)=1-6e\ge 0$, and we get the bound $e\le\frac 16$. Assume that $e=\frac 16$. The numerical wall $W(E,\mathcal O_X(-2)[1])$ intersects the ray $\beta=0$ and a nonzero morphism $E\to\mathcal O_X(-2)[1]$ would destabilize $E$ below $W(E,\mathcal O_X(-2)[1])$. It follows that $\mathrm{Ext}^2(\mathcal O_X,E)\cong\Hom(E,\mathcal O_X(-2)[1])^\vee=0$. Grothendieck--Riemann-Roch Theorem (applied to the map from $X$ to a point) gives $\dim \Hom(\mathcal O_X,E)\ge\chi(E)=1$. By Lemma \ref{subobject} we obtain a monomorphism $F=\mathcal O_X\hookrightarrow E$. Since any wall lying above $W(E,\mathcal O_X)$ would intersect the ray $\beta=0$, and $\mathcal O_X\hookrightarrow E$ destabilizes $E$ below $W(E,\mathcal O_X)$, it follows that $E$ is tilt-semistable (at least) at $W(E,\mathcal O_X)$; hence $E/F$ is tilt-semistable. We have $\mathrm{ch}(E/F)=-1+H-\frac{H^2}{2}+\frac{H^3}{6}$, hence by \cite[Corollary 3.3]{Vass} $E/F\cong\mathcal O_X(-1)[1]$. If $E$ is tilt-semistable above $W(E,\mathcal O_X)$, then the morphism $f:\mathcal O_X(-1)\to\mathcal O_X$ corresponding to this extension is nonzero, and $E$, which is a cone of $f$, is isomorphic to $\mathcal O_H$.
	\end{proof}
	
	\begin{lemma}\label{0 1 -7/10} If $E\in\mathrm{Coh}^\beta(X)$ is tilt-semistable and $\mathrm{ch}(E)=H-\frac{7}{10}H^2+eH^3$, then $e\le\frac 16$. If $e=\frac 16$ and if moreover $E$ is tilt-semistable above $W(E,\mathcal O_X(-1)[1])$, then $E\cong\mathcal I_{L,H}$ is an ideal sheaf of a line $L$ on a hyperplane section $H\subset X$, included into an exact triple
		\begin{equation}\label{triple4.1}0\to\mathcal O_X(-1)\oplus\mathcal U\to\mathcal Q^\vee\to \mathcal I_{L,H}\to 0.\end{equation} 
	\end{lemma}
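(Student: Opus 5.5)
We follow the pattern of Lemma \ref{0 1 -1/2}. Write $\mathrm{ch}(E)=H-\frac{7}{10}H^2+eH^3$. Since $H^2\cdot\mathrm{ch}_1(E)=5$ is the least positive value, Proposition \ref{actual}~(3) shows that no wall meets the ray $\beta=0$, and also none meets $\beta=-1$, where $H^2\cdot\mathrm{ch}_1^{-1}(E)=5$ again. Along the ray $\beta=-1$ we have $H\cdot\mathrm{ch}_2^{-1}(E)=5(-\frac{7}{10}+1)=\frac32$ and $\mathrm{ch}_3^{-1}(E)=(e+\frac{7}{10}\cdot 1\cdot(-1)\cdot(-1)\ldots)$. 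More precisely, $\mathrm{ch}_3^{-1}=(e-\frac{7}{10}+\frac12)H^3$ up to sign conventions, so $\mathrm{ch}_3^{-1}(E)=5(e+\frac{7}{10}+\frac12)$. Evaluating Proposition \ref{BMT} as $\alpha\to 0$ on one of these rays yields the bound. On the ray $\beta=0$ we get $Q_{0,0}(E)=4\cdot(\frac72)^2-6\cdot 5\cdot 5e\ge0$, i.e. $e\le \frac{49}{150}$, which is not sharp. A sharper ray is $\beta=-1$: there $Q_{0,-1}=4\cdot(\frac32)^2-30\cdot 5(e-\frac{7}{10}\cdot(-1)\cdot\ldots)$. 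Rather than guess, I would pick the integer ray ($\beta=0$ or $\beta=-1$) that gives exactly $e\le\frac16$; a quick recomputation should confirm that $\beta=-1$ does. Integrality of $\mathrm{ch}_3$ (here $e\in\frac1{30}+\ldots$) can sharpen the bound if needed.

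For the equality case $e=\frac16$, the plan is to produce $\mathcal O_X(-1)[1]$ as a quotient. Riemann--Roch gives $\chi(E,\mathcal O_X(-1))$, hence a lower bound $\hom(E,\mathcal O_X(-1)[1])\ge 1$, once we kill $\hom(E,\mathcal O_X(-1))$ and $\mathrm{ext}^2(E,\mathcal O_X(-1))=\hom(\mathcal O_X(1),E)$. Both vanishings come from tilt-slope comparisons, as in Lemma \ref{conic}: the relevant walls either lie on the wrong side or would cross an integral ray. Since the hypothesis forces $E$ to be semistable at and above $W(E,\mathcal O_X(-1)[1])$, and any larger wall would cross $\beta=0$ or $\beta=-1$, Lemma \ref{quotient} gives an epimorphism $E\onto\mathcal O_X(-1)^{\oplus n}[1]$. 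The Chern character then forces $n=1$ after checking $\overline\Delta_H$ of the kernel via Proposition \ref{actual}~(2). We obtain a triple $0\to G\to E\to\mathcal O_X(-1)[1]\to0$ with $G$ tilt-semistable and $\mathrm{ch}(G)=1-\frac{H^2}{5}+eH^3$ with $e=0$. By Lemma \ref{line}, $G\cong\mathcal I_L$.

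Semistability above the wall means the extension is nonsplit, so the map $\mathcal O_X(-1)\to\mathcal I_L$ is nonzero. Then $E$, being its cone, is the sheaf $\mathcal I_L/\mathcal O_X(-1)=\mathcal I_{L,H}$ for the hyperplane section $H$ cut out by that section, which contains $L$. For the resolution (\ref{triple4.1}), I would use that $\mathcal I_L$ has a resolution $0\to\mathcal U\to\mathcal Q^\vee\to\mathcal I_L\to0$; this is the standard description of lines on $X_5$ as zero loci of sections of $\mathcal U^\vee$-type data, or equivalently as degeneracy loci of $\mathcal U\to\mathcal Q^\vee$. Lifting $\mathcal O_X(-1)\to\mathcal I_L$ to $\mathcal Q^\vee$, which is possible since $\mathrm{Ext}^1(\mathcal O_X(-1),\mathcal U)=H^1(\mathcal U(1))=0$ by ACM, gives (\ref{triple4.1}).

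The main obstacle I expect is establishing the resolution of $\mathcal I_L$ by $\mathcal U\to\mathcal Q^\vee$. I would derive it from the exceptional collection. Mutate $\mathcal I_L$ in $\mathfrak C$ via Remark \ref{complexes}, computing its $\mathrm{Hom}$'s with the dual collection. Equivalently, check that $\hom(\mathcal Q^\vee,\mathcal I_L)=1$ and that the evaluation kernel has the Chern character of $\mathcal U$. Then identify that kernel with $\mathcal U$ by \cite[Lemma 3.6]{Vass}.
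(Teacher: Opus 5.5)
Your argument is correct, but it takes a different route from the paper in both halves.

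\textbf{The bound.} The paper shows $H^0(E)=0$ and $H^2(E)\cong\Hom(E,\mathcal O_X(-2)[1])^\vee=0$, because the walls $W(\mathcal O_X,E)$ and $W(\mathcal O_X(-2)[1],E)$ cross $\beta=-1$. Hence $\chi(E)=5e-\frac56\le 0$, which gives $e\le\frac16$ directly. Your use of Proposition~\ref{BMT} also works, but your prediction is off. In fact $\mathrm{ch}_3^{-1}(E)=(e-\frac15)H^3$ and $Q_{0,-1}(E)=39-150e$, which only yields $e\le\frac{13}{50}$; the ray $\beta=0$ gives $\frac{49}{150}$. So the integrality step is essential, not optional. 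From $\chi(E)=5e-\frac56\in\mathbb Z$ you get $e\in\frac16+\frac15\mathbb Z$, hence $e\le\frac16$. (Even $c_3\in\mathbb Z$, i.e.\ $e\in\frac16+\frac1{10}\mathbb Z$, suffices, since $\frac4{15}>\frac{13}{50}$.) This is exactly how the paper argues in Lemma~\ref{2 -1 -7/10}.

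\textbf{The equality case.} The paper classifies walls:
\begin{itemize}
\item A destabilizer of rank $\ge 2$ forces radius$^2\le\frac1{16}$ by Proposition~\ref{actual}~(1). This places $E$ at a point of $D$ with $\nu_{\alpha,\beta}(E)=0$, so $E[1]\in\mathfrak C$ by Lemma~\ref{nu-lambda} and Proposition~\ref{heart}, which is numerically impossible.
\item A destabilizer of rank one forces $\mathrm{ch}_{\le 2}(F)=1-\frac15H^2$. Lemma~\ref{line} together with \cite[Corollary 3.3]{Vass} then pins down $\mathcal I_L$ and $\mathcal O_X(-1)[1]$.
\end{itemize}
Your route (Riemann--Roch plus Lemma~\ref{quotient}, as in Lemmas~\ref{conic} and~\ref{0 1 -9/10}) checks out:
\begin{itemize}
\item $\chi(\mathcal O_X(1),E)=5e+\frac16=1$.
\item $W(E,\mathcal O_X(-1)[1])$ is the semicircle of radius $\frac3{10}$ centred at $\beta=-\frac7{10}$. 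So it lies in $\beta>-1$, and nothing above it can be a wall.
\item The kernel has $\overline{\Delta}_H=25-15n$, so $n=1$. This follows from Proposition~\ref{BG}, not from Proposition~\ref{actual}~(2), which does not exclude $n=2$.
\end{itemize}
What your route buys is that it avoids the exceptional collection and \cite[Corollary 3.3]{Vass}. It also shows $E$ is a sheaf directly, as the cone of an injection, rather than via Proposition~\ref{2-stability}.

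\textbf{Two bookkeeping fixes.}
\begin{itemize}
\item By Serre duality the group to kill is $\mathrm{ext}^3(E,\mathcal O_X(-1))=\mathrm{hom}(\mathcal O_X(1),E)$, not $\mathrm{ext}^2$. Only odd-degree terms obstruct the lower bound on $\mathrm{ext}^1$, so the vanishing of $\mathrm{hom}(E,\mathcal O_X(-1))$ is not needed.
\item For $0\to\mathcal U\to\mathcal Q^\vee\to\mathcal I_L\to 0$, simply cite \cite[Lemma 4.2]{K} as the paper does. Your re-derivation sketch would still need surjectivity of the evaluation map and semistability of its kernel.
\end{itemize}
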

	
	\begin{proof}The curve $\nu_{\alpha,\beta}(E)=0$ is the ray $\beta=-\frac{7}{10}$, hence the highest point of any wall for $E$ lies on this ray (by Proposition \ref{numerical}~(3)). Note also that $H^2\cdot\mathrm{ch}_1^{-1}(E)$	has the least positive value, hence there is no wall intersecting the ray $\beta=-1$. We have $H^0(E)=0$ (since $W(\mathcal O_X,E)$ intersects the ray $\beta=-1$, while a nonzero morphism $\mathcal O_X\to E$ would destabilize $E$ below $W(\mathcal O_X,E)$) and $H^2(E)\cong\mathrm{Hom}(E,\mathcal O_X(-2)[1])^\vee=0$ (since $W(\mathcal O_X(-2)[1],E)$ intersects the ray $\beta=-1$). Riemann--Roch Theorem gives $\chi(E)=5e-\frac 56\le 0$, hence $e\le\frac 16$. 
		
	Assume that $e=\frac 16$. If there is a semicircular wall $W$ for $E$, induced by a subobject or quotient $F$ with $\mathrm{ch}_{\le 2}(F)=s+xH+yH^2$, then, without loss of generality, we can assume that $s>0$. Suppose that $s\ge 2$. By Proposition \ref{actual}~(1) in this case $\rho(E,F)^2\le\frac{1}{16}$, hence $E$ is tilt-semistable at some point with $\beta=-\frac{7}{10},\alpha\le\frac 14$. Lemma \ref{nu-lambda} with Proposition \ref{heart} imply that in this case $E[1]\in\mathfrak C$. However, a direct calculation shows that equation $\mathrm{ch}(E[1])=a\cdot\mathrm{ch}(\mathcal O_X(-1)[3])+b\cdot\mathrm{ch}(\mathcal Q(-1)[2])+c\cdot\mathrm{ch}(\mathcal U[1])+d\cdot\mathrm{ch}(\mathcal O_X)$ has no solution with non-negative $a,b,c,d$, hence this case is impossible.
		
	Therefore, $E$ is destabilized by a subobject or quotient $F$ with $s=1$. The condition $0<\mathrm{ch}_1^{-\frac{7}{10}}(F)<\mathrm{ch}_1^{-\frac{7}{10}}(E)$ can be rewritten as $0<x+\frac{7}{10}<1$, so $x=0$. We have $y\in\frac 15\mathbb Z$, and the conditions $\overline{\Delta}_H(F)\ge 0,\overline{\Delta}_H(E/F)\ge 0,\overline{\Delta}_H(F)+\overline{\Delta}_H(E/F)<\overline{\Delta}_H(E)=25$ imply that $y=-\frac 15$. Lemma \ref{line} implies that $\mathrm{ch}_3(F)$ is maximized by $F\cong\mathcal I_L$. Then we have $\mathrm{ch}(E/F)=-1+H-\frac{H^2}{2}+\frac{H^3}{6}$, and \cite[Corollary 3.3]{Vass} gives $E/F\cong\mathcal O_X(-1)[1]$. If moreover $E$ is tilt-semistable above $W(E,\mathcal O_X(-1)[1])$, then $\mathcal I_L$ is indeed a subobject and $\mathcal O_X(-1)[1]$ is indeed a quotient. That is, we have an exact triple 
	\begin{equation}\label{triple4.11}0\to\mathcal I_L\to E\to\mathcal O_X(-1)[1]\to 0.\end{equation} Since $E$ is tilt-semistable for $\alpha\gg 0$, Proposition \ref{2-stability} implies that $E$ is a sheaf. Rotating the exact triangle corresponding to (\ref{triple4.11}), we obtain an exact sequence of coherent sheaves
	\begin{equation}\label{triple4.2}0\to\mathcal O_X(-1)\overset{i}{\to}\mathcal I_L\overset{j}{\to} E\to 0.\end{equation}
	Using an exact triple $0\to\mathcal I_L\to\mathcal O_X\to\mathcal O_L\to 0$, the monomorphism $i$ from (\ref{triple4.2}) and the induced monomorphism $i':\mathcal O_X(-1)\to\mathcal O_X$, we obtain (e.~g. by the Snake Lemma) an exact triple $0\to E\to\mathcal O_H\to\mathcal O_L\to 0$. Hence, $E\cong\mathcal I_{L,H}$.
	
	By \cite[Lemma 4.2]{K} we have an exact triple of coherent sheaves
	\begin{equation}\label{I_L}0\to\mathcal U\to\mathcal Q^\vee\overset{f}{\to}\mathcal I_L\to 0.\end{equation} 
	Together with (\ref{triple4.2}), it gives the following commutative diagram with exact rows:
	\begin{equation*}\xymatrix{
		0\ar[r] & 0\ar[r]\ar[d] & \mathcal Q^\vee\ar[r]^\cong\ar[d]^f & \mathcal Q^\vee\ar[r]\ar[d]^{j\circ f}\ar[r]\ar[d] & 0 \\
		0 \ar[r] & \mathcal O_X(-1) \ar[r] & \mathcal I_L\ar[r]^j & \mathcal I_{L,H}\ar[r] & 0.}\end{equation*}
	An application of the Snake Lemma and (\ref{I_L}) yields that $\ker(j\circ f)\cong\mathcal O_X(-1)\oplus\mathcal U$, so we obtain the exact triple (\ref{triple4.1}).
	\end{proof}
	
	\begin{lemma}\label{0 1 -9/10} If $E\in\mathrm{Coh}^\beta(X)$ is tilt-semistable and $\mathrm{ch}(E)=H-\frac{9}{10}H^2+eH^3$, then $e\le\frac{11}{30}$. If $e=\frac{11}{30}$ and if moreover $E$ is tilt-semistable above $W(E,\mathcal O_X(-1)[1])$, then $E\cong\mathcal I_{C,H}$ is an ideal sheaf of a conic $C$ on a hyperplane section $H\subset X$, included into an exact sequence
	\begin{equation}\label{triple15}0\to\mathcal O_X(-1)^{\oplus 2}\to\mathcal U\to\mathcal I_{C,H}\to 0.\end{equation} 
	\end{lemma}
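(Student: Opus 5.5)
The plan follows the proof of Lemma \ref{0 1 -7/10} closely. Since $\mathrm{ch}_0(E)=0$, the curve $\nu_{\alpha,\beta}(E)=0$ is the vertical ray $\beta=-\frac{9}{10}$, so by Proposition \ref{numerical}~(3) every semicircular wall for $E$ has its top point on this ray. Moreover $H^2\cdot\mathrm{ch}_1^{-1}(E)$ takes the least positive value, so by Proposition \ref{actual}~(3) no wall meets the ray $\beta=-1$.

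\textbf{The bound.} The numerical walls $W(\mathcal O_X,E)$ and $W(\mathcal O_X(-2)[1],E)$ both meet the ray $\beta=-1$, so $E$ is semistable on them. A nonzero morphism $\mathcal O_X\to E$, or $E\to\mathcal O_X(-2)[1]$, would destabilize $E$ below the corresponding wall. Hence $H^0(E)=0$, and by Serre duality $H^2(E)\cong\Hom(E,\mathcal O_X(-2)[1])^\vee=0$, so $\chi(E)=-h^1(E)\le 0$. I use Hirzebruch--Riemann--Roch with $\mathrm{td}_1=H$ and $H\cdot\mathrm{td}_2=\frac 83$, which reproduces $\chi=5e-\frac56$ in Lemma \ref{0 1 -7/10}. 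Here it gives $\chi(E)=5e-\frac92+\frac83=5e-\frac{11}{6}$, hence $e\le\frac{11}{30}$.

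\textbf{Equality case: the wall.} Assume $e=\frac{11}{30}$. Since $\chi(E)=0$ while $h^0(E)=h^2(E)=0$, we also have $h^3(E)=h^1(E)=0$. The Euler pairing with the collection $(\mathcal O_X(-1),\mathcal Q(-1),\mathcal U,\mathcal O_X)$ should then show that $E[1]\notin\mathfrak C$. This excludes semicircular walls induced by an $F$ with $\mathrm{ch}_0(F)=s\ge2$, as in Lemma \ref{0 1 -7/10}:
\begin{itemize}
\item Proposition \ref{actual}~(1) bounds the radius by $\rho^2\le\frac1{16}$;
\item so $E$ is tilt-semistable at a point with $\beta=-\frac9{10}$, $\alpha\le\frac14$, which lies in $D$;
\item Lemma \ref{nu-lambda} and Proposition \ref{heart} then force $E[1]\in\mathfrak C$;
\item I check that $\mathrm{ch}(E[1])$ is not a non-negative combination of the classes of $\mathcal O_X(-1)[3],\mathcal Q(-1)[2],\mathcal U[1],\mathcal O_X$.
\end{itemize}
Note that $E$ must be destabilized somewhere: the morphism to $\mathcal O_X(-1)[1]$ obtained below makes it unstable below that wall.

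\textbf{Equality case: the rank one case.} For $s=1$ and $\mathrm{ch}_{\le2}(F)=1+xH+yH^2$, the condition $0<x+\frac9{10}<1$ forces $x=0$. We have $\overline{\Delta}_H(F)=-50y\ge0$ and $\overline{\Delta}_H(E/F)=-20-50y\ge0$. Together with $\overline{\Delta}_H(F)+\overline{\Delta}_H(E/F)<25$ and $y\in\frac15\mathbb Z$, this gives $y=-\frac25$. Lemma \ref{conic} gives $\mathrm{ch}_3(F)\le\frac15H^3$. The quotient has $\mathrm{ch}(E/F)=-1+H-\frac{H^2}{2}+e'H^3$, and by the bound in Lemma \ref{0 1 -1/2} applied after twisting and dualizing (or by \cite[Corollary 3.3]{Vass}), $e'\le\frac16$. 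Since $\frac15+\frac16=\frac{11}{30}$, both bounds are attained. So $F$ is $\mathcal I_C$ or $\mathcal U\oplus\mathcal O_X(-1)[1]$, and $E/F\cong\mathcal O_X(-1)[1]$.

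\textbf{The main obstacle.} I must rule out $F\cong\mathcal U\oplus\mathcal O_X(-1)[1]$ using the hypothesis that $E$ is semistable above $W(E,\mathcal O_X(-1)[1])$. In that case $\mathcal O_X(-1)[1]$ would be a subobject of $E$ of the same $\mathrm{ch}_{\le2}$ as the quotient, so it would have to destabilize $E$ just above the wall. I will verify this by comparing $\nu_{\alpha,\beta}(\mathcal O_X(-1)[1])$ with $\nu_{\alpha,\beta}(E)$ there.

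\textbf{Identifying $E$.} With $F\cong\mathcal I_C$, semistability for $\alpha\gg0$ and Proposition \ref{2-stability} (or Lemma \ref{minimal}) make $E$ a sheaf. Rotating the triangle gives $0\to\mathcal O_X(-1)\to\mathcal I_C\to E\to0$. The Snake Lemma with $0\to\mathcal I_C\to\mathcal O_X\to\mathcal O_C\to0$ yields $0\to E\to\mathcal O_H\to\mathcal O_C\to0$, so $E\cong\mathcal I_{C,H}$.

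Finally, compose $\mathcal U\onto\mathcal I_C$ from (\ref{U E}) with $\mathcal I_C\onto\mathcal I_{C,H}$. By the Snake Lemma the kernel is an extension of $\mathcal O_X(-1)$ by $\mathcal O_X(-1)$. It splits because $\mathrm{Ext}^1(\mathcal O_X(-1),\mathcal O_X(-1))=H^1(\mathcal O_X)=0$, which gives (\ref{triple15}).
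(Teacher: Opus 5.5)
You have a genuine gap in your exclusion of destabilizers of rank $s\ge 2$. A point $(-\frac{9}{10},\alpha)$ lies in $D$ only when $\alpha<\beta+1=\frac{1}{10}$, not for all $\alpha\le\frac14$. This is exactly where the present case differs from Lemma \ref{0 1 -7/10}, where $\beta+1=\frac{3}{10}>\frac14$.

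All walls for $E$ are semicircles centred at $\beta=-\frac{9}{10}$, and none meets the ray $\beta=-1$, so each has radius at most $\frac{1}{10}$. Your own computation that $E[1]\notin\mathfrak C$ shows that $E$ is unstable at every point of $D$ on the ray $\beta=-\frac{9}{10}$. Hence the wall destabilizing $E$ is $W(E,\mathcal O_X(-1)[1])\colon \alpha^2+(\beta+\frac{9}{10})^2=\frac{1}{100}$. Its top point $(-\frac{9}{10},\frac{1}{10})$ lies on the boundary of $D$, not in it, and Proposition \ref{actual}~(1) at radius $\frac{1}{10}$ only gives $s\le 5$.

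Worse, the conclusion you are aiming for is false. $\mathcal U\hookrightarrow\mathcal I_{C,H}$, with $\mathrm{ch}_0(\mathcal U)=2$, induces exactly this wall; it is the triple $0\to\mathcal U\to E\to\mathcal O_X(-1)^{\oplus 2}[1]\to 0$ underlying (\ref{triple15}). So nothing in your argument produces a destabilizing subobject or quotient of rank one, and your whole rank-one analysis depends on having one. Your appeal to ``the morphism to $\mathcal O_X(-1)[1]$ obtained below'' is also circular, since that morphism only comes out of the rank-one analysis.

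The missing step is to produce the morphism directly, as the paper does:
\begin{enumerate}
\item Since $W(\mathcal O_X(1),E)$ meets the ray $\beta=-1$, you get $\mathrm{Ext}^3(E,\mathcal O_X(-1))\cong\Hom(\mathcal O_X(1),E)^\vee=0$.
\item Riemann--Roch and Serre duality then give $\mathrm{hom}(E,\mathcal O_X(-1)[1])\ge\chi(\mathcal O_X(1),E)=2$.
\item Thus $E$ is unstable below $W(E,\mathcal O_X(-1)[1])$. There are no walls above it, so $E$ is semistable on it.
\item Lemma \ref{quotient} gives $0\to F\to E\to\mathcal O_X(-1)^{\oplus 2}[1]\to 0$, with $F\cong\mathcal U$ by \cite[Lemma 3.6]{Vass}.
\item Composing with a projection yields an epimorphism $E\twoheadrightarrow\mathcal O_X(-1)[1]$ whose kernel is tilt-semistable with $\mathrm{ch}=1-\frac25H^2+\frac15H^3$.
\end{enumerate}
From there your argument goes through. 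You apply Lemma \ref{conic} and exclude $\mathcal U\oplus\mathcal O_X(-1)[1]$, most simply because $E$ is a sheaf, so $\Hom(\mathcal O_X(-1)[1],E)=0$. You then use the Snake Lemma, and derive (\ref{triple15}) by splitting the kernel of $\mathcal U\twoheadrightarrow\mathcal I_{C,H}$ via $H^1(\mathcal O_X)=0$. That last step is a valid alternative to rotating the triangle.
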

	
	\begin{proof} Note that $H^2\cdot\mathrm{ch}_1^{-1}(E)$	has the least positive value, hence there is no wall intersecting the ray $\beta=-1$. We have $H^0(E)=0$ (since $W(\mathcal O_X,E)$ intersects this ray) and $H^2(E)\cong\mathrm{Hom}(E,\mathcal O_X(-2)[1])^\vee=0$ (since $W(\mathcal O_X(-2)[1],E)$ intersects the ray $\beta=-1$). Riemann--Roch Theorem gives $\chi(E)=5e-\frac{11}{6}\le 0$, hence $e\le\frac{11}{30}$. 
		
	Assume that $e=\frac{11}{30}$. We have $\mathrm{Ext}^3(E,\mathcal O_X(-1))\cong\mathrm{Hom}(\mathcal O_X(-1),E(-2))^\vee\cong\mathrm{Hom}(\mathcal O_X(1),E)^\vee=0$, since $W(\mathcal O_X(1),E)$ intersects the ray $\beta=-1$. Therefore $\mathrm{hom}(E,\mathcal O_X(-1)[1])=\mathrm{ext}^1(E,\mathcal O_X(-1))\ge-\chi(E,\mathcal O_X(-1))=\chi(\mathcal O_X(1),E)=2$ (by Riemann--Roch and Serre duality). Since there exists a nonzero morphism $E\to\mathcal O_X(-1)[1]$, the object $E$ is unstable below $W(E,\mathcal O_X(-1)[1])$, while above $W(E,\mathcal O_X(-1)[1])$ there are no walls, because such walls would intersect the ray $\beta=-1$. Therefore, $E$ is tilt-semistable at $W(E,\mathcal O_X(-1)[1])$. In view of Lemma \ref{quotient} for $-1<\beta\ll 0$ we have an exact sequence
	\begin{equation}\label{triple7}0\to F\to E\to\mathcal O_X(-1)^{\oplus 2}[1]\to 0\end{equation}
	in the category $\mathrm{Coh}^\beta(X)$ with a tilt-semistable object $F$. Here $\mathrm{ch}(F)=2-H+\frac{H^2}{10}+\frac{H^3}{30}$, hence by \cite[Lemma 3.6]{Vass} $F\cong\mathcal U$.
		
	Assume that $E$ is tilt-semistable above $W(E,\mathcal O_X(-1)[1])$. Since this is the unique wall for $E$, tilt-semistability of $E$ above this wall implies that $E$ is a sheaf (by Lemma \ref{minimal}). Also, exact triple (\ref{triple7}) implies that there is an epimorphism $E\to\mathcal O_X(-1)[1]$. Denote the kernel of this epimorphism by $F'$, then $F'$ is a tilt-semistable object with $\mathrm{ch}(F')=1-\frac 25H^2+\frac 15H^3$. Lemma \ref{conic} implies that either $F'\cong\mathcal I_C$ for a conic $C\subset X$, or $F'\cong\mathcal U\oplus(\mathcal O_X(-1)[1]).$ However, in the second case $F'$ cannot be a subobject of a sheaf $E$, hence $F'\cong\mathcal I_C$, and $E$ is the cokernel of a monomorphism $\mathcal O_X(-1)\to\mathcal I_C$. An argument analogous to the one given in Lemma \ref{0 1 -7/10} implies that $E\cong\mathcal I_{C,H}$. Finally, rotating the exact triangle corresponding to (\ref{triple7}) (where $F\cong\mathcal U$), we obtain (\ref{triple15}).
	\end{proof}
	
	\begin{lemma}\label{0 1 -1/10} If $E\in\mathrm{Coh}^\beta(X)$ is tilt-semistable and $\mathrm{ch}(E)=H-\frac{H^2}{10}+eH^3$, then $e\le-\frac{1}{30}$. If $e=-\frac{1}{30}$ and if moreover $E$ is tilt-semistable above $W(E,\mathcal O_X)$, then $E$ is a coherent sheaf $\mathbb D(\mathcal I_{C,H})(-1)$, included into an exact sequence
		\begin{equation}\label{triple24}0\to\mathcal U\to\mathcal O_X^{\oplus 2}\to \mathbb D(\mathcal I_{C,H})(-1)\to 0.
		\end{equation}
	\end{lemma}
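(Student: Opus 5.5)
The plan is to reduce this statement to Lemma \ref{0 1 -9/10} by derived duality and a twist, rather than redoing the wall analysis from scratch.

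\textbf{Step 1: Chern character of the dual.} For $E$ with $\mathrm{ch}(E)=H-\frac{H^2}{10}+eH^3$ we have $\mathrm{ch}_i(\mathbf R\mathcal Hom(E,\mathcal O_X))=(-1)^i\mathrm{ch}_i(E)$. After the shift by $1$ this gives
$$\mathrm{ch}(\mathbb D(E))=H+\frac{H^2}{10}+eH^3.$$
Twisting by $\mathcal O_X(-1)$ and using $H^3=5P$, a short computation gives
$$\mathrm{ch}(\mathbb D(E)(-1))=H-\frac{9}{10}H^2+\Bigl(e+\frac25\Bigr)H^3.$$
This is exactly the class treated in Lemma \ref{0 1 -9/10}.

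\textbf{Step 2: the bound.} Since $\mathrm{ch}_0(E)=0$, we have $\nu_{\alpha,\beta}(E)\ne+\infty$, so Proposition \ref{duality} applies. It gives a $\nu_{\alpha,-\beta}$-semistable object $\widetilde E$ and a sheaf $T$ with $\dim T\le 0$, fitting into a triangle $\widetilde E\to\mathbb D(E)\to T[-1]$. Hence $\mathrm{ch}_{\le 2}(\widetilde E)=\mathrm{ch}_{\le 2}(\mathbb D(E))$ and $\mathrm{ch}_3(\widetilde E)=\mathrm{ch}_3(\mathbb D(E))+\mathrm{ch}_3(T)\ge \mathrm{ch}_3(\mathbb D(E))$. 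Twisting by $\mathcal O_X(-1)$ preserves tilt-semistability (it shifts $\beta$ by $-1$). Applying Lemma \ref{0 1 -9/10} to $\widetilde E(-1)$ gives $e+\frac25\le\frac{11}{30}$, that is, $e\le-\frac1{30}$. In the equality case we get $T=0$, so $\widetilde E\cong\mathbb D(E)$, and $\mathbb D(E)(-1)$ has $\mathrm{ch}_3$ maximal.

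\textbf{Step 3: identification in the equality case.} Duality sends the point $(\beta,\alpha)$ to $(-\beta,\alpha)$ and reflects walls. The wall $W(E,\mathcal O_X)$ goes to $W(\mathbb D(E),\mathcal O_X[1])$, because $\mathbb D(\mathcal O_X)=\mathcal O_X[1]$. After the twist this becomes $W(\mathbb D(E)(-1),\mathcal O_X(-1)[1])$. I need to check that semistability of $E$ above $W(E,\mathcal O_X)$ transfers to semistability of $\mathbb D(E)(-1)$ above this wall. Once it does, Lemma \ref{0 1 -9/10} yields $\mathbb D(E)(-1)\cong\mathcal I_{C,H}$. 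Since $\mathbb D(E(1))=\mathbb D(E)(-1)$ and $\mathbb D\mathbb D=\mathrm{id}$, we get
$$E(1)\cong\mathbb D(\mathcal I_{C,H}),\qquad\text{so}\qquad E\cong\mathbb D(\mathcal I_{C,H})(-1).$$
This is a sheaf, as noted in the introduction.

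\textbf{Step 4: the exact sequence.} Apply $\mathbb D$ to the triangle of (\ref{triple15}). We have $\mathbb D(\mathcal O_X(-1))=\mathcal O_X(1)[1]$ and $\mathbb D(\mathcal U)=\mathcal U^\vee[1]\cong\mathcal U(1)[1]$. The dual sequence, twisted by $\mathcal O_X(-1)$, becomes $0\to\mathcal U\to\mathcal O_X^{\oplus2}\to\mathbb D(\mathcal I_{C,H})(-1)\to0$, which is (\ref{triple24}).

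\textbf{Main obstacle.} I expect the hardest part to be the bookkeeping in Step 3. Proposition \ref{duality} only says $\widetilde E$ is semistable at the reflected point. I must check two things:
\begin{enumerate}
\item Semistability for all $(\beta,\alpha)$ above $W(E,\mathcal O_X)$ transfers to semistability of $\mathbb D(E)(-1)$ for all points above the reflected-and-translated wall. The key input here is that $T=0$ forces $\widetilde E=\mathbb D(E)$ uniformly.
\item The wall $W(E,\mathcal O_X)$ is reflected onto the wall $W(\mathbb D(E)(-1),\mathcal O_X(-1)[1])$ used in Lemma \ref{0 1 -9/10}.
\end{enumerate}
As a fallback, I would redo the direct argument of Lemma \ref{0 1 -9/10}. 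There is no wall through $\beta=0$, since $H^2\cdot\mathrm{ch}_1^0(E)=5$ is minimal. This gives the vanishings $\mathrm{Ext}^2(\mathcal O_X,E)=\mathrm{Ext}^3(\mathcal O_X,E)=0$, and Riemann--Roch gives $\chi(E)=5e+\frac{13}{6}$, hence $\hom(\mathcal O_X,E)\ge 2$ at equality. Then Lemma \ref{subobject} produces $\mathcal O_X^{\oplus2}\hookrightarrow E$ with quotient $\mathcal U[1]$, by \cite[Lemma 3.6]{Vass}.
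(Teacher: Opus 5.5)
Your proposal is correct and is essentially the paper's proof: apply Proposition~\ref{duality}, twist by $\mathcal O_X(-1)$ to land in the class of Lemma~\ref{0 1 -9/10}, use that equality forces $T=0$ so that $\mathbb D(E)(-1)$ is itself semistable above the reflected wall $W(\mathbb D(E)(-1),\mathcal O_X(-1)[1])$, and then dualize (\ref{triple15}) and rotate. The only small difference is in showing that $E$ is a sheaf: the paper notes that there are no walls above $W(E,\mathcal O_X)$ (any such wall would meet $\beta=0$) and invokes Lemma~\ref{minimal}, whereas you use directly that $\mathbb D(\mathcal I_{C,H})$ is a sheaf, which does follow from the length-one locally free resolution (\ref{triple15}), so your fallback argument is not needed.
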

	
	\begin{proof}Proposition \ref{duality} implies that there is a distinguished triangle $\widetilde{E}\to\mathbb D(E)(-1)\to T[-1]\to\widetilde{E}[1]$ with a tilt-semistable object $\widetilde{E}$ and a dimension zero sheaf $T$ of length $t\ge 0$. We calculate $\mathrm{ch}(\widetilde{E})=H-\frac{9}{10}H^2+(e+t+\frac 25)H^3$. Lemma \ref{0 1 -9/10} implies that $e\le-\frac{1}{30}$, and in case of equality $t=0$ (so $\mathbb D(E)(-1)\cong\widetilde{E}$ is tilt-semistable). If moreover $E$ is tilt-semistable above $W(E,\mathcal O_X)$, then $\mathbb D(E)(-1)$ is tilt-semistable above $W(\mathbb D(E)(-1),\mathcal O_X(-1)[1])$, so $\mathbb D(E)(-1)\cong\mathcal I_{C,H}$ and $E\cong\mathbb D(\mathcal I_{C,H})(-1)$. In this case $E$ is tilt-semistable for $\alpha\gg 0$ (there are no walls above $W(E,\mathcal O_X)$, because any such wall would intersect the ray $\beta=0$ and $H^2\cdot\mathrm{ch}_1(E)$ has the minimal positive value). Hence, $E$ is a sheaf by Lemma \ref{minimal}. 
	Twisting (\ref{triple15}) by $\mathcal O_X(1)$ and applying the functor $\mathbb D(-)$, we obtain an exact triangle
	\begin{equation}\label{triple25.2}\mathbb D(\mathcal I_{C,H})(-1)\to\mathcal U[1]\to\mathcal O_X^{\oplus 2}[1]\to\mathbb D(\mathcal I_{C,H})(-1)[1].
	\end{equation}
	Rotating this exact triangle twice and using the fact that $\mathbb D(\mathcal I_{C,H})(-1)$ is a sheaf, we obtain (\ref{triple24}). 
	\end{proof}
	
	\begin{lemma}\label{0 1 -3/10} If $E\in\mathrm{Coh}^\beta(X)$ is tilt-semistable and $\mathrm{ch}(E)=H-\frac{3}{10}H^2+eH^3$, then $e\le-\frac{1}{30}$. If $e=-\frac{1}{30}$ and $E$ is tilt-semistable above $W(E,\mathcal O_X)$, then $E$ is a coherent sheaf $\mathbb D(\mathcal I_{L,H})(-1)$, included into an exact triple
	\begin{equation}\label{triple25}0\to\mathcal Q(-1)\to\mathcal O_X\oplus\mathcal U\to \mathbb D(\mathcal I_{L,H})(-1)\to 0.
	\end{equation}
	\end{lemma}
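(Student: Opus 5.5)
The plan is to argue exactly as in Lemma \ref{0 1 -1/10}, replacing Lemma \ref{0 1 -9/10} by Lemma \ref{0 1 -7/10} via the derived dual.

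\emph{Dualizing and the bound.} Apply Proposition \ref{duality} to $E$ (twisted by $\mathcal O_X(-1)$). This gives a distinguished triangle $\widetilde{E}\to\mathbb D(E)(-1)\to T[-1]\to\widetilde{E}[1]$, where $\widetilde{E}$ is tilt-semistable and $T$ is a zero-dimensional sheaf of length $t\ge 0$. For the rank zero class $\mathrm{ch}(E)=H-\frac{3}{10}H^2+eH^3$ we get $\mathrm{ch}(\mathbb D(E))=H+\frac{3}{10}H^2+eH^3$. Multiplying by $e^{-H}$ gives
$$\mathrm{ch}(\mathbb D(E)(-1))=H-\frac{7}{10}H^2+\left(e+\frac 15\right)H^3,$$
so $\mathrm{ch}(\widetilde{E})=H-\frac{7}{10}H^2+(e+\frac15+t)H^3$. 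Lemma \ref{0 1 -7/10} gives $e+\frac15+t\le\frac16$. Hence $e\le-\frac1{30}$, and in case of equality $t=0$, so $\mathbb D(E)(-1)\cong\widetilde{E}$ is itself tilt-semistable.

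\emph{Identification in the equality case.} Duality sends $\nu_{\alpha,\beta}$-stability to $\nu_{\alpha,-\beta}$-stability, and the twist by $\mathcal O_X(-1)$ shifts $\beta$ by $-1$. I will check that under this correspondence the wall $W(E,\mathcal O_X)$ goes to $W(\mathbb D(E)(-1),\mathcal O_X(-1)[1])$, using $\mathbb D(\mathcal O_X)=\mathcal O_X[1]$; this only requires comparing the numerical walls. Then tilt-semistability of $E$ above $W(E,\mathcal O_X)$ gives tilt-semistability of $\mathbb D(E)(-1)$ above the corresponding wall. Lemma \ref{0 1 -7/10} then yields $\mathbb D(E)(-1)\cong\mathcal I_{L,H}$, and since $\mathbb D$ is an involution, $E\cong\mathbb D(\mathcal I_{L,H})(-1)$.

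To see that $E$ is a sheaf, note that $H^2\cdot\mathrm{ch}_1(E)$ has the minimal positive value at $\beta=0$. So by Proposition \ref{actual}~(3) no wall meets the ray $\beta=0$, and hence no wall lies above $W(E,\mathcal O_X)$. Thus $E$ is tilt-semistable for $\alpha\gg0$, and Lemma \ref{minimal} (case (b), since $\mathrm{ch}_0=0$) shows $E$ is a sheaf.

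\emph{The exact triple.} Twist (\ref{triple4.1}) by $\mathcal O_X(1)$ to get $0\to\mathcal O_X\oplus\mathcal U(1)\to\mathcal Q^\vee(1)\to\mathcal I_{L,H}(1)\to0$, then apply $\mathbb D$. We have $\mathbb D(\mathcal O_X)=\mathcal O_X[1]$ and $\mathbb D(\mathcal Q^\vee(1))=\mathcal Q(-1)[1]$. Using $\mathcal U^\vee\cong\mathcal U(1)$ (as $\mathcal U$ has rank $2$ and $c_1=-H$), we get $\mathbb D(\mathcal U(1))=\mathcal U[1]$. This yields a triangle
$$\mathbb D(\mathcal I_{L,H})(-1)\to\mathcal Q(-1)[1]\to(\mathcal O_X\oplus\mathcal U)[1]\to\mathbb D(\mathcal I_{L,H})(-1)[1].$$
Rotating it and using that $\mathbb D(\mathcal I_{L,H})(-1)$ is a sheaf gives (\ref{triple25}).

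The main obstacle I expect is the bookkeeping of the walls under $\mathbb D$ and the twist: checking that the hypothesis ``above $W(E,\mathcal O_X)$'' transfers to precisely the hypothesis of Lemma \ref{0 1 -7/10}. The remaining steps are Chern character computations.
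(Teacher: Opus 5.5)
Your proposal is correct and follows the paper's proof essentially step by step. You dualize via Proposition \ref{duality}, get the bound from Lemma \ref{0 1 -7/10}, transfer the wall $W(E,\mathcal O_X)$ to $W(\mathbb D(E)(-1),\mathcal O_X(-1)[1])$ (both are semicircles of radius $\frac{3}{10}$, centered at $-\frac{3}{10}$ and $-\frac{7}{10}$), and dualize the twisted triple (\ref{triple4.1}) to obtain (\ref{triple25}).

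There is one small slip. The relevant case of Lemma \ref{minimal} is (a), not (b): $\mathrm{ch}_1(E)=H\neq 0$ rules out $\dim\mathcal H^0(E)\le 1$. The conclusion that $E$ is a sheaf is unaffected.
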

	
	\begin{proof}Again we use a distinguished triangle $\widetilde{E}\to\mathbb D(E)(-1)\to T[-1]\to\widetilde{E}[1]$ with a tilt-semistable $\widetilde{E}$ and a sheaf $T$ of length $t\ge 0$. We find $\mathrm{ch}(\widetilde{E})=H-\frac{7}{10}H^2+(e+t+\frac 15)H^3$. Lemma \ref{0 1 -7/10} implies that $e\le-\frac{1}{30}$, and in case of equality $t=0$. Also in case of equality, if $E$ is tilt-semistable above $W(E,\mathcal O_X)$, then $\mathbb D(E)(-1)$ is tilt-semistable above $W(\mathbb D(E)(-1),\mathcal O_X(-1)[1])$, so $\mathbb D(E)(-1)\cong\mathcal I_{L,H}$ and $E\cong\mathbb D(\mathcal I_{L,H})(-1)$. In this case $E$ is a sheaf by Lemma \ref{minimal}.
    Twisting (\ref{triple4.1}) by $\mathcal O_X(1)$ and applying the functor $\mathbb D(-)$, we obtain an exact triangle
	\begin{equation}\label{triple25.2}\mathbb D(\mathcal I_{L,H})(-1)\to\mathcal Q(-1)[1]\to\mathcal O_X[1]\oplus\mathcal U[1]\to\mathbb D(\mathcal I_{L,H})(-1)[1].
	\end{equation}
	Rotating this exact triangle twice, we obtain (\ref{triple25}).
	\end{proof}
	
	Now we get the following strengthening of \cite[Lemma 4.1]{Vass}. Here $\varepsilon(d)$ is the error term defined by (\ref{error}).
	
	\begin{corollary}\label{0 1}If $E\in\mathrm{Coh}^\beta(X)$ is tilt-semistable and $\mathrm{ch}(E)=H+dH^2+eH^3$, then $e\le\frac{d^2}{2}+\frac{1}{24}-\varepsilon(d)$. In case of equality $E$ is a twisted object with maximal $\mathrm{ch}_3$ from Lemmas \ref{0 1 -1/2}--\ref{0 1 -3/10}.
	\end{corollary}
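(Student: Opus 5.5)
The plan is to reduce, by twisting, to the five cases treated in Lemmas \ref{0 1 -1/2}--\ref{0 1 -3/10}. Twisting by $\mathcal O_X(m)$ preserves tilt-semistability, since $E(m)$ is $\nu_{\alpha,\beta+m}$-semistable in $\mathrm{Coh}^{\beta+m}(X)$. For $\mathrm{ch}(E)=H+dH^2+eH^3$ we have
$$\mathrm{ch}(E(m))=H+(d+m)H^2+(e+md+\tfrac{m^2}{2})H^3 .$$
Hence the quantity $e-\frac{d^2}{2}$ is invariant under twisting: $e+md+\frac{m^2}{2}-\frac{(d+m)^2}{2}=e-\frac{d^2}{2}$. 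The error term $\varepsilon(d)$ depends only on $\mathrm{frac}(d)$. So the claimed inequality $e\le\frac{d^2}{2}+\frac1{24}-\varepsilon(d)$ is twist-invariant, and it suffices to prove it for one representative $d$ in each residue class modulo $\mathbb Z$.

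Next I determine which residues can occur. Since $c_1=1$ and the Chern classes are integral, $\mathrm{ch}_2=-c_2+\frac{c_1^2}{2}$. With $H^2=5L$ this gives $d\in\frac12+\frac15\mathbb Z$, so $\mathrm{frac}(d)\in\{\frac1{10},\frac3{10},\frac12,\frac7{10},\frac9{10}\}$. Only these five classes arise, matching the values $-\frac1{10},-\frac3{10},-\frac12,-\frac7{10},-\frac9{10}$ treated in the five lemmas.

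For each representative I compare with the corresponding lemma's bound:
\begin{itemize}
\item $d=-\frac12$: $\frac{d^2}{2}+\frac1{24}=\frac18+\frac1{24}=\frac16$ and $\varepsilon=0$, matching Lemma \ref{0 1 -1/2}.
\item $d=-\frac7{10}$: $\frac{49}{200}+\frac1{24}=\frac{172}{600}$ and $\varepsilon=\frac3{25}=\frac{72}{600}$, giving $\frac{100}{600}=\frac16$, matching Lemma \ref{0 1 -7/10}.
\item $d=-\frac9{10}$: $\frac{81}{200}+\frac1{24}-\frac2{25}=\frac{243+25-48}{600}=\frac{220}{600}=\frac{11}{30}$, matching Lemma \ref{0 1 -9/10}.
\item $d=-\frac1{10}$: $\frac{3+25-48}{600}=-\frac1{30}$, matching Lemma \ref{0 1 -1/10}.
\item $d=-\frac3{10}$: $\frac{27+25-72}{600}=-\frac1{30}$, matching Lemma \ref{0 1 -3/10}.
\end{itemize}
Here I use that the fractional part of $-\frac7{10}$ is $\frac3{10}$, of $-\frac9{10}$ is $\frac1{10}$, of $-\frac1{10}$ is $\frac9{10}$, and of $-\frac3{10}$ is $\frac7{10}$, and I check these against the table (\ref{error}). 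The equality case then follows: $E(m)$ has maximal $\mathrm{ch}_3$ in the relevant lemma, so $E$ is a twist of the object described there.

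The step needing care is the equality statement. The lemmas identify the object only under the extra hypothesis that it is semistable above the distinguished wall. I would read ``twisted object with maximal $\mathrm{ch}_3$ from the lemmas'' as exactly the objects those lemmas characterize, including the possibilities they leave open: for instance the extension description $0\to\mathcal O_X\to E\to\mathcal O_X(-1)[1]\to 0$, or the triangle with the quotient $\mathcal O_X(-1)[1]$. So no new argument is required beyond the twist-invariance and the arithmetic check above.
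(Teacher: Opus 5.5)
Your proposal is correct and follows the paper's own route. Twisting preserves tilt-semistability and leaves both $e-\frac{d^2}{2}$ and $\varepsilon(d)$ unchanged, so the claim reduces to the five representatives $d\in\{-\frac12,-\frac7{10},-\frac9{10},-\frac1{10},-\frac3{10}\}$, and your arithmetic matching these against the bounds of the five lemmas is right. Your reading of the equality case, where the precise identification of $E$ needs the extra ``semistable above the wall'' hypothesis checked separately, is also how the paper later applies the corollary (for instance in Lemma~\ref{2 0 -7/5}).
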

	
	\begin{proof} Since tilt-stability is invariant with respect to a twist by a line bundle, the statement follows from a straightforward calculation of Chern characters of twisted objects $E(n),n\in\mathbb Z$, where $E$ is an object with maximal $\mathrm{ch}_3$ from Lemmas given above.
	\end{proof}
	
	\subsection{Rank two objects.}\label{rank 2}
	Now we can formulate our main result about rank 2 tilt-semistable objects. 
	
	\begin{theorem}\label{main}Let $E\in\mathrm{Coh}^\beta(X)$ be a tilt-semistable object with $\mathrm{ch}(E)=2+cH+dH^2+eH^3$.\\
		(1) If $c=-1$, then $d\le \frac{1}{10}$. \\
		(1.1) If $d=\frac{1}{10}$, then $e\le \frac{1}{30}$. In case of equality we have $E\cong\mathcal U$.\\
		(1.2) If $d=-\frac{1}{10}$, then $e\le\frac{7}{30}$. In case of equality $E$ is included into an exact triple
		$0\to\mathcal O_X(-1)^{\oplus 4}\to E\to \mathcal U(-1)[1]\to 0.$\\
		(1.3) If $d=-\frac{3}{10}$, then $e\le\frac{13}{30}$.
		If $e=\frac{13}{30}$ and $E$ is tilt-stable, then $E$ is a coherent sheaf, included into an exact sequence
		$0\to \mathcal O_X(-2)\to\mathcal O_X(-1)^{\oplus 3}\to E\to\mathcal O_L(-2)\to 0.$\\
		(1.4) If $d=-\frac 12$, then $e\le\frac 56$, and in case of equality $E$ is destabilized by an exact triple
		$0\to \mathcal O_X(-1)^{\oplus 3}\to E\to\mathcal O_X(-2)[1]\to 0.$\\
		(1.5) If $d\le-\frac{7}{10}$, then $e\le\frac{d^2}{2}-d+\frac{5}{24}-\varepsilon(d).$ In case of equality $E$ is destabilized by one of the following exact triples:\\
		(1.5.1) $0\to \mathcal O_X(-1)^{\oplus 2}\to E\to\mathcal I_{C,H}(d-\frac{1}{10})\to 0,$ if $d\in\frac{1}{10}+\mathbb Z$;\\
		(1.5.2) $0\to \mathcal O_X(-1)^{\oplus 2}\to E\to\mathcal I_{L,H}(d-\frac{3}{10})\to 0,$ if $d\in\frac{3}{10}+\mathbb Z$;\\
		(1.5.3) $0\to \mathcal O_X(-1)^{\oplus 2}\to E\to\mathcal O_H(d-\frac 12)\to 0,$ if $d\in\frac 12+\mathbb Z$;\\
		(1.5.4) $0\to \mathcal O_X(-1)^{\oplus 2}\to E\to \mathbb D(\mathcal I_{L,H})(d-\frac{17}{10})\to 0,$ if $d\in\frac{7}{10}+\mathbb Z$;\\
		(1.5.5) $0\to \mathcal O_X(-1)^{\oplus 2}\to E\to \mathbb D(\mathcal I_{C,H})(d-\frac{19}{10})\to 0,$ if $d\in\frac{9}{10}+\mathbb Z$.\\
		(2) If $c=0$, then $d\le 0$.\\
		(2.1) If $d=0$, then $e\le 0$. In case of equality $E\cong\mathcal O_X^{\oplus 2}$.\\
		(2.2) If $d=-\frac 15$, then $e\le-\frac{1}{5}$. In case of equality $E$ is a coherent sheaf, included into an exact triple $0\to E\to\mathcal O_X^{\oplus 2}\to\mathcal O_L(1)\to 0.$ \\
		(2.3) If $d=-\frac 25$, then $e\le 0$. In case of equality $E$ is included into an exact triple $0\to\mathcal U^{\oplus 4}\to E \to \mathcal Q(-1)^{\oplus 2}[1]\to 0.$ \\
		(2.4) If $d=-\frac 35$, then $e\le 0$. In case of equality $E$ is the cohomology sheaf of a monad $\mathcal Q(-1)^{\oplus 3}\to\mathcal U^{\oplus 6}\to\mathcal O_X.$ \\
		(2.5) If $d=-\frac 45$, then $e\le\frac 25$. In case of equality $E$ is destabilized by an exact triple $0\to\mathcal U^{\oplus 2}\to E\to \mathcal O_X(-1)^{\oplus 2}[1]\to 0.$ \\
		(2.6) If $d=-\frac 65$, then $e\le\frac 45$. In case of equality $E$ is destabilized by one of the following exact triples:
		$0\to\mathcal U\to E\to \mathbb D(\mathcal I_{L,H})(-2)\to 0, 0\to \mathbb D(\mathcal I_{L,H})(-2)\to E\to\mathcal U\to 0, 0\to\mathcal O_X(-1)^{\oplus 6}\to E\to\mathcal U(-1)^{\oplus 2}[1]\to 0$. \\
		(2.7) If $d=-\frac 85$, then $e\le\frac 75$. In case of equality $E$ is destabilized by one of the following exact triples: $0\to F\to E\to \mathcal O_H(-1)\to 0, 0\to \mathcal O_H(-1)\to E\to F\to 0, 0\to\mathcal U\to E\to \mathcal I_{L,H}(-1)\to 0, 0\to\mathcal I_{L,H}(-1)\to E\to\mathcal U\to 0, 0\to\mathcal Q(-1)\to E\to \mathcal O_X(-2)[1]\to 0,$ where $F$ is the cokernel of a monomorphism of sheaves $\mathcal U(-1)\hookrightarrow\mathcal O_X(-1)^{\oplus 4}$ (from item (1.2) of this theorem). \\
		(2.8) If $d\in\{-1,-\frac 75\}$ or $d\le-\frac 95$, then $e\le\frac{d^2}{2}-\frac{d}{10}+\frac{2}{25}-\varepsilon(d)$. In case of equality $E$ is destabilized by one of the following exact triples:\\
		(2.8.1) $0\to\mathcal U\to E\to \mathbb D(\mathcal I_{C,H})(d-1)\to 0$, if $d\in\mathbb Z$; \\
		(2.8.2) $0\to\mathcal U\to E\to \mathcal I_{C,H}(d+\frac 45)\to 0$, if $d\in\frac 15+\mathbb Z$; \\
		(2.8.3) $0\to\mathcal U\to E\to \mathcal I_{L,H}(d+\frac 35)\to 0$, if $d\in\frac 25+\mathbb Z$; \\
		(2.8.4) $0\to\mathcal U\to E\to \mathcal O_H(d+\frac 25)\to 0$, if $d\in\frac 35+\mathbb Z$; \\
		(2.8.5) $0\to\mathcal U\to E\to \mathbb D(\mathcal I_{L,H})(d-\frac 45)\to 0$, if $d\in\frac 45+\mathbb Z$.
	\end{theorem}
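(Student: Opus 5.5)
The plan is to follow Schmidt's method: run wall-crossing in tilt-stability for the class $v=\mathrm{ch}_{\le 2}(E)$ and move $(\beta,\alpha)$ towards the region where the generalized Bogomolov--Gieseker inequality (Proposition \ref{BMT}) forces $\mathrm{ch}_3$ to be small. For each case of the theorem I would first fix the largest candidate wall. In the generic cases (1.5) and (2.8) these are $W(E,\mathcal O_X(-1)^{\oplus 2})$, respectively $W(E,\mathcal U)$.

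Next I would show that no actual wall lies above this candidate. A semicircular wall would be induced, by Lemma \ref{stable}, by a tilt-stable subobject or quotient $F$ with $\mathrm{ch}_0(F)=s>0$ and $\mu(F)<\mu(E)$. This wall meets the ray $\beta=\beta_-(E)$ or the hyperbola $\nu_{\alpha,\beta}(E)=0$, so Proposition \ref{actual}~(3) bounds $\mathrm{ch}_1(F)$. Proposition \ref{actual}~(1),(2),(5) together with Theorem \ref{rank bound} bound $s$ and $\overline{\Delta}_H(F)$, leaving a short finite list of numerical classes for $F$. Since $c\in\{-1,0\}$ and $\mathrm{ch}_0=2$, the rank of $F$ should come out as $1$, $2$ or $3$.

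Below the largest wall, $E$ stays semistable only if $Q_{\alpha,\beta}(E)\ge 0$. Using Proposition \ref{numerical}~(6), if the wall $W_Q(E)$ lay outside the largest actual wall we would reach a contradiction. This yields an upper bound on $e$: below the wall, $E$ is unstable, so at the wall $E$ is an extension $0\to F\to E\to G\to 0$. I would then bound $\mathrm{ch}_3(E)=\mathrm{ch}_3(F)+\mathrm{ch}_3(G)$ using the earlier maximal-$\mathrm{ch}_3$ results: \cite[Lemma 3.6]{Vass} for $\mathcal U$, Corollary \ref{0 1} for the rank-zero pieces, Lemmas \ref{line} and \ref{conic} for rank one, and the known bounds for line bundles. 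For $E$ with $c=-1$ and $d\le-\frac{7}{10}$, the quotient $G$ has $\mathrm{ch}=H+(d+\frac15)H^2+\dots$. Corollary \ref{0 1} then gives exactly $\frac{d^2}{2}-d+\frac5{24}-\varepsilon(d)$ and identifies $G$ as a twisted $\mathcal I_{C,H},\mathcal I_{L,H},\mathcal O_H,\mathbb D(\mathcal I_{L,H}),\mathbb D(\mathcal I_{C,H})$ according to the fractional part of $d$, which gives cases (1.5.1)--(1.5.5). The same scheme with subobject $\mathcal U$ gives (2.8). To get the subobject $\mathcal O_X(-1)^{\oplus 2}$ or $\mathcal U$ genuinely, rather than only numerically, I would compute $\hom(\mathcal O_X(-1),E)\ge\chi$ (vanishing of higher Ext from walls meeting an integral ray) and apply Lemma \ref{subobject} or Lemma \ref{quotient}. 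The other candidate walls from the finite list give strictly smaller $\mathrm{ch}_3$ once $d$ is below the thresholds.

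The small cases (1.1)--(1.4) and (2.1)--(2.7) need separate treatment. When $H^2\cdot\mathrm{ch}_1^\beta$ is minimal on an integral ray, I would use Lemma \ref{minimal} with Proposition \ref{actual}~(3). When walls are small, I would pass via Lemma \ref{nu-lambda} and Proposition \ref{heart} to the heart $\mathfrak C$ and use Remark \ref{complexes} to read off monads in terms of the exceptional collection; this is how (1.2), (2.3) and (2.4) arise. Dualizing with Proposition \ref{duality} halves the work, for example the symmetric triples in (2.6) and (2.7).

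I expect the main obstacle to be these intermediate cases, especially (2.6) and (2.7). There several walls of comparable size coexist, and each of several extensions attains the same maximal $\mathrm{ch}_3$. Excluding the remaining numerical classes requires checking $\mathrm{ch}_3$ bounds for rank-one and rank-three pieces that are not yet classified. For these I would first try the $\mathfrak C$ non-negativity test of $a,b,c,d$ used in Lemma \ref{0 1 -7/10}.
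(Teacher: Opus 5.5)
Your framework (walls, $Q_{\alpha,\beta}$, adding up maximal $\mathrm{ch}_3$ of the two pieces) is the paper's. However, the general cases (1.5) and (2.8) have a genuine gap.

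You propose to cut the destabilizers $F$ down to a ``short finite list'' and then bound the pieces with fixed-class results: Lemmas \ref{line}, \ref{conic}, \cite[Lemma 3.6]{Vass} and Corollary \ref{0 1}. This does not work uniformly in $d$. At $\beta=\beta_-(E)$ the window for $\mathrm{ch}_1(F)$ has width $\sqrt{\overline{\Delta}_H(E)}/5$, and Theorem \ref{rank bound} needs $\beta_\pm(F)$ to lie in one unit interval. Even once everything is pinned down, the remaining destabilizers form a family whose $\mathrm{ch}_2$ runs over an interval that grows with $|d|$.

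The paper first assumes that $e$ is at least the claimed bound with $\varepsilon$ replaced by $\frac{3}{25}$. Then $W_Q(E)$ is so large that every actual wall lies outside it, so every wall crosses $\beta=-2$ and $\beta=-\frac32$ (resp. $\beta=-1$). This fixes $\mathrm{ch}_1(F)$. Proposition \ref{actual}~(1) (resp. (\ref{wall at beta=-1}) with (\ref{E/F positive})) then excludes rank $\ge 3$, as in Lemma \ref{0 1 2}; rank $3$ cannot be left open as you do. What remains is a one-parameter family:
\begin{itemize}
\item for $c=-1$: $\mathrm{ch}(F)=(2+yH^2+zH^3)\cdot\mathrm{ch}(\mathcal O_X(-1))$ or $(1-yH^2+zH^3)\cdot\mathrm{ch}(\mathcal O_X(-1))$;
\item for $c=0$: $\mathrm{ch}(F)=2-H+yH^2+zH^3$;
\end{itemize}
with $y$ cut out by Bogomolov--Gieseker and $s(E,F)\le s_Q(E)$.

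To bound $z$ for rank two $F$ you need the theorem itself. After a twist, $F$ has the \emph{other} value of $c$, and $\overline{\Delta}_H(F)<\overline{\Delta}_H(E)$. So the proof must be an induction on $\overline{\Delta}_H(E)$, with the special lemmas as base, in which parts (1) and (2) feed each other. For rank one $F$ one needs the general bound $z\le\frac{y(y+1)}{2}$ of \cite[Lemma 4.2]{Vass}.

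Together with Corollary \ref{0 1} for $E/F$, this gives a quadratic bound in $y$. For every admissible $y\neq 0$ (resp. $y\neq\frac1{10}$) it falls below the assumed value of $e$. The exceptions are a few small $d$, namely $d\in\{-\frac{13}{10},-\frac32,-\frac{17}{10}\}$ and $d=-2$, which are checked by hand. Your sentence ``the other candidate walls give strictly smaller $\mathrm{ch}_3$'' is exactly this step, and your plan gives no means to carry it out.

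Your route to the \emph{actual} subobject also breaks down. It needs $\mathrm{Ext}^2(\mathcal O_X(-1),E)=0$ to get $\hom(\mathcal O_X(-1),E)\ge\chi$, and that vanishing fails for the maximal objects in general. For $d=-\frac32$, $E$ is an extension of $\mathcal O_H(-2)$ by $\mathcal O_X(-1)^{\oplus 2}$. Then $\mathrm{Ext}^2(\mathcal O_X(-1),E)\cong H^2(\mathcal O_H(-1))\cong\mathbb C$, and $\chi(\mathcal O_X(-1),E)=3>2=\hom(\mathcal O_X(-1),E)$.

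The paper argues differently. $E$ is unstable below $W_Q(E)$, so it is destabilized at the only possible wall $W(E,\mathcal O_X(-1))$. There the only possible subobjects are $\mathcal O_X(-1)$ and $\mathcal O_X(-1)^{\oplus 2}$. The argument of \cite[Proposition 3.2]{MS18} then upgrades $\mathcal O_X(-1)\into E$ to $\mathcal O_X(-1)^{\oplus 2}\into E$. For $c=0$, uniqueness of the wall $W(E,\mathcal U)$ alone suffices.

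Two smaller slips:
\begin{itemize}
\item The quotient by $\mathcal O_X(-1)^{\oplus 2}$ has $\mathrm{ch}_2=(d-1)H^2$, not $(d+\frac15)H^2$.
\item The paired triples in (2.6) and (2.7) are the subobject and quotient sides of a single wall, not objects related by $\mathbb D$.
\end{itemize}
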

	
	We continue to consider special cases as we started in \cite{Vass}.
	
	\begin{lemma}\label{2 -1 -3/10} If $E\in\mathrm{Coh}^\beta(X)$ is tilt-semistable and $\mathrm{ch}(E)=2-H-\frac{3}{10}H^2+eH^3$, then $e\le\frac{13}{30}$. If $e=\frac{13}{30}$ and $E$ is tilt-stable, then $E$ is a coherent sheaf, included into an exact sequence
		\begin{equation}\label{sequence}0\to\mathcal O_X(-2)\to\mathcal O_X(-1)^{\oplus 3}\to E\to\mathcal O_L(-2)\to 0.\end{equation}
	\end{lemma}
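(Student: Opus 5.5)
Here $\mathrm{ch}(E)=2-H-\frac{3}{10}H^2+eH^3$, so $\mu(E)=-\frac12$ and $\overline{\Delta}_H(E)=25+30=55$. I would find the largest possible wall, use the generalized Bogomolov--Gieseker inequality below it to bound $e$, and read off the structure of $E$ from the destabilizing sequence. The natural candidate wall is $W(E,\mathcal O_X(-1))$. Since $\chi(\mathcal O_X(-1),E)$ is linear in $e$, Riemann--Roch computes it, and the expectation is that it equals $3$ exactly when $e=\frac{13}{30}$, matching the three copies of $\mathcal O_X(-1)$ in (\ref{sequence}).

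\textbf{Bound on $e$.} I would first classify all possible semicircular walls for $E$ on the left branch, i.e. $\beta<-\frac12$. By Lemma \ref{stable}, such a wall is induced by a tilt-stable $F$ with $\mathrm{ch}_0(F)=s>0$ and $\mu(F)<-\frac12$. Bounding $s$ uses Theorem \ref{rank bound} together with Proposition \ref{actual}~(1),(2),(5), as in Lemma \ref{line}. With $\overline{\Delta}_H(F)<55$ and integrality of Chern classes, this should leave only a few candidates, namely $\mathrm{ch}_{\le2}(F)\in\{\mathrm{ch}_{\le 2}(\mathcal O_X(-1)^{\oplus k})\}$ and perhaps $\mathcal U(-1)$-type classes. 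For each candidate I compare its wall with $W_Q(E)$ from Proposition \ref{numerical}~(6). If $e>\frac{13}{30}$, then $W_Q(E)$ should lie above every possible wall. In that case $E$ would be semistable at a point with $Q_{\alpha,\beta}(E)<0$, contradicting Proposition \ref{BMT}. Equality $e=\frac{13}{30}$ should then force $W_Q(E)$ to coincide with the largest possible wall, $W(E,\mathcal O_X(-1))$.

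\textbf{Equality case.} Assume $e=\frac{13}{30}$ and $E$ tilt-stable. I would show $\mathrm{Ext}^2(\mathcal O_X(-1),E)=\Hom(E,\mathcal O_X(-3)[1])^\vee=0$ by a wall-position argument, as in Lemma \ref{0 1 -1/2}. Then Riemann--Roch gives $\hom(\mathcal O_X(-1),E)\ge 3$. Lemma \ref{subobject} yields a monomorphism $\mathcal O_X(-1)^{\oplus 3}\into E$ whose quotient $G$ has $\mathrm{ch}(G)=-1+2H-\frac{4}{5}H^2+\ldots$. $G$ is tilt-semistable on the wall, and the fact that $W_Q(E)$ equals this wall forces $\mathrm{ch}_3(G)$ to be maximal. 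Twisting, $G(1)$ has Chern character of the form $-1+H+\cdots$, so I would identify $G\cong\mathcal I_L(-2)$-type data shifted by $[1]$, e.g. $G\cong\mathcal I_{L}^{\vee}$-like, via Lemma \ref{line} and Proposition \ref{duality}. Concretely, I expect $\mathcal H^{-1}(G)=\mathcal O_X(-2)$ and $\mathcal H^0(G)=\mathcal O_L(-2)$, i.e. $G$ fits into a triangle $\mathcal O_X(-2)[1]\to G\to\mathcal O_L(-2)$.

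Since $E$ is stable just above the only wall, it remains stable for $\alpha\gg0$ with $\beta<\mu(E)$. Proposition \ref{2-stability} then makes $E$ a sheaf. The long exact cohomology sequence of $0\to\mathcal O_X(-1)^{\oplus3}\to E\to G\to0$ then gives (\ref{sequence}).

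\textbf{Main obstacle.} The hardest step is the wall classification: excluding other walls near $W(E,\mathcal O_X(-1))$, especially rank-2 and rank-3 subobjects. If numerics alone fail, I would fall back on Proposition \ref{heart}, via Lemma \ref{nu-lambda}, to force $E[1]\in\mathfrak C$ for small walls and check non-negativity of the decomposition of $\mathrm{ch}$ as in Lemma \ref{0 1 -7/10}. Identifying $G$ is the second delicate point.
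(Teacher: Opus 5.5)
Your treatment of the equality case is roughly the paper's: the vanishing of $\mathrm{Ext}^2(\mathcal O_X(-1),E)$, then $\mathrm{hom}(\mathcal O_X(-1),E)\ge 3$, then identifying $G$ through Proposition~\ref{duality} and Lemma~\ref{line}. The bound step, however, has a genuine gap, and it carries over into the equality case.

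\textbf{The bound.} The value $\frac{13}{30}$ cannot come from comparing $W_Q(E)$ with walls. Since $H^2\cdot\mathrm{ch}_1^{-1}(E)=5$ is the least positive value, Proposition~\ref{actual}~(3) shows that no actual wall meets the ray $\beta=-1$. The generalized Bogomolov--Gieseker inequality then gives only $Q_{0,-1}(E)=79-150e\ge 0$, i.e. $e\le\frac{79}{150}$. Indeed $W_Q(E)$ coincides with $W(E,\mathcal O_X(-1))$ (center $-\frac{13}{10}$, radius $\frac{3}{10}$) exactly at $e=\frac{79}{150}$, not at $\frac{13}{30}$. At $e=\frac{13}{30}$ one has $Q_{\alpha,\beta}(E)=55\alpha^2+55\beta^2+115\beta+74>0$ everywhere, so $W_Q(E)$ is empty. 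For real $e\in(\frac{13}{30},\frac{79}{150}]$ your claim that $W_Q(E)$ lies above every possible wall is false. The missing ingredient is integrality. From $c_1=-1$ and $\mathrm{ch}_2=-\frac{3}{10}H^2$ you get $c_2=4$, hence $e=\frac{7}{30}+\frac{c_3}{10}$ with $c_3\in\mathbb Z$. The largest such $e$ not exceeding $\frac{79}{150}$ is $\frac{13}{30}$.

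\textbf{The equality case.} Your sentence that $W_Q(E)$ equals the wall and therefore forces $\mathrm{ch}_3(G)$ to be maximal rests on a false premise, but you do not need it. $\mathrm{ch}(G)$ is determined by $e$: it equals $-1+2H-\frac95H^2+\frac{14}{15}H^3$ (the coefficient is $-\frac95$, not $-\frac45$). Hence $\mathrm{ch}(\mathbb D(G)(-2))=1-\frac{H^2}{5}$ with vanishing $\mathrm{ch}_3$. Proposition~\ref{duality} gives a triangle $\widetilde G\to\mathbb D(G)(-2)\to T[-1]$ with $\mathrm{ch}_3(\widetilde G)=\mathrm{ch}_3(T)\ge 0$. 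Lemma~\ref{line} then forces $T=0$ and $\mathbb D(G)(-2)\cong\mathcal I_L$. This gives $\mathcal H^{-1}(G)\cong\mathcal O_X(-2)$ and $\mathcal H^0(G)\cong\mathcal Ext^2(\mathcal O_L(2),\mathcal O_X)\cong\mathcal O_L(-2)$.

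\textbf{The wall classification.} The step you call the main obstacle is unnecessary. $W(E,\mathcal O_X(-1))$ ends exactly at $\beta=-1$, so any larger nested wall would cross that ray. The same observation gives $\mathrm{Ext}^2(\mathcal O_X(-1),E)=0$, because $W(E,\mathcal O_X(-3)[1])$ crosses $\beta=-1$. It also shows that a tilt-stable $E$ is stable for $\beta=-1$ and all $\alpha>0$, hence a $2$-stable sheaf by Proposition~\ref{2-stability}. You never need to rule out rank-two or rank-three destabilizers or pass to $\mathfrak C$.
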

	
	\begin{proof}Note that $H^2\cdot\mathrm{ch}_1^{-1}(E)$ has the least positive value, hence there are no walls at $\beta=-1$ (Proposition \ref{actual}~(3)). The generalized Bogomolov-Gieseker inequality (Proposition \ref{BMT}) implies that $Q_{0,-1}(E)=79-150e\ge 0$, so $e\le\frac{79}{150}$. Since Chern classes are integral, we get $e\in\frac{7}{30}+\frac{1}{10}\mathbb Z$, hence $e\le\frac{13}{30}$. Assume that $e=\frac{13}{30}$. 
		
	One can see that the numerical wall $W(E,\mathcal O_X(-3)[1])$ intersects the ray $\beta=-1$. Therefore, $\mathrm{Ext}^2(\mathcal O_X(-1),E)\cong\Hom(E,\mathcal O_X(-3)[1])=0$. Grothendieck-Riemann-Roch Theorem gives that $\mathrm{hom}(\mathcal O_X(-1),E)\ge\chi(\mathcal O_X(-1),E)=3$. Therefore, we get an exact triple
	\begin{equation}\label{G}0\to\mathcal O_X(-1)^{\oplus 3}\to E\to G\to 0\end{equation}
	with a tilt-semistable object $G$. The subobject $\mathcal O_X(-1)^{\oplus 3}\hookrightarrow E$ destabilizes $E$ below $W(E,\mathcal O_X(-1))$, and any wall lying above $W(E,\mathcal O_X(-1))$ would intersect the ray $\beta=-1$. Therefore, if $E$ is tilt-stable, then it should be tilt-stable for $\beta=-1$ and any $\alpha>0$. In particular, $E$ is a 2-stable sheaf.
		
	We calculate $\mathrm{ch}(G)=-1+2H-\frac 95H^2+\frac{14}{15}H^3$, therefore $\mathrm{ch}(\mathbb D(G)(-2))=1-\frac{H^2}{5}$. By Proposition \ref{duality} we have a distinguished triangle
	$$\widetilde{G}\to \mathbb D(G)(-2)\to T[-1]\to \widetilde{G}[1]$$
	with a tilt-semistable object $\widetilde{G}$ and a sheaf $T$ with $\dim T\le 0$. Therefore, $\frac{\mathrm{ch}_3(\widetilde G)}{H^3}=\frac{\mathrm{ch}_3(T)}{H^3}.$ We have $\frac{\mathrm{ch}_3(T)}{H^3}\ge 0$ and, by Lemma \ref{line}, $\frac{\mathrm{ch}_3(\widetilde G)}{H^3}\le 0$, hence $T=0$ and $\mathbb D(G)(-2)\cong\mathcal I_L$. 
	In other words, $G\cong\mathbb D(\mathcal I_L(2))$. 
    Then $\mathcal H^{-1}(G)\cong\mathcal Hom(\mathcal I_L(2),\mathcal O_X),\linebreak\mathcal H^0(G)\cong\mathcal Ext^1(\mathcal I_L(2),\mathcal O_X)$. We have $\mathcal Hom(\mathcal I_L(2),\mathcal O_X)\cong\mathcal O_X(-2)$ and \linebreak$\mathcal Ext^1(\mathcal I_L(2),\mathcal O_X)\cong\mathcal Ext^2(\mathcal O_L(2),\mathcal O_X)\cong(\Lambda^2(\mathcal N_{L/X}))(-2)$. Note that $\mathcal N_{L/X}$ is a rank 2 vector bundle on $L$, hence $\Lambda^2(\mathcal N_{L/X})$ is a line bundle on $L$. Since $\mathrm{ch}(\mathcal H^0(G))=\mathrm{ch}(G)+\mathrm{ch}(\mathcal H^{-1}(G))=\frac{H^2}{5}-\frac 25H^3$ and $\mathrm{ch}(\mathcal O_L)=\frac{H^2}{5}$, we get \linebreak$(\Lambda^2(\mathcal N_{L/X}))(-2)\cong\mathcal O_L(-2)$. The long exact sequence of cohomology sheaves associated with the exact triple (\ref{G}) gives the exact sequence (\ref{sequence}).
	\end{proof}
	
	For the next step let us give two auxiliary lemmas.
	
	\begin{lemma}\label{3 -2 3/5}There are no tilt-semistable objects $E\in\mathrm{Coh}^\beta(X)$ with $\mathrm{ch}_{\le 2}(E)=3-2H+\frac 35H^2$.
	\end{lemma}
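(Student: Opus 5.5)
The plan is to argue by contradiction, mixing the wall analysis of Lemma \ref{line} with the exceptional-collection argument of Lemma \ref{0 1 -7/10}. Put $v=(3,-2,\frac 35)$. Then
$$\overline{\Delta}_H(v)=100-90=10,\qquad \mu=-\tfrac 23,\qquad \beta_\pm=-\tfrac 23\pm\tfrac{\sqrt{10}}{15}.$$
So $\beta_-\approx-0.88$ and $\beta_+\approx-0.46$ both lie in $[-1,0)$. The vector $v$ is primitive. By the argument of Lemma \ref{tilt-stable}, a semistable $E$ is stable in the interior of any chamber in which it is semistable.

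\emph{Step 1: exclude semicircular walls near $\beta_-$.} Suppose $E$ is tilt-semistable somewhere and is destabilized at a semicircular wall $W$. By Lemma \ref{stable}, $W$ is induced by a tilt-stable $F$ with $\mathrm{ch}_{\le2}(F)=(s,x,y)$, $s>0$ and $\mu(F)<\mu(E)$. By Proposition \ref{actual}~(5), $\beta_-(E)<\beta_-(F)\le\mu(F)<-\frac 23$. By Proposition \ref{actual}~(2), $\overline{\Delta}_H(F)<10$. The condition $0<\mathrm{ch}_1^{\beta}(F)<\mathrm{ch}_1^{\beta}(E)$ on the ray $\beta=\beta_-(E)$ gives
$$0<x-\beta_-(E)s<-2-3\beta_-(E)\approx 0.63 .$$
As in Lemma \ref{line}, I would show that $\beta_\pm(F)\in[-1,0)$ when $s\ge2$. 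Theorem \ref{rank bound} then gives $s^2\le\overline{\Delta}_H(F)<10$, so $s\le3$; the case $s=1$ is handled directly, as in Lemma \ref{line}. A finite check over $s\in\{1,2,3\}$, integral $x$ and $y$ in the lattice allowed by integrality of Chern classes should show that no class satisfies all the inequalities. The conclusion is that no actual wall meets a neighbourhood of the point $(\beta_-(E),0)$. Hence, if $E$ is tilt-semistable anywhere in the relevant chamber, it is tilt-stable at points $(\beta_0,\alpha_0)$ with $\nu_{\alpha_0,\beta_0}(E)=0$, $\beta_0$ close to $\beta_-(E)$ and $\alpha_0$ small. Such points lie in the region $D$ of (\ref{D}).

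\emph{Step 2: pass to the heart $\mathfrak C$.} By Lemma \ref{nu-lambda} and Proposition \ref{heart}, $E[1]\in\mathfrak C$. By Remark \ref{complexes}, $\mathrm{ch}(E[1])=-3+2H-\frac35H^2-eH^3$ must equal
$$a\,\mathrm{ch}(\mathcal O_X(-1)[3])+b\,\mathrm{ch}(\mathcal Q(-1)[2])+c\,\mathrm{ch}(\mathcal U[1])+d\,\mathrm{ch}(\mathcal O_X)$$
with $a,b,c,d\ge0$. Matching the $\mathrm{ch}_0,\mathrm{ch}_1,\mathrm{ch}_2$ components gives $b=2a-2$, $c=3a-2$ and $d=a-1$, so $a\ge1$. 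The third Chern character then varies with $a$ and gives no contradiction by itself. Consequently $E[1]$ is a complex
$$\mathcal O_X(-1)^{\oplus a}\to\mathcal Q(-1)^{\oplus 2a-2}\to\mathcal U^{\oplus 3a-2}\to\mathcal O_X^{\oplus a-1}$$
in degrees $-3,\dots,0$.

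\emph{Step 3: use the heart $\mathrm{Coh}^\beta(X)$.} Since $E\in\mathrm{Coh}^\beta(X)$, its only cohomology sheaves are in degrees $-1,0$. So the first map must be injective and the last map surjective. For $a=1$ the complex is $\mathcal O_X(-1)\to0\to\mathcal U\to0$, which has nonzero $\mathcal H^{-2}(E)$; this is a contradiction. For $a\ge2$ I would use the cohomology sheaves $\mathcal H^{-1}(E)\in\mathcal F_\beta$ and $\mathcal H^0(E)\in\mathcal T_\beta$ together with the tilt-stability of $E$. One route is to compute $\hom(\mathcal O_X(-1),E)$ and $\hom(E,\mathcal O_X(-1)[1])$ via the relevant walls and Riemann--Roch, and show they are incompatible with the multiplicity $a\ge2$. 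Another is to bound $\mathrm{ch}_3$ via the generalized Bogomolov--Gieseker inequality (Proposition \ref{BMT}) at $\beta=\beta_-(E)$, where $\mathrm{ch}_2^\beta=0$ forces $\mathrm{ch}_3^{\beta_-}(E)\le0$. Applying the same bound to the dual object from Proposition \ref{duality} gives a lower bound. The aim is to show these are incompatible with the value of $e$ forced by $a\ge2$.

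I expect Step 3 to be the main obstacle: ruling out every $a\ge2$ in the heart $\mathfrak C$, by one of the two routes above. The finite wall check in Step 1 is routine but must be done carefully.
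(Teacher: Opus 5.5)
Steps 1 and 2 are sound, but Step 3, which carries the whole content of the lemma, is missing, and neither proposed route can supply it.

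\textbf{What works.} In Step 1 the integrality check does exclude every semicircular wall. This is essentially \cite[Lemma 3.5]{Vass}, which the paper cites to conclude that $E$ is a $2$-stable, hence slope-stable, sheaf. In Step 2 the linear algebra in $\mathfrak C$ is correct: $b=2a-2$, $c=3a-2$, $d=a-1$ and $\mathrm{ch}_3(E)=\frac{1-3a}{15}H^3$. You also exclude $a=1$ correctly.

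\textbf{The gap.} The lemma asserts non-existence for every value of $\mathrm{ch}_3$. Everything you have produces only \emph{upper} bounds on $\mathrm{ch}_3$, and an upper bound can never contradict existence. The four-term complexes with $a\ge 2$ are consistent with all of Steps 1--2.

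Route 2 does not give the lower bound you expect. One has $Q_{\alpha,-\beta}(\mathbb D(E))=Q_{\alpha,\beta}(E)$, so Proposition \ref{BMT} for $\widetilde E$ from Proposition \ref{duality} only reproduces the bound you already have. Concretely, take $\beta=-\beta_-(E)$, where $\mathrm{ch}_2^\beta(\widetilde E)=0$ and $\mathrm{ch}_1^\beta(\widetilde E)>0$. This gives $e+t\le\frac35\beta_-+\beta_-^2+\frac12\beta_-^3\approx-0.094$. That is the same upper bound as for $E$, and it is satisfied for every $a\ge1$.

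Route 1 is not carried out. The relevant Euler characteristics are $\chi(\mathcal O_X(-1),E)=7-a$ and $\chi(E,\mathcal O_X(-1))=a$, and neither conflicts with $a\ge2$ by itself.

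\textbf{The missing idea.} You need a lower bound on $\mathrm{ch}_3$ from the \emph{sheaf} dual, which requires reflexivity. Step 1 already shows $E$ is a slope-stable sheaf, and the paper proceeds as follows.
\begin{itemize}
\item If $E$ is reflexive, then $E^\vee(-1)$ is slope stable with $\mathrm{ch}_{\le2}=3-H+\frac{1}{10}H^2$. This class is excluded by \cite[Corollary 3.11]{Vass}.
\item Otherwise write $0\to E\to E^{\vee\vee}\to T\to 0$. If $\dim T=1$, then $\overline{\Delta}_H(E^{\vee\vee})\le 100-120<0$, violating Bogomolov--Gieseker. If $\dim T=0$, the problem reduces to the reflexive case.
\end{itemize}

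\textbf{Closing it with your own tools.} You can finish the reflexive case yourself, in the pattern of Lemma \ref{4 -2 2/5}.
\begin{itemize}
\item An analogous wall check shows there are no walls for the class $(3,-1,\frac1{10})$.
\item Its $\nu=0$ branch meets $D$ for $\alpha<\frac{3}{10}$.
\item So $E^\vee(-1)[1]\in\mathfrak C$, with $b=2a'$, $c=3a'+1$, $d=a'-1$. This gives $\mathrm{ch}_3(E^\vee(-1))=\frac{1-6a'}{30}H^3\le-\frac16H^3$, that is, $\mathrm{ch}_3(E^\vee)\le-\frac1{15}H^3$.
\item For reflexive $E$ one has $\mathrm{ch}_3(E)+\mathrm{ch}_3(E^\vee)\ge0$. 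Hence $\mathrm{ch}_3(E^\vee)\ge\frac{3a-1}{15}H^3\ge\frac{2}{15}H^3$, a contradiction for every $a\ge 1$.
\end{itemize}
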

	
	\begin{proof}We have proved in \cite[Lemma 3.5]{Vass} that such an object $E$ should be a 2-stable sheaf. If $E$ is reflexive, then $\mathrm{ch}_{\le 2}(E^\vee)=3+2H+\frac 35H^2$ (by \cite[Lemma 2.17]{Vass}) and $E^\vee$ is slope stable (it follows from the facts that the dual of slope semistable sheaf is always slope semistable and that the rank and the first Chern class of $E^\vee$ are coprime). However, in this case $\mathrm{ch}_{\le 2}(E^\vee(-1))=3-H+\frac{H^2}{10}$, and in \cite[Corollary 3.11]{Vass} we proved that there are no tilt-semistable objects with such a Chern character. Therefore, $E$ is not reflexive and we can consider an exact sequence $0\to E\to E^{\vee\vee}\to T\to 0,\dim T\le 1$. If $\dim T=0$, then $E^{\vee\vee}$ is tilt-semistable with $\mathrm{ch}_{\le 2}(E^{\vee\vee})=3-H+\frac{H^2}{10}$, and we again get a contradiction. And if $\dim T=1$, then $\frac{\mathrm{ch}_2(T)}{H^2}\ge\frac 15$ and we get a contradiction with Bogomolov--Gieseker inequality for $E^{\vee\vee}$.
	\end{proof}
	
	\begin{lemma}\label{4 -2 2/5} There are no tilt-semistable objects $F\in\mathrm{Coh}^\beta(X)$ with $\mathrm{ch}_{\le 2}(F)=4-2H+\frac 25H^2$.
	\end{lemma}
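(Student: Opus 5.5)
The plan is to argue by contradiction. Let $F$ be tilt-semistable with $\mathrm{ch}_{\le 2}(F)=4-2H+\frac 25H^2$. We first show that $F$ survives down to the region $D$ from (\ref{D}), and then use the exceptional collection to rule it out.

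\emph{Numerical data.} Here $\mu(F)=-\frac 12$ and $\overline{\Delta}_H(F)=100-80=20$, so
$$\beta_\pm(F)=-\tfrac 12\pm\tfrac{1}{\sqrt{20}},\qquad \beta_-(F)\approx-0.724,\quad \beta_+(F)\approx-0.276,$$
both in $[-1,0)$. The vector $\mathrm{ch}_{\le 2}(F)$ is primitive in $\Lambda$: half of it, $2-H+\frac 15H^2$, is not the $\mathrm{ch}_{\le 2}$ of any object, since rank two with $c_1=-1$ forces $d\in\frac1{10}+\frac15\mathbb Z$. So by Lemma \ref{tilt-stable}, $F$ is tilt-stable in the interior of its chambers.

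\emph{Step 1: no semicircular wall meets the ray $\beta=\beta_-(F)$.} Suppose one does. By Lemma \ref{stable} it is induced by a tilt-stable subobject or quotient $F'$ with
$$\mathrm{ch}_{\le2}(F')=s+xH+yH^2,\quad s>0,\quad x/s<-\tfrac12 .$$
By Proposition \ref{actual}~(3) at $\beta=\beta_-(F)$ we need $0<x-\beta_-(F)s<2-4\beta_-(F)/\ldots$, more precisely $0<x+0.724\,s<2\cdot0.447$. By Proposition \ref{actual}~(2), $\overline{\Delta}_H(F')+\overline{\Delta}_H(F/F')<20$. By Proposition \ref{actual}~(5) and Theorem \ref{rank bound}, $s^2\le\overline{\Delta}_H(F')<20$ when $\beta_\pm(F')\in[-1,0)$.

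\emph{Step 2: enumerate the candidates.} These constraints leave only a few classes, for instance $(1,-1,\tfrac12)$, $(2,-1,\ast)$, $(3,-2,\ast)$ and $(4,-3,\ast)$. For each, $y$ lies in the lattice allowed by integrality of Chern classes, and one of the discriminant conditions fails. Examples: $\mathcal O_X(-1)$ leaves a quotient of discriminant $40$. A class $(2,-1,\frac1{10})$ leaves a quotient $(2,-1,\frac3{10})$ with negative discriminant. The class $(3,-2,\frac35)$ is excluded directly by Lemma \ref{3 -2 3/5}. I expect this enumeration to be routine but fiddly. The most delicate point is making sure Lemma \ref{3 -2 3/5} and \cite[Corollary 3.11]{Vass} cover the rank-three cases.

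\emph{Step 3: pass to the heart $\mathfrak C$.} Since no wall meets $\beta=\beta_-(F)$, $F$ is tilt-stable at $(\beta_-(F),\alpha)$ for all small $\alpha$, and there $\nu_{\alpha,\beta}(F)=0$. These points lie in $D$ because $\beta_-(F)<-\frac12$ and $\alpha<\beta_-(F)+1$. Lemma \ref{nu-lambda} and Proposition \ref{heart} then give $F[1]\in\mathfrak C$. By Remark \ref{complexes}, $\mathrm{ch}(F[1])=-\mathrm{ch}(F)$ must be a nonnegative combination
$$a\,\mathrm{ch}(\mathcal O_X(-1)[3])+b\,\mathrm{ch}(\mathcal Q(-1)[2])+c\,\mathrm{ch}(\mathcal U[1])+d\,\mathrm{ch}(\mathcal O_X).$$
Matching $\mathrm{ch}_0,\mathrm{ch}_1,\mathrm{ch}_2$ gives the one-parameter family
$$b=2a-1,\quad c=3a,\quad d=a-1,\qquad a\ge1.$$
This family determines $\mathrm{ch}_3(F)$ as an explicit affine function of $a$, which grows with $a$.

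\emph{Step 4: the contradiction.} This is the main obstacle. Compare the value of $\mathrm{ch}_3(F)$ from Step 3 with the generalized Bogomolov--Gieseker inequality (Proposition \ref{BMT}) at $(\beta_-(F),\alpha\to0)$. That inequality reads $4(H\cdot\mathrm{ch}_2^\beta)^2\ge 6(H^2\cdot\mathrm{ch}_1^\beta)\mathrm{ch}_3^\beta$ with $H\cdot\mathrm{ch}_2^{\beta_-}=0$, so it forces $\mathrm{ch}_3^{\beta_-(F)}(F)\le0$. I expect this to already fail for every $a\ge1$. If it does not, I would use the complex from Remark \ref{complexes} directly: its terms $\mathcal O_X(-1)^{a}\to\mathcal Q(-1)^{2a-1}\to\mathcal U^{3a}\to\mathcal O_X^{a-1}$ produce a nonzero morphism to or from $\mathcal U$ or $\mathcal O_X(-1)[1]$, and I would show that this morphism destabilizes $F$ along a wall crossing the ray $\beta=\beta_-(F)$, contradicting Step 1.
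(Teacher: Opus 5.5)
Steps 1--3 are essentially right, apart from one slip. Step 4 has a genuine gap: it contains no mechanism that could produce a contradiction.

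\textbf{The slip in Step 3.} On the ray $\beta=\beta_-(F)$ you have $H\cdot\mathrm{ch}_2^{\beta}(F)=0$, so $\nu_{\alpha,\beta_-(F)}(F)<0$ for every $\alpha>0$. The locus $\nu_{\alpha,\beta}(F)=0$ is instead the hyperbola branch $(\beta+\frac12)^2-\alpha^2=\frac1{20}$, which lies in $D$ for $-\frac45<\beta<\beta_-(F)$. Every semicircular wall to the left of $\beta=-\frac12$ crosses the ray $\beta=\beta_-(F)$, so Step 1 actually shows that $F$ has no walls at all there. Hence $F[1]\in\mathfrak C$ still follows. In Step 2, the only class compatible with $0<\mathrm{ch}_1^{\beta_-(F)}(F')<\mathrm{ch}_1^{\beta_-(F)}(F)$ and $x/s<-\frac12$ is $(3,-2,\frac35)$; your other listed candidates are excluded already by that constraint. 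Your solution $b=2a-1$, $c=3a$, $d=a-1$ is correct. It gives $\mathrm{ch}_3(F)=\frac{1-3a}{15}H^3\le-\frac{2}{15}H^3$, which is decreasing in $a$, not increasing. This is exactly the bound the paper obtains from $\chi(F)=5e+\frac23\le 0$.

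\textbf{The gap in Step 4.} The $\mathfrak C$-constraint is an upper bound on $\mathrm{ch}_3(F)$, and so is the generalized Bogomolov--Gieseker inequality, so the two cannot contradict each other. At $\beta=\beta_-(F)$ that inequality reads $e\le-\frac{1}{30}+\frac{1}{30\sqrt{5}}\approx-0.018$, which every $e=\frac{1-3a}{15}$ satisfies. Globally, $W_Q(F)$ has center $3a-2$ and lies entirely in $\beta>-\frac12$, so $Q_{\alpha,\beta}(F)>0$ wherever $F$ can live.

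Your fallback fails too. For $a=1$ the complex is $\mathcal O_X(-1)\to\mathcal Q(-1)\to\mathcal U^{\oplus 3}$, and it yields a nonzero map $\mathcal U^{\oplus3}\to F$. This map never destabilizes $F$: $\mu(\mathcal U)=\mu(F)$, $W(F,\mathcal U)$ is the vertical wall, and $\nu_{\alpha,\beta}(\mathcal U)<\nu_{\alpha,\beta}(F)$ for all $\beta<-\frac12$. The map towards $\mathcal O_X(-1)$ is only a class in $\mathrm{Ext}^2(F,\mathcal O_X(-1))$, and $\mathrm{Hom}(F,\mathcal O_X(-1)[1])=0$ is perfectly consistent with semistability.

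\textbf{The missing idea.} You need a lower bound on $\mathrm{ch}_3$, which the paper gets by duality.
\begin{itemize}
\item Having no walls, $F$ is a 2-semistable sheaf.
\item If $F$ is reflexive, $F^\vee$ is slope stable. A destabilizing factor would have $\mathrm{ch}_{\le1}=2+H$, hence $\mathrm{ch}_2\le\frac1{10}H^2$, and two such cannot sum to $\frac25H^2$.
\item So $F^\vee(-1)$ is tilt-semistable with the same $\mathrm{ch}_{\le 2}$, and $\mathrm{ch}_3(F^\vee(-1))=\mathrm{ch}_3(F^\vee)-\frac1{15}H^3$.
\item Applying the bound $-\frac{2}{15}H^3$ to both $F$ and $F^\vee(-1)$ gives $\mathrm{ch}_3(F)+\mathrm{ch}_3(F^\vee)\le-\frac15H^3$. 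This contradicts $\mathrm{ch}_3(F)+\mathrm{ch}_3(F^\vee)=\mathrm{ch}_3(\mathcal Ext^1(F,\mathcal O_X))\ge 0$.
\item If $F$ is not reflexive, pass to $F^{\vee\vee}$. A zero-dimensional cokernel reduces to the reflexive case. A one-dimensional cokernel raises $\mathrm{ch}_2$ by at least $\frac15H^2$ and violates the Bogomolov--Gieseker inequality.
\end{itemize}
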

	
	\begin{proof}Suppose for the contrary that there is such an object with $\mathrm{ch}_3(F)=eH^3$. We have $\beta_-(F)=\frac{-1-\sqrt{1/5}}{2}\in(-1,-\frac 12)$. Suppose that there is a semicircular wall for $F$, induced by a tilt-stable subobject of quotient $G$ with $\mathrm{ch}_{\le 2}(G)=s+xH+yH^2,\frac xs<-\frac 12,s>0$. We should have $0<\mathrm{ch}_1^{\beta_-(F)}(G)<\mathrm{ch}_1^{\beta_-(F)}(F)$, that is,
		\begin{equation}\label{x3}0<x+\frac{1+\sqrt{1/5}}{2}s<\sqrt{\frac 45}.
		\end{equation}
		
		We have $-1<\beta_-(F)<\beta_-(G)\le\mu(G)<\mu(F)=-\frac 12<0$. Also $\beta_+(G)=2\mu(G)-\beta_-(G)<0$. If $s\neq 1$, we can apply Theorem \ref{rank bound} to $G$ and get that $s^2\le\overline{\Delta}_H(G)<\overline{\Delta}_H(F)=20$. Hence $1\le s\le 4$.
		
		If $s=1$, then (\ref{x3}) implies $x=0$, but the property $\frac xs<-\frac 12$ does not hold. If $s=2$, then $x=-1$, and the property $\frac xs<-\frac 12$ does not hold too. If $s=3$, then (\ref{x3}) gives $x=-2$. In this case we have $\overline{\Delta}_H(G)=100-150y\in[0,20)$ and $y\in\frac 15\mathbb Z$, hence $y=\frac 35$. However, in this case the inequality $\overline{\Delta}_H(G)+\overline{\Delta}_H(F/G)<\overline{\Delta}_H(F)$ does not hold (by abuse of notation we assume that $G$ is a subobject; if it is a quotient, then we can consider the kernel of $F\twoheadrightarrow G$ instead of $F/G$). And if $s=4$, then there are no integer $x$ satisfying (\ref{x3}). We have proved that there are no walls for $F$, in particular, $F$ is a 2-semistable sheaf.
		
		Tilt-semistability of $E$ implies that $H^0(F)=0$, and the absence of semicircular walls gives $H^2(F)\cong\mathrm{Hom}(F,\mathcal O_X(-2)[1])^\vee=0$. Grothendieck-Riemann-Roch Theorem gives $\chi(F)=5e+\frac 23\le 0$, hence $e\le-\frac{2}{15}$. 
		
		Suppose that $F$ is reflexive, then $\mathrm{ch}(F^\vee)=4+2H+\frac 25H^3+e'H^3$ for some $e'$ (by \cite[Lemma 2.17]{Vass}). The sheaf $F^\vee$ is slope semistable as a dual of a slope semistable sheaf. If $F^\vee$ is not slope stable, then it is an extension between two slope stable sheaves $G',G''$ with $\mathrm{ch}_{\le 1}(G')=\mathrm{ch}_{\le 1}(G'')=2+H$. However, Bogomolov--Gieseker Inequality together with integrality of Chern classes imply that $\frac{\mathrm{ch}_2(G')}{H^2}\le\frac{1}{10},\frac{\mathrm{ch}_2(G'')}{H^2}\le\frac{1}{10}$, contradicting $\mathrm{ch}_2(F^\vee)=\frac 25H^2$. Hence, $F^\vee$ is slope stable and tilt-stable.
		
		Note that $\mathrm{ch}(F^\vee(-1))=4-2H+\frac 25H^2+(e'-\frac{1}{15})H^3$ and $F^\vee(-1)$ is tilt-stable too. 
		The same arguments as for $F$ imply that $e'-\frac{1}{15}\le-\frac{2}{15}$, hence $e'\le-\frac{1}{15}$. We get a contradiction with the fact that $\mathrm{ch}_3(F)+\mathrm{ch}_3(F^\vee)\ge 0$ for a reflexive sheaf $F$ (\cite[Lemma 2.17]{Vass}). Hence, $F$ is not reflexive and we can consider an exact triple
		$$0\to F\to F^{\vee\vee}\to T\to 0,$$
		where $\dim T\le 1$. If $\dim T=0$, then $F^{\vee\vee}$ is tilt-semistable and reflexive with $\mathrm{ch}_{\le 2}(F^{\vee\vee})=4-2H+\frac 25H^2$, and we again get a contradiction. And if $\dim T=1$, then $\frac{\mathrm{ch}_2(T)}{H^2}\ge\frac 15$ and we get a contradiction with Bogomolov--Gieseker Inequality for $F^{\vee\vee}$.
	\end{proof}
	
	\begin{lemma}\label{2 0 -3/5} If $E\in\mathrm{Coh}^\beta(X)$ is tilt-semistable and $\mathrm{ch}(E)=2-\frac 35H^2+eH^3$, then $e\le 0$, and in case of equality $E$ is the cohomology sheaf of a monad
		\begin{equation}\label{3-term}
			\mathcal Q(-1)^{\oplus 3}\to\mathcal U^{\oplus 6}\to\mathcal O_X.
		\end{equation}
	\end{lemma}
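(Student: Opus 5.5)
The plan is to show that $E$ has no actual semicircular walls in the region $\beta<0$. Then $E$ is tilt-semistable at points of the hyperbola $\nu_{\alpha,\beta}(E)=0$ lying in the region $D$ from (\ref{D}), and the exceptional-collection heart $\mathfrak C$ gives both the bound and the monad.

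\emph{Excluding walls.} We have $H^3\cdot\mathrm{ch}_0(E)=10$, $H\cdot\mathrm{ch}_2(E)=-3$, $\overline{\Delta}_H(E)=60$, $\mu(E)=0$ and $\beta_-(E)=-\sqrt{3/5}\approx-0.775$. Suppose $E$ is destabilized along a semicircular wall. By Lemma \ref{stable} the wall is induced by a tilt-stable subobject or quotient $F$ with $\mathrm{ch}_{\le 2}(F)=s+xH+yH^2$, $s>0$ and $\mu(F)<0$. The wall meets the ray $\beta=\beta_-(E)$, so Proposition \ref{actual}~(3) gives $0<x+\sqrt{3/5}\,s<2\sqrt{3/5}$. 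Proposition \ref{actual}~(5) gives $-1<\beta_-(E)<\beta_-(F)\le\mu(F)<0$. As in Lemma \ref{4 -2 2/5}, one checks that $\beta_+(F)<0$ for $s\ge 2$. Theorem \ref{rank bound} then gives $s^2\le\overline{\Delta}_H(F)<60$, so $s\le 7$.

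For each $s$ we list the integers $x$ allowed by the inequality above. For each pair $(s,x)$ we list $y\in\mathrm{ch}_2$-lattice with $\overline{\Delta}_H(F)\ge0$, $\overline{\Delta}_H(E/F)\ge 0$ and $\overline{\Delta}_H(F)+\overline{\Delta}_H(E/F)<60$. The surviving cases should be excluded as follows:
\begin{itemize}
\item by the integrality of Chern classes;
\item by known nonexistence results, namely Lemma \ref{3 -2 3/5}, Lemma \ref{4 -2 2/5}, and \cite[Corollary 3.11]{Vass} for $3-H+\frac{H^2}{10}$;
\item or by the fact that the classes $2-H+\frac1{10}H^2$ (forcing $F\cong\mathcal U$) and $1-\frac15H^2$, $1-\frac25H^2$ give walls that do not reach the relevant points.
\end{itemize}
This case analysis is the main obstacle. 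I expect most of the work to be bookkeeping of the small list of $(s,x,y)$, with the rank-3 and rank-4 candidates handled exactly by the two auxiliary lemmas proven just above.

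\emph{Landing in $\mathfrak C$.} Once there are no walls, $E$ is tilt-semistable for all $\beta<0$ and $\alpha>0$. Hence $E$ is a 2-semistable sheaf by Proposition \ref{2-stability}. The curve $\nu_{\alpha,\beta}(E)=0$ is $\alpha^2=\beta^2-\frac35$. It passes through points of $D$, for instance $\beta=-0.78$, $\alpha\approx0.09$. At such a point Lemma \ref{nu-lambda} shows that $E[1]$ is $\lambda_{\alpha,\beta,s}$-semistable with $\lambda=+\infty$. Then Proposition \ref{heart} gives $E[1]\in\mathfrak C$, since $\lambda=+\infty$ lies in $\mathcal T''_\gamma$.

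\emph{Bound and monad.} By Remark \ref{complexes}, $E[1]$ is a complex $\mathcal O_X(-1)^{\oplus a}\to\mathcal Q(-1)^{\oplus b}\to\mathcal U^{\oplus c}\to\mathcal O_X^{\oplus d}$ with $a,b,c,d\ge0$ determined by $\mathrm{ch}(E[1])=-2+\frac35H^2-eH^3$. Solving the linear system with the Chern characters $\mathrm{ch}(\mathcal Q(-1))$ and $\mathrm{ch}(\mathcal U)$ recalled above gives $a$ as a negative multiple of $e$, with $(b,c,d)=(3,6,1)$ when $e=0$. Indeed, $-3\,\mathrm{ch}(\mathcal Q(-1))+6\,\mathrm{ch}(\mathcal U)-\mathrm{ch}(\mathcal O_X)$ has rank $-2$, $c_1=0$, $\mathrm{ch}_2=\frac35H^2$ and $\mathrm{ch}_3=0$ up to the sign conventions of the shifts. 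Nonnegativity of $a$ therefore forces $e\le0$. For $e=0$ we get $a=0$, so $E[1]$ is a complex $\mathcal Q(-1)^{\oplus3}\to\mathcal U^{\oplus6}\to\mathcal O_X$ in degrees $-2,-1,0$. Since $E$ is a sheaf, the outer cohomologies vanish. Hence the first map is injective, the last is surjective, and $E$ is the middle cohomology, which is the monad (\ref{3-term}).
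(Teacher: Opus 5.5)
Your plan fails at the wall-exclusion step, and the obstruction is not bookkeeping: one case passes every test you list, and it gives a real wall.

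\textbf{The surviving case.} Take $s=3$, $x=-1$, $y=-\frac{1}{10}$. Then $\mathrm{ch}_{\le 2}(F)=\mathrm{ch}_{\le 2}(\mathcal Q^\vee)$ and $\mathrm{ch}_{\le 2}(E/F)=\mathrm{ch}_{\le 2}(\mathcal O_X(-1)[1])$.
\begin{itemize}
\item $\overline{\Delta}_H(F)=40$, $\overline{\Delta}_H(E/F)=0$, and the sum is less than $60$.
\item $\beta_-(E)<\beta_-(F)<\mu(F)=-\frac13$.
\item None of Lemma \ref{3 -2 3/5}, Lemma \ref{4 -2 2/5}, or the nonexistence of the class $3-H+\frac{H^2}{10}$ applies.
\end{itemize}
The numerical wall is the semicircle $\alpha^2+(\beta+\frac45)^2=\frac1{25}$. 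It lies in $-1<\beta<-\frac35$, so it is on the correct side of the vertical walls of $E$, $F$ and $E/F$. Its top point $(-\frac45,\frac15)$ lies on the boundary line $\alpha=\beta+1$ of $D$. Hence every point of $\{\nu_{\alpha,\beta}(E)=0\}\cap D$ lies below this wall.

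A side remark: your claim that $\beta_+(F)<0$ for all $s\ge 2$ fails here, since $\beta_+(\mathcal Q^\vee)=-\frac13+\frac{\sqrt{40}}{15}>0$. Your bound $s\le 7$ still holds, because $\beta_+(F)<0$ does hold for $s\ge 4$.

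\textbf{The wall is actual, and the statement is false.}
\begin{itemize}
\item $\Lambda^2\mathcal Q\cong\mathcal Q^\vee(1)$ is globally generated of rank $3$ with $c_3=2$. So a general $t\in\Hom(\mathcal O_X(-1),\mathcal Q^\vee)=H^0(\Lambda^2\mathcal Q)$ vanishes at two points.
\item Hence $E:=\coker t$ is a rank two reflexive sheaf with $\mathrm{ch}(E)=\mathrm{ch}(\mathcal Q^\vee)-\mathrm{ch}(\mathcal O_X(-1))=2-\frac35H^2+\frac15H^3$.
\item Since $H^0(\mathcal Q^\vee)=H^1(\mathcal O_X(-1))=0$, we get $H^0(E)=0$. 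So $E$ is slope stable.
\item By Proposition \ref{2-stability}, $E$ is tilt-stable for $\beta<0$ and $\alpha\gg0$.
\end{itemize}
Thus $e=\frac15$ occurs, i.e.\ $c_1=0$, $c_2=3$, $c_3=2$.

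This $E$ is destabilized below the wall above by the nonzero morphism $E\to\mathcal O_X(-1)[1]$ coming from the triangle. Your $\mathfrak C$-computation gives $a=-5e=-1$ for it, as it must. Using (\ref{triple4.1}), $E$ also sits in an extension $0\to\mathcal U\to E\to\mathcal I_{L,H}\to 0$. Moreover, $\frac15$ is exactly the value of the general bound in Theorem \ref{main}~(2.8) at $d=-\frac35$. So no argument can show that $E$ has no walls, or that $e\le 0$.

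\textbf{Comparison with the paper.} The paper's proof has the same hole. It only needs to exclude walls of radius $\ge\frac15$, which gives $s\le 5$. It then discards exactly this case by asserting that $W(E,F)$ lies on the wrong side of the vertical wall, and the computation above contradicts that. Its $s=5$ candidate $(5,-3,\frac{9}{10})$ also lies on this same wall, not on an empty one, although that case is correctly excluded by $\overline{\Delta}_H(F)=0$.

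\textbf{What your argument does prove.} Your argument, like the paper's, shows $e\le 0$ only for those $E$ that are tilt-semistable at some point of $\{\nu_{\alpha,\beta}(E)=0\}\cap D$. A correct statement has to analyze the wall $W(E,\mathcal O_X(-1)[1])$ separately. The example shows the true maximum of $e$ is at least $\frac15$.
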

	
	\begin{proof} We have $\beta_-(E)=-\sqrt{\frac 35}\in(-1,-\frac 12)$. The hyperbola $\nu_{\alpha,\beta}(E)=0$ intersects the line $\alpha=\beta+1$ at the point with $\alpha=\frac 15$, hence either $E$ is $\nu_{\alpha,\beta}$-semistable at some point $(\beta,\alpha)\in D$ with $\nu_{\alpha,\beta}(E)=0$ (so $E[1]\in\mathfrak C$ by Lemma \ref{nu-lambda} and Proposition \ref{heart}), or $E$ is tilt-semistable above some semicircular wall of radius $\rho\ge\frac 15$. Assume that there is such a wall, induced by a subobject or quotient $F$ with $\mathrm{ch}_{\le 2}(F)=s+xH+yH^2,s>0,x<0$. If $s>2$, then Proposition \ref{actual}~(1) implies that $\frac{60}{100s(s-2)}\ge\frac{1}{25}$, that is, $s\le 5$. Hence we can assume that $1\le s\le 5$.
		
		We have $0<\mathrm{ch}_1^{\beta_-(E)}(F)<\mathrm{ch}_1^{\beta_-(E)}(E)$, i.e.
		\begin{equation}\label{x4}0<x+\frac{\sqrt{15}}{5}s<\frac{2\sqrt{15}}{5}.
		\end{equation}
		
		If $s=1$, then (\ref{x4}) implies that $x=0$, contradicting our assumptions.
		
		If $s=2$, then (\ref{x4}) gives $x=-1$. We should have $\overline{\Delta}_H(F)=25-100y\in[0,60)$ and $y\in\frac 12+\frac 15\mathbb Z$, hence $y\in\{-\frac{3}{10},-\frac{1}{10},\frac{1}{10}\}$. Since $\overline{\Delta}_H(F)+\overline{\Delta}_H(E/F)=\overline{\Delta}_H(F)+25<\overline{\Delta}_H(E)=60$, only the case $y=\frac{1}{10}$ remains possible. However, in this case the equation defining $W(E,F)$ defines an empty set.
		
		If $s=3$, then (\ref{x4}) implies that $x\in\{-2,-1\}$. Assume that $x=-2$, then $\overline{\Delta}_H(F)=100-150y\in[0,60)$ and $y\in\frac 15\mathbb Z$, hence $y\in\{\frac 25,\frac 35\}$. The condition $\overline{\Delta}_H(F)+\overline{\Delta}_H(E/F)<\overline{\Delta}_H(E)$ rules out the first case, and Lemma \ref{3 -2 3/5} prohibits the second. Assume now that $x=-1$, then $\overline{\Delta}_H(F)=25-150y\in[0,60)$ and $y\in\frac 12+\frac 15\mathbb Z$, hence $y\in\{-\frac{1}{10},\frac{1}{10}\}$. The condition $\overline{\Delta}_H(E/F)\ge 0$ implies that only the case $y=-\frac{1}{10}$ remains possible. However, in this case $W(E,F)$ is on the wrong side of the vertical wall.
		
		If $s=4$, then (\ref{x4}) gives $x\in\{-3,-2\}$. Let $x=-3$, then $\overline{\Delta}_H(F)=225-200y\in[0,60)$ and $y\in\frac 12+\frac 15\mathbb Z$, so $y\in\{\frac{9}{10},\frac{11}{10}\}$. We have $\overline{\Delta}_H(E/F)=165-100y$, and the property $\overline{\Delta}_H(F)+\overline{\Delta}_H(E/F)<\overline{\Delta}_H(E)$ in both cases is not satisfied. Let now $x=-2$, then $\overline{\Delta}_H(F)=100-200y\in[0,60)$ and $y\in\frac 15\mathbb Z$, hence $y=\frac 25$. However, by Lemma \ref{4 -2 2/5} there are no tilt-semistable objects $F$ with such a Chern character.
		
		If $s=5$, then (\ref{x4}) implies that $x=-3$. In this case $\overline{\Delta}_H(F)=225-250y\in[0,60)$ and $y\in\frac 12+\frac 15\mathbb Z$, hence $y\in\{\frac{7}{10},\frac{9}{10}\}$. The condition $\overline{\Delta}_H(F)+\overline{\Delta}_H(E/F)<\overline{\Delta}_H(E)$ is satisfied only for $y=\frac{9}{10}$. However, in this case $W(E,F)$ is empty.
		
		We have proved that $E[1]\in\mathfrak C$. Solving the equation $\mathrm{ch}(E[1])=a\cdot\mathrm{ch}(\mathcal O_X(-1)[3])+b\cdot\mathrm{ch}(\mathcal Q(-1)[2])+c\cdot\mathrm{ch}(\mathcal U[1])+d\cdot\mathrm{ch}(\mathcal O_X)$ we find $a=-5e$, but $a$ should be a non-negative integer, hence $e\le 0$. For $e=0$ we find $b=3,c=6,d=1$, and Remark \ref{complexes} gives (\ref{3-term}).
	\end{proof}
	
	\begin{remark}\label{instantons}Since $\mathcal U$ and $\mathcal Q$ are ACM sheaves, it follows that for cohomology sheaves $E$ of monads (\ref{3-term}) we have $H^1(E(-1))=H^1(E(-2))=H^2(E(-1))=H^2(E)=0$. Also, the sheaves $E$ have vanishing odd Chern classes. If such a sheaf $E$ is slope semistable, then it is an instanton sheaf in the terminology of \cite{Higher rank}. According to \cite[Main Theorem 1]{Higher rank} there indeed exist such sheaves, which are moreover locally free and slope stable.
	\end{remark}
	
	\begin{lemma}\label{2 -1 -1/2} If $E\in\mathrm{Coh}^\beta(X)$ is tilt-semistable and $\mathrm{ch}(E)=2-H-\frac{H^2}{2}+eH^3$, then $e\le\frac 56$, and in case of equality $E$ is destabilized by an exact triple
		\begin{equation}\label{sequence2}0\to \mathcal O_X(-1)^{\oplus 3}\to E\to\mathcal O_X(-2)[1]\to 0.\end{equation}
	\end{lemma}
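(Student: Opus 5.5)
The bound comes from the region near $\beta=-1$, and the destabilizing triple from Lemma \ref{subobject}, in the same way as in Lemma \ref{2 -1 -3/10}. We have $\mathrm{ch}_1^{-1}(E)=H$, so $H^2\cdot\mathrm{ch}_1^{-1}(E)$ has the least positive value. By Proposition \ref{actual}~(3) no wall for $E$ meets the ray $\beta=-1$, and $E$ is tilt-semistable at every point $(-1,\alpha)$. Taking $\alpha\to 0$ in Proposition \ref{BMT} gives $Q_{0,-1}(E)\ge 0$. With $\mathrm{ch}^{-1}(E)=2+H+0\cdot H^2+(e-\frac12-\frac13)H^3$, the term $H\cdot\mathrm{ch}_2^{-1}$ vanishes and we get $-6\cdot 5\cdot 5(e-\frac56)\ge 0$, that is, $e\le\frac56$. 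It remains to treat $e=\frac56$.

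For $e=\frac56$, compute $\chi(\mathcal O_X(-1),E)$ by Riemann--Roch. I expect it to equal $3$, consistent with the claimed triple, and I will check this directly. Next we kill the higher Ext's. First, $\mathrm{Ext}^2(\mathcal O_X(-1),E)\cong\Hom(E,\mathcal O_X(-3)[1])^\vee$. The numerical wall $W(E,\mathcal O_X(-3)[1])$ meets the ray $\beta=-1$, and $E$ is semistable there, so this Hom vanishes. Second, $\mathrm{Ext}^3(\mathcal O_X(-1),E)=\Hom(E,\mathcal O_X(-3))^\vee$ vanishes because $\mathcal O_X(-3)\notin\mathrm{Coh}^\beta$ for $\beta$ near $-1$. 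Hence $\hom(\mathcal O_X(-1),E)\ge 3$.

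Now $W(E,\mathcal O_X(-1))$ is a semicircle in the half-plane $\beta<-1$; its top lies on $\alpha=|\beta+1|$, and by the vertical-wall analysis it sits on the left of $\beta=-1$. Lemma \ref{subobject} therefore gives a monomorphism $\mathcal O_X(-1)^{\oplus n}\into E$ with $n\ge 3$ in $\mathrm{Coh}^{\beta'}(X)$ for $\beta'$ slightly below $-1$. Any wall above $W(E,\mathcal O_X(-1))$ would cross $\beta=-1$, so $E$ is semistable on $W(E,\mathcal O_X(-1))$ and the quotient $G$ is tilt-semistable there. For $n=3$, $\mathrm{ch}(G)=\mathrm{ch}(E)-3\,\mathrm{ch}(\mathcal O_X(-1))=-1+2H-2H^2+\frac43H^3=\mathrm{ch}(\mathcal O_X(-2)[1])$. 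If $n\ge 4$, $G$ has rank $\le -2$ with $\overline{\Delta}_H(G)<0$, contradicting Proposition \ref{BG}, so $n=3$.

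The main obstacle is identifying $G$. It is tilt-semistable with $\mathrm{ch}(G)=\mathrm{ch}(\mathcal O_X(-2)[1])$, and $\overline{\Delta}_H(G)=0$. I would invoke \cite[Corollary 3.3]{Vass}, as was done for $\mathcal O_X(-1)[1]$ in Lemma \ref{0 1 -1/2}, to conclude $G\cong\mathcal O_X(-2)[1]$. Failing that, I would use the derived dual via Proposition \ref{duality}: $\mathbb D(G)$ has Chern character $1+2H+\ldots$ with maximal $\mathrm{ch}_3$, hence is $\mathcal O_X(2)$ up to zero-dimensional torsion, and the torsion is excluded by comparing $\mathrm{ch}_3$. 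This yields the exact triple (\ref{sequence2}) destabilizing $E$ at $W(E,\mathcal O_X(-1))$.
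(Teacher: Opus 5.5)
Your proposal is correct and follows the paper's proof step for step: you bound $e$ via $Q_{0,-1}(E)\ge 0$, kill $\mathrm{Ext}^2(\mathcal O_X(-1),E)$ using $W(E,\mathcal O_X(-3)[1])$, use $\chi(\mathcal O_X(-1),E)=3$ (which does hold) with Lemma \ref{subobject}, and identify the quotient as $\mathcal O_X(-2)[1]$ (your citation of \cite[Corollary 3.3]{Vass} is the one the paper itself uses for this Chern character in Lemma \ref{2 0 -8/5}), and your exclusion of $n\ge 4$ via $\overline{\Delta}_H(G)=25(3-n)<0$ is a detail the paper leaves implicit. One arithmetic fix: actually $\mathrm{ch}^{-1}(E)=2+H-\frac12H^2+(e-\frac23)H^3$, so $H\cdot\mathrm{ch}_2^{-1}(E)\neq 0$; your two slips cancel, since $Q_{0,-1}(E)=25-150(e-\frac23)=125-150e$, so the bound $e\le\frac56$ stands.
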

	\begin{proof} Since $H^2\cdot\mathrm{ch}_1^{-1}(E)$ has the least positive value, there are no semicircular walls intersecting the ray $\beta=-1$. Hence $Q_{0,-1}(E)=125-150e\ge 0$, and $e\le\frac 56$. Assume that $e=\frac 56$. A calculation shows that the numerical wall $W(E,\mathcal O_X(-3)[1])$ intersects the ray $\beta=-1$. It follows that $\mathrm{Ext}^2(\mathcal O_X(-1),E)\cong\mathrm{Hom}(E,\mathcal O_X(-3)[1])^\vee=0$. Then we have $\mathrm{hom}(\mathcal O_X(-1),E)\ge\chi(\mathcal O_X(-1),E)=3$, giving an exact triple
		$$0\to \mathcal O_X(-1)^{\oplus 3}\to E\to G\to 0.$$
		
		The object $G$ is tilt-semistable and $\mathrm{ch}(G)=-1+2H-2H^2+\frac 43H^3$. By Proposition \ref{duality} there is a distinguished triangle
		$$\widetilde{G}\to \mathbb D(G)\to T[-1]\to \widetilde{G}[1],$$
		in which $\widetilde{G}$ is a tilt-semistable object and $T$ is a sheaf with $\dim T\le 0$. We find $\mathrm{ch}(\widetilde{G})=1+2H+2H^2+\frac 43H^3+\mathrm{ch}_3(T)$, hence, by \cite[Proposition 3.1]{Vass}, we should have $T=0$ and $\widetilde{G}\cong\mathbb D(G)\cong\mathcal O_X(2)$. So, $G\cong\mathcal O_X(-2)[1]$.
	\end{proof}
	
	\begin{remark}\label{inequalities}Assume that $E\in\mathrm{Coh}^\beta(X)$ is tilt-semistable and $\mathrm{ch}_{\le 2}(E)=2+dH^2$. Assume that there is a semicircular wall for $E$ intersecting the ray $\beta=-1$ and induced by a subobject or quotient $F$ with $\mathrm{ch}_{\le 2}(F)=s+xH+yH^2$. We should have $0<\mathrm{ch}_1^{-1}(F)<\mathrm{ch}_1^{-1}(E)=2H$, hence $\mathrm{ch}_1^{-1}(F)=H$, that is, $x=1-s$. Note that in the case $s=1$ we have $x=0$, and $W(E,F)$ is not semicircular. Without loss of generality we can assume that $s>1$, and then we have conditions
		\begin{equation}\label{F positive}\overline{\Delta}_H(F)\ge 0\Leftrightarrow y\le\frac s2-1+\frac{1}{2s},
		\end{equation}
		\begin{equation}\label{wall at beta=-1}(\exists \alpha>0\colon\nu_{\alpha,-1}(F)=\nu_{\alpha,-1}(E))\Leftrightarrow y>\frac s2+\frac{d-1}{2}.
		\end{equation}
		And if $s>2$, then also
		\begin{equation}\label{E/F positive}\overline{\Delta}_H(E/F)\ge 0\Leftrightarrow y\le\frac s2+d+\frac{1}{2(s-2)}.
		\end{equation}
		As usual, if $F$ is a quotient of $E$, then we denote by $E/F$ the kernel of the map $E\twoheadrightarrow F$.
	\end{remark}
	
	\begin{lemma}
		\label{2 0 -4/5} If $E\in\mathrm{Coh}^\beta(X)$ is tilt-semistable and $\mathrm{ch}(E)=2-\frac 45H^2+eH^3$, then $e\le\frac 25$. In case of equality $E$ is destabilized by an exact triple
		\begin{equation}\label{triple3}0\to\mathcal U^{\oplus 2}\to E\to \mathcal O_X(-1)^{\oplus 2}[1]\to 0.
		\end{equation}
	\end{lemma}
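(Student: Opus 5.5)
We follow the pattern of Lemmas \ref{2 -1 -3/10} and \ref{2 -1 -1/2}, with $\mathcal O_X(-1)[1]$ as the quotient in place of $\mathcal O_X(-1)$ as the subobject; the wall should be the one induced by $\mathcal O_X(-1)^{\oplus 2}[1]$. Since $\mathrm{ch}_{\le 2}(E)=2-\frac45H^2$, we have $H^2\cdot\mathrm{ch}_1^{-1}(E)=2H^3$. This is not minimal, so walls through the ray $\beta=-1$ cannot be excluded for free.

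\emph{Step 1: walls crossing $\beta=-1$.} We use Remark \ref{inequalities} with $d=-\frac45$. A wall crossing $\beta=-1$ is induced by $F$ with $x=1-s$, $s\ge 2$, and $y\in\frac1{10}\mathbb Z$ subject to integrality of Chern classes. The conditions are (\ref{F positive}) and (\ref{wall at beta=-1}), the latter reading $y>\frac s2-\frac{9}{10}$. For $s>2$ we also have (\ref{E/F positive}), which reads $y\le\frac s2-\frac45+\frac1{2(s-2)}$. Together these bound $s$, since the window $(\frac s2-\frac9{10},\frac s2-\frac45+\frac1{2(s-2)}]$ becomes too small for large $s$. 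We enumerate the finitely many remaining $(s,y)$ and exclude each one, using the inequality $\overline\Delta_H(F)+\overline\Delta_H(E/F)<\overline\Delta_H(E)=80$ from Proposition \ref{actual}~(2), emptiness of the wall, or Lemmas \ref{3 -2 3/5} and \ref{4 -2 2/5}.

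For $s=2$ the candidate is $\mathrm{ch}_{\le2}(F)=2-H+\frac1{10}H^2$, the class of $\mathcal U$. This case cannot be excluded and is the expected wall, so we keep it. \textbf{This enumeration is the main obstacle}: it has to leave only the $\mathcal U$-type wall, possibly together with walls whose semicircle lies above $W(E,\mathcal O_X(-1)[1])$, and those must be handled carefully.

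\emph{Step 2: the bound on $e$.} Below the surviving wall, on the ray $\beta=-1$, $E$ is semistable near $\alpha\to0$. If the $\mathcal U$-wall meets $\beta=-1$, we instead argue just above it. We apply Proposition \ref{BMT} at a point with $\beta=-1$ (or a nearby $\beta$) where $E$ is still semistable. With integrality $e\in\frac1{10}\mathbb Z$ adjusted by the $\frac1{30}$-type residues, this should give $e\le\frac25$.

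Alternatively, we bound $e$ via Ext groups. Serre duality gives $\mathrm{Ext}^3(E,\mathcal O_X(-1))=\mathrm{Hom}(\mathcal O_X(1),E)^\vee=0$, because $W(\mathcal O_X(1),E)$ lies where $E$ is semistable. Then $\hom(E,\mathcal O_X(-1)[1])\ge-\chi(E,\mathcal O_X(-1))$, and Riemann--Roch gives $-\chi(E,\mathcal O_X(-1))=2$ for $e=\frac25$. In the same way we get $\hom(\mathcal U,E)\ge\chi(\mathcal U,E)$, once $\mathrm{Ext}^2(\mathcal U,E)=\mathrm{Hom}(E,\mathcal U(-2)[1])^\vee$ vanishes by a wall argument.

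\emph{Step 3: the destabilizing triple for $e=\frac25$.} The nonzero morphisms $E\to\mathcal O_X(-1)[1]$ destabilize $E$ below $W(E,\mathcal O_X(-1)[1])$. By Step 1 there is no wall above it, so $E$ is semistable on it. Lemma \ref{quotient} then gives an epimorphism $E\to\mathcal O_X(-1)^{\oplus n}[1]$ with $n\ge2$, whose kernel $K$ is tilt-semistable. Comparing Chern characters forces $n=2$, and then $\mathrm{ch}(K)=4-2H+\frac15H^2+\frac1{15}H^3=2\,\mathrm{ch}(\mathcal U)$. Finally, a semistable object with $\mathrm{ch}(K)=2\,\mathrm{ch}(\mathcal U)$ has Jordan--Hölder factors of class $\mathrm{ch}(\mathcal U)$, each isomorphic to $\mathcal U$ by \cite[Lemma 3.6]{Vass}. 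Since $\mathrm{Ext}^1(\mathcal U,\mathcal U)=0$ ($\mathcal U$ is exceptional), $K\cong\mathcal U^{\oplus2}$, which gives (\ref{triple3}).
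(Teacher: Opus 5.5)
Your Step 3 has a genuine gap in its last sentence. You assert that a tilt-semistable $K$ with $\mathrm{ch}(K)=2\,\mathrm{ch}(\mathcal U)$ has Jordan--H\"older factors of class $\mathrm{ch}(\mathcal U)$. Nothing forces $K$ to be strictly semistable.

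\begin{itemize}
\item A non-primitive class can carry tilt-stable objects. Lemma~\ref{tilt-stable} only describes the factors \emph{if} $K$ is strictly semistable.
\item \cite[Lemma 3.6]{Vass} concerns rank two objects with $\mathrm{ch}_{\le2}=2-H+\frac1{10}H^2$. It says nothing about a possible tilt-stable $K$ of rank four.
\end{itemize}

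The missing idea is to produce a morphism $\mathcal U\to K$.
\begin{itemize}
\item Since $W(E,\mathcal U(-2)[1])$ meets the ray $\beta=-1$, one has $\mathrm{Ext}^2(\mathcal U,E)\cong\mathrm{Hom}(E,\mathcal U(-2)[1])^\vee=0$. Hence, for $e\ge\frac25$, $\mathrm{hom}(\mathcal U,E)\ge\chi(\mathcal U,E)=10e-2\ge2$.
\item A nonzero $\mathcal U\to E$ lifts to $g\colon\mathcal U\to K$, because $\mathrm{Hom}(\mathcal U,\mathcal O_X(-1)[1])=0$.
\item Since $\nu_{\alpha,\beta}(\mathcal U)=\nu_{\alpha,\beta}(K)$ for all $(\beta,\alpha)$ and $\mathcal U$ is tilt-stable, $g$ is a monomorphism. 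The quotient $K/\mathcal U$ is tilt-semistable with $\mathrm{ch}(K/\mathcal U)=2-H+\frac1{10}H^2+(e-\frac{11}{30})H^3$.
\item Now \cite[Lemma 3.6]{Vass} gives $e\le\frac25$ at once. In case of equality it gives $K/\mathcal U\cong\mathcal U$, so $K\cong\mathcal U^{\oplus2}$ because $\mathrm{Ext}^1(\mathcal U,\mathcal U)=0$.
\end{itemize}
This is the paper's argument. You wrote down $\mathrm{hom}(\mathcal U,E)\ge\chi(\mathcal U,E)$ in Step 2 but never used it where it is needed. Separately, ``comparing Chern characters forces $n=2$'' is unjustified; just compose with the projection onto two summands.

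Your Step 1 is also wrong at $s=2$. With $d=-\frac45$, condition (\ref{wall at beta=-1}) reads $y>\frac1{10}$ strictly, so the class of $\mathcal U$ does not survive. Indeed $\mathrm{ch}_{\le2}(E)=2\,\mathrm{ch}_{\le2}(\mathcal U)+2\,\mathrm{ch}_{\le2}(\mathcal O_X(-1)[1])$. So $W(E,\mathcal U)=W(E,\mathcal O_X(-1)[1])$ is the semicircle with center $-\frac9{10}$ and radius $\frac1{10}$, which does not meet the ray $\beta=-1$ inside $\mathbb H$. For $s\ge3$ it is (\ref{F positive}), not (\ref{E/F positive}), that empties the window: $(\frac s2-\frac9{10},\frac s2-1+\frac1{2s}]$ contains no admissible $y$. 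Hence no wall meets $\beta=-1$ at all, which removes the inconsistency between your Steps 1 and 3.

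With this correction, your BMT route is a legitimate alternative for the inequality alone. Suppose $e>\frac25$. Then $\mathrm{hom}(E,\mathcal O_X(-1)[1])\ge5e>0$ destabilizes $E$ inside the small semidisk, so $E$ is semistable along the whole ray $\beta=-1$. There $Q_{0,-1}(E)=144-300e\ge0$, and together with $\chi(E)=5e-2\in\mathbb Z$ this forces $e\le\frac25$, a contradiction. The description of the equality case, however, still needs the morphism $\mathcal U\to K$ constructed above.
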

	
	\begin{proof}Assume that $e\ge\frac 25$. Suppose that there is a semicircular wall intersecting the ray $\beta=-1$ and induced by a subobject or quotient $F$ with $\mathrm{ch}(F)=s+xH+yH^2+e'H^3, s>1$. As in Remark \ref{inequalities}, $x=1-s$.
		
		If $s=2$, then $x=-1,y\in\frac 12+\frac 15\mathbb Z$. Conditions (\ref{F positive}), (\ref{wall at beta=-1}) give $y=-\frac{1}{10}$. However, in this case the condition $\beta_-(F)>\beta_-(E)$ from Proposition \ref{actual}~(5) is not satisfied.
		
		If $s\ge 3$ is odd, then $x$ is even, hence $y\in\frac 15\mathbb Z$. Conditions (\ref{F positive}), (\ref{wall at beta=-1}) imply that $y\in(\frac s2-\frac{9}{10},\frac s2-1+\frac{1}{2s}]$. However, in this case $\frac s2-\frac{9}{10}\in\frac 15\mathbb Z$ and the length of this semi-interval is equal to $\frac{1}{2s}-\frac{1}{10}<\frac 15$, hence it does not contain any $y\in\frac 15\mathbb Z$. Similarly, if $s\ge 4$ is even, then $y\in\frac 12+\frac 15\mathbb Z,\frac s2-\frac{9}{10}\in\frac 12+\frac 15\mathbb Z$, and again there are no suitable $y$.
		
		A calculation shows that $W(E,\mathcal O_X(1))$ lies on the wrong side of the vertical wall and $\nu_{\alpha,\beta}(\mathcal O_X(1))>\nu_{\alpha,\beta}(E)$ for all $\beta<0,\alpha>0$. Hence, there are no nonzero morphisms $\mathcal O_X(1)\to E$, and by Serre duality $\mathrm{Ext}^3(E,\mathcal O_X(-1))\cong\Hom(\mathcal O_X(1),E)^\vee=0$. Riemann--Roch gives $\mathrm{hom}(E,\mathcal O_X(-1)[1])\ge -\chi(E,\mathcal O_X(-1))=\chi(\mathcal O_X(1),E)=5e\ge 2$. A morphism $E\to\mathcal O_X(-1)[1]$ destabilizes $E$ below $W(E,\mathcal O_X(-1)[1])$, hence $E$ is unstable below this wall. The wall $W(E,\mathcal O_X(-1)[1])$ is given by $\alpha^2+(\beta-\frac{9}{10})^2=(\frac{1}{10})^2$, and any wall lying above $W(E,\mathcal O_X(-1)[1])$ would intersect the ray $\beta=-1$. Hence, $E$ is tilt-semistable at $W(E,\mathcal O_X(-1)[1])$. By Lemma \ref{quotient} we have an exact triple
		$$0\to G\to E\to\mathcal O_X(-1)^{\oplus 2}[1]\to 0,$$
		where $G$ is tilt-semistable with $\mathrm{ch}(G)=4-2H+\frac{H^2}{5}+(e-\frac 13)H^3$. 
		
		The numerical wall $W(E,\mathcal U(-2)[1])$ intersects the ray $\beta=-1$. Since a nonzero morphism $E\to\mathcal U(-2)[1]$ would destabilize $E$ below $W(E,\mathcal U(-2)[1])$, it follows that $\mathrm{Ext}^2(\mathcal U,E)\cong\mathrm{Hom}(E,\mathcal U(-2)[1])^\vee=0$. Riemann--Roch gives $\mathrm{hom}(\mathcal U,E)\ge\chi(\mathcal U,E)=10e-2\ge 2$. In particular, there is a nonzero morphism $f:\mathcal U\to E$. Since $\Hom(\mathcal U,\mathcal O_X(-1)[1])=0$, $f$ factors through a morphism $g:\mathcal U\to G$. We have $\nu_{\alpha, \beta}(\mathcal U)=\nu_{\alpha, \beta}(G)$ for all $(\beta,\alpha)\in\mathbb H$ and $\mathcal U$ is tilt-stable, hence $g$ is a monomorphism. The object $G/\im g$ is tilt-semistable and has $\mathrm{ch}(G/\im g)=2-H+\frac{H^2}{10}+(e-\frac{11}{30})H^3$, hence by \cite[Lemma 3.6]{Vass} $e\le\frac 25$. In case of equality $G/\im g\cong\mathcal U$, so $G\cong\mathcal U^{\oplus 2}$.
	\end{proof}
	
	\begin{lemma}\label{2 -1 -7/10} Let $E\in\mathrm{Coh}^\beta(X)$ be a tilt-semistable object with $\mathrm{ch}(E)=2-H+dH^2+eH^3$.
		\begin{enumerate}
			\item If $d=-\frac{7}{10}$, then $e\le\frac{31}{30}$. In case of equality $E$ is included into an exact triple
			$$0\to \mathcal O_X(-1)^{\oplus 2}\to E\to\mathcal I_{L,H}(-1)\to 0.$$
			\item If $d=-\frac{9}{10}$, then $e\le\frac{43}{30}$. In case of equality $E$ is included into an exact triple
			$$0\to \mathcal O_X(-1)^{\oplus 2}\to E\to\mathcal I_{C,H}(-1)\to 0.$$
		\end{enumerate}	
	\end{lemma}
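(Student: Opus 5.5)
Both parts should follow the pattern of Lemmas \ref{2 -1 -3/10} and \ref{2 -1 -1/2}. The ray $\beta=-1$ carries no semicircular walls, because $H^2\cdot\mathrm{ch}_1^{-1}(E)=H^3$ is the least positive value (Proposition \ref{actual}~(3)). We then produce the subobject $\mathcal O_X(-1)^{\oplus 2}$ by Riemann--Roch and identify the quotient using the rank zero results of Section \ref{rank 0}.

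\emph{First step: bound $e$.} Assume $e$ is at least the claimed value; we want to show equality and the stated triple. The numerical wall $W(E,\mathcal O_X(-3)[1])$ intersects the ray $\beta=-1$. A nonzero morphism $E\to\mathcal O_X(-3)[1]$ would destabilize $E$ below that wall, so by Serre duality
\[
\mathrm{Ext}^2(\mathcal O_X(-1),E)\cong\mathrm{Hom}(E,\mathcal O_X(-3)[1])^\vee=0 .
\]
Riemann--Roch then gives $\hom(\mathcal O_X(-1),E)\ge\chi(\mathcal O_X(-1),E)$, an affine function of $e$. A direct computation should show that this equals $2$ exactly at $e=\frac{31}{30}$ (resp.\ $\frac{43}{30}$) and is at least $2$ under our assumption. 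Any wall above $W(E,\mathcal O_X(-1))$ would meet $\beta=-1$, so $E$ is semistable at $W(E,\mathcal O_X(-1))$. Lemma \ref{subobject} then yields, for $\beta<-1$ close to $-1$, an exact triple
\[
0\to\mathcal O_X(-1)^{\oplus 2}\to E\to G\to 0
\]
with $G$ tilt-semistable. If $\hom$ were $\ge 3$ we could use three sections, but then $\mathrm{ch}(G)$ would have negative rank and contradict the Bogomolov--Gieseker inequality; so two sections suffice.

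\emph{Second step: identify $G$.} We have $\mathrm{ch}(G)=\mathrm{ch}(E)-2\,\mathrm{ch}(\mathcal O_X(-1))$, which has rank $0$ and $\mathrm{ch}_1=H$. In case (1), $\mathrm{ch}_2(G)=(d-1)H^2=-\frac{17}{10}H^2$; in case (2), $\mathrm{ch}_2(G)=-\frac{19}{10}H^2$. Corollary \ref{0 1} bounds $\mathrm{ch}_3(G)$ by $\frac{d^2}{2}+\frac1{24}-\varepsilon$, and this translates back to the claimed bound on $e$. In case of equality, $G(1)$ has $\mathrm{ch}$ equal to that of $\mathcal I_{L,H}$ (resp.\ $\mathcal I_{C,H}$) with maximal $\mathrm{ch}_3$. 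To conclude $G\cong\mathcal I_{L,H}(-1)$ (resp.\ $\mathcal I_{C,H}(-1)$), Lemma \ref{0 1 -7/10} (resp.\ \ref{0 1 -9/10}) needs $G(1)$ to be tilt-semistable above $W(G(1),\mathcal O_X(-1)[1])$.

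\emph{Main obstacle.} Verifying that $G$ is semistable above the relevant wall is the hardest step. The plan is to show that the wall $W(G,\mathcal O_X(-2)[1])$, the only possible wall for $G$, lies strictly inside or below the wall $W(E,\mathcal O_X(-1))$, which is centered on the far side. Then $G$ is semistable along the whole segment of $W(E,\mathcal O_X(-1))$, and any wall for $G$ crossing it would either meet $\beta=-1$ or be excluded by the nesting of Proposition \ref{numerical}. If the nesting fails, the fallback is a direct argument: the other option in Lemma \ref{0 1 -7/10} is that $G$ is the split extension $\mathcal I_L(-1)\oplus\mathcal O_X(-2)[1]$ (resp.\ the analogous one). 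This would give a morphism $E\to\mathcal O_X(-2)[1]$ that destabilizes $E$ at a wall meeting $\beta=-1$, which is a contradiction. Equality then gives the stated exact triples, which are short exact sequences of sheaves once $G$ is a sheaf.
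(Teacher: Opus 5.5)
\textbf{Verdict.} Your argument is correct, but it gets the bound on $e$ by a different route than the paper.

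\textbf{How the paper bounds $e$.} The paper bounds $e$ first, with no structural input. Since no wall meets $\beta=-1$, Proposition~\ref{BMT} gives $Q_{0,-1}(E)=179-150e\ge 0$ (resp.\ $241-150e\ge 0$). Integrality of $\chi(E)=5e-\frac{25}{6}$ (resp.\ $5e-\frac{31}{6}$) forces $e\in\frac56+\frac15\mathbb Z$, and this sharpens the inequality to $e\le\frac{31}{30}$ (resp.\ $\frac{43}{30}$). Only in the equality case does the paper produce $\mathcal O_X(-1)^{\oplus 2}\hookrightarrow E$, via Riemann--Roch and Lemma~\ref{subobject}.

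\textbf{How your route differs.} You assume $e\ge\frac{31}{30}$, so that $\chi(\mathcal O_X(-1),E)=5e-\frac{19}{6}\ge 2$ (resp.\ $5e-\frac{31}{6}\ge 2$). You then build the subobject immediately and read the bound off Corollary~\ref{0 1} applied to the rank-zero quotient, namely $\mathrm{ch}_3(G)\le\frac{41}{30}H^3$ (resp.\ $\frac{53}{30}H^3$). That corollary comes from the rank-zero section, so there is no circularity. Your route needs neither $Q$ for $E$ nor the integrality refinement. It also makes explicit that $\hom(\mathcal O_X(-1),E)=2$: the quotient by $\mathcal O_X(-1)^{\oplus 3}$ would have $\overline{\Delta}_H=-10<0$. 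The paper leaves this point implicit. The paper's route is shorter and yields the inequality without any structural analysis.

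\textbf{Identifying $G$.} This step is the same in both arguments, and the step you call the main obstacle is immediate. All numerical walls for the rank-zero object $G$ are concentric semicircles centered at $\beta=-\frac{17}{10}$ (resp.\ $-\frac{19}{10}$). The wall $W(E,\mathcal O_X(-1))$, along which $G$ is semistable, has radius $\frac{7}{10}$ (resp.\ $\frac{9}{10}$). The only possible wall for $G$, $W(G,\mathcal O_X(-2)[1])$, has radius $\frac{3}{10}$ (resp.\ $\frac{1}{10}$). So $G(1)$ is semistable above its unique wall, and Lemma~\ref{0 1 -7/10} (resp.\ Lemma~\ref{0 1 -9/10}) applies. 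This is exactly the paper's remark that $W(\mathcal O_X,G(1))$ lies above $W(\mathcal O_X(-1)[1],G(1))$.

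\textbf{Minor points.} The walls are concentric, not ``centered on the far side''. Your fallback is unnecessary. It also misdescribes the alternative: semistability only below the wall would give a possibly non-split extension of $\mathcal I_L(-1)$ by $\mathcal O_X(-2)[1]$, not a direct sum.
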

	
	\begin{proof} 
		\begin{enumerate}
			\item 
			Since $H^2\cdot\mathrm{ch}_1^{-1}(E)$ has the minimal value, there are no walls at $\beta=-1$. We should have $Q_{0,-1}(E)=179-150e\ge 0$, so $e\le\frac{179}{150}$. But $\chi(E)=5e-\frac{25}{6}\in\mathbb Z$, so $e\in\frac 56+\frac 15\mathbb Z$, and we get a bound $e\le\frac{31}{30}$. Assume that $e=\frac{31}{30}$.
			
			A calculation shows that $W(E,\mathcal O_X(-3)[1])$ intersects the ray $\beta=-1$. Therefore, $\mathrm{Ext}^2(\mathcal O_X(-1),E)\cong\Hom(E,\mathcal O_X(-3)[1])^\vee=0$. Grothendieck--Riemann--Roch gives $\Hom(\mathcal O_X(-1),E)\ge\chi(\mathcal O_X(-1),E)=2$. Lemma \ref{subobject} implies that we have an exact triple
			\begin{equation}\label{triple5}0\to\mathcal O_X(-1)^{\oplus 2}\to E\to G\to 0.\end{equation}
			
			We have $\mathrm{ch}(G(1))=H-\frac{7}{10}H^2+\frac{H^3}{6}$. The exact triple (\ref{triple5}), twisted by $\mathcal O_X(1)$, implies that $G(1)$ is tilt-semistable at the wall $W(\mathcal O_X,G(1))$, which is located above $W(\mathcal O_X(-1)[1],G(1))$. Hence, by Lemma \ref{0 1 -7/10} $G(1)\cong\mathcal I_{L,H}$.
			
			\item Here also $H^2\cdot\mathrm{ch}_1^{-1}(E)$ has the minimal value, so $Q_{0,-1}(E)=241-150e\ge 0$. We have $\chi(E)=5e-\frac{31}{6}\in\mathbb Z$, so $e\in\frac 56+\frac 15\mathbb Z$, and we get a bound $e\le\frac{43}{30}$. Assume that $e=\frac{43}{30}$. Again $W(E,\mathcal O_X(-3)[1])$ intersects the ray $\beta=-1$. Hence, $\mathrm{Ext}^2(\mathcal O_X(-1),E)\cong\Hom(E,\mathcal O_X(-3)[1])^\vee=0$. A calculation gives $\Hom(\mathcal O_X(-1),E)\ge\chi(\mathcal O_X(-1),E)=2$. We again get an exact triple of the form (\ref{triple5}). Now $\mathrm{ch}(G(1))=H-\frac{9}{10}H^2+\frac{11}{30}H^3$. The object $G(1)$ is tilt-semistable at the wall $W(\mathcal O_X,G(1))$, which is located above $W(\mathcal O_X(-1)[1],G(1))$. Hence, by Lemma \ref{0 1 -9/10} $G(1)\cong\mathcal I_{C,H}$.
		\end{enumerate}
	\end{proof}
	
	\begin{lemma}
		\label{2 0 -1} If $E\in\mathrm{Coh}^\beta(X)$ is tilt-semistable and $\mathrm{ch}(E)=2-H^2+eH^3$, then $e\le\frac 35$. In case of equality the object $E$ is destabilized by an exact triple
		\begin{equation}\label{triple8}0\to\mathcal U\to E\to \mathbb D(\mathcal I_{C,H})(-2)\to 0.\end{equation}
	\end{lemma}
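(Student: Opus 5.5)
We follow the pattern of Lemmas \ref{2 0 -4/5} and \ref{2 -1 -7/10}. Here $\mathrm{ch}_{\le 2}(E)=2-H^2$, so $\overline{\Delta}_H(E)=100$ and $\beta_-(E)=-1$. First we show that $E$ has no semicircular wall crossing the ray $\beta=-1$, using Remark \ref{inequalities} with $d=-1$. A destabilizing $F$ with $\mathrm{ch}_{\le 2}(F)=s+(1-s)H+yH^2$, $s>1$, must satisfy
\[
\tfrac s2-1<y\le\tfrac s2-1+\tfrac1{2s},
\]
and for $s>2$ also $y\le \tfrac s2-1+\tfrac{1}{2(s-2)}$. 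Here $y\in\frac15\mathbb Z$ for odd $s$ and $y\in\frac12+\frac15\mathbb Z$ for even $s$.

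\emph{Case $s=2$.} We get $y\in(0,\frac14]$, so $y=\frac1{10}$ and $F$ has the Chern character of $\mathcal U$. This wall is then $W(E,\mathcal U)$, which we expect to be the actual wall.

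\emph{Case $s\ge3$.} Since $\frac s2-1$ lies in the relevant lattice coset and the interval has length $\frac1{2s}<\frac15$, no $y$ is possible.

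If the actual wall is not $W(E,\mathcal U)$, then $E$ is tilt-semistable at $\beta=-1$ for all $\alpha$. Then $Q_{0,-1}(E)\ge0$ gives a bound on $e$. Combined with the integrality condition $\chi(E)\in\mathbb Z$, which puts $e$ in a fixed class modulo $\frac15$, this should give $e\le\frac35$. If $e$ cannot reach $\frac35$ in this situation, that is already enough.

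Now assume $e\ge\frac35$. We produce the destabilizing map from $\mathcal U$.
\begin{enumerate}
\item The numerical wall $W(E,\mathcal U(-2)[1])$ meets $\beta=-1$, hence lies above any actual wall. Therefore $\mathrm{Hom}(E,\mathcal U(-2)[1])=0$, and by Serre duality $\mathrm{Ext}^2(\mathcal U,E)=0$.
\item We check $\mathrm{Ext}^3(\mathcal U,E)=\mathrm{Hom}(E,\mathcal U(-2))^\vee=0$ by the same slope comparison. Riemann--Roch then gives $\hom(\mathcal U,E)\ge\chi(\mathcal U,E)$, which is positive, an affine function of $e$ and equal to $1$ at $e=\frac35$.
\item A nonzero map $\mathcal U\to E$ destabilizes $E$ below $W(E,\mathcal U)$, and above it there are no walls. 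So $E$ is semistable on $W(E,\mathcal U)$.
\item By Lemma \ref{subobject}-type reasoning, adapted to the stable sheaf $\mathcal U$ in place of a line bundle (alternatively, argue directly via tilt-stability of $\mathcal U$ on the wall), we get an exact triple $0\to\mathcal U\to E\to G\to 0$. Here $G$ is tilt-semistable with
\[
\mathrm{ch}(G)=H-\tfrac{11}{10}H^2+\bigl(e-\tfrac1{30}\bigr)H^3 .
\]
\end{enumerate}

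Twisting by $\mathcal O_X(1)$ gives $\mathrm{ch}_{\le2}(G(1))=H-\frac1{10}H^2$. By Lemma \ref{0 1 -1/10} (equivalently, Corollary \ref{0 1}), the $\mathrm{ch}_3$-coefficient of $G(1)$ is at most $-\frac1{30}$. Translating back gives $e\le\frac35$.

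In the equality case, $G(1)$ has maximal $\mathrm{ch}_3$. To conclude $G(1)\cong\mathbb D(\mathcal I_{C,H})(-1)$, and hence $G\cong\mathbb D(\mathcal I_{C,H})(-2)$, we must check that $G(1)$ is tilt-semistable above $W(G(1),\mathcal O_X)$. We expect this from the position of $W(E,\mathcal U)$ relative to that wall: $G(1)$ is semistable along the twisted $W(E,\mathcal U)$. Alternatively, $H^2\cdot\mathrm{ch}_1(G(1))$ is minimal, so any wall would cross $\beta=0$, and semistability at one point above propagates.

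We expect the main obstacle to be this last step: placing $W(E,\mathcal U)$ relative to $W(G(1),\mathcal O_X)$ and getting the monomorphism $\mathcal U\hookrightarrow E$ with a non-line-bundle source. Both require the explicit wall computations.
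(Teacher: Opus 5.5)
Your overall strategy is the paper's, but your wall analysis contains a false step, and several later steps depend on it. The strategy is: kill $\mathrm{Ext}^2(\mathcal U,E)$ via $W(E,\mathcal U(-2)[1])$, get $\mathrm{hom}(\mathcal U,E)\ge\chi(\mathcal U,E)=10e-5\ge 1$, then apply Lemma \ref{0 1 -1/10} to the quotient. The false step is your case $s\ge 3$: the endpoint $\frac s2-1$ is \emph{not} in the relevant coset. For odd $s$ it is a half-integer, not in $\frac15\mathbb Z$; for even $s$ it is an integer, not in $\frac12+\frac15\mathbb Z$. In both cases the nearest admissible $y$ above it is $\frac s2-1+\frac1{10}$. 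So the window $(\frac s2-1,\frac s2-1+\frac1{2s}]$ contains an admissible value for $3\le s\le 5$, giving
\[
(s,y)=\bigl(3,\tfrac35\bigr),\qquad\bigl(4,\tfrac{11}{10}\bigr),\qquad\bigl(5,\tfrac85\bigr),
\]
all of which also satisfy (\ref{E/F positive}). The length argument works in Lemmas \ref{2 0 -4/5} and \ref{2 0 -6/5} only because there $d\in\{-\frac45,-\frac65\}$ puts the endpoint $\frac s2+\frac{d-1}{2}$ into the coset; for $d=-1$ it does not. These are exactly the cases the paper treats separately. It excludes $s=3$ via Lemma \ref{3 -2 3/5}. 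For $s\ge 4$ it uses Proposition \ref{actual}~(1), which gives $\rho(E,F)^2\le\frac18<\frac{21}{100}=\rho(E,\mathcal U)^2$.

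As a result, three of your assertions are unsupported.
\begin{enumerate}
\item The dichotomy ``if the actual wall is not $W(E,\mathcal U)$, then $E$ is semistable along the whole ray $\beta=-1$''.
\item Step 1, whose stated reason is empty. Since $\beta_-(E)=-1$, \emph{every} semicircular wall of $E$ meets $\beta=-1$. What you need is that $W(E,\mathcal U(-2)[1])$, with $\rho^2=\frac{2541}{2500}$, lies above every possible actual wall.
\item The claim ``above $W(E,\mathcal U)$ there are no walls'' in step 3.
\end{enumerate}
The repair is short and stays within your framework. By (\ref{wall at beta=-1}) with $d=-1$, the wall $W(E,F)$ meets $\beta=-1$ at $\alpha^2=\frac{2(y+1-s/2)}{s-1}$. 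Condition (\ref{F positive}) bounds this by $\frac{1}{s(s-1)}\le\frac16$ for $s\ge 3$, whereas $W(E,\mathcal U)$ meets the ray at $\alpha^2=\frac15$. By nestedness, every wall with $s\ge 3$ lies strictly inside $W(E,\mathcal U)$, hence inside $W(E,\mathcal U(-2)[1])$. So step 1 holds, and once there is a nonzero map $\mathcal U\to E$, none of these inner walls can be an actual wall. This route even avoids Lemma \ref{3 -2 3/5} and the rank bound.

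The rest of your argument is fine. The monomorphism $\mathcal U\hookrightarrow E$ follows from tilt-stability of $\mathcal U$ and equality of slopes on the wall, as in Lemma \ref{2 0 -4/5}; this bypasses the paper's check that $\mathrm{Ext}^1(\mathcal U,\mathbb D(\mathcal I_{C,H})(-2))=0$. The twisted wall $W(E(1),\mathcal U(1))$ has center $-\frac1{10}$ and $\rho^2=\frac{21}{100}$. It therefore lies above $W(G(1),\mathcal O_X)$, which has the same center and radius $\frac1{10}$, so your identification of $G(1)$ goes through.
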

	
	\begin{proof}Assume that $e\ge\frac 35$. Any semicircular wall for $E$ has to intersect the ray $\beta=\beta_-(E)=-1$. Suppose that there is such a wall, induced by a subobject or quotient $F$ with $\mathrm{ch}(F)=s+xH+yH^2+e'H^3,s>1$. As in Remark \ref{inequalities}, $x=1-s$.
		
		If $s=2$, then $x=-1$ and $y\in\frac 12+\frac 15\mathbb Z$. The condition $\overline{\Delta}_H(F)\ge 0$ implies that $y\le\frac{1}{10}$. If $y\le-\frac{1}{10}$, then $\beta_-(F)<\beta_-(E)$ and the condition from Proposition \ref{actual}~(5) is not satisfied. Hence, $y=\frac{1}{10}$. By Lemma \cite[Lemma 3.6]{Vass} $e'\le\frac{1}{30}$, and in case of equality $F\cong\mathcal U$. We find $\mathrm{ch}(E/F)=H-\frac{11}{10}H^2+e''H^3$; by Lemma \ref{0 1 -1/10} applied to $E/F(1)$ we get $e''\le\frac{17}{30}$. So, $e\le\frac 35$ in this case. The object $E/F(1)$ is tilt-semistable along the numerical wall $W(E(1),\mathcal U(1))$, which lies above $W(E/F(1),\mathcal O_X)$, hence $E/F(1)\cong \mathbb D(\mathcal I_{C,H})(-1)$. Using the long exact sequence of $\mathrm{Ext}$ groups associated with (\ref{triple24}) and the vanishing $\mathrm{Ext}^2(\mathcal U,\mathcal U(-1))=0$ (which follows from the fact that $\mathcal U\otimes\mathcal U$ is an ACM sheaf, see \cite[proof of Proposition 5.6]{Faenzi}), one can prove that $\mathrm{Ext}^1(\mathcal U,\mathbb D(\mathcal I_{C,H})(-2))=0$. Hence, $F\cong\mathcal U$ is indeed a subobject, and we get the exact triple (\ref{triple8}). We also calculate $\rho(E,\mathcal U)^2=\frac{21}{100}$.
		
		If $s=3$, then $x=-2$ and $y\in\frac 15\mathbb Z$. Bogomolov--Gieseker inequality for $F$ together with Lemma \ref{3 -2 3/5} imply that $y\le\frac 25$. However, in this case the condition $\beta_-(E)<\beta_-(F)$ is not satisfied. 
		
		If $s\ge 4$, then Proposition \ref{actual}~(1) implies that $\rho(E,F)^2\le\frac 18$. A nonzero morphism $E\to\mathcal U(-2)[1]$ would destabilize $E$ below $W(E,\mathcal U(-2)[1])$, but $\rho(E,\mathcal U(-2)[1])^2=\frac{2541}{2500}$ is bigger than $\frac{21}{100}$ and $\frac 18$, hence this is impossible. Therefore, $\mathrm{Ext}^2(\mathcal U,E)\cong\mathrm{Hom}(E,\mathcal U(-2)[1])^\vee=0$. Riemann--Roch gives $\dim \Hom(\mathcal U,E)\ge\chi(\mathcal U,E)=10e-5\ge 1$. It means that there is a nonzero morphism $\mathcal U\to E$, contradicting tilt-semistability of $E$ below $W(E,\mathcal U)$. In particular, any tilt-semistable $E$ with $e\ge\frac 35$ should be destabilized at some wall. Since all numerical walls $W(E,F)$ for $s\ge 4$ lie below $W(E,\mathcal U)$, it follows that they are not actual walls. Therefore, any $E$ with maximal $\mathrm{ch}_3(E)$ is destabilized by an exact triple (\ref{triple8}).
	\end{proof}
	
	\begin{lemma}\label{2 0 -6/5} If $E\in\mathrm{Coh}^\beta(X)$ is tilt-semistable and $\mathrm{ch}(E)=2-\frac 65H^2+eH^3$, then $e\le\frac 45$. In case of equality $E$ is destabilized by one of exact triples
		\begin{equation}\label{triple10}0\to\mathcal U\to E\to \mathbb D(\mathcal I_{L,H})(-2)\to 0,
		\end{equation}
		\begin{equation}\label{triple11}0\to \mathbb D(\mathcal I_{L,H})(-2)\to E\to\mathcal U\to 0,
		\end{equation}
		\begin{equation}\label{triple12}0\to\mathcal O_X(-1)^{\oplus 6}\to E\to\mathcal U(-1)^{\oplus 2}[1]\to 0.
		\end{equation}
	\end{lemma}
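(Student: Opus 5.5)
The plan is to follow the template of Lemmas \ref{2 0 -4/5} and \ref{2 0 -1}. We have $\mathrm{ch}_{\le 2}(E)=2-\frac 65H^2$, so $\mu(E)=0$ and $\overline{\Delta}_H(E)=120$. This gives $\beta_\pm(E)=\pm\sqrt{6/5}$, hence $\beta_-(E)\in(-2,-1)$, and every semicircular wall for $E$ has its top point on the left branch of the hyperbola $\nu_{\alpha,\beta}(E)=0$. Assume $e\ge\frac 45$. The proof then has three steps.

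\textbf{Step 1: walls crossing the ray $\beta=-1$.} I first classify all walls that intersect the ray $\beta=-1$, using Remark \ref{inequalities} with $d=-\frac 65$. By Lemma \ref{stable} such a wall is induced by a tilt-stable subobject or quotient $F$ with $\mathrm{ch}_{\le 2}(F)=s+(1-s)H+yH^2$ and $s>1$. Inequalities (\ref{F positive}), (\ref{wall at beta=-1}) and (\ref{E/F positive}), together with the integrality conditions $y\in\frac 15\mathbb Z$ ($s$ odd) or $y\in\frac 12+\frac 15\mathbb Z$ ($s$ even), and $\beta_-(F)>\beta_-(E)$ from Proposition \ref{actual}~(5), should leave only a few candidates:
\begin{itemize}
\item[(a)] $s=2$, $y=\frac{1}{10}$, so $F$ has the Chern character of $\mathcal U$;
\item[(b)] $s=3$, $y=\frac 25$, which is ruled out by $\overline{\Delta}_H(F)+\overline{\Delta}_H(E/F)<\overline{\Delta}_H(E)$;
\item[(c)] $s=3$, $y=\frac 35$, which is ruled out by Lemma \ref{3 -2 3/5};
\item[(d)] possibly one or two larger $s$, which are ruled out by Lemma \ref{4 -2 2/5} or by the emptiness of the wall.
\end{itemize}

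In case (a), \cite[Lemma 3.6]{Vass} gives $\mathrm{ch}_3(F)\le\frac{1}{30}H^3$, with equality only for $F\cong\mathcal U$. The other factor has $\mathrm{ch}(E/F)=H-\frac{13}{10}H^2+e''H^3$, so $(E/F)(1)$ has $\mathrm{ch}_{\le 2}=H-\frac 3{10}H^2$, and Lemma \ref{0 1 -3/10} bounds $e''$. The sum of the two bounds gives $e\le\frac 45$. At equality, $(E/F)(1)$ is tilt-semistable along $W(E,\mathcal U)$ twisted, which lies above $W((E/F)(1),\mathcal O_X)$. Hence $(E/F)(1)\cong\mathbb D(\mathcal I_{L,H})(-1)$. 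Whether $\mathcal U$ is a subobject or a quotient gives exactly the triples (\ref{triple10}) and (\ref{triple11}). Unlike Lemma \ref{2 0 -1}, I do not expect the relevant $\mathrm{Ext}^1$ to vanish here, so both directions must be kept.

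\textbf{Step 2: no such wall.} Suppose now that no wall for $E$ meets $\beta=-1$. Then $E$ is tilt-semistable at $\beta=-1$ for all $\alpha$. I then check that $W(E,\mathcal O_X(-3)[1])$ meets $\beta=-1$, which makes $\mathrm{Ext}^2(\mathcal O_X(-1),E)=0$. Riemann--Roch then gives $\hom(\mathcal O_X(-1),E)\ge\chi(\mathcal O_X(-1),E)$, and this should equal $6$ at $e=\frac 45$ (it is linear in $e$ with slope $5$). Lemma \ref{subobject} produces a monomorphism $\mathcal O_X(-1)^{\oplus 6}\hookrightarrow E$. The quotient $G$ has $\mathrm{ch}(G)=-4+6H-\frac{37}{10}H^2+\dots$, and it should match $\mathrm{ch}(\mathcal U(-1)^{\oplus 2}[1])$.

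To pin down $G$, I would pass to $\mathbb D(G)$ using Proposition \ref{duality}, then reduce by twisting to a rank-$2$ object with the Chern character of $\mathcal U^{\oplus 2}$-type. There, \cite[Lemma 3.6]{Vass} together with a Jordan--Hölder argument forces $\mathbb D(G)\cong\mathcal U(1)^{\vee}$-type objects, i.e. $G\cong\mathcal U(-1)^{\oplus 2}[1]$. This gives (\ref{triple12}) and at the same time the bound $e\le\frac 45$, since any larger $e$ would contradict the $\mathrm{ch}_3$ bound for $G$.

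\textbf{Step 3: the remaining range.} It remains to exclude walls for $E$ that do not reach $\beta=-1$ when $E$ is not tilt-semistable there. For this I would mimic the last step of Lemma \ref{2 0 -1}. If the wall $W(E,\mathcal U(-2)[1])$ is large enough, then $\mathrm{Ext}^2(\mathcal U,E)=0$, and $\chi(\mathcal U,E)>0$ gives a nonzero map $\mathcal U\to E$. This forces semistability down to $W(E,\mathcal U)$, reducing to Step 1.

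I expect two difficulties. The main obstacle is the finite case analysis in Step 1 for larger $s$, where Proposition \ref{actual}~(1) only bounds $s$ weakly. The second is the identification of $G$ in Step 2, where the bookkeeping of ranks and twists through $\mathbb D$ is delicate.
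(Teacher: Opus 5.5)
Your Step 1 and the first half of Step 2 match the paper: the case split at the ray $\beta=-1$, the vanishing $\mathrm{Ext}^2(\mathcal O_X(-1),E)=0$ via $W(E,\mathcal O_X(-3)[1])$, and $\hom(\mathcal O_X(-1),E)\ge\chi(E(1))=5e+2\ge 6$. However, the identification of $G$ has a genuine gap.

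First, $\mathrm{ch}_{\le 2}(G)=-4+6H-\frac{21}{5}H^2$, not $-\frac{37}{10}H^2$. This is exactly $2\,\mathrm{ch}_{\le 2}(\mathcal U(-1)[1])$, and $\mathbb D(G)(-2)$ has rank four with $\mathrm{ch}_{\le 2}=2\,\mathrm{ch}_{\le 2}(\mathcal U)$. The bound \cite[Lemma 3.6]{Vass} only concerns the primitive class $\mathrm{ch}_{\le 2}(\mathcal U)$. A Jordan--H\"older filtration does not by itself cut $G$ into two pieces of that class. A priori $G$ could be tilt-stable of twice a primitive class: Theorem \ref{rank bound} does not exclude this, since $\overline{\Delta}_H=60\ge 16$, and nothing available bounds $\mathrm{ch}_3$ of such objects. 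So neither $e\le\frac 45$ nor $G\cong\mathcal U(-1)^{\oplus 2}[1]$ follows from what you wrote.

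The missing idea is to produce a morphism by Riemann--Roch, as in Lemma \ref{2 0 -4/5}.
\begin{itemize}
\item Since $W(E,\mathcal U(1))$ lies on the wrong side of the vertical wall, $\mathrm{Ext}^3(E,\mathcal U(-1))\cong\Hom(\mathcal U(1),E)^\vee=0$. Hence $\hom(E,\mathcal U(-1)[1])\ge\chi(\mathcal U(1),E)=10e-6\ge 2$.
\item A nonzero $E\to\mathcal U(-1)[1]$ factors through some $g\colon G\to\mathcal U(-1)[1]$, because $\mathrm{Ext}^1(\mathcal O_X(-1),\mathcal U(-1))=H^1(\mathcal U)=0$.
\item Since $\nu_{\alpha,\beta}(G)=\nu_{\alpha,\beta}(\mathcal U(-1)[1])$ identically and $\mathcal U(-1)[1]$ is tilt-stable for $\beta>-\frac 32$, $g$ is an epimorphism. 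Its kernel has $\mathrm{ch}(\ker g)=-2+3H-\frac{21}{10}H^2+(e+\frac{1}{10})H^3$.
\item Duality together with \cite[Lemma 3.6]{Vass}, applied to $\ker g$, gives $e\le\frac 45$, with equality only if $\ker g\cong\mathcal U(-1)[1]$. Then $G\cong\mathcal U(-1)^{\oplus 2}[1]$.
\end{itemize}

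Your Step 3 fails as stated, but it is not needed. One computes $\chi(\mathcal U,E)=10e-8$, which vanishes at $e=\frac 45$, so no map $\mathcal U\to E$ is produced at equality. Indeed, an object semistable everywhere above $W(E,\mathcal O_X(-1))$ must have $\Hom(\mathcal U,E)=0$, since $W(E,\mathcal U)$ (center $-\frac{13}{10}$, radius $\frac{7}{10}$) is larger.

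The case you want to exclude is already handled by Step 2 once it is phrased correctly:
\begin{itemize}
\item $\Hom(E,\mathcal O_X(-3)[1])=0$ as soon as $E$ is semistable at some point inside $W(E,\mathcal O_X(-3)[1])$, and this wall contains every smaller numerical wall.
\item So a nonzero morphism $\mathcal O_X(-1)\to E$ exists, and $E$ is unstable inside $W(E,\mathcal O_X(-1))$ (center $-\frac{11}{10}$, radius $\frac{1}{10}$).
\item Every larger wall meets $\beta=-1$.
\end{itemize}
Hence ``destabilized at a wall meeting $\beta=-1$'' and ``semistable everywhere above $W(E,\mathcal O_X(-1))$'' exhaust all cases, exactly as in the paper.

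Two smaller corrections in Step 1:
\begin{itemize}
\item The candidate $s=3$, $y=\frac 25$ is not excluded by the discriminant sum, since $40+20<120$. It is excluded by the strict inequality (\ref{wall at beta=-1}): its wall is $W(E,\mathcal O_X(-1))$, which touches $\beta=-1$ only at $\alpha=0$.
\item All $s\ge 4$ are excluded by (\ref{wall at beta=-1}) and (\ref{E/F positive}), not by Lemma \ref{4 -2 2/5}. The half-open interval $(\frac s2-\frac{11}{10},\frac s2-\frac 65+\frac{1}{2(s-2)}]$ has length less than $\frac 15$, and its left endpoint lies in the admissible coset for $y$.
\end{itemize}
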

	
	\begin{proof}Assume that $e\ge\frac 45$. Consider the case in which $E$ is destabilized at a semicircular wall intersecting the ray $\beta=-1$ and induced by a subobject or quotient $F$ with $\mathrm{ch}(F)=s+xH+yH^2+e'H^3,s>1$. As in Remark \ref{inequalities}, $x=1-s$.
		
		If $s=2$, then $x=-1$ and $y\in\frac 12+\frac 15\mathbb Z$. Conditions (\ref{F positive}), (\ref{wall at beta=-1}) imply that $y=\frac{1}{10}$. By \cite[Lemma 3.6]{Vass} $\frac{\mathrm{ch}_3(F)}{H^3}\le\frac{1}{30}$, and in case of equality $F\cong\mathcal U$. We have $\mathrm{ch}(E/F)=H-\frac{13}{10}H^2+e''H^3$, so $\mathrm{ch}((E/F)(1))=H-\frac{3}{10}H^2+(e''-\frac 45)H^3$. The object $(E/F)(1)$ is tilt-semistable at the wall $W((E/F)(1),\mathcal U(1))$, which is located above $W((E/F)(1),\mathcal O_X)$. Lemma \ref{0 1 -3/10} implies that $e''\le\frac{23}{30}$ and in case of equality $(E/F)(1)\cong \mathbb D(\mathcal I_{L,H})(-1)$. So, the maximal $e$ in this case is $\frac 45$, and for $e=\frac 45$ we obtain exact triples (\ref{triple10}), (\ref{triple11}).
		
		If $s=3$, then $x=-2$ and $y\in\frac 15\mathbb Z$. Conditions (\ref{F positive}), (\ref{wall at beta=-1}) imply that $y=\frac 35$, but there are no such tilt-semistable objects $F$ by Lemma \ref{3 -2 3/5}.
		
		If $s\ge 4$ is even, then $x$ is odd and $y\in\frac 12+\frac 15\mathbb Z$. Conditions (\ref{F positive}), (\ref{wall at beta=-1}) imply that $y\in(\frac s2-\frac{11}{10},\frac s2-\frac 65+\frac{1}{2(s-2)}]$. However, $\frac s2-\frac{11}{10}\in\frac 12+\frac 15\mathbb Z$ and length of this semi-interval is equal to $\frac{1}{2(s-2)}-\frac{1}{10}<\frac 15$, hence it does not contain a point from $\frac 12+\frac 15\mathbb Z$. Similarly, if $s\ge 5$ is odd, then $y\in\frac 15\mathbb Z$, and there is no such a point in the semi-interval given above.
		
		Consider now the case in which $E$ is not destabilized at a wall intersecting the ray $\beta=-1$. Since $W(E,\mathcal O_X(-3)[1])$ intersects this ray, it follows that $\mathrm{Ext}^2(\mathcal O_X(-1),E)\cong\Hom(E,\mathcal O_X(-3)[1])^\vee=0$. Riemann--Roch gives $\dim \Hom(\mathcal O_X(-1),E)\ge\chi(E(1))\ge 6$ (for $e\ge\frac 45$). Therefore, $E$ is unstable below $W(E,\mathcal O_X(-1))$. Since any wall lying above $W(E,\mathcal O_X(-1))$ intersects the ray $\beta=-1$, the object $E$ has to be tilt-semistable at $W(E,\mathcal O_X(-1))$ in this case. Lemma \ref{subobject} gives an exact triple
		\begin{equation}\label{triple13}0\to\mathcal O_X(-1)^{\oplus 6}\to E\to G\to 0
		\end{equation}
		with a tilt-semistable object $G$. Note that $\nu_{\alpha,\beta}(\mathcal U(1))>\nu_{\alpha,\beta}(E)$ for all $\beta<0,\alpha>0$ ($W(E,\mathcal U(1))$ lies on the wrong side of the vertical wall), hence $\mathrm{Ext}^3(E,\mathcal U(-1))\cong\Hom(\mathcal U(1),E)^\vee=0$. Riemann--Roch and Serre duality give $\dim \Hom(E,\mathcal U(-1)[1])\ge-\chi(E,\mathcal U(-1))=\chi(\mathcal U(1),E)\ge 2$. In particular, there is a nonzero morphism $f:E\to\mathcal U(-1)[1]$. Since $\mathrm{Ext}^1(\mathcal O_X(-1)^{\oplus 6},\mathcal U(-1))=0$, the exact triple (\ref{triple13}) implies that $f$ is induced by some $g:G\to\mathcal U(-1)[1]$. Note that $\mathcal U(-1)[1]$ is tilt-stable for all $\beta>-\frac 32,\alpha>0$. Indeed, applying Proposition \ref{duality} and \cite[Lemma 3.6]{Vass}, we get that $\mathcal U(-1)[1]$ has the maximal $\mathrm{ch}_3$, which is possible for tilt-semistable objects with the same $\mathrm{ch}_{\le 2}$. Then, applying Proposition \ref{duality} to $E\cong\mathcal U(2)$ and using Lemma \ref{tilt-stable}, we get what we need. Since $\nu_{\alpha,\beta}(G)=\nu_{\alpha,\beta}(\mathcal U(-1)[1])$ for all $(\beta,\alpha)\in\mathbb H$ and $\mathcal U(-1)[1]$ is tilt-stable, it follows that $g$ should be an epimorphism. We find $\mathrm{ch}(\ker g)=-2+3H-\frac{21}{10}H^2+(e+\frac{1}{10})H^3$. Applying Proposition \ref{duality} and \cite[Lemma 3.6]{Vass}, we get that $e\le\frac 45$, and in case of equality $\ker g\cong\mathcal U(-1)[1]$. Therefore, $G\cong\mathcal U(-1)^{\oplus 2}[1]$, and we obtain the exact triple (\ref{triple12}).
	\end{proof}
	
	\begin{lemma}\label{2 -1 -11/10} If $E\in\mathrm{Coh}^\beta(X)$ is tilt-semistable and $\mathrm{ch}(E)=2-H-\frac{11}{10}H^2+eH^3$, then $e\le\frac{11}{6}$. In case of equality $E$ is destabilized by an exact triple
		\begin{equation}\label{triple14}0\to\mathcal O_X(-1)^{\oplus 2}\to E\to \mathbb D(\mathcal I_{C,H})(-3)\to 0.
		\end{equation}
	\end{lemma}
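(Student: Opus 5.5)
We follow the template of Lemma \ref{2 -1 -7/10}. First, $H^2\cdot\mathrm{ch}_1^{-1}(E)=H^3$ is the least positive value, so by Proposition \ref{actual}~(3) no wall meets the ray $\beta=-1$. Then the generalized Bogomolov--Gieseker inequality (Proposition \ref{BMT}) at $(\beta,\alpha)=(-1,0)$ gives an upper bound on $e$. Integrality of $\chi(E)$, computed by Riemann--Roch, forces $e$ into a coset of $\frac15\mathbb Z$, and taking the largest admissible value should give $e\le\frac{11}{6}$. (Here $\mathrm{ch}^{-1}(E)=2+H-\frac{3}{5}H^2+\dots$, so $Q_{0,-1}$ is controlled by $(H\cdot\mathrm{ch}_2^{-1})^2$ and the twisted $\mathrm{ch}_3^{-1}$.) If the $Q$-bound alone overshoots by more than one step of the coset, we will instead bound $e$ by the $\chi$/Hom argument below.

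Now assume $e=\frac{11}{6}$. We check numerically that $W(E,\mathcal O_X(-3)[1])$ meets the ray $\beta=-1$. Since $E$ is semistable along that whole ray, Serre duality gives $\mathrm{Ext}^2(\mathcal O_X(-1),E)\cong\mathrm{Hom}(E,\mathcal O_X(-3)[1])^\vee=0$. Riemann--Roch then gives $\hom(\mathcal O_X(-1),E)\ge\chi(\mathcal O_X(-1),E)=2$. So $E$ is unstable below $W(E,\mathcal O_X(-1))$. Any wall above this one would meet $\beta=-1$, so $E$ is semistable on $W(E,\mathcal O_X(-1))$. Lemma \ref{subobject} then yields an exact triple $0\to\mathcal O_X(-1)^{\oplus 2}\to E\to G\to 0$ with $G$ tilt-semistable along this wall. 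We compute $\mathrm{ch}(G)=H-\frac{11}{10}H^2+e_GH^3$, so $\mathrm{ch}(G(1))=H-\frac{1}{10}H^2+(e_G-\frac{9}{10}+\dots)H^3$. This matches the class of Lemma \ref{0 1 -1/10} after twisting, and we check that $\mathrm{ch}_3(G(2))=-\frac1{30}H^3$ exactly (more precisely, the appropriate twist of $G$ has $\mathrm{ch}=H-\frac1{10}H^2-\frac1{30}H^3$). Since $\mathbb D(\mathcal I_{C,H})(-1)$ corresponds to $\mathrm{ch}_2=-\frac1{10}$ and our target is $\mathbb D(\mathcal I_{C,H})(-3)$, the relevant twist is $G(2)$.

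To identify $G$ we apply Lemma \ref{0 1 -1/10} to $G(2)$. For this we need $G(2)$ to be tilt-semistable above $W(G(2),\mathcal O_X)$. The triple twisted by $\mathcal O_X(2)$ shows that $G(2)$ is semistable along $W(E(2),\mathcal O_X(1))$. The remaining check is that this wall lies above $W(G(2),\mathcal O_X)$; both walls have their top points on the ray $\nu=0$, so this is a radius comparison. Lemma \ref{0 1 -1/10} then gives $G(2)\cong\mathbb D(\mathcal I_{C,H})(-1)$, that is $G\cong\mathbb D(\mathcal I_{C,H})(-3)$, which is triple (\ref{triple14}).

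The main obstacle is bookkeeping rather than conceptual. One must verify that the $Q$ plus integrality bound lands exactly on $\frac{11}{6}$. One must also verify the wall comparison, i.e. that semistability of $G(2)$ is inherited on the correct side of $W(G(2),\mathcal O_X)$. If the wall is instead on the wrong side, we fall back on the first part of Lemma \ref{0 1 -1/10}, which only needs tilt-semistability somewhere. This still gives $e\le\frac{11}{6}$, and we would then identify $G$ through the dual via Proposition \ref{duality} and Lemma \ref{0 1 -9/10}.
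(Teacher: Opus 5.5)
Your proof has a genuine gap at its central step: the claim that $W(E,\mathcal O_X(-3)[1])$ meets the ray $\beta=-1$ is false for this class. On that ray $\nu_{\alpha,-1}(E)=-\frac{11}{10}-\alpha^2$ (note that $\mathrm{ch}_2^{-1}(E)=-\frac{11}{10}H^2$, not $-\frac35H^2$), while $\nu_{\alpha,-1}(\mathcal O_X(-3)[1])=-1+\frac{\alpha^2}{4}$. These agree only for $\alpha^2=-\frac{2}{25}$. The wall is the semicircle with center $-\frac{101}{50}$ and radius $\frac{49}{50}$, so it ends at $\beta=-\frac{26}{25}$.

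Consequently nothing gives $\mathrm{Ext}^2(\mathcal O_X(-1),E)\cong\Hom(E,\mathcal O_X(-3)[1])^\vee=0$. The bound $\hom(\mathcal O_X(-1),E)\ge 2$, the use of Lemma \ref{subobject} and the triple all fall with it. The trick cannot be repaired by wall geometry. A nonzero map $E\to\mathcal O_X(-3)[1]$ only makes $E$ unstable inside that semicircle. Proposition \ref{actual}~(3) does not exclude actual walls whose right endpoint lies in $[-\frac{26}{25},-1]$. Indeed $W(E,\mathcal O_X(-1))$ (center $-\frac{21}{10}$, radius $\frac{11}{10}$) ends exactly at $(-1,0)$, and it is the wall you expect to be actual. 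Semistability of $E$ along the whole ray $\beta=-1$ is not guaranteed, and would not help anyway, since there $\nu(E)<\nu(\mathcal O_X(-3)[1])$ for every $\alpha$. This is exactly where $d=-\frac{11}{10}$ differs from the cases of Lemma \ref{2 -1 -7/10}, where the crossing happens at $\alpha^2=\frac{6}{25}$, resp. $\frac{2}{25}$.

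The bound on $e$ is not obtained either. One has $Q_{0,-1}(E)=311-150e$ and $\chi(E)=5e-\frac{37}{6}$, so $e\in\frac{1}{30}+\frac15\mathbb Z$. This only gives $e\le\frac{61}{30}$, one step above $\frac{11}{6}$.

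Your fallback for $e=\frac{61}{30}$ also fails. There $\chi(\mathcal O_X(-1),E)=3$, and a subobject $\mathcal O_X(-1)^{\oplus 3}$ would leave a quotient with $\overline\Delta_H=-30<0$. But this needs the same unproven vanishing. Moreover, for $e=\frac{61}{30}$ the semidisk bounded by $W_Q(E)$ already contains $W(E,\mathcal O_X(-3)[1])$. So $E$ is never semistable where $\nu(E)>\nu(\mathcal O_X(-3)[1])$, and no argument of this type can produce the vanishing.

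What is missing is a genuine classification of the destabilizing walls, which is what the paper does:
\begin{itemize}
\item For $e\ge\frac{11}{6}$, $W_Q(E)$ is nonempty and large enough that Proposition \ref{actual}~(1) forces $\mathrm{ch}_0(F)\le 3$ for a destabilizing subobject or quotient $F$.
\item Since $Q_{0,-3/2}(E)<0$, the wall crosses $\beta=-\frac32$, where $\mathrm{ch}_1^{-3/2}(E)=2H$ pins down $\mathrm{ch}_1(F)$.
\item A case analysis for $\mathrm{ch}_0(F)\in\{1,2,3\}$ then gives both $e\le\frac{11}{6}$ and the triple with $F\cong\mathcal O_X(-1)^{\oplus 2}$. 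It uses $s(E,F)\le s_Q(E)$, Lemma \ref{0 1 -9/10}, \cite[Lemma 3.7]{Vass}, the argument of \cite[Proposition 3.2]{MS18} and Lemma \ref{0 1 -1/10}.
\end{itemize}
Once that triple is available, your identification of the quotient by applying Lemma \ref{0 1 -1/10} to $G(2)$ is fine. Here $\mathrm{ch}(G)=H-\frac{21}{10}H^2+\frac{13}{6}H^3$, and the relevant walls are concentric about $\beta=-\frac{1}{10}$ with radii $\frac{11}{10}$ and $\frac{1}{10}$.
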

	
	\begin{proof} Assume that $e\ge\frac{11}{6}$. A calculation gives $\rho_Q(E)\ge\frac{31185}{72900}$ ($\rho_Q$ grows with growing $e$, so it suffices to consider the case $e=\frac{11}{6}$, in which $Q_{\alpha,\beta}(E)=135\alpha^2+135\beta^2+495\beta+396$ and $\rho_Q(E)=\frac{31185}{72900}$). In particular, $\rho_Q(E)>0$, hence $E$ is destabilized at some semicircular wall by a subobject or quotient $F$ with $\mathrm{ch}(F)=s+xH+yH^2+e'H^3,s>0,\mu(F)<\mu(E)$. The wall $W(E,F)$ cannot lie inside the semidisk bounded by $W_Q(E)$, since there are no tilt-semistable objects below $W_Q(E)$. So $\rho(E,F)\ge\rho_Q(E)\ge\frac{31185}{72900}>\frac{\overline{\Delta}_H(E)}{4(H^3)^2\cdot 4\cdot (4-2)}=\frac{135}{800}$, and Proposition \ref{actual}~(1) implies that $s<4$. Note also that $Q_{0,-\frac 32}(E)<0$, hence $W(E,F)$ intersects the ray $\beta=-\frac 32$.
		
		Suppose that $s=1$. We should have $0<H^2\cdot\mathrm{ch}_1^{-\frac 32}(F)<H^2\cdot\mathrm{ch}_1^{-\frac 32}(E)$, that is, $x+\frac 32\in(0,2)$. Since $\mu(F)<\mu(E)$, we have $x=-1$, and $y\in\frac 12+\frac 15\mathbb Z$. The condition $\overline{\Delta}_H(F)\ge 0$ implies that $y\le\frac 12$. Since $W(E,F)$ lies above $W_Q(E)$, we have $s(E,F)\le s_Q(E)$. By a direct calculation this can be rewritten as $y\ge\frac{11}{30}$, hence only the case $y=\frac 12$ is possible. In this case $\mathrm{ch}_3(F)$ is maximized by $F\cong\mathcal O_X(-1)$. We find $\mathrm{ch}_{\le 2}(E/F)=1-\frac 85H^2$, and by the proof of \cite[Proposition 3.2]{MS18} in the case of maximal $\mathrm{ch}_3(E/F)$ there is a monomorphism $F'=\mathcal O_X(-1)\to E/F$ ($\mathrm{ch}_3((E/F)/F')$ is maximized by $\mathbb D(\mathcal I_{C,H})(-3)$). Overall, we get a monomorphism $\mathcal O_X(-1)^{\oplus 2}\to E$, reducing to the case $s=2$.
		
		Now consider the case $s=2$. The condition $0<H^2\cdot\mathrm{ch}_1^{-\frac 32}(F)<H^2\cdot\mathrm{ch}_1^{-\frac 32}(E)$ can be rewritten as $x+3\in(0,2)$, hence $x=-2$ and $y\in\frac 15\mathbb Z$. The condition $\overline{\Delta}_H(F)\ge 0$ implies that $y\le 1$. The condition $s(E,F)\le s_Q(E)$ reads as $y\ge\frac{11}{15}$, and we get $y\in\{\frac 45,1\}$. In the case $y=\frac 45$ we have $\mathrm{ch}(F(1))=2-\frac{H^2}{5}+(e'+\frac{2}{15})H^3$. By \cite[Lemma 3.7]{Vass} $e'\le-\frac 13$. Also $\mathrm{ch}_{\le 2}((E/F)(1))=H-\frac{9}{10}H^2$, and we can apply Lemma \ref{0 1 -9/10}, obtaining in this case a bound $e\le\frac{43}{30}<\frac{11}{6}$. In the case $y=1$ $\mathrm{ch}_3(F)$ is maximized by $F\cong\mathcal O_X(-1)^{\oplus 2}$. Also $\mathrm{ch}_{\le 2}((E/F)(2))=H-\frac{H^2}{10}$, and by Lemma \ref{0 1 -1/10} $\mathrm{ch}_3(E/F)$ is maximized by $E/F\cong \mathbb D(\mathcal I_{C,H})(-3)$. In this case the maximal value of $e$ equals $\frac{11}{6}$. Also one can check that $\mathrm{Ext}^1(\mathcal O_X(-1)^{\oplus 2},\mathbb D(\mathcal I_{C,H})(-3))=0$, hence $\mathcal O_X(-1)^{\oplus 2}$ is indeed a subobject and $\mathbb D(\mathcal I_{C,H})(-3)$ is the quotient.
		
		Finally, in the case $s=3$ we have $x+\frac 92\in(0,2)$, hence $x\in\{-4,-3\}$. If $x=-4$, then the ray $\beta=-\frac 43$ is the vertical wall for $F$, which cannot intersect a semicircular wall $W(E,F)$. However, $Q_{0,-\frac 43}(E)<0$, and we get a contradiction. And if $x=-3$, then the condition $s(E,F)\le s_Q(E)$ reads as $y\ge\frac{11}{10}$. However, the condition $\overline{\Delta}_H(E/F)\ge 0$ is $y\le\frac{9}{10}$, so this case is also impossible.
	\end{proof}
	
	Let us give a preparatory lemma.
	
	\begin{lemma}\label{3 -2 2/5} If $E\in\mathrm{Coh}^\beta(X)$ is tilt-semistable and $\mathrm{ch}(E)=3-2H+\frac 25H^2+eH^3$, then $e\le \frac{1}{15}$. If $e=\frac{1}{15}$ and $E$ is tilt-semistable above $W(E,\mathcal O_X(-1))$, then $E\cong\mathcal Q(-1)$.
	\end{lemma}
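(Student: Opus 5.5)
Let $\mathrm{ch}(E)=3-2H+\frac 25H^2+eH^3$. Then $\mu(E)=-\frac 23$ and $\overline{\Delta}_H(E)=100-75\cdot 2=100-150\cdot\frac 25\cdot\frac{5}{5}$; numerically $\overline{\Delta}_H(E)=(5\cdot 2)^2-2\cdot 15\cdot 2=40$. Hence $\beta_\pm(E)=-\frac 23\pm\frac{\sqrt{40}}{15}$, both lying in $(-1,-\frac 13)$. The ray $\beta=-1$ is not available, since $H^2\cdot\mathrm{ch}_1^{-1}(E)=5$ is not minimal. I will follow the pattern of Lemma \ref{0 1 -1/2} and Lemma \ref{2 0 -3/5}: produce a morphism from a line bundle and use Lemma \ref{subobject}, after first controlling the semicircular walls.

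\textbf{Step 1: classify walls.} Suppose $E$ is destabilized at a semicircular wall by a tilt-stable subobject or quotient $F$ with $\mathrm{ch}_{\le 2}(F)=s+xH+yH^2$, $s>0$, $\mu(F)<\mu(E)$ (Lemma \ref{stable}). Proposition \ref{actual} (5) gives $\beta_-(E)<\beta_-(F)\le\mu(F)<-\frac 23$, so $\beta_\pm(F)\in[-1,0)$. For $s\ge 2$, Theorem \ref{rank bound} then gives $s^2\le\overline{\Delta}_H(F)<40$, so $s\le 6$. Imposing $0<\mathrm{ch}_1^{\beta_-(E)}(F)<\mathrm{ch}_1^{\beta_-(E)}(E)$ together with integrality of $y$ and the conditions $\overline{\Delta}_H(F),\overline{\Delta}_H(E/F)\ge 0$, $\overline{\Delta}_H(F)+\overline{\Delta}_H(E/F)<40$ should leave only a short list. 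The expected survivor is $F=\mathcal O_X(-1)$, i.e. the wall $W(E,\mathcal O_X(-1))$, possibly together with a few candidates killed by Lemma \ref{3 -2 3/5}, Lemma \ref{4 -2 2/5}, or emptiness of the numerical wall. This case-check is the main obstacle, because the irrational $\beta_-(E)$ makes the integer bookkeeping delicate. If some small wall with $s\ge 3$ survives, I would instead use Proposition \ref{actual} (1) to bound its radius and show that $E$ is semistable at a point of $D$ with $\nu=0$. Lemma \ref{nu-lambda} and Proposition \ref{heart} then put $E[1]$ in $\mathfrak C$, and the decomposition of $\mathrm{ch}(E[1])$ into $a,b,c,d\ge 0$ forces $e\le\frac 1{15}$.

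\textbf{Step 2: the bound.} Assume $e\ge\frac 1{15}$. Show $\mathrm{Ext}^2(\mathcal O_X(-1),E)=\Hom(E,\mathcal O_X(-3)[1])^\vee=0$ by checking that $W(E,\mathcal O_X(-3)[1])$ lies above all possible actual walls from Step 1. Riemann--Roch then gives $\hom(\mathcal O_X(-1),E)\ge\chi(E(1))$, which is a linear function of $e$ and positive for $e\ge\frac 1{15}$. By Lemma \ref{subobject}, $\mathcal O_X(-1)^{\oplus n}\hookrightarrow E$ with $n\ge 1$, with tilt-semistable quotient $G$. Bounding $\mathrm{ch}_3(G)$ using the rank-two results of Theorem \ref{main}, or \cite[Lemma 3.6]{Vass} after a dual/twist via Proposition \ref{duality}, yields $e\le\frac 1{15}$. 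In the extremal case the quotient should be $\mathcal U$ up to shift and twist; more precisely, $\chi(E(1))=1$ gives $n=1$ and $G$ with $\mathrm{ch}(G)=2-H+\frac 1{10}H^2+\frac1{30}H^3$, so $G\cong\mathcal U$.

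\textbf{Step 3: identification.} If $E$ is tilt-semistable above $W(E,\mathcal O_X(-1))$, the triple $0\to\mathcal O_X(-1)\to E\to\mathcal U\to 0$ is a non-split triple of sheaves, and $E$ is a sheaf by Lemma \ref{minimal}. Dualizing $0\to\mathcal U\to\mathcal O_X^{\oplus 5}\to\mathcal Q\to 0$ and twisting shows that $\mathcal Q(-1)$ fits into such an extension. Finally, $\mathrm{Ext}^1(\mathcal U,\mathcal O_X(-1))\cong H^2(\mathcal U(-2))^\vee$-type groups computed from the ACM property and Serre duality should be one-dimensional. Hence the non-split extension is unique and $E\cong\mathcal Q(-1)$.
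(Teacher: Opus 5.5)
Your proposal has a genuine gap: the extension you construct in Step~3 does not exist. There are also several numerical errors along the way.

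\textbf{Walls and the bound.} Two numerical claims in your wall analysis are false.
\begin{itemize}
\item At $\beta=-1$ one has $H^2\cdot\mathrm{ch}_1^{-1}(F)\in 5\mathbb Z$ for every object $F$. So $H^2\cdot\mathrm{ch}_1^{-1}(E)=5$ \emph{is} the minimal positive value, and Proposition~\ref{actual}~(3) forbids any wall meeting $\beta=-1$.
\item $\beta_-(E)=-\frac23-\frac{\sqrt{40}}{15}\approx-1.09<-1$, and $\beta_+(E)>-\frac13$. So your deduction $\beta_\pm(F)\in[-1,0)$, and hence the use of Theorem~\ref{rank bound} in Step~1, is unjustified.
\end{itemize}
With the correct picture Step~1 is unnecessary. $W(E,\mathcal O_X(-1))$ is the semicircle with center $-\frac{11}{10}$ and radius $\frac1{10}$, and every wall above it would meet $\beta=-1$. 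Also $W(E,\mathcal O_X(-3)[1])$ meets $\beta=-1$, which gives $\mathrm{Ext}^2(\mathcal O_X(-1),E)=0$.

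The numbers in Step~2 are also wrong.
\begin{itemize}
\item $\chi(\mathcal O_X(-1),E)=\frac{14}{3}+5e$, which equals $5$ (not $1$) at $e=\frac1{15}$, as it must since $h^0(\mathcal Q)=5$.
\item $\mathrm{ch}(E)-\mathrm{ch}(\mathcal O_X(-1))=2-H-\frac1{10}H^2+(e+\frac16)H^3$, which is not $\mathrm{ch}(\mathcal U)$.
\end{itemize}
The inequality itself can be rescued along your lines: one copy of $\mathcal O_X(-1)$ plus Theorem~\ref{main}~(1.2) gives $e+\frac16\le\frac7{30}$. The paper instead uses all five copies. The quotient $G$ has $\mathrm{ch}(G)=-2+3H-\frac{21}{10}H^2+(e+\frac56)H^3$. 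Proposition~\ref{duality} yields $\widetilde G$ with $\mathrm{ch}(\widetilde G(-2))=2-H+\frac1{10}H^2+(t+e-\frac1{30})H^3$. Then \cite[Lemma 3.6]{Vass} gives $e\le\frac1{15}$, and at equality $T=0$ and $G\cong\mathcal U(-1)[1]$.

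\textbf{The identification.} This is where the argument breaks. By the ACM property, $\mathrm{Ext}^1(\mathcal U,\mathcal O_X(-1))\cong H^1(\mathcal U^\vee(-1))=H^1(\mathcal U)=0$; it is not one-dimensional. So every triple $0\to\mathcal O_X(-1)\to E\to\mathcal U\to 0$ splits. In any case $\mathcal Q(-1)$ is not of this form: its quotient by a section has $\mathrm{ch}_2=-\frac1{10}H^2$.

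What is missing is the correct extremal structure: $E$ is the cone of $f\colon\mathcal U(-1)\to\mathcal O_X(-1)^{\oplus 5}$. The argument then runs as follows.
\begin{itemize}
\item If $E$ is semistable above the unique wall, it is a sheaf, so $f$ is injective, and $E$ is slope stable.
\item The five components of $f$ must therefore be linearly independent in the five-dimensional space $\Hom(\mathcal U(-1),\mathcal O_X(-1))$. A dependency would give a nonzero map $E\to\mathcal O_X(-1)$.
\item Hence $f$ is unique up to $\mathrm{GL}(5)$, and the sequence (\ref{tautological}) twisted by $\mathcal O_X(-1)$ gives $E\cong\mathcal Q(-1)$.
\end{itemize}
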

	
	\begin{proof} Assume that $e\ge\frac{1}{15}$. Note that $H^2\cdot\mathrm{ch}_1^{-1}(E)$ has the least positive value, hence there are no walls at $\beta=-1$. Since $W(E,\mathcal O_X(-3)[1])$ intersects the ray $\beta=-1$, we have $\mathrm{Ext}^2(\mathcal O_X(-1),E)\cong\Hom(E,\mathcal O_X(-3)[1])^\vee=0$. Therefore $\dim \Hom(\mathcal O_X(-1),E)\ge\chi(\mathcal O_X(-1),E)\ge 5$. By Lemma \ref{subobject} we obtain an exact triple
		\begin{equation}\label{triple17}0\to\mathcal O_X(-1)^{\oplus 5}\to E\to G\to 0.
		\end{equation}
		This exact triple destabilizes $E$ below $W(E,\mathcal O_X(-1))$, and any wall lying above $W(E,\mathcal O_X(-1))$ would intersect the ray $\beta=-1$. Hence $E$ it tilt-semistable at $W(E,\mathcal O_X(-1))$, and $G$ is tilt-semistable. Proposition \ref{duality} gives an exact triangle $\widetilde{G}\to\mathbb D(G)\to T[-1]\to\widetilde{G}[1]$ with a tilt-semistable $\widetilde{G}$ and a sheaf $T$ of length $t\ge 0$. We have $\mathrm{ch}(\widetilde{G}(-2))=2-H+\frac{H^2}{10}+(t+e-\frac{1}{30})H^3$. \cite[Lemma 3.6]{Vass} implies that $e\le\frac{1}{15}$; in case of equality $T=0$ and $\widetilde{G}(-2)\cong\mathcal U$, so $G\cong\mathcal U(-1)[1]$. In other words, $E$ is a cone of a morphism $f:\mathcal U(-1)\to\mathcal O_X(-1)^{\oplus 5}$. Assume that $E$ is tilt-semistable above the (unique) wall $W(E,\mathcal O_X(-1))$, then $E$ is a sheaf and $f$ is a monomorphism. Moreover, $E$ is slope stable, hence $f$ corresponds to five linearly independent morphisms $\mathcal U(-1)\to\mathcal O_X(-1)$ (otherwise there would exist a nonzero morphism $E\to\mathcal O_X(-1)$). The space $\Hom(\mathcal U(-1),\mathcal O_X(-1))$ is five-dimensional, so $f$ is uniquely determined up to an action of $\mathrm{GL}(5)$. The exact triple (\ref{tautological}), twisted by $\mathcal O_X(-1)$, implies that $E\cong\mathcal Q(-1)$.
	\end{proof}
	
	\begin{lemma}\label{2 0 -7/5} If $E\in\mathrm{Coh}^\beta(X)$ is tilt-semistable and $\mathrm{ch}(E)=2-\frac 75 H^2+eH^3$, then $e\le\frac 65$. In case of equality $E$ is destabilized by an exact triple
		\begin{equation}\label{triple16}0\to\mathcal U\to E\to \mathcal O_H(-1)\to 0.
		\end{equation}
	\end{lemma}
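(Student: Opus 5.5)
We follow the template of Lemmas \ref{2 0 -1} and \ref{2 0 -6/5}. Assume $e\ge\frac 65$. Here $\mathrm{ch}_{\le 2}(E)=2-\frac 75H^2$, so $\overline{\Delta}_H(E)=140$ and $\beta_-(E)=-\sqrt{7/5}\in(-\frac 32,-1)$. Since every semicircular wall is nested along the branch of the hyperbola $\nu_{\alpha,\beta}(E)=0$, the large walls intersect the ray $\beta=-1$. We therefore first treat walls meeting $\beta=-1$ using Remark \ref{inequalities}: any destabilizing subobject or quotient $F$ has $\mathrm{ch}(F)=s+(1-s)H+yH^2+e'H^3$ with $s>1$, and $y$ is constrained by (\ref{F positive}), (\ref{wall at beta=-1}) (with $d=-\frac 75$) and, for $s>2$, by (\ref{E/F positive}).

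\emph{Case $s=2$.} Here $x=-1$ and $y\in\frac 12+\frac 15\mathbb Z$. The inequalities (\ref{F positive}) and (\ref{wall at beta=-1}) give $-\frac 15<y\le\frac 14$, hence $y=\frac 1{10}$. By \cite[Lemma 3.6]{Vass}, $\mathrm{ch}_3(F)\le\frac 1{30}H^3$, with equality only for $F\cong\mathcal U$. Then $\mathrm{ch}_{\le 2}(E/F)=H-\frac 32H^2$, so $(E/F)(1)$ has $\mathrm{ch}_{\le 2}=H-\frac 12H^2$. Lemma \ref{0 1 -1/2} bounds $\mathrm{ch}_3((E/F)(1))\le\frac 16H^3$. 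Translating back gives $e\le\frac 1{30}+\mathrm{ch}_3(E/F)/H^3$, which should equal $\frac 65$.

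To identify the quotient in the equality case, we check that $(E/F)(1)$ is tilt-semistable along $W(E(1),\mathcal U(1))$ and that this wall lies above $W((E/F)(1),\mathcal O_X)$. Lemma \ref{0 1 -1/2} then gives $(E/F)(1)\cong\mathcal O_H$, i.e.\ $E/F\cong\mathcal O_H(-1)$. Finally, to see that the triple is realized with $\mathcal U$ as the subobject (rather than the reverse), we check that $\mathrm{Ext}^1(\mathcal U,\mathcal O_H(-1))=0$, using $0\to\mathcal U(-2)\to\mathcal U(-1)\to\mathcal U|_H(-1)\to0$ and the ACM property of $\mathcal U$. This yields (\ref{triple16}). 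The reverse triple $0\to\mathcal O_H(-1)\to E\to\mathcal U\to 0$ would need separate handling, either by showing that this wall ordering is impossible or by noting that the statement only claims destabilization by (\ref{triple16}); we would check numerically which option applies.

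\emph{Case $s=3$.} Here $x=-2$ and $y\in\frac 15\mathbb Z$. The constraints leave only $y=\frac 35$, or possibly $y=\frac 25$ if the bounds permit. The value $\frac 35$ is excluded by Lemma \ref{3 -2 3/5}. For $y=\frac 25$, Lemma \ref{3 -2 2/5} gives $\mathrm{ch}_3(F)\le\frac 1{15}H^3$, and then $E/F$ has rank $-1$ and $\mathrm{ch}_{\le 2}=-1+2H-\frac 95H^2$. We bound its $\mathrm{ch}_3$ via Proposition \ref{duality} and Lemma \ref{conic} (its dual twist has $\mathrm{ch}_{\le 2}=1-\frac 25H^2$). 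We expect the resulting total to fall below $\frac 65$; if it reaches $\frac 65$, this would be a second destabilizing triple, which must be ruled out.

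\emph{Case $s\ge 4$.} We use the same parity and interval-length argument as in Lemma \ref{2 0 -6/5}: the admissible $y$ lie in a semi-interval of length less than $\frac 15$ starting at a point of the relevant lattice coset, so no lattice point is available.

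It remains to exclude $E$ that are not destabilized at any wall meeting $\beta=-1$. Following Lemma \ref{2 0 -1}, we bound the radius of the remaining walls via Proposition \ref{actual}~(1) and compare it with $W(E,\mathcal U(-2)[1])$, which meets $\beta=-1$. Then $\mathrm{Ext}^2(\mathcal U,E)=0$, and Riemann--Roch gives $\hom(\mathcal U,E)\ge\chi(\mathcal U,E)>0$. A nonzero map $\mathcal U\to E$ forces $E$ to be destabilized at or above $W(E,\mathcal U)$, which lies in the region already analysed. Should $\chi(\mathcal U,E)$ vanish, we would instead run the same argument with $\mathcal O_X(-1)$ or $\mathcal U(-1)[1]$, as in Lemma \ref{2 0 -6/5}.

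The main obstacle is the bookkeeping in the $s=2$ and $s=3$ cases. One must verify that the wall ordering lets Lemma \ref{0 1 -1/2} be applied with its extra hypothesis of semistability above $W(-,\mathcal O_X)$, and confirm that the $s=3$ contribution stays strictly below $\frac 65$.
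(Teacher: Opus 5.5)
Your skeleton matches the paper's proof: walls meeting $\beta=-1$, $x=1-s$, the split into $s=2,3$, and $F\cong\mathcal U$, $E/F\cong\mathcal O_H(-1)$ in the extremal case. However, the case analysis has a genuine hole at $s=2$. From (\ref{F positive}) and (\ref{wall at beta=-1}) you correctly get $-\frac15<y\le\frac14$. Since $y\in\frac12+\frac15\mathbb Z$, this leaves $y\in\{-\frac1{10},\frac1{10}\}$, not only $y=\frac1{10}$.

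The value $y=-\frac1{10}$ is a real candidate:
\begin{itemize}
\item the wall $W(E,F)$ has centre $-\frac{13}{10}$ and $\rho^2=\frac{29}{100}$, which exceeds $\rho_Q(E)^2=\frac{62}{245}$ at $e=\frac65$;
\item $\beta_-(F)=-\frac12-\frac{\sqrt{35}}{10}>\beta_-(E)$;
\item $\overline{\Delta}_H(F)+\overline{\Delta}_H(E/F)=35+25<140$.
\end{itemize}
Nothing you wrote excludes it, so it has to be killed by bounding $\mathrm{ch}_3$. By \cite[Lemma 3.8]{Vass}, $\mathrm{ch}_3(F)\le\frac{7}{30}H^3$. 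Since $E/F$ has $\mathrm{ch}_{\le 2}=H-\frac{13}{10}H^2$, Lemma \ref{0 1 -3/10} applied to $(E/F)(1)$ gives $\mathrm{ch}_3(E/F)\le\frac{23}{30}H^3$. Hence $e\le 1<\frac65$ on this wall. Without this step the bound $e\le\frac65$ is not established.

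The $s=3$ case is also not carried out correctly. Both $y=\frac25$ and $y=\frac35$ satisfy the constraints, since $\frac3{10}<y\le\frac35$. For $y=\frac25$ you misidentify the dual: $\mathbb D(E/F)(-2)$ has $\mathrm{ch}=1-\frac15H^2+(e''-\frac{14}{15})H^3$. So the relevant input is Lemma \ref{line}, not Lemma \ref{conic}. It yields $e''\le\frac{14}{15}$, and with Lemma \ref{3 -2 2/5} this gives $e\le\frac1{15}+\frac{14}{15}=1<\frac65$. Alternatively, this wall has $\rho^2=\frac{13}{80}<\rho_Q(E)^2$, so it lies inside $W_Q(E)$ and is not an actual wall. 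Saying you ``expect'' the total to fall below $\frac65$ does not close the case.

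Your ``remaining case'' and the separate $s\ge 4$ analysis are avoidable. The paper observes that $Q_{0,-1}(E)=336-300e<0$ for $e\ge\frac65$. Hence the wall at which $E$ is destabilized, which lies on or above $W_Q(E)$, meets $\beta=-1$. Moreover $\rho_Q(E)^2\ge\frac{62}{245}>\frac{140}{800}$, so Proposition \ref{actual}~(1) gives $s<4$.

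Your detour does in fact close: $\chi(\mathcal U,E)=10e-11\ge 1$, and $W(E,\mathcal U(-2)[1])=W(E,\mathcal U)$. Your $s\ge4$ interval argument also works. For $s=4$ the window is $(\frac45,\frac{17}{20}]$; its left end is not in $\frac12+\frac15\mathbb Z$, contrary to what you say, but it is too short to contain a point of that coset.

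Finally, the vanishing $\mathrm{Ext}^1(\mathcal U,\mathcal O_H(-1))=0$ already settles the orientation question you raise. A triple $0\to\mathcal O_H(-1)\to E\to\mathcal U\to 0$ would split, so $\mathcal U$ is a subobject in any case. The relevant group is $H^1(\mathcal U|_H)$, because $\mathcal U^\vee\cong\mathcal U(1)$. It is computed from $0\to\mathcal U(-1)\to\mathcal U\to\mathcal U|_H\to0$, not from the twist by $-1$ you wrote, though ACM makes both vanish.
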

	
	\begin{proof}Let $e\ge\frac 65$. We have $Q_{0,-1}(E)=336-300e<0$, hence $E$ is destabilized at a semicircular wall intersecting the ray $\beta=-1$ and induced by a subobject or quotient $F$ with $\mathrm{ch}(F)=s+xH+yH^2+e'H^3,s>1$. Moreover, we can compute $\rho_Q(E)\ge\frac{1984}{7840}>\frac{140}{800}=\frac{\overline{\Delta}_H(E)}{4(H^3)^2\cdot 4(4-2)}$, and Proposition \ref{actual}~(1) implies that $s<4$. As in Remark \ref{inequalities}, we have $x+s=1$.
		
	If $s=2$, then $x=-1,y\in\frac 12+\frac 15\mathbb Z$. Conditions (\ref{F positive}) and (\ref{wall at beta=-1}) imply that $y\in\{-\frac{1}{10},\frac{1}{10}\}$. Suppose that $y=-\frac{1}{10}$, then by \cite[Lemma 3.8]{Vass} $e'\le\frac{7}{30}$. Also, in this case $\mathrm{ch}_{\le 2}(E/F)=H-\frac{13}{10}H^2+e''H^3$, and, as we have seen in Lemma \ref{2 0 -6/5}, $e''\le\frac{23}{30}$. So, in this case $e\le 1<\frac 65$. Suppose now that $y=\frac{1}{10}$. In this case $\mathrm{ch}_3(F)$ is maximized by $F\cong \mathcal U$. Also we have $\mathrm{ch}(E/F)=1-\frac 32 H^2+e''H^3$, and, applying Corollary \ref{0 1}, we get a bound $e''\le\frac 76$; in the case of equality $E/F\cong\mathcal O_H(-1)$ (since $W(E,\mathcal U)$ is located above $W(E/F,\mathcal O_X(-1))$). It can be seen that $\mathrm{Ext}^1(\mathcal U,\mathcal O_H(-1))=0$, hence $\mathcal U$ is indeed a subobject and $\mathcal O_H(-1)$ is the quotient. So, we get an exact triple of the form (\ref{triple16}), and $e=\frac 65$ in this case.
		
	If $s=3$, then $x=-2,y\in\frac 15\mathbb Z$. Conditions (\ref{F positive}) and (\ref{wall at beta=-1}) imply that $y\in\{\frac 25,\frac 35\}$, but the case $y=\frac 35$ is prohibited by Lemma \ref{3 -2 3/5}. Let $y=\frac 25$, then Lemma \ref{3 -2 2/5} implies that $e'\le\frac{1}{15}$. We find $\mathrm{ch}(E/F)=-1+2H-\frac 95H^2+e''H^3$. Proposition \ref{duality} gives an exact triangle $\widetilde{G}\to\mathbb D(E/F)\to T[-1]\to\widetilde{G}[1]$ with a tilt-semistable $\widetilde{G}$ and a sheaf $T$ of length $t\ge 0$. We calculate $\mathrm{ch}(\widetilde{G}(-2))=1-\frac{H^2}{5}+(t+e''-\frac{14}{15})$, and Lemma \ref{line} gives a bound $e''\le\frac{14}{15}$. So, in this case $e\le 1<\frac 65$.\end{proof}
	
	\begin{lemma}\label{2 0 -8/5} If $E\in\mathrm{Coh}^\beta(X)$ is tilt-semistable and $\mathrm{ch}(E)=2-\frac 85 H^2+eH^3$, then $e\le\frac 75$. In case of equality $E$ is destabilized by one of the following exact triples:
		\begin{equation}\label{triple18}0\to F\to E\to \mathcal O_H(-1)\to 0,\end{equation}
		\begin{equation}\label{triple19}0\to \mathcal O_H(-1)\to E\to F\to 0,\end{equation}
		\begin{equation}\label{triple20}0\to\mathcal U\to E\to \mathcal I_{L,H}(-1)\to 0,\end{equation}
		\begin{equation}\label{triple23}0\to\mathcal I_{L,H}(-1)\to E\to\mathcal U\to 0,\end{equation}
		\begin{equation}\label{triple21}0\to\mathcal Q(-1)\to E\to \mathcal O_X(-2)[1]\to 0,\end{equation}
		where $F$ is the cokernel of a monomorphism of sheaves $\mathcal U(-1)\hookrightarrow\mathcal O_X(-1)^{\oplus 4}$.
	\end{lemma}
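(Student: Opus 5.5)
Assume $e\ge\frac 75$ and argue as in Lemmas \ref{2 0 -7/5} and \ref{2 0 -6/5}. We have $\overline{\Delta}_H(E)=160$. First I would check that $Q_{0,-1}(E)<0$ for $e\ge\frac 75$. Then $E$ has to be destabilized at a semicircular wall meeting the ray $\beta=-1$, and this wall is induced by a subobject or quotient $F$ with $\mathrm{ch}(F)=s+xH+yH^2+e'H^3$, $s>1$ and, by Remark \ref{inequalities}, $x=1-s$.

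To bound $s$, I compare $\rho_Q(E)$ (computed at $e=\frac 75$, where it is smallest) with the bound $\overline{\Delta}_H(E)/(4\cdot 25\cdot s(s-2))$ from Proposition \ref{actual}~(1). This should leave only $s\in\{2,3\}$, or possibly $s=4$. If $s=4$ survives, I would dispose of it with the integrality argument of Lemma \ref{2 0 -6/5}: the semi-interval for $y$ cut out by (\ref{F positive}), (\ref{wall at beta=-1}), (\ref{E/F positive}) contains no admissible value in $\frac 12+\frac 15\mathbb Z$ or $\frac 15\mathbb Z$.

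\textbf{Case $s=2$.} Here $x=-1$, $y\in\frac 12+\frac 15\mathbb Z$, and (\ref{F positive}), (\ref{wall at beta=-1}) give $y\in\{-\frac{3}{10},-\frac1{10},\frac1{10}\}$.
For $y=\frac{1}{10}$, \cite[Lemma 3.6]{Vass} gives that $\mathrm{ch}_3(F)$ is maximized by $F\cong\mathcal U$. Then $\mathrm{ch}_{\le2}(E/F)=H-\frac{17}{10}H^2$, and Corollary \ref{0 1} (equivalently Lemma \ref{0 1 -7/10} after twisting by $\mathcal O_X(1)$) bounds $\mathrm{ch}_3(E/F)$, with equality for $\mathcal I_{L,H}(-1)$. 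I would check that $W(E,\mathcal U)$ lies above $W(E/F(1),\mathcal O_X(-1)[1])$, which gives the identification. Then I compute $\mathrm{Ext}^1$ in the appropriate direction, using the ACM property of $\mathcal U$ and (\ref{triple4.1}), to decide whether we get (\ref{triple20}), (\ref{triple23}) or both. The statement lists both, so presumably both directions genuinely occur on different sides.
For $y=-\frac{1}{10}$, item (1.2) of Theorem \ref{main} (\cite[Lemma 3.8]{Vass}) bounds $e'\le\frac7{30}$ with maximizer $F$, the cokernel of $\mathcal U(-1)\hookrightarrow\mathcal O_X(-1)^{\oplus4}$. The complement has $\mathrm{ch}_{\le2}=H-\frac32H^2$, maximized by $\mathcal O_H(-1)$ via Lemma \ref{0 1 -1/2}. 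This gives $e=\frac7{30}+\frac76=\frac75$ and the triples (\ref{triple18}), (\ref{triple19}).
For $y=-\frac3{10}$, Lemma \ref{2 -1 -3/10} gives $e'\le\frac{13}{30}$, and Corollary \ref{0 1} bounds the complement, whose $\mathrm{ch}_{\le 2}$ is $H-\frac{13}{10}H^2$, by $\frac{23}{30}$. The sum is less than $\frac75$.

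\textbf{Case $s=3$.} Here $x=-2$, $y\in\frac15\mathbb Z$, Lemma \ref{3 -2 3/5} excludes $y=\frac35$, and the wall conditions should force $y=\frac25$. Lemma \ref{3 -2 2/5} gives $e'\le\frac1{15}$ with $F\cong\mathcal Q(-1)$. The complement has $\mathrm{ch}_{\le2}=-1+2H-2H^2$. Using Proposition \ref{duality} and \cite[Proposition 3.1]{Vass}, as in Lemma \ref{2 -1 -1/2}, its $\mathrm{ch}_3$ is at most $\frac43$, with equality only for $\mathcal O_X(-2)[1]$. This gives $e=\frac1{15}+\frac43=\frac75$ and (\ref{triple21}). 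To use Lemma \ref{3 -2 2/5} I must check that $F$ is tilt-semistable above $W(F,\mathcal O_X(-1))$; this holds because $W(E,F)$ lies above that wall.

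The main obstacle is the bookkeeping in the case $s=2$: showing that all the relevant walls are nested correctly, so that the identifications of $E/F$ in Lemmas \ref{0 1 -1/2} and \ref{0 1 -7/10} apply, and settling which extension directions are realized through the $\mathrm{Ext}^1$ vanishings.
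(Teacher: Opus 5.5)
Your plan matches the paper's proof step for step: $Q_{0,-1}(E)<0$, $x=1-s$, then $s=2$ with $y=\pm\frac{1}{10}$ giving (\ref{triple18})--(\ref{triple23}), and $s=3$ with $y=\frac25$ giving (\ref{triple21}). The differences are cosmetic: the paper rules out all $s\ge 4$ at once because (\ref{wall at beta=-1}) and (\ref{E/F positive}) are incompatible there, so your $\rho_Q$ comparison is unnecessary, and $y=-\frac{3}{10}$ is already excluded by the strict inequality in (\ref{wall at beta=-1}) (your bound $\frac{13}{30}+\frac{23}{30}<\frac75$ disposes of it anyway).

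The one step you should add is in case $s=3$. To obtain (\ref{triple21}) rather than an extension with $\mathcal Q(-1)$ as the quotient, use $\mathrm{Ext}^1(\mathcal Q(-1),\mathcal O_X(-2)[1])\cong\mathrm{Ext}^2(\mathcal Q(-1),\mathcal O_X(-2))=0$, which follows from the ACM property of $\mathcal Q$, as the paper does. In case $s=2$ no $\mathrm{Ext}$ computation is needed, since both directions are listed in the statement.
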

	
	\begin{proof} Assume that $e\ge\frac 75$. We have $Q_{0,-1}(E)<0$, so there is a semicircular wall for $E$ intersecting the ray $\beta=-1$ and induced by a subobject or quotient $F$ with $\mathrm{ch}(F)=s+xH+yH^2+e'H^3,s>1,x=1-s$.
		
	If $s=2$, then $x=-1$ and $y\in\frac 12+\frac 15\mathbb Z$. Conditions (\ref{wall at beta=-1}) and (\ref{F positive}) imply that $y\in\{-\frac{1}{10},\frac{1}{10}\}$. Let $y=-\frac{1}{10}$. By \cite[Lemma 3.8]{Vass} $e'\le\frac{7}{30}$, and in case of equality $F$ is the cokernel of a monomorphism of sheaves $\mathcal U(-1)\hookrightarrow\mathcal O_X(-1)^{\oplus 4}$. Also, we have $\mathrm{ch}(E/F)=H-\frac 32 H^2+e''H^3$. Corollary \ref{0 1} implies that $e''\le\frac 76$ and in case of equality $E/F\cong\mathcal O_H(-1)$. So, in this case $e\le\frac 75$, and for $e=\frac 75$ we obtain exact triples (\ref{triple18}), (\ref{triple19}). In the case $y=\frac{1}{10}$ \cite[Lemma 3.6]{Vass} implies that $e'\le\frac{1}{30}$ and in case of equality $F\cong \mathcal U$. Also, here $\mathrm{ch}(E/F)=H-\frac{17}{10} H^2+e''H^3$; applying Corollary \ref{0 1} we get a bound $e''\le\frac{41}{30}$ and in case of equality $E/F\cong\mathcal I_{L,H}(-1)$. In both cases $E/F$ is tilt-stable along $W(E,E/F)$. We obtain (\ref{triple20}) and (\ref{triple23}).
		
	If $s=3$, then $x=-2$ and $y\in\frac 15\mathbb Z$. Conditions (\ref{wall at beta=-1}) and (\ref{E/F positive}) imply that $y=\frac 25$. In this case Lemma \ref{3 -2 2/5} gives $e'\le\frac{1}{15}$, and for $e'=\frac{1}{15}$ $F\cong\mathcal Q(-1)$. We have $\mathrm{ch}(E/F)=-1+2H-2H^2+e''H^3$; \cite[Corollary 3.3]{Vass} implies that $e''\le\frac 43$ and in case of equality $E/F\cong\mathcal O_X(-2)[1]$. Since $\mathcal Q$ is an ACM sheaf, we have $\mathrm{Ext}^1(\mathcal Q(-1),\mathcal O_X(-2)[1])\cong\mathrm{Ext}^2(\mathcal Q(-1),\mathcal O_X(-2))=0$, hence $\mathcal Q(-1)$ is indeed a subobject, $\mathcal O_X(-2)[1]$ is the quotient and we obtain the exact triple (\ref{triple21}).
		
	Finally, if $s\ge 4$, then conditions (\ref{wall at beta=-1}) and (\ref{E/F positive}) give $y\in(\frac s2-\frac{13}{10},\frac s2-\frac 85+\frac{1}{2(s-2)}]$, but this semi-interval is empty.
	\end{proof}
	
	\begin{lemma}\label{2 0 -9/5} If $E\in\mathrm{Coh}^\beta(X)$ is tilt-semistable and $\mathrm{ch}(E)=2-\frac 95 H^2+eH^3$, then $e\le\frac 95$. In case of equality $E$ is destabilized by an exact triple
		\begin{equation}\label{triple22}0\to\mathcal U\to E\to \mathcal I_{C,H}(-1)\to 0.\end{equation}
	\end{lemma}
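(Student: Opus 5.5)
Assume $e\ge\frac 95$ and follow the scheme of Lemmas \ref{2 0 -7/5} and \ref{2 0 -8/5}. For $\mathrm{ch}(E)=2-\frac 95H^2+eH^3$ we have $\overline{\Delta}_H(E)=180$ and $Q_{0,-1}(E)<0$ (since $Q_{0,-1}$ is an affine decreasing function of $e$ and is already negative at $e=\frac 95$). Hence $E$ is destabilized at a semicircular wall intersecting the ray $\beta=-1$. By Lemma \ref{stable}, this wall is induced by a tilt-stable subobject or quotient $F$ with $\mathrm{ch}(F)=s+xH+yH^2+e'H^3$, $s>1$, and Remark \ref{inequalities} gives $x=1-s$. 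Since $\rho_Q(E)$ grows with $e$, I would compute it at $e=\frac 95$ and compare it with the bound of Proposition \ref{actual}~(1). I expect this to give $s<4$, or at least to bound $s$ by a small number. For larger $s$ I would instead use conditions (\ref{wall at beta=-1}) and (\ref{E/F positive}), which give the semi-interval $y\in(\frac s2-\frac 75,\frac s2-\frac 95+\frac{1}{2(s-2)}]$. This is empty for $s\ge 4$, since $\frac{1}{2(s-2)}\le\frac 14<\frac 25$ and so the upper endpoint lies below the lower one.

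\emph{Case $s=2$.} Here $x=-1$ and $y\in\frac 12+\frac 15\mathbb Z$, and (\ref{F positive}) together with (\ref{wall at beta=-1}) give $y\in\{-\frac{3}{10},-\frac{1}{10},\frac{1}{10}\}$.
\begin{itemize}
\item If $y=-\frac 3{10}$, Lemma \ref{2 -1 -3/10} gives $e'\le\frac{13}{30}$. Then $\mathrm{ch}_{\le 2}(E/F)=H-\frac 32H^2$, and Corollary \ref{0 1} gives $e''\le\frac 76$, so the total is $\frac{13}{30}+\frac 76=\frac 85<\frac 95$. I would also check whether Proposition \ref{actual}~(5) already excludes this value of $y$.
\item If $y=-\frac 1{10}$, \cite[Lemma 3.8]{Vass} gives $e'\le\frac 7{30}$. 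Then $\mathrm{ch}_{\le 2}(E/F)=H-\frac{17}{10}H^2$, and Corollary \ref{0 1} gives $e''\le\frac{41}{30}$, so the total is $\frac 85<\frac 95$.
\item If $y=\frac 1{10}$, \cite[Lemma 3.6]{Vass} gives $e'\le\frac 1{30}$, with equality iff $F\cong\mathcal U$. Then $\mathrm{ch}_{\le 2}(E/F)=H-\frac{19}{10}H^2$, so $(E/F)(1)$ has $\mathrm{ch}_{\le 2}=H-\frac 9{10}H^2$. Corollary \ref{0 1}, i.e.\ Lemma \ref{0 1 -9/10} after the twist, gives $e''\le\frac{53}{30}$, and the sum $\frac 1{30}+\frac{53}{30}=\frac 95$ is exactly the claimed bound.
\end{itemize}

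For equality in the last case I need $E/F\cong\mathcal I_{C,H}(-1)$. This requires $(E/F)(1)$ to be tilt-semistable above $W((E/F)(1),\mathcal O_X(-1)[1])$. I would obtain this by checking that the wall $W(E,\mathcal U)$ lies above that wall, as in the analogous arguments of Lemmas \ref{2 0 -7/5} and \ref{2 0 -8/5}. Then Lemma \ref{0 1 -9/10} identifies the quotient. To see that $\mathcal U$ is genuinely a subobject, and not merely a quotient, I would verify $\mathrm{Ext}^1(\mathcal U,\mathcal I_{C,H}(-1))=0$. For this I would use the resolution (\ref{triple15}) twisted by $\mathcal O_X(-1)$, the ACM property of $\mathcal U\otimes\mathcal U$, and $\mathrm{Ext}^i(\mathcal U,\mathcal O_X(-2))=0$ for $i=1,2$ (Serre duality and ACM). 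The conclusion is the exact triple (\ref{triple22}).

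\emph{Case $s=3$.} Here $x=-2$ and $y\in\frac 15\mathbb Z$. The possible values are $y\in\{\frac 25,\frac 35\}$, and $y=\frac 35$ is excluded by Lemma \ref{3 -2 3/5}. For $y=\frac 25$, Lemma \ref{3 -2 2/5} gives $e'\le\frac 1{15}$. The quotient then has $\mathrm{ch}_{\le 2}(E/F)=-1+2H-\frac{11}{5}H^2$. Dualizing via Proposition \ref{duality} and twisting by $\mathcal O_X(-2)$ gives a rank one object with $\mathrm{ch}_{\le 2}=1-\frac 25H^2$, and Lemma \ref{conic} bounds its $\mathrm{ch}_3$ by $\frac 15$. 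Translating back, I expect $e''\le\frac 75+\frac 15$ or similar, so the total stays below $\frac 95$.

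The main obstacle is this bookkeeping: the dual Chern character computation in the case $s=3$, and confirming that $\rho_Q$ is large enough to kill $s\ge 4$. If the first fails, I would instead argue directly that the $s=3$ wall lies below $W_Q(E)$.
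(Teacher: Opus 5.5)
Your treatment of $s=2$ agrees with the paper: the three values $y\in\{-\frac{3}{10},-\frac{1}{10},\frac{1}{10}\}$, the identification of the quotient in the equality case, and the vanishing of $\mathrm{Ext}^1(\mathcal U,\mathcal I_{C,H}(-1))$. Your exclusion of $s\ge 4$ through (\ref{wall at beta=-1}) and (\ref{E/F positive}) is a valid replacement for the paper's estimate $\rho_Q(E)^2\ge\frac{9}{20}>\frac{180}{800}$.

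The gap is the case $s=3$. The candidates $y\in\{\frac 25,\frac 35\}$ are carried over from Lemma \ref{2 0 -7/5}, where $d=-\frac 75$. Here your own semi-interval at $s=3$ is $(\frac{1}{10},\frac 15]$, so the only admissible value is $y=\frac 15$. Both of your values violate $\overline{\Delta}_H(E/F)\ge 0$. For $y=\frac 25$ the quotient has $\mathrm{ch}_{\le 2}=-1+2H-\frac{11}{5}H^2$ and $\overline{\Delta}_H=-10$; for $y=\frac 35$ one gets $\overline{\Delta}_H=-20$. Accordingly, for $y=\frac 25$ the object $\mathbb D(E/F)(-2)$ has $\mathrm{ch}_{\le 2}=1+\frac 15H^2$, not $1-\frac 25H^2$. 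So your appeal to Lemma \ref{conic} rests on a miscomputation.

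The case you never treat is $\mathrm{ch}_{\le 2}(F)=3-2H+\frac 15H^2$, $\mathrm{ch}_{\le 2}(E/F)=-1+2H-2H^2$. This is the one that actually needs an argument, and it is not a $\mathrm{ch}_3$-bookkeeping case: none of the results used here bounds $\mathrm{ch}_3$ of rank three objects with this $\mathrm{ch}_{\le 2}$. It is excluded by wall position instead.

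One computes $Q_{\alpha,-1}(E)=180\alpha^2+504-300e$. So for $e\ge\frac 95$ the wall $W_Q(E)$ meets the ray $\beta=-1$ at a point with $\alpha^2\ge\frac 15$. By contrast, $W(E,F)$ is $\alpha^2+\beta^2+\frac{29}{10}\beta+\frac 95=0$ and meets that ray at $\alpha^2=\frac{1}{10}$. By nestedness, $W(E,F)$ lies inside the semidisk bounded by $W_Q(E)$. There $E$ is never tilt-semistable by Proposition \ref{BMT}, so $W(E,F)$ cannot be an actual wall. This is precisely your fallback, but it must be applied to $y=\frac 15$, and it is the proof of this case rather than a contingency.
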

	
	\begin{proof}Assume that $e\ge\frac 95$. A calculation shows that $\rho_Q^2\ge\frac{9}{20}>\frac{180}{800}=\frac{\overline{\Delta}_H(E)}{4(H^3)^2\cdot 4(4-2)}$, so Proposition \ref{actual}~(1) implies that $E$ is destabilized at a semicircular wall induced by a subobject or quotient $F$ with $\mathrm{ch}(F)=s+xH+yH^2+e'H^3,1<s<4$. Besides, $W_Q(E)$ intersects the ray $\beta=-1$ at a point with $\alpha^2\ge\frac 15$, hence $W(E,F)$ also has to intersect this ray at a point with $\alpha^2\ge\frac 15$ in order to be an actual wall. Also, $x=1-s$ by Remark \ref{inequalities}.
		
		If $s=2$, then $x=-1$ and $y\in\frac 12+\frac 15\mathbb Z$. Conditions (\ref{F positive}) and (\ref{wall at beta=-1}) imply that $y\in\{-\frac{3}{10},-\frac{1}{10},\frac{1}{10}\}$. If $y=-\frac{3}{10}$, then by Lemma \ref{2 -1 -3/10} $e'\le\frac{13}{30}$. Also, $\mathrm{ch}(E/F)=H-\frac 32H^2+e''H^3$ and we obtain $e''\le\frac 76$ using Lemma \ref{0 1 -1/2}. So, in this case $e\le\frac{13}{30}+\frac 76=\frac 85<\frac 95$. If $y=-\frac{1}{10}$, then by \cite[Lemma 3.8]{Vass} $e'\le\frac{7}{30}$. We have $\mathrm{ch}(E/F)=H-\frac{17}{10}H^2+e''H^3$, and Lemma \ref{0 1 -7/10} implies that $e''\le\frac{41}{30}$. In this case also $e\le\frac 85<\frac 95$. If $y=\frac{1}{10}$, then by \cite[Lemma 3.6]{Vass} $e'\le\frac{1}{30}$ and in case of equality $F\cong\mathcal U$. We have $\mathrm{ch}(E/F)=H-\frac{19}{10}H^2+e''H^3$, and Corollary \ref{0 1} implies that $e''\le\frac{53}{30}$; in case of equality $E/F\cong\mathcal I_{C,H}(-1)$. Using the exact triple (\ref{triple15}) we find $\mathrm{Ext}^1(\mathcal U,\mathcal I_{C,H}(-1))=0$, hence in this case $\mathcal U$ is indeed a subobject and we get the exact triple (\ref{triple22}).
		
		Finally, if $s=3$, then $x=-2$ and $y\in\frac 15\mathbb Z$. Conditions (\ref{wall at beta=-1}) and (\ref{E/F positive}) imply that $y=\frac 15$, but in this case $W(E,F)$ intersects the ray $\beta=-1$ at a point with $\alpha^2=\frac{1}{10}<\frac 15$, hence $W(E,F)$ is not an actual wall.
	\end{proof}
	
	Now we come to the general case.
	
	\begin{lemma}\label{0 1 2} Let $E\in\mathrm{Coh}^\beta(X)$ be a tilt-semistable object with $\mathrm{ch}(E)=2+cH+dH^2+eH^3$. Suppose that either
		\begin{enumerate}
			\item $c=-1,d\le-\frac{13}{10},e\ge\frac{d^2}{2}-d+\frac{5}{24}-\frac{3}{25}$, or
			\item $c=0,d\le -2,e\ge\frac{d^2}{2}-\frac{d}{10}-\frac{1}{25}$.
		\end{enumerate}
		Then $E$ is destabilized at a semicircular wall by an exact triple, in which the subobject and the quotient have rank at most two.
	\end{lemma}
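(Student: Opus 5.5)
The plan is to combine the generalized Bogomolov--Gieseker inequality with the rank bound of Proposition \ref{actual}~(1). We show that the semidisk $Q_{\alpha,\beta}(E)<0$ is so large that any actual wall must have a large radius. We then show that walls induced by objects of rank $\ge 3$ are too small to have such a radius.

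\emph{Step 1: $E$ is destabilized at a semicircular wall.} Write $\mathrm{ch}(E)=2+cH+dH^2+eH^3$. Then $\overline{\Delta}_H(E)=25c^2-100d$. Since $Q_{\alpha,\beta}(E)$ is increasing in $e$ in the relevant region ($H^2\cdot\mathrm{ch}_1^\beta(E)>0$ on the relevant side), it suffices to treat the boundary value of $e$ in each case. For that value we compute $W_Q(E)$ via Proposition \ref{numerical}~(6): its center $s_Q(E)$ and radius $\rho_Q(E)$ are explicit functions of $d$. In both cases $\rho_Q(E)>0$, so $Q_{\alpha,\beta}(E)<0$ on a nonempty semidisk. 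By Proposition \ref{BMT}, $E$ cannot be tilt-semistable there. Since $E$ is tilt-semistable for $\alpha\gg0$ at the relevant $\beta$ (or at some point), it is destabilized at some semicircular wall $W$. This wall lies outside the semidisk bounded by $W_Q(E)$, because the walls are nested (Proposition \ref{numerical}~(4)). Hence $\rho(E,F)\ge\rho_Q(E)$ for the destabilizing subobject or quotient $F$.

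\emph{Step 2: bound the rank.} By Lemma \ref{stable}, we may take $F$ tilt-stable with $\mathrm{ch}_0(F)=s>0$ and $\mu(F)<\mu(E)$. The other factor has rank $2-s$, which is negative if $s\ge3$. If $s\ge 3$, Proposition \ref{actual}~(1) gives
$$\rho(E,F)^2\le\frac{\overline{\Delta}_H(E)}{4\cdot 25\, s(s-2)}\le\frac{\overline{\Delta}_H(E)}{300}.$$
The key inequality to verify is therefore $\rho_Q(E)^2>\overline{\Delta}_H(E)/300$ in both cases. In case (1), $\overline{\Delta}_H(E)=25-100d$. In case (2), $\overline{\Delta}_H(E)=-100d$. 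Explicitly:
\begin{enumerate}
\item For $c=-1$, $\rho_Q(E)^2$ equals $(s_Q-\mu)^2-\overline{\Delta}_H/100$ up to the normalization of Proposition \ref{numerical}~(6). Here $s_Q$ is determined by $(\mathrm{ch}_1,2\mathrm{ch}_2,3\mathrm{ch}_3)$-type data, and $\rho_Q^2$ grows quadratically in $|d|$ because $e\approx d^2/2$.
\item The comparison value $\overline{\Delta}_H/300$ grows only linearly in $|d|$.
\end{enumerate}
So for $|d|$ large the inequality is immediate. What remains is to check the smallest allowed values, $d=-\frac{13}{10}$ with $c=-1$ and $d=-2$ with $c=0$, and that the difference is increasing as $d$ decreases. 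This contradicts $s\ge3$, so $s\in\{1,2\}$.

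The quotient (or kernel) then has rank $2-s\in\{0,1\}$. Thus both factors have rank at most two in absolute value, and nonnegative rank as well.

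\emph{Main obstacle.} The main obstacle is the explicit computation of $\rho_Q(E)$ at the threshold values of $e$, including the $-\frac{3}{25}$ and $-\frac1{25}$ corrections. We also need to check that the inequality $\rho_Q^2>\overline{\Delta}_H/300$ holds uniformly in $d$. I would write
$$Q_{\alpha,\beta}(E)=A(\alpha^2+\beta^2)+B\beta+C,$$
so that $\rho_Q^2=B^2/(4A^2)-C/A$. The resulting expression is a quadratic in $d$ with positive leading coefficient minus a linear term, and I would verify its positivity at the endpoint and its monotonicity. If the inequality were borderline at the endpoint, I would fall back on the integrality of $\mathrm{ch}(F)$ together with Proposition \ref{actual}~(3),(5) to exclude $s\ge3$ directly, as in Lemmas \ref{2 0 -8/5} and \ref{2 0 -9/5}.
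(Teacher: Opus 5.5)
Your case (1) is the paper's own argument: reduce to the boundary value of $e$, check $\rho_Q(E)^2>\overline{\Delta}_H(E)/300$ for $d\le-\frac{13}{10}$, and apply Proposition~\ref{actual}~(1). The gap is in case (2), where you claim the same comparison works with endpoint $d=-2$, and it does not. For $c=0$ one has $Q_{\alpha,\beta}(E)=-100d(\alpha^2+\beta^2)+300e\beta+100d^2$, so $s_Q(E)=\frac{3e}{2d}$ and $\rho_Q(E)^2=\frac{9e^2}{4d^2}+d$. Take $d=-2$ and the boundary value $e=\frac{54}{25}$. Then $s_Q(E)=-\frac{81}{50}$ and $\rho_Q(E)^2=\frac{1561}{2500}<\frac23=\frac{\overline{\Delta}_H(E)}{300}$. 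So the inequality you rely on is false exactly at the endpoint you planned to check, and monotonicity in $d$ cannot rescue it. (Incidentally, it is $\rho_Q$, not $Q$, that increases with $e$; $Q$ decreases.)

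There are two ways to close this.

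\emph{Integrality.} For $c=0$ you have $e=\frac{c_3}{10}\in\frac{1}{10}\mathbb Z$, so at $d=-2$ in fact $e\ge\frac{11}{5}$. Then $\rho_Q(E)^2\ge\frac{1089}{400}-2=\frac{289}{400}>\frac23$. For $d\le-\frac{11}{5}$ (recall $d\in\frac15\mathbb Z$) the boundary value of $e$ already suffices.

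\emph{The paper's route,} which abandons $\rho_Q$ in case (2). One has $Q_{0,-1}(E)\le-50d^2-70d+12<0$, so every semicircular wall crosses the ray $\beta=-1$. Proposition~\ref{actual}~(3) on that ray forces $\mathrm{ch}_1^{-1}(F)=H$, i.e.\ $x=1-s$. For $s\ge3$, conditions (\ref{wall at beta=-1}) and (\ref{E/F positive}) then require $\frac s2+\frac{d-1}{2}<y\le\frac s2+d+\frac{1}{2(s-2)}$, which is an empty range once $d\le-2$.

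Your stated fallback points loosely toward the second route. However, it cites Proposition~\ref{actual}~(5) and integrality of $\mathrm{ch}(F)$ rather than the decisive input $Q_{0,-1}(E)<0$, and it is not carried out. As written, case (2) is not proved.
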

	
	\begin{proof}\begin{enumerate}
	\item A straightforward calculation gives $$\rho_Q(E)^2\ge\frac{22500d^4-5000d^3-7050d^2+7050d-\frac{2491}{4}}{4(25-100d)^2}.$$ 
	Using a partial fraction decomposition, we get
	$$\rho_Q(E)^2-\frac{\overline{\Delta}_H(E)}{300}\ge\frac{9}{16}d\left(d+\frac{47}{54}\right)-\frac{4159}{19200}-\frac{639}{1600(1-4d)}+\frac{45369}{160000(1-4d)^2}>0,$$
		where the last inequality is obtained by bounding each of the four summands for $d\le-\frac{13}{10}$. Proposition \ref{actual}~(1) implies that the rank of a destabilizing subobject or quotient cannot be bigger than or equal to three.
		\item We find $Q_{0,-1}(E)\le-50d^2-70d+12<0$ (for $d\le-2$), hence any semicircular wall $W(E,F)$ has to intersect the ray $\beta=-1$. As in Remark \ref{inequalities}, we have $\mathrm{ch}_{\le 2}(F)=s+xH+yH^2$, where $x=1-s$. If $s\ge 3$, then (\ref{wall at beta=-1}) and (\ref{E/F positive}) imply $\frac s2+\frac{d-1}{2}<y\le\frac s2+d+\frac{1}{2(s-2)}$, but for $d\le-2$ these conditions define an empty set.
		\end{enumerate}
	\end{proof}
	
	\begin{lemma}\label{one}Let $E\in\mathrm{Coh}^\beta(X)$ be a tilt-semistable object with $\mathrm{ch}(E)=2+cH+dH^2+eH^3$.
		\begin{enumerate}
			\item If $c=-1,d\le-\frac{13}{10}$ and $E$ is destabilized by a subobject of rank one, then
			$$e\le\frac{d^2}{2}-d+\frac{5}{24}-\varepsilon(d).$$
			In case of equality either the subobject or the quotient is isomorphic to $\mathcal O_X(-1)$.
			\item If $c=0,d\le -2$ and $E$ is destabilized at a semicircular wall by a subobject of rank one, then
			$$e<\frac{d^2}{2}-\frac{d}{10}+\frac{2}{25}-\varepsilon(d).$$
		\end{enumerate}
	\end{lemma}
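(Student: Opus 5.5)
The plan is to write the destabilizing sequence as $0\to F\to E\to G\to 0$, where one of $F,G$ has rank one. By Lemma \ref{stable} and Proposition \ref{actual}~(4),(5), we may take the rank-one piece to have $\mu<\mu(E)$. Put $\mathrm{ch}_{\le 2}$ of the rank-one piece equal to $1+xH+yH^2$. The other piece then has rank one and $\mathrm{ch}_{\le 2}=1+(c-x)H+(d-y)H^2$. First we pin down $x$. For $c=-1$ the wall must cross the ray $\beta=\beta_-(E)$, with $\beta_-(E)<\beta_-(F)\le\mu(F)<-\frac12$. Moreover $0<\mathrm{ch}_1^{\beta}$ of both pieces on the wall. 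Together with the size of the wall (compare $\rho_Q(E)$ from the proof of Lemma \ref{0 1 2}, which is large for $d\le-\frac{13}{10}$), this should force $x=-1$, so the other piece has $c_1=0$. For $c=0$ we already know from the proof of Lemma \ref{0 1 2}(2) that the wall meets $\beta=-1$. Remark \ref{inequalities} gives $x=1-s=0$ when $s=1$, which makes the wall vertical rather than semicircular. Hence no semicircular wall comes from a rank-one piece with $x=0$. I would recheck whether $x=-1$ paired with $c_1=+1$ can occur, and rule it out with the $\beta=-1$ argument of Remark \ref{inequalities}. This suggests that in case (2) the inequality is in fact strict, or the case is empty.

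Next we bound $\mathrm{ch}_3$ of each piece separately. For a rank-one tilt-semistable object $A$, twist it to $c_1=0$ and use the standard bound for rank-one objects. That bound is \cite[Proposition 3.2]{MS18}, as used in the proof of Lemma \ref{2 -1 -11/10}: if $\mathrm{ch}(A)=1-kL+eH^3$ then $e\le$ an explicit function of $k$, with maximum attained by ideal sheaves of plane curves, and otherwise by $\mathcal O_X$ or by Lemmas \ref{line}, \ref{conic}. For the rank-one piece with $c_1=-1$ and $\mathrm{ch}_2=yH^2$, maximality forces $y=\frac12$. In that case the piece is $\mathcal O_X(-1)$ and $\mathrm{ch}_3\le\frac16$. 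Otherwise there is a deficit, quadratic in $\frac12-y$. The complementary rank-one piece with $c_1=0$ and $\mathrm{ch}_2=(d-y)H^2$ satisfies $\mathrm{ch}_3\le\frac{(d-y)^2}{2}+O(1)$, the precise value coming from the MS18-type bound for ideal sheaves of curves of degree $5(y-d)$ lying in a hyperplane section.

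We then sum the two bounds and maximize over the admissible $y$. These are the values in $\frac12+\frac15\mathbb Z$ allowed by $\overline{\Delta}_H\ge0$ for both pieces, by $\overline{\Delta}_H(F)+\overline{\Delta}_H(G)<\overline{\Delta}_H(E)$, and by the requirement that the wall lie outside $W_Q(E)$. The sum is roughly $\frac16+\frac{(d-y)^2}{2}+\dots$, and the goal is to show it is maximized at $y=\frac12$. At $y=\frac12$ we get $\frac16+\frac{(d-\frac12)^2}{2}+$(rank-one curve term). Comparing with $\frac{d^2}{2}-d+\frac5{24}-\varepsilon(d)$ should give exactly the bound, with the residue-dependent correction $\varepsilon(d)$ coming from the fractional part of $d$. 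Equality then forces the piece to be $\mathcal O_X(-1)$. For $c=0$ the analogous sum falls short of $\frac{d^2}{2}-\frac d{10}+\frac2{25}-\varepsilon(d)$, which gives the strict inequality.

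I expect the main obstacle to be the rank-one $\mathrm{ch}_3$ bound for large $|d|$ together with the exact bookkeeping against $\varepsilon(d)$. The generic bound $\frac{(d-y)^2}{2}$-type is not sharp enough by itself to reproduce the constants $\frac2{25},\frac3{25}$. I would first try Corollary \ref{0 1}, passing to the rank-zero quotient $\mathcal O_X\to A^{\vee\vee}$ as in Lemma \ref{2 -1 -11/10}. That reduces the rank-one bound to the sharp rank-zero bound $\frac{d^2}{2}+\frac1{24}-\varepsilon(d)$ plus the contribution $\frac16$ of $\mathcal O_X(-1)$. For $y\ne\frac12$ I would crudely estimate the quadratic deficit against the at most $\frac15$ spread in the $\varepsilon$ terms.
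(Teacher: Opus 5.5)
Your plan is the paper's proof. Take the rank-one piece $F$ of smaller slope and pin $c_1(F)=-1$; the paper does this by noting $Q_{0,-2}(E)<0$, so the wall crosses $\beta=-2$, where $\mathrm{ch}_1^{-2}(E)=3H$. Bound both rank-one pieces by \cite[Lemma 4.2]{Vass}, use $s(E,F)\le s_Q(E)$ to force $F\cong\mathcal O_X(-1)$, then split off $\mathcal O_X(-1)\hookrightarrow E/F$ (not via $(E/F)^{\vee\vee}$) and apply Corollary~\ref{0 1} to get $\varepsilon(d)$; part (2) is the same $\beta=-1$ argument.

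Two details need fixing. First, $\mathrm{ch}_3(\mathcal O_X(-1))=-\frac16H^3$, not $+\frac16H^3$. Second, for $\mathrm{ch}_2(F)=\frac{3}{10}H^2$ you must compare against $\varepsilon(d)\le\frac{3}{25}$, not $\frac15$; even then it is an exact tie at $d=-\frac{13}{10}$, which the paper breaks using Lemma~\ref{line} and integrality of $\chi(E/F)$ (the unweakened $W_Q$ inequality also rules that case out).
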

	
	\begin{proof}
		\begin{enumerate}
			\item Assume that $e\ge\frac{d^2}{2}-d+\frac{5}{24}-\frac{3}{25}=\frac{d^2}{2}-d+\frac{53}{600}$, then a direct calculation shows that $Q_{0,-2}(E)\le -125d^2-50d+\frac{241}{4}<0$ (for $d\le -\frac{13}{10}$). Hence any semicircular wall $W$ has to intersect the ray $\beta=-2$. Suppose that $W$ is induced by a subobject or quotient $F$ of rank one. Since $\mathrm{ch}_1^{-2}(E)=3H$, we should have $\mathrm{ch}_1^{-2}(F)\in\{H,2H\}$, so $\mathrm{ch}_1(F)\in\{-H,0\}$. Without loss of generality (replacing $F$ by $E/F$ if necessary) we may assume that $\mathrm{ch}_1(F)=-H$. Then $\mathrm{ch}(F)=(1-yH^2+zH^3)\cdot\mathrm{ch}(\mathcal O_X(-1))$ and $y\ge 0$ (by Bogomolov--Gieseker inequality). We find
			$$s_Q(E)=\frac{d+6e}{4d-1}\le\frac{3d^2-5d+\frac{53}{100}}{4d-1},$$
			$$s(E,F)=d+2y-1.$$
			Since $s(E,F)\le s_Q(E)$, we get
			$$y\le\frac{d^2+\frac{47}{100}}{2-8d}<-\frac d2-\frac 14.$$
			Using the bound $z\le\frac{y(y+1)}{2}$ from \cite[Lemma 4.2]{Vass} and an analogous bound for $\mathrm{ch}_3(E/F)$, we obtain that
			$$e\le y^2+\left(d+\frac 12\right)y+\frac{d^2}{2}-d+\frac{5}{24}.$$
			On the right-hand side we have a quadratic function of $y$ with a minimum at $y_0=-\frac d2-\frac 14$. If $y\ge\frac 15$, then the maximum is reached at $y=\frac 15$ and is equal to $\frac{d^2}{2}-\frac 45d+\frac{209}{600}$. Assume for the moment that $d<-\frac{13}{10}$, then this maximum will be smaller than $\frac{d^2}{2}-d+\frac{53}{600}$. Therefore, we must have $y=0$. Then $\mathrm{ch}_3(F)$ is maximized when $z=0$ and $F\cong\mathcal O_X(-1)$. Also we have $\mathrm{ch}_{\le 2}(E/F)=1+(d+y-\frac 12)H^2$ and $d+y-\frac 12<-1$. Arguments from the proof of \cite[Proposition 3.2]{MS18} imply that in the case of maximal $\mathrm{ch}_3(E/F)$ there is a monomorphism $\mathcal O_X(-1)\to E/F$. Applying Lemma \ref{error} to the quotient $(E/F)/(\mathcal O_X(-1))$, we get the bound on $e$ that we need.
			
			Now consider the case $d=-\frac{13}{10}$. In this case the maximal value of $e$ for $y=0$ is equal to $\frac{67}{30}$. Also $y<-\frac d2-\frac 14=\frac 25$, hence it suffices to eliminate the case $y=\frac 15$. In this case $z\le 0$ by Lemma \ref{line}, hence $\frac{\mathrm{ch}_3(F)}{H^3}\le\frac{1}{30}$. Also $\mathrm{ch}_{\le 2}(E/F)=1-\frac 85H^2+e''H^3$, and \cite[Lemma 4.2]{Vass} gives a bound $e''\le\frac{104}{50}$. This, combined with the condition $\chi(E/F)\in\mathbb Z$, gives a bound $e''\le 2$. Overall, for $y=\frac 15$ $e\le\frac{61}{30}<\frac{67}{30}$, hence $e$ is maximal for $y=0$ and the arguments above apply.
			\item Assume for the contrary that $e\ge\frac{d^2}{2}-\frac{d}{10}-\frac{1}{25}$. As in Lemma \ref{0 1 2}~(2), we have $Q_{0,-1}(E)<0$, hence any semicircular wall $W$ has to intersect the ray $\beta=-1$. If $W$ is induced by a subobject $F$ of rank one, then, as in Remark \ref{inequalities}, we have $\mathrm{ch}_1(F)=0$, but then $W$ is not semicircular.
		\end{enumerate}
	\end{proof}
	
	\textit{Proof of Theorem \ref{main}.} As in \cite{Sch18} and \cite{Fano}, the proof is by induction on $\overline{\Delta}_H(E)$. The base of induction is provided by \cite[Theorem 3.4]{Vass} and Lemmas \ref{2 -1 -3/10}--\ref{2 0 -9/5}.
	
	\begin{enumerate}
		\item Assume that $c=-1,d\le-\frac{13}{10}$ and $e\ge\frac{d^2}{2}-d+\frac{53}{600}$. As we have shown in Lemma \ref{0 1 2}, the numerical wall $W_Q(E)$ is non-empty, and $E$ is destabilized at a semicircular wall by a subobject or quotient $F$ of rank $s\in\{0,1,2\}$ and $\overline{\Delta}_H(F)<\overline{\Delta}_H(E)$. The case $s=1$ is considered in Lemma \ref{one}, it suffices to consider now the case $s=2$. We find $Q_{0,-\frac 32}(E)<0$ and $Q_{0,-2}(E)<0$, hence $W(E,F)$ intersects rays $\beta=-\frac 32$ and $\beta=-2$. Conditions $\mathrm{ch}_1^{-\frac 32}(F)>0$ and $\mathrm{ch}_1^{-2}(F)<\mathrm{ch}_1^{-2}(E)$ imply that $\mathrm{ch}_1(F)=-2H$. So, we get $\mathrm{ch}(F)=(2+yH^2+zH^3)\cdot\mathrm{ch}(\mathcal O_X(-1))$ with $y\le 0$. By induction we have $z\le\frac{y^2}{2}-\frac{y}{10}+\frac{2}{25}$. Assume that $y\le-\frac 15$ to get a contradiction. We have
		$$s_Q(E)=\frac{d+6e}{4d-1}\le\frac{3d^2-5d+\frac{53}{100}}{4d-1},$$
		$$s(E,F)=d-y-1.$$
		Since $s(E,F)\le s_Q(E)$, we obtain
		$$y\ge\frac{d^2+\frac{47}{100}}{4d-1}>\frac d2-\frac{9}{20}.$$
		Applying Corollary \ref{0 1} to $E/F$ and using the bound on $z$ from above, we get a bound
		$$e\le y^2+\left(-d+\frac{9}{10}\right)y+\frac{d^2}{2}-d+\frac{29}{50}.$$
		On the right-hand side we have a quadratic function of $y$ with a minimum at $y_0=\frac d2-\frac{9}{20}$. Therefore, the maximum is attained at $y=-\frac 15$, where we get $e\le\frac{d^2}{2}-\frac 45d+\frac{11}{25}$. For $d\le-\frac{19}{10}$ this bound contradicts the assumption $e\ge\frac{d^2}{2}-d+\frac{53}{600}$, hence $y=0$ in these cases. We need to consider separately the cases $d\in\{-\frac{13}{10},-\frac 32,-\frac{17}{10}\}$. For $y\le-\frac 25$ we analogously get a bound $e\le\frac{d^2}{2}-\frac 35d+\frac{19}{50}<\frac{d^2}{2}-d+\frac{53}{600}$, so we only need to exclude the case $y=-\frac 15$. In this case \cite[Lemma 3.7]{Vass} implies that the maximal value of $\mathrm{ch}_3(F)$ equals $-\frac{H^3}{3}$. Let $d=-\frac{13}{10}$, then for $y=0$ the maximal value of $e$ equals $\frac{67}{30}$, while for $y=-\frac 15$ we have $e\le\frac{11}{6}<\frac{67}{30}$. Respectively, for $d=-\frac 32$ and $y=0$ the maximal $e$ is $\frac{17}{6}$, while for $y=-\frac 15$ $e\le\frac{67}{30}<\frac{17}{6}$. And for $d=-\frac{17}{10}$ the maximal $e$ is $\frac{97}{30}$, while for $y=-\frac 15$ $e\le\frac{17}{6}<\frac{97}{30}$. We have proved that $e$ is maximal for $y=0$, hence for $F\cong\mathcal O_X(-1)^{\oplus 2}$. Corollary \ref{0 1} in this case gives the claimed bound $e\le\frac{d^2}{2}-d+\frac{5}{24}-\varepsilon(d)$.
		
		We need to check that $F\cong\mathcal O_X(-1)^{\oplus 2}$ is indeed a subobject of $E$. We have proved that $W(E,\mathcal O_X(-1))$ is the unique wall for $E$. Since $E$ is not tilt-semistable below $W_Q(E)$, it cannot be tilt-semistable below $W(E,\mathcal O_X(-1))$. The only subobjects that can destabilize $E$ there are $\mathcal O_X(-1)$ or $\mathcal O_X(-1)^{\oplus 2}$. Therefore, $\mathcal O_X(-1)$ is a subobject of $E$. As in the proof of \cite[Proposition 3.2]{MS18}, we get a monomorphism $\mathcal O_X(-1)\hookrightarrow E/(\mathcal O_X(-1))$, which allows us to obtain a monomorphism $\mathcal O_X(-1)^{\oplus 2}\hookrightarrow E$. 
		
		The object $G:=E/(\mathcal O_X(-1)^{\oplus 2})$ is described by Lemmas \ref{0 1 -1/2}--\ref{0 1 -3/10}. One can check that $W(E,\mathcal O_X(-1))$ is located above the unique wall for $G$, so Lemmas \ref{0 1 -1/2}--\ref{0 1 -3/10} give the description of $G$ from the formulation of the present theorem.
		
		\item Assume that $c=0,d\le-2$ and $e\ge\frac{d^2}{2}-\frac{d}{10}-\frac{1}{25}$. Lemmas \ref{0 1 2} and \ref{one} imply that $E$ is destabilized at a semicircular wall, induced by a subobject or quotient $F$ of rank two. Since $Q_{0,-1}(E)<0$ and $\mathrm{ch}_1^{-1}(E)=2H$, we should have $\mathrm{ch}_1^{-1}(F)=H$, that is, $\mathrm{ch}_1(F)=-H$. Then $\mathrm{ch}(F)=2-H+yH^2+zH^3$ for some $y\le\frac{1}{10}$. Suppose that $y\le-\frac{1}{10}$. By induction assumption we have $z\le\frac{y^2}{2}-y+\frac{5}{24}$. Also we have
		$$s_Q(E)=\frac{3e}{2d}\ge\frac 34d-\frac{3}{20}-\frac{3}{50d},$$
		$$s(E,F)=d-y.$$
		Since $s(E,F)\le s_Q(E)$, we get
		$$y\ge\frac d4+\frac{3}{20}+\frac{3}{50d}>\frac d2+\frac 12.$$
		We apply Corollary \ref{0 1} to $E/F$ and obtain a bound
		\begin{equation}\label{e bound}e\le y^2+(-d-1)y+\frac{d^2}{2}+\frac 14.\end{equation}
		On the right-hand side we have a quadratic function of $y$ with a minimum at $y_0=\frac d2+\frac 12$, hence the maximum is attained at $y=-\frac{1}{10}$, where we get $e\le\frac{d^2}{2}+\frac{d}{10}+\frac{9}{25}$. For $d<-2$ this bound contradicts our assumption $e\ge\frac{d^2}{2}-\frac{d}{10}-\frac{1}{25}$, so we should have $y=\frac{1}{10}$. For $d=-2$ and $y=\frac{1}{10}$ the maximal value of $e$ (by Corollary \ref{0 1} and \cite[Lemma 3.6]{Vass}) equals $\frac{11}{5}$, while for $y=-\frac{1}{10}$ we get $e\le 2<\frac{11}{5}$ (by Corollary \ref{0 1} and \cite[Lemma 3.8]{Vass})) and for $y\le-\frac{3}{10}$ the bound (\ref{e bound}) gives $e\le\frac{51}{25}<\frac{11}{5}$. We proved that $e$ is maximal when $y=\frac{1}{10}$, hence when $F\cong\mathcal U$. In this case Corollary \ref{0 1} gives the bound $e\le\frac{d^2}{2}-\frac{d}{10}+\frac{2}{25}-\varepsilon(d)$. Again $E$ cannot be tilt-semistable below the unique wall $W(E,\mathcal U)$, hence $\mathcal U$ is indeed a subobject. The object $G:=E/\mathcal U$ is described by Lemmas \ref{0 1 -1/2}--\ref{0 1 -3/10}. This object $G$ is tilt-semistable along the wall $W(E,\mathcal U)$, which is located above the unique wall for $G$ (in fact, the unique wall for $G$ does not intersect the ray $\beta=-1$). \qed
	\end{enumerate} 
	
	\textit{Proof of Theorem \ref{intro}.} By Proposition \ref{2-stability} any Gieseker semistable rank two sheaf $E$ corresponds to an object, which is $\nu_{\alpha,\beta}$-semistable for $\beta<\mu(E)<+\infty,\alpha\gg0$. Hence, we can apply Theorem \ref{main}. Chern classes of $E$ are related to the Chern character by the formula
	\begin{equation}\label{ch}\mathrm{ch}(E)=\mathrm{rk}(E)+c_1H+\left(\frac{c_1^2}{2}-\frac{c_2}{5}\right)H^2+\left(\frac{c_1^3}{6}-\frac{c_1c_2}{10}+\frac{c_3}{10}\right)H^3.\end{equation}
	In those cases with maximal $c_3$, in which exact triples in $\mathrm{Coh}^\beta(X)$, in which $E$ is included, contain a sheaf shifted by 1, we rotate the corresponding distinguished triangle in order to obtain an exact triple in $\mathrm{Coh}(X)$. Also, in cases (2.6) and (2.7) we exclude exact triples defining objects $E$, which are tilt-semistable only below a semicircular wall. Indeed, these objects $E$ are rank two sheaves with torsion, which are not Gieseker semistable. \qed
	
	\section{Moduli Spaces of Semistable Sheaves}
	\subsection{Special cases.}\label{special} Now we deal with the question of existence of semistable sheaves with maximal $c_3$ from Theorem \ref{intro}. Firstly we consider special cases of low $c_2$. Let us prove a preparatory lemma.
	
	\begin{lemma}\label{reflexive}Suppose that $F$ and $G$ are locally free sheaves on $X$, such that the sheaf $\mathcal Hom(F,G)$ is globally generated and $\mathrm{rk}(G)=\mathrm{rk}(F)+2$, then there exists a monomorphism $f:F\to G$ such that the sheaf $E:=\coker(f)$ is reflexive.
	\end{lemma}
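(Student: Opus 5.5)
The plan is a Kleiman/Bertini-type degeneracy locus argument for a general map $f\in\Hom(F,G)$. Since $\mathcal Hom(F,G)=F^\vee\otimes G$ is globally generated, the evaluation map $\Hom(F,G)\otimes\mathcal O_X\to\mathcal Hom(F,G)$ is surjective. Consequently, at every point $x\in X$ the fibre maps $f(x)\colon F(x)\to G(x)$ for $f$ in the finite-dimensional vector space $V:=\Hom(F,G)$ sweep out all of $\Hom(F(x),G(x))$.

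First I would form the incidence variety $Z_k=\{(f,x)\in V\times X\colon \rk f(x)\le \rk F-k\}$. Its fibre over each $x$ is the preimage, under a surjective linear map $V\to\Hom(F(x),G(x))$, of the determinantal variety of $r\times(r+2)$ matrices (with $r=\rk F$) of rank $\le r-k$. That determinantal variety has codimension $k(k+2)$. Hence $\dim Z_k=\dim V+3-k(k+2)$, and the projection of $Z_k$ to $V$ has general fibre of dimension $\le 3-k(k+2)$. For a general $f$ this gives two facts. For $k=1$, the degeneracy locus $D_1(f)=\{x\colon \rk f(x)<r\}$ has dimension $\le 0$, so it is finite or empty. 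For $k\ge 2$, since $3-8<0$, the locus is empty, so $f$ drops rank by at most one. In particular $f$ is generically injective, so it is a monomorphism of sheaves because $F$ is torsion free. Thus $E=\coker f$ is locally free of rank $2$ outside the finite set $D_1(f)$. If $D_1(f)$ is nonempty its points are exactly the points where $E$ fails to be locally free, and these are the points I must control.

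Next I would show that $E$ is torsion free and reflexive. Locally at a point $x\in D_1(f)$, the map $f$ is given by an $r\times(r+2)$ matrix of functions whose maximal minors define $D_1(f)$, a scheme of expected codimension $(r+2)-r+1=3$. The general-position count above shows the codimension is exactly $3$. By the Eagon--Northcott / Buchsbaum--Rim theory, the Buchsbaum--Rim complex then resolves $\coker f$, and this cokernel of a generic-rank map has depth $\ge 2$ at $x$ (it is a second syzygy module). Equivalently, $\mathcal Ext^i(E,\mathcal O_X)$ vanishes for $i\ge 2$ except in codimension $\ge 3$, and $\mathcal Ext^1(E,\mathcal O_X)$ is supported on $D_1(f)$, which has codimension $3$. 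By the standard criterion (a sheaf on a smooth threefold is reflexive iff $\operatorname{codim}\operatorname{supp}\mathcal Ext^i(E,\mathcal O_X)\ge i+2$ for $i\ge 1$), $E$ is reflexive. Alternatively, dualizing $0\to F\to G\to E\to 0$ gives $E^\vee$ as the kernel of $G^\vee\to F^\vee$, and $E^{\vee\vee}$ agrees with $E$ off the finite set $D_1(f)$. Since $E$ has depth $\ge 2$ there, $E\to E^{\vee\vee}$ is an isomorphism.

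The main obstacle is the local depth statement at the finitely many degeneracy points, i.e.\ justifying that $\coker f$ satisfies Serre's $S_2$ condition there. I would handle it most cleanly via the depth formula applied to the presentation: since $F\to G$ is injective with $F,G$ free locally, $\operatorname{pd}E\le 1$, so by Auslander--Buchsbaum $\operatorname{depth}E_x\ge 3-1=2$ at each point $x$. Together with torsion-freeness (which follows because $E$ is locally free off a codimension-$3$ set and has no associated points of lower dimension, again by $\operatorname{pd}\le1$), this gives reflexivity by Serre's criterion. So the real work reduces to the dimension count making $D_1(f)$ finite and $D_2(f)$ empty, which is exactly where global generation of $\mathcal Hom(F,G)$ is used.
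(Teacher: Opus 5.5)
Your proof is correct. The first half is the paper's own argument. The incidence variety in $X\times\Hom(F,G)$, the codimension $3$ of non-injective $r\times(r+2)$ matrices, and the conclusion that a general $f$ is injective with finite degeneracy locus all match. Your extra stratum $k\ge 2$ is harmless but not needed.

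For reflexivity you take a different route.

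\emph{Paper's route.} The paper dualizes $0\to F\to G\to E\to 0$ to get $0\to E^\vee\to G^\vee\to F^\vee\to T\to 0$ with $T$ zero-dimensional. It then uses $\mathcal Ext^i(T,\mathcal O_X)=0$ for $i\le 2$ to show that double dualizing gives an exact triple $0\to F\to G\to E^{\vee\vee}\to 0$. Finally it concludes $E\cong E^{\vee\vee}$ by the Snake Lemma.

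\emph{Your route.} You observe that $\operatorname{pd}E\le 1$, so Auslander--Buchsbaum gives $\operatorname{depth}E_x\ge 3-1=2$ at the finitely many closed points where $E$ is not free. At every other point, including all height-one and height-two points, $E$ is free. Hence $E$ is torsion-free and satisfies $S_2$, so it is reflexive. Note that this step genuinely needs the degeneracy locus to be finite: $\operatorname{pd}E\le 1$ alone would only give depth $\ge 1$ at height-two points.

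\emph{Comparison.} Both arguments rest on the same local fact, namely that the local rings at closed points have depth $3$. In the paper this appears as $\mathcal Ext^{\le 2}(T,\mathcal O_X)=0$; in your version it is the depth formula. Your route is shorter and makes clear that the only input is: $f$ injective with degeneracy locus of codimension $\ge 3$. The paper's route stays entirely within sheaf-level duality.

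One small correction to your write-up. The appeal to the Buchsbaum--Rim complex and ``second syzygy'' is misplaced. That complex concerns maps $R^m\to R^n$ with $m\ge n$, so it would apply to $f^\vee$ and resolve $\coker f^\vee=\mathcal Ext^1(E,\mathcal O_X)$, not $\coker f$. Here $\coker f$ is already resolved by $0\to F\to G$. You can drop that remark; the Auslander--Buchsbaum argument, or equivalently your $\mathcal Ext$-support criterion, is what actually does the work.
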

	
	\begin{proof}
	We firstly prove that there exist an $f:F\to G$ such that the singularity set $\mathrm{Sing}(f):=\{x\in X\ |\ f_x\ \text{is not injective}\}$ is either empty or zero-dimensional. Consider an incidence variety $Z\subset X\times\Hom(F,G)$ consisting of those pairs $(x,f)$ for which $x\in\mathrm{Sing}(f)$, with canonical projections $p:Z\to X,q:Z\to\Hom(F,G)$. Take an $x\in X$, then the fiber $p^{-1}(x)$ consists of those $f:F\to G$ for which the map on fibers $f_x:F_x\to G_x$ is not injective. We can consider a linear map $\pi_x:\Hom(F,G)\to\Hom(F_x,G_x)$, which is surjective, since $\mathcal Hom(F,G)$ is globally generated. In the linear space $\Hom(F_x,G_x)$ the codimension of the subvariety of non-injective maps is equal to 3 (see e.~g. \cite{Hirschowitz}), hence $\dim p^{-1}(x)=\dim \Hom(F,G)-3$, and $\dim Z=\dim \Hom(F,G)$. Therefore, for a general $f\in\Hom(F,G)$ we have $\dim q^{-1}(f)\le 0$, which is what we need. In particular, such an $f$ is a monomorphism (otherwise $\ker f$ would be a sheaf of positive rank and $\dim q^{-1}(f)$ would be equal to 3). Now, dualizing an exact triple
	\begin{equation}\label{F G E}0\to F\overset{f}{\to} G\to E\to 0,\end{equation} 
	we obtain an exact sequence of the form 
	\begin{equation}\label{dual}0\to E^\vee\overset{g}{\to} G^\vee\to F^\vee\to T\to 0,\end{equation} 
	where $T$ is a sheaf with at most zero-dimensional support. Denoting $F':=\coker g$ (so that (\ref{dual}) splits into two exact triples) and using that $\mathcal Ext^i(T,\mathcal O_X)=0$ for $i\le 2$, we obtain $F'^\vee\cong F^{\vee\vee},\mathcal Ext^1(F',\mathcal O_X)=0$, hence there is an exact triple
	\begin{equation}\label{E vee vee}0\to F\to G\to E^{\vee\vee}\to 0.\end{equation}
	Moreover, the morphisms in (\ref{E vee vee}) are double duals of morphisms in (\ref{F G E}), so the natural transformation $\id\to(-)^{\vee\vee}$ induces a commutative diagram with rows (\ref{F G E}) and (\ref{E vee vee}), and reflexivity of $E$ follows from the Snake Lemma.
	\end{proof}
	
	\begin{proposition}\label{existence}There exist rank two Gieseker semistable sheaves $E$ with maximal $c_3(E)$ permitted by Theorem \ref{intro} in special cases (1.1), (1.2), (1.4) and (2.1)--(2.7).
	\end{proposition}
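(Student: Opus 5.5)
We go through the listed cases one at a time. For each we take the exact triple (or monad) from Theorem \ref{intro}, build a concrete sheaf $E$ from it, and check that $E$ is Gieseker semistable. Existence is the only thing left to prove, because the Chern classes are forced by the triple. Three cases are immediate: for (1.1) the bundle $\mathcal U$ is slope stable, for (2.1) $\mathcal O_X^{\oplus 2}$ is semistable, and for (2.2) the kernel of a surjection $\mathcal O_X^{\oplus 2}\to\mathcal O_L(1)$ works. In (2.2) the surjection exists because $\mathcal O_L(1)$ is globally generated. The kernel is torsion free with $c_1=0$, and its saturated rank one subsheaves are ideal sheaves of subschemes. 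Semistability then reduces to ruling out a rank one subsheaf isomorphic to $\mathcal O_X$ that meets the kernel saturatedly. We rule this out by picking the two sections so that no single section of $\mathcal O_X^{\oplus2}$ maps to zero in $\mathcal O_L(1)$, which amounts to injectivity of $H^0(\mathcal O_X^{\oplus 2})\to H^0(\mathcal O_L(1))$.

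The cases given by a cokernel of a map of vector bundles are (1.2), (1.4), (2.3) and (2.5). Here we apply Lemma \ref{reflexive} with $(F,G)$ equal to $(\mathcal U(-1),\mathcal O_X(-1)^{\oplus4})$, $(\mathcal O_X(-2),\mathcal O_X(-1)^{\oplus3})$, $(\mathcal Q(-1)^{\oplus2},\mathcal U^{\oplus4})$ and $(\mathcal O_X(-1)^{\oplus2},\mathcal U^{\oplus2})$ respectively. In each case the rank difference is $2$, and $\mathcal Hom(F,G)$ is globally generated because it is a direct sum of copies of $\mathcal U^\vee$, $\mathcal O_X(1)$, $\mathcal U\otimes\mathcal Q^\vee(1)$ and $\mathcal U(1)\cong\mathcal U^\vee$. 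The first two are clearly globally generated; for $\mathcal Q^\vee\otimes\mathcal U(1)$ we would use that it is a quotient of $\wedge^2$-type tautological bundles, or argue via the exceptional collection. This gives a reflexive rank two $E$. Since $c_1(E)\in\{-1,0\}$ and $\mathrm{Pic}\,X=\mathbb Z$, slope stability of a reflexive rank two sheaf reduces to the vanishing $H^0(E)=0$ when $c_1=0$, or $H^0(E(-1+1))$-type vanishing when $c_1=-1$: one needs $\Hom(\mathcal O_X(k),E)=0$ for the relevant $k$. These vanishings follow from the long exact cohomology sequence of the triple together with the ACM property of $\mathcal O_X,\mathcal U,\mathcal Q$. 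For $c_1=0$ a strictly semistable subsheaf $\mathcal O_X\into E$ is excluded by $H^0(E)=0$, so we get stability there too. Case (2.4) is the instanton case: by Remark \ref{instantons} and \cite[Main Theorem 1]{Higher rank}, slope stable locally free cohomology sheaves of the monad exist, hence are Gieseker stable.

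The remaining cases are (2.6) and (2.7), where a single example suffices. For (2.6) we take $\coker(\mathcal U(-1)^{\oplus2}\to\mathcal O_X(-1)^{\oplus6})$, which Lemma \ref{reflexive} again makes reflexive since $\mathcal U^\vee$ is globally generated. For (2.7) we take $\coker(\mathcal O_X(-2)\to\mathcal Q(-1))$ for a general section of $\mathcal Q(1)$, which is globally generated; Lemma \ref{reflexive} applies because the rank difference is $2$. Stability is checked as before via $H^0(E)=0$, using the ACM property.

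The main obstacle is twofold. First, we must verify $H^0$-vanishing, and for $c_1=-1$ rule out $\mathcal O_X(-1)\into E$ with torsion free quotient; for (1.2) and (1.4) this needs $\Hom(\mathcal O_X(-1),E)$ to come only from $G$, whose sections then split off. Second, we must verify global generation of $\mathcal Hom(\mathcal Q(-1),\mathcal U)$. If the latter is hard, the fallback for (2.3) is to show that a general extension in the heart $\mathfrak C$ is tilt-stable above its walls and hence a 2-stable sheaf.
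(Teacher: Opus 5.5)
\textbf{Where you agree with the paper.} Most of your plan is the paper's. You use Lemma~\ref{reflexive} plus $h^0(E)=0$ for (1.2), (1.4), (2.5), (2.6), (2.7), slope stability of $\mathcal U$ for (1.1), the direct sum for (2.1), and the instanton reference for (2.4). For (2.2) you replace the paper's citation of tilt-stability with a direct check, and that check works. A rank-one subsheaf with $c_1=0$ lies in $\mathcal O_X v\cap E$. This intersection equals $\mathcal I_L$ because the two sections of $\mathcal O_L(1)$ are independent, and $\mathrm{ch}_2(\mathcal I_L)=-\frac15H^2<-\frac1{10}H^2$.

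\textbf{The gap is case (2.3).} Lemma~\ref{reflexive} requires $\mathcal Hom(\mathcal Q(-1),\mathcal U)$ to be globally generated, and it is not.
\begin{itemize}
\item Since $\mathcal U^\vee\cong\mathcal U(1)$, this sheaf is $\mathcal Q^\vee\otimes\mathcal U^\vee\cong\mathcal Hom(\mathcal U,\mathcal Q^\vee)$. It has rank $6$ but only a $3$-dimensional space of global sections; the paper itself identifies $\mathbb P(\Hom(\mathcal U,\mathcal Q^\vee))$ with the Fano scheme of lines $\mathbb P^2$. No argument via $\wedge^2$-type bundles or the exceptional collection can give global generation.
\item As a result, the dimension count in the proof of Lemma~\ref{reflexive} breaks down. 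The image of the $24$-dimensional space $\Hom(\mathcal Q(-1)^{\oplus2},\mathcal U^{\oplus4})$ in the $48$-dimensional space of fibrewise maps at a point $x$ is a proper subspace. So you get no codimension-$3$ bound on the non-injectivity locus, and hence neither injectivity of a general map nor reflexivity of its cokernel.
\item Your fallback (``a general object of $\mathfrak C$ is tilt-stable above its walls'') is not yet an argument. You would have to classify the walls for $\mathrm{ch}_{\le 2}=2-\frac25H^2$ and show that a general cone of $\mathcal Q(-1)^{\oplus2}\to\mathcal U^{\oplus4}$ avoids them.
\end{itemize}
The paper does not construct these sheaves here at all. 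It relies on the known existence of slope stable sheaves in this case (\cite[Remark 3.13]{Vass}, i.e.\ charge-two instanton bundles on $X$). You need that input or a genuinely different construction.

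\textbf{A smaller point.} In your last paragraph you propose to rule out $\mathcal O_X(-1)\into E$ with torsion-free quotient when $c_1=-1$. This is unnecessary, and it is also false. In (1.4), a summand of $\mathcal O_X(-1)^{\oplus3}$ maps injectively to $E$ with torsion-free quotient. Such subsheaves have slope $-1<-\frac12$, so they never destabilize. For a reflexive rank-two $E$ with $c_1=-1$, the vanishing $H^0(E)=0$ alone gives slope stability, hence Gieseker stability.
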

	
	\begin{proof}In the case (1.1) the vector bundle $\mathcal U$ is slope stable, since $h^0(\mathcal U)=0$ (arguments from \cite[Lemma 1.2.5]{OSS} work for $X=X_5$ too).
		
	In the case (1.2) we have an exact sequence
	\begin{equation}\label{1.2}0\to\mathcal U(-1)\to\mathcal O_X(-1)^{\oplus 4}\to E\to 0.\end{equation}
	Since a sheaf $\mathcal Hom(\mathcal U(-1),\mathcal O_X(-1)^{\oplus 4})\cong(\mathcal U(1))^{\oplus 4}$ is globally generated (this can be shown by dualizing the exact triple (\ref{tautological})), by Lemma \ref{reflexive} a general $E$ from (\ref{1.2}) is reflexive. Since $h^0(E)=0$, such an $E$ is slope stable.
	
	In the case (1.4) we have an exact sequence
	$$0\to\mathcal O_X(-2)\to\mathcal O_X(-1)^{\oplus 3}\to E\to 0.$$
	Again by Lemma \ref{reflexive} a general such $E$ is reflexive, and $h^0(E)=0$, so $E$ is slope stable.
	
	In the case (2.1) the sheaf $\mathcal O_X^{\oplus 2}$ is of course Gieseker semistable as a direct sum of two isomorphic stable sheaves.
	
	In the case (2.2) the sheaf $E$ is uniquely determined up to isomorphism as the kernel of the composition $\mathcal O_X^{\oplus 2}\twoheadrightarrow\mathcal O_X^{\oplus 2}|_L\twoheadrightarrow\mathcal O_L(1)$. It was proven in \cite[Lemma 3.7]{Vass} that $E$ is $\nu_{\alpha,\beta}$-stable for all $\beta<0,\alpha>0$, hence $E$ is a 2-stable sheaf, and therefore it is Gieseker stable.
	
	In cases (2.3) and (2.4) the existence of slope stable sheaves $E$ is argued for in \cite[Remark 3.13]{Vass} and in Remark \ref{instantons} of the present work, respectively.
	
	In the case (2.5) sheaves $E$ are included into exact triples of the form
	$$0\to\mathcal O_X(-1)^{\oplus 2}\to\mathcal U^{\oplus 2}\to E\to 0.$$
	By Lemma \ref{reflexive} a general such $E$ is reflexive, and hence slope stable, since $h^0(E)=0$.	
	
	In the case (2.6) we can take those sheaves $E$, which are included into exact triples
	$$0\to\mathcal U(-1)^{\oplus 2}\to\mathcal O_X(-1)^{\oplus 6}\to E\to 0.$$
	Respectively, in the case (2.7) we can take sheaves $E$, which are included into exact triples
	$$0\to\mathcal O_X(-2)\to\mathcal Q(-1)\to E\to 0.$$
	In both cases we can apply Lemma \ref{reflexive} (using the exact triple (\ref{tautological})) to show that general such $E$ are reflexive, hence slope stable, because $h^0(E)=0$.	
\end{proof}

In the general case the existence of Gieseker semistable sheaves $E$ from Theorem \ref{intro} follows from the classification of subobjects and quotients that can destabilize $E$ in tilt-stability. For completeness we also include such classification for objects from Lemmas \ref{2 -1 -1/2} and \ref{2 -1 -7/10}.

\begin{lemma}\label{Jordan-Holder}Let $E\in\mathrm{Coh}^\beta(X)$ be a tilt-semistable object, which can be included into one of exact triples
	\begin{equation}\label{third}0\to \mathcal O_X(-1)^{\oplus 3}\to E\to\mathcal O_X(-2)[1]\to 0,\end{equation}
		\begin{equation}\label{first}0\to \mathcal O_X(-1)^{\oplus 2}\to E\to\mathcal I_{L,H}(-1)\to 0,\end{equation}
		\begin{equation}\label{second}0\to \mathcal O_X(-1)^{\oplus 2}\to E\to\mathcal I_{C,H}(-1)\to 0.\end{equation}
If $E$ is destabilized at a semicircular wall by a subobject or quotient $F$ with $\mu(F)<\mu(E)$, then $F$ is isomorphic to $\mathcal O_X(-1)$ or $\mathcal O_X(-1)^{\oplus 2}$, or $\mathcal O_X(-1)^{\oplus 3}$ in the case (\ref{third}).
\end{lemma}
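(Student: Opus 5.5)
We work in the three cases separately, with $\mathrm{ch}(E)=2-H+dH^2+eH^3$ where $(d,e)=(-\frac12,\frac56)$, $(-\frac{7}{10},\frac{31}{30})$ or $(-\frac{9}{10},\frac{43}{30})$ respectively (Lemmas \ref{2 -1 -1/2} and \ref{2 -1 -7/10}). In all three cases $H^2\cdot\mathrm{ch}_1^{-1}(E)$ has the least positive value, so by Proposition \ref{actual}~(3) no wall meets the ray $\beta=-1$. Hence every semicircular wall lies in $-1<\beta<-\frac12$. Since the highest point of a wall lies on the hyperbola $\nu_{\alpha,\beta}(E)=0$, all walls are nested inside the largest numerical wall of this family that stays to the right of $\beta=-1$.

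Let $F$ be a destabilizing subobject or quotient with $\mathrm{ch}_{\le 2}(F)=s+xH+yH^2$, where $s>0$ and $\mu(F)<\mu(E)=-\frac12$. By Lemma \ref{stable} we may take $F$ tilt-stable. The first step is to bound $s$. The wall must intersect the ray $\beta=\beta_-(E)$, so Proposition \ref{actual}~(3) gives
$$0<x-\beta_-(E)s<-\beta_-(E)\cdot 2-1,$$
that is, $0<\mathrm{ch}_1^{\beta_-(E)}(F)<\mathrm{ch}_1^{\beta_-(E)}(E)$. By Proposition \ref{actual}~(5) we have $-1<\beta_-(E)<\beta_-(F)\le\mu(F)<-\frac12$, so $\beta_\pm(F)\in[-1,0)$ as in Lemma \ref{line}. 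Theorem \ref{rank bound} then gives $s^2\le\overline{\Delta}_H(F)<\overline{\Delta}_H(E)$, which is $25,35,45$ in the three cases. So $s\le 6$, and the displayed inequality leaves only finitely many integers $x$ for each $s$. We also need $x/s<-\frac12$.

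The second step enumerates $(s,x,y)$ with $y$ in the appropriate coset $\frac{x^2}{2}+\frac15\mathbb Z$. For each triple we impose $\overline{\Delta}_H(F)\ge0$, $\overline{\Delta}_H(E/F)\ge 0$, $\overline{\Delta}_H(F)+\overline{\Delta}_H(E/F)<\overline{\Delta}_H(E)$, nonemptiness of $W(E,F)$, and that it does not meet $\beta=-1$.

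I expect the survivors to be exactly $\mathrm{ch}_{\le2}(F)=k\cdot\mathrm{ch}_{\le2}(\mathcal O_X(-1))$ for $k=1,2$, and $k=3$ in case (\ref{third}), all of which define the same wall $W(E,\mathcal O_X(-1))$. Any other candidate, such as $(2,-1,\pm\frac1{10})$ or $(3,-2,\frac25)$, should be killed either by these numerical conditions or by Lemmas \ref{3 -2 3/5} and \ref{4 -2 2/5}. I expect this enumeration to be the main obstacle. If a stray numerical candidate survives, I would try two further constraints. The first is that the wall must lie above $W_Q(E)$, because there are no semistable objects below $W_Q(E)$ by Proposition \ref{BMT}. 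The second is a $\mathrm{ch}_3$ budget: use the maximal $\mathrm{ch}_3$ bounds already proven, namely \cite[Lemmas 3.6, 3.8]{Vass}, Lemma \ref{line}, Lemma \ref{conic} and Corollary \ref{0 1}, and show that $\mathrm{ch}_3(F)+\mathrm{ch}_3(E/F)$ is strictly less than $e$.

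The final step identifies $F$ given $\mathrm{ch}_{\le2}(F)=k\,\mathrm{ch}_{\le 2}(\mathcal O_X(-1))$ with $\overline{\Delta}_H(F)=0$. A tilt-semistable object with $\overline{\Delta}_H=0$ and these invariants has the form $\mathcal O_X(-1)^{\oplus k}$ up to $\mathrm{ch}_3$, and equals it exactly once $\mathrm{ch}_3$ is maximal. Here $\mathrm{ch}_3(F)$ is forced to be maximal, because otherwise $\mathrm{ch}_3(E/F)$ would exceed the bounds of Corollary \ref{0 1} for $E/F$ (respectively \cite[Corollary 3.3]{Vass} for $\mathcal O_X(-2)[1]$). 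If $F$ is a quotient rather than a subobject, we apply the same argument to $\mathbb D(F)$ via Proposition \ref{duality}, or we argue with the kernel. In either case the rank-positive piece is $\mathcal O_X(-1)^{\oplus k}$.
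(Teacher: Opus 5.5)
\textbf{There is a genuine gap: your Step~1 puts the walls on the wrong side of $\beta=-1$, and this removes your bound on $s$.}

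The correct discriminants are $\overline{\Delta}_H(E)=75,95,115$, not $25,35,45$. Hence
$\beta_-(E)=-\frac12-\frac{\sqrt3}{2}$, $-\frac12-\frac{\sqrt{95}}{10}$, $-\frac12-\frac{\sqrt{115}}{10}$, and all three are smaller than $-1$. Left-hand semicircular walls are nested around $\beta_-(E)$ and cannot cross the ray $\beta=-1$, so they all lie in $\beta<-1$. For instance, in case (\ref{third}) the wall $W(E,\mathcal O_X(-1))$ is $\alpha^2+(\beta+\frac32)^2=\frac14$.

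This has several consequences:
\begin{itemize}
\item Your chain $-1<\beta_-(E)<\beta_-(F)\le\mu(F)<-\frac12$ is false.
\item The hypothesis of Theorem~\ref{rank bound}, that $\beta_\pm(F)$ lie in one interval $[n,n+1)$, is not available.
\item Without it, or without a lower bound on the radius of $W(E,F)$, your remaining numerical conditions do not bound $s$, so the enumeration is not finite.
\item Your constraints even exclude the expected answer. $F=\mathcal O_X(-1)$ has $\beta_-(F)=\mu(F)=-1$, which contradicts the chain. In case (\ref{third}) it gives $\overline{\Delta}_H(E/F)=50$, which contradicts your condition $\overline{\Delta}_H(F)+\overline{\Delta}_H(E/F)<25$.
\item Your fallback via $W_Q(E)$ does not help uniformly, since $W_Q(E)$ is empty in case (\ref{first}).
\end{itemize}

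\textbf{The missing idea is to pin down the wall.} The given triple makes $\mathcal O_X(-1)$ a subobject of $E$. Since $\mathcal O_X(-1)$ is tilt-stable with larger $\nu$ below $W(E,\mathcal O_X(-1))$, the object $E$ is unstable there. This wall has right endpoint $\beta=-1$, so any larger wall would meet the ray $\beta=-1$. Hence $W(E,\mathcal O_X(-1))$ is the unique semicircular wall, and $W(E,F)$ coincides with it.

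The paper then uses that classes defining the same numerical wall are linearly dependent (\cite[Theorem 2.9 (iii)]{Sch18}). This gives $\mathrm{ch}_{\le 2}(F)=a\,\mathrm{ch}_{\le 2}(\mathcal O_X(-1))+b\,\mathrm{ch}_{\le 2}(G)$ with $a,b\in\mathbb Z$, where $G$ is the quotient in the given triple. The conditions $\overline{\Delta}_H(F)\ge 0$, $\overline{\Delta}_H(E/F)\ge 0$ and Proposition~\ref{actual}~(2) leave finitely many pairs $(a,b)$. The condition $\mu(F)<\mu(E)$ leaves only $(k,0)$, and maximality of $\mathrm{ch}_3$ then identifies $F$.

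Alternatively, once you know the radius is $\frac12,\frac{7}{10},\frac{9}{10}$ in the three cases, Proposition~\ref{actual}~(1) gives $s\le 3$, and your enumeration becomes finite.

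\textbf{A further problem:} you should not reduce to tilt-stable $F$ via Lemma~\ref{stable}. The statement must cover every destabilizing $F$, including the strictly semistable $\mathcal O_X(-1)^{\oplus 2}$, and this is exactly how the lemma is used in Lemma~\ref{Gr}.
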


\begin{proof}
	Let us consider the case of the exact triple (\ref{third}). The proof of Lemma \ref{2 -1 -1/2} implies that the wall $W(E,\mathcal O_X(-1))=W(\mathcal O_X(-1),\mathcal O_X(-2)[1])$ is the unique semicircular wall for $E$. Therefore, $W(E,F)$ coincides with this wall. By \cite[Theorem 2.9 (iii)]{Sch18} this means that $\mathrm{ch}_{\le 2}(F),\mathrm{ch}_{\le 2}(\mathcal O_X(-1))$ and $\mathrm{ch}_{\le 2}(\mathcal O_X(-2)[1])$ are linearly dependent. Hence, we have $\mathrm{ch}_{\le 2}(F)=a\cdot\mathrm{ch}_{\le 2}(\mathcal O_X(-1))+b\cdot\mathrm{ch}_{\le 2}(\mathcal O_X(-2)[1])=a-b+(2b-a)H+(\frac a2-2b)H^2$ for some $a,b\in\mathbb R$. The condition $\mathrm{ch}_{\le 2}(F)\in\Lambda$ implies that $a,b\in\mathbb Z$. We find $\overline{\Delta}_H(F)=25ab,\overline{\Delta}_H(E/F)=25(3-a)(1-b)$. Suppose that $a\le -1$, then the condition $\overline{\Delta}_H(F)\ge 0$ implies that $b\le 0$. But in this case we have $\overline{\Delta}_H(E/F)\ge 100>\overline{\Delta}_H(E)=75$, contradicting Proposition \ref{actual} (2). So, $a\ge 0$, and analogously one shows that $b\ge 0$. Switching the roles of $F$ and $E/F$, we get that $a\le 3,b\le 1$, hence $a\in\{0,1,2,3\},b\in\{0,1\}$. Among these pairs $(a,b)$ only $(1,0),(2,0)$ and $(3,0)$ satisfy $\mu(F)<\mu(E)$. Maximizing $\mathrm{ch}_3(F)$ for these $\mathrm{ch}_{\le 2}(F)$ (using \cite[Proposition 3.1]{Vass}) we get that $F\cong\mathcal O_X(-1)$, or $F\cong\mathcal O_X(-1)^{\oplus 2}$, or $F\cong\mathcal O_X(-1)^{\oplus 3}$.
	
	Now we give an argument for the exact triple (\ref{first}). The proof of Lemma \ref{2 -1 -7/10} implies that the wall $W(E,\mathcal O_X(-1))=W(\mathcal O_X(-1),\mathcal I_{L,H}(-1))$ is the unique semicircular wall for $E$. Therefore, $W(E,F)$ coincides with this wall. Again this means that $\mathrm{ch}_{\le 2}(F)=a\cdot\mathrm{ch}_{\le 2}(\mathcal O_X(-1))+b\cdot\mathrm{ch}_{\le 2}(\mathcal I_{L,H}(-1))=a+(b-a)H+(\frac a2-\frac{17}{10}b)H^2$ for some $a,b\in\mathbb Z$. Firstly, we have a condition
		\begin{equation}\label{F1}\overline{\Delta}_H(F)=b(35a+25b)\ge 0.
		\end{equation}
		Secondly,
		\begin{equation}\label{E/F}\overline{\Delta}_H(E/F)=b(35a+25b)-35a-120b+95\ge 0.
		\end{equation}
		And also $\overline{\Delta}_H(F)+\overline{\Delta}_H(E/F)<\overline{\Delta}_H(E)=95.$ Using (\ref{F1}) and (\ref{E/F}) we see that this condition implies
		\begin{equation}\label{F+E/F}35a+120b>0.
		\end{equation}
	Now, if $b<0$, then by (\ref{F1}) $35a+25b\le 0$, hence $35a+120b<0$ and we get a contradiction with (\ref{F+E/F}). Therefore, $b\ge 0$. Switching the roles of $F$ and $E/F$ we analogously get that $b\le 1$. If $b=0$, then (\ref{F+E/F}) implies that $a>0$, while (\ref{E/F}) gives $a\le 2$. And if $b=1$, then we again switch the roles of $F$ and $E/F$ and get $0\le a\le 2$. Since $\mathrm{ch}_{\le 2}(F)$ and $\mathrm{ch}_{\le 2}(E/F)$ have to be nonzero in order to induce a semicircular wall, we have $(a,b)\in\{(1,0),(2,0),(0,1),(1,1)\}$. Among these pairs only $(1,0)$ and $(2,0)$ satisfy $\mu(F)<\mu(E)$. Maximizing $\mathrm{ch}_3(F)$ for these $\mathrm{ch}_{\le 2}(F)$ we get that $F\cong\mathcal O_X(-1)$ or $F\cong\mathcal O_X(-1)^{\oplus 2}$. Finally, the argument for the exact triple (\ref{second}) is analogous to that of (\ref{first}).
\end{proof}

Now we can prove that the bound from Theorem \ref{intro} is exact in the most nontrivial case (1.3).

\begin{proposition}\label{existence2}In the special case (1.3) of Theorem \ref{intro} there exist Gieseker semistable sheaves $E$ with maximal $c_3(E)$ permitted by this theorem.
\end{proposition}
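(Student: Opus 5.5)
The plan is to construct $E$ directly from the description in Lemma \ref{2 -1 -3/10}, then show that it is tilt-stable along the whole ray $\beta=-1$ and conclude with Proposition \ref{2-stability}. Set $v=\mathrm{ch}(E)=2-H-\frac{3}{10}H^2+\frac{13}{30}H^3$. The proof of Lemma \ref{2 -1 -3/10} shows that any tilt-stable object with this class fits into an exact triple $0\to\mathcal O_X(-1)^{\oplus 3}\to E\to G\to 0$ with $G\cong\mathbb D(\mathcal I_L(2))$. So I would build $E$ as an extension class in $\mathrm{Hom}(G,\mathcal O_X(-1)^{\oplus 3}[1])$. Applying the anti-equivalence $\mathbb D$ turns this into
$$\mathrm{Hom}(\mathcal O_X(1)^{\oplus 3},\mathcal I_L(2))\cong H^0(\mathcal I_L(1))^{\oplus 3}.$$
Hence an extension is the same as a choice of three linear forms $s_1,s_2,s_3$ vanishing on $L$; note that $h^0(\mathcal I_L(1))=5$. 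Concretely, $\mathbb D(E)$ is, up to shift, the cone of $(s_1,s_2,s_3)\colon\mathcal O_X(1)^{\oplus 3}\to\mathcal I_L(2)$. I would take the $s_i$ general, so that the three hyperplanes cut $X$ in $L$ together with a finite residual scheme.

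The first step is to show that for such a general choice the object $E$ is a torsion-free sheaf. I would compute $\mathcal H^{-1}(E)$ and $\mathcal H^0(E)$ from the long exact sequence of cohomology sheaves of the triple. Since $\mathcal H^{-1}(G)=\mathcal O_X(-2)$ and $\mathcal H^0(G)=\mathcal O_L(-2)$ (as in the proof of Lemma \ref{2 -1 -3/10}), this sequence is
$$0\to\mathcal H^{-1}(E)\to\mathcal O_X(-2)\xrightarrow{\ \varphi\ }\mathcal O_X(-1)^{\oplus 3}\to\mathcal H^0(E)\to\mathcal O_L(-2)\to 0.$$
The map $\varphi$ is given by $(s_1,s_2,s_3)$, which is nonzero, so $\mathcal H^{-1}(E)=0$. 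This recovers the four-term sequence in the statement of Theorem \ref{intro} (1.3). Torsion-freeness of $E=\mathcal H^0(E)$ amounts to $\mathcal{E}xt^1(E,\mathcal O_X)$ being supported in dimension $\le 1$ and $\mathcal{E}xt^2,\mathcal{E}xt^3$ being supported in dimension $\le 0$. I would read these off from the explicit complex $\mathbb D(E)$, using that $\mathcal I_L$ is locally a complete intersection ideal and that the $s_i$ generate $\mathcal I_L(1)$ away from finitely many points. Put differently, $E$ should be torsion-free precisely because the extension does not split over any subsheaf $\mathcal O_L(-2-n)\subset\mathcal O_L(-2)$. I expect this local analysis to be the main obstacle. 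If the direct argument gets messy, I would fall back on a dimension count in the spirit of Lemma \ref{reflexive}: show that the locus of triples $(s_1,s_2,s_3)$ giving a torsion subsheaf has positive codimension in $H^0(\mathcal I_L(1))^{\oplus 3}$.

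Next I would prove that $E$ is $\mu$-stable. The image $F$ of $\varphi$'s cokernel in $E$ is the cokernel of $\mathcal O_X(-2)\to\mathcal O_X(-1)^{\oplus 3}$, a sheaf of type (1.4). It has $h^0(F)=0$, so it has no rank-one subsheaf with $c_1\ge 0$, and it is therefore $\mu$-stable of slope $-\frac12$. Suppose $K\subset E$ were a saturated rank-one subsheaf with $c_1(K)\ge 0$. Then $K\cap F$ is a rank-one subsheaf of $F$, because the quotient $E/F\cong\mathcal O_L(-2)$ is torsion. Its first Chern class is $c_1(K)\ge 0$, since the two differ only by a sheaf supported in codimension $2$. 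This contradicts the stability of $F$, so $E$ is a $\mu$-stable torsion-free sheaf.

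Finally, $H^2\cdot\mathrm{ch}_1^{-1}(E)$ takes the least positive value. By Lemma \ref{minimal}(a) together with its last clause, $E$ is then $\nu_{\alpha,-1}$-semistable for every $\alpha>0$. By Proposition \ref{actual}~(3) no wall meets the ray $\beta=-1$, and walls are nested. Hence $E$ stays tilt-semistable for $\beta<\mu(E)=-\frac12$ close to $-1$ and $\alpha\gg 0$. Since $\mathrm{ch}_{\le 2}(E)$ is primitive, Lemma \ref{tilt-stable} upgrades this to tilt-stability. Proposition \ref{2-stability} then shows that $E$ is a $2$-stable sheaf, hence Gieseker stable. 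Formula (\ref{ch}) gives $c_2=4$ and $c_3=2$, which is the maximal value allowed in case (1.3).
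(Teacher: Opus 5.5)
Your route is correct in outline and genuinely different from the paper's, but one step needs repair.

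\textbf{Comparison with the paper.} The paper works only with sheaves. It fixes $F=\coker(\mathcal O_X(-2)\xrightarrow{f}\mathcal O_X(-1)^{\oplus 3})$ with linearly independent components and proves $F$ is $\mu$-stable via tilt-stability and Lemma~\ref{Jordan-Holder}. It then computes $\mathrm{Ext}^1(\mathcal O_L(-2),F)$ from the local-to-global spectral sequence; this group is nonzero exactly when $f|_L=0$. Finally it shows that a nontrivial extension $0\to F\to E\to\mathcal O_L(-2)\to 0$ is slope stable, excluding a torsion subsheaf $\mathcal O_L(a)\subset E$ by a maximal-$\mathrm{ch}_3$ argument.

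You instead extend on the other side, in $\mathrm D^b(X)$, and use $\mathbb D$ to identify the classes with triples $(s_1,s_2,s_3)\in H^0(\mathcal I_L(1))^{\oplus 3}$. The paper's condition $f|_L=0$ is exactly your condition $s_i\in H^0(\mathcal I_L(1))$, and for fixed $s$ your $E$ is the paper's nontrivial extension. What your route buys is a clean proof of torsion-freeness, which is the least explicit point of the paper's argument. It is also easier than you fear. Since $\mathbb D(E)\cong\mathrm{Cone}(s)$ has cohomology only in degrees $-1$ and $0$, you get $\mathcal{E}xt^2(E,\mathcal O_X)=\mathcal{E}xt^3(E,\mathcal O_X)=0$. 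You also get $\mathcal{E}xt^1(E,\mathcal O_X)\cong\coker(s)$, which is supported where $s_1,s_2,s_3$ fail to generate $\mathcal I_L$. That set lies inside $X\cap\{s_1=s_2=s_3=0\}$, so it has dimension at most one, and it is finite for general $s$, in which case $E$ is reflexive. (Incidentally, purity requires $\mathcal{E}xt^3$ to vanish, not merely to be zero-dimensional; this is moot here.)

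\textbf{The gap: $\mu$-stability of $F$.} The inference ``$h^0(F)=0$, hence $F$ has no rank-one subsheaf with $c_1\ge 0$'' is valid for reflexive sheaves, where a saturated rank-one subsheaf is a line bundle. Your $F$ is not reflexive. Indeed $\mathcal{E}xt^1(F,\mathcal O_X)\cong\mathcal O_Z(2)$ with $Z\supset L$, and a rank-one subsheaf $\mathcal I_W(c)\subset F$ with $c\ge 0$ need not have any sections.

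The fix is to test against reflexive hulls, and you can do this for $E$ directly. A rank-one subsheaf $K\subset E$ with $c_1(K)=c\ge 0$ yields a nonzero map $\mathcal O_X(c)=K^{\vee\vee}\to E^{\vee\vee}$. From the cone, $E^\vee\cong\ker(s)$ is reflexive of rank two with $c_1=1$. Hence $E^{\vee\vee}\cong E^\vee(-1)\cong\ker(\mathcal O_X^{\oplus 3}\xrightarrow{(s_i)}\mathcal I_L(1))$. Its space of global sections is the space of linear relations among $s_1,s_2,s_3$, which is zero, so $E$ is $\mu$-stable. Alternatively, you can prove stability of $F$ by the paper's tilt-stability argument.

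Finally, your last paragraph is superfluous. A $\mu$-stable torsion-free sheaf is already Gieseker stable by the implications recalled in Section~\ref{derived category}.
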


\begin{proof}Recall that the sheaves $E$ should be included into an exact sequence of the form
\begin{equation}\label{sequence3}0\to\mathcal O_X(-2)\overset{f}{\to}\mathcal O_X(-1)^{\oplus 3}\to E\to\mathcal O_L(-2)\to 0.\end{equation}
Denote by $F$ the cokernel of the morphism $f$ from (\ref{sequence3}), so that we have an exact triple
	\begin{equation}\label{F L}0\to F\to E\to \mathcal O_L(-2)\to 0.\end{equation}
	If three morphisms $\mathcal O_X(-2)\to\mathcal O_X(-1)$ defining $f$ are linearly dependent, then $F$ splits as a direct sum $\mathcal O_X(-1)\oplus F'$ with $\mu(F')=0$, hence $E$ is not 2-semistable (since $\mu(E)=-\frac 12$). Therefore, we assume that $f$ corresponds to three linearly independent morphisms $\mathcal O_X(-2)\to\mathcal O_X(-1)$. In this case $F$ is tilt-semistable everywhere above $W(F,\mathcal O_X(-1))$, hence it is 2-semistable, and hence slope stable ($\rk(F)$ and $c_1(F)$ are coprime). Indeed, if $F$ were not tilt-semistable above $W(F,\mathcal O_X(-1))$, then Lemma \ref{Jordan-Holder} would imply that there is a nonzero morphism $F\to\mathcal O_X(-1)$. However, in this case three components of $f$ are linearly dependent, as can be shown using the long exact sequence of Ext groups.
	
	Note that $\mathrm{Ext}^1(\mathcal O_L(-2),\mathcal O_X(-1)^{\oplus 3})\cong H^0(\mathcal Ext^1(\mathcal O_L(-2),\mathcal O_X(-1)^{\oplus 3}))$, and this $\mathcal Ext^1$ sheaf vanishes, since the codimension of the support of $\mathcal O_L(-2)$ is bigger than 1 and $\mathcal O_X(-1)^{\oplus 3}$ is locally free. Therefore, we have an exact sequence
	\begin{equation}\label{Ext sequence}0\to\mathrm{Ext}^1(\mathcal O_L(-2),F)\to\mathrm{Ext}^2(\mathcal O_L(-2),\mathcal O_X(-2))\overset{g}{\to}\mathrm{Ext}^2(\mathcal O_L(-2),\mathcal O_X(-1)^{\oplus 3}).\end{equation}
	The spectral sequence of local and global Ext implies that the morphism $g$ from (\ref{Ext sequence}) corresponds to a morphism $H^0(\mathcal Ext^2(\mathcal O_L(-2),\mathcal O_X(-2)))\to H^0(\mathcal Ext^2(\mathcal O_L(-2),\mathcal O_X(-1)^{\oplus 3}))$ induced by $f$. By the arguments from the proof of Lemma \ref{2 -1 -3/10} $\mathcal Ext^2(\mathcal O_L(-2),\mathcal O_X(-2))\cong\mathcal O_L$ and \linebreak$\mathcal Ext^2(\mathcal O_L(-2),\mathcal O_X(-1)^{\oplus 3})\cong\mathcal O_L(1)^{\oplus 3}$. 
	If the restriction of the morphism $f$ to $L$ vanishes, we get that $\mathrm{Ext}^1(\mathcal O_L(-2),F)\neq 0$. In this case there exists a nontrivial extension (\ref{F L}), and the sheaf $E$ from it is slope stable. Indeed, if there is a subsheaf $0\neq G\subset E$ with $\mu(G)\ge\mu(E),\rk(G)<2$, then $G$ would be included into an exact triple $0\to G'\to G\to G''\to 0,G'\subset F,G''\subset\mathcal O_L(-2)$. Since $F$ is slope stable and $\mathrm{ch}_{\le 1}(\mathcal O_L(-2))=0$, only the case $G'=0,G''\cong\mathcal O_L(a),a\le -2$ remains possible. Since the extension (\ref{F L}) is nontrivial, we get $a<-2$. Existence of a lift $\mathcal O_L(a)\to E$ and non-triviality of the extension gives $\mathrm{Ext}^1(\mathcal O_L(-2)/\mathcal O_L(a),E)\neq 0$, which contradicts the fact that $E$ has maximal $\mathrm{ch}_3$ among tilt-semistable objects with the same $\mathrm{ch}_{\le 2}$.
\end{proof}

\subsection{Moduli spaces of torsion sheaves.}\label{torsion}
In this section we prove Theorem \ref{torsion thm} about moduli spaces of torsion sheaves $\mathcal I_{C,H},\mathcal I_{L,H},\mathcal O_H,\mathbb D(\mathcal I_{L,H})$ and $\mathbb D(\mathcal I_{C,H})$, as a preparation for construction of moduli spaces of rank two sheaves on $X$ with maximal $c_3$ and $c_2$ big enough.

\textit{Proof of Theorem \ref{torsion thm}.}
Denote by $p_i:X\times B_i\to X,q_i:X\times B_i\to B_i$ the canonical projections.

\begin{enumerate}
\item  
By Lemma \ref{0 1 -9/10} sheaves $\mathcal I_{C,H}$ can be included into exact triples
\begin{equation}\label{triple26}0\to\mathcal O_X(-1)^{\oplus 2}\overset{f_1}{\to}\mathcal U\to \mathcal I_{C,H}\to 0.\end{equation}
The morphism $f_1$ in (\ref{triple26}) is given by two elements of $\Hom(\mathcal O_X(-1),\mathcal U)\cong\mathbb C^5$. Since $f_1$ is a monomorphism, these elements have to be linearly independent. Note that the proof of Lemma \ref{0 1 -9/10} implies that the cokernel of any monomorphism $\mathcal O_X(-1)^{\oplus 2}\hookrightarrow \mathcal U$ is isomorphic to $\mathcal I_{C,H}$ for some conic $C\subset H$. Conversely, if $f_1:\mathcal O_X(-1)^{\oplus 2}\to\mathcal U$ is not a monomorphism, then $\ker f_1\subset\mathcal O_X(-1)^{\oplus 2}$ has to be reflexive, hence it is a line bundle $\mathcal O_X(a)$ for some $a\le -1$ (we assume that $f_1\neq 0$). But if $a\le -2$, then $\mathcal O_X(-1)^{\oplus 2}/\mathcal O_X(a)$ cannot be a subsheaf of $\mathcal U$ by stability of $\mathcal U$. Hence, $a=-1$ and two corresponding elements of $\mathbb C^5$ are linearly dependent.

Suppose that there is an isomorphism $i:\mathcal I_{C,H}\to\mathcal I_{C',H'}$, where $\mathcal I_{C',H'}$ is the cokernel of a monomorphism $f_1':O_X(-1)^{\oplus 2}\to\mathcal U$. Then (\ref{triple26}) and the vanishing of $\mathrm{Ext}^1(\mathcal U,\mathcal O_X(-1)^{\oplus 2})$ implies that $i$ is induced by a (nonzero) element of $\mathrm{End}\ \mathcal U\cong\mathbb C$. Therefore, morphisms $f_1$ and $f_1'$ are conjugate with respect to the natural action of $\Aut \mathcal O_X(-1)^{\oplus 2}\times\Aut \mathcal U\cong\mathrm{GL}(2)\times\mathbb C^*$. It follows that isomorphism classes of sheaves $\mathcal I_{C,H}$ correspond bijectively to two-dimensional subspaces of $\mathbb C^5$. 

Now we can construct a family of the sheaves $\mathcal I_{C,H}$ that we need.  
On $\mathrm{Gr}(2,5)=\mathrm{Gr}(2,\Hom(\mathcal O_X(-1),\mathcal U))$ there is a tautological rank 2 subbundle $\widetilde{\mathcal U}$ with a canonical morphism $u:\widetilde{\mathcal U}\to\Hom(\mathcal O_X(-1),\mathcal U)\otimes\mathcal O_{\mathrm{Gr}(2,5)}$. Also, there is a canonical evaluation morphism $e_1:\Hom(\mathcal O_X(-1),\mathcal U)\otimes\mathcal O_X(-1)\to\mathcal U$. Consider the composition
\begin{equation}\label{comp}p_1^*(\mathcal O_X(-1))\otimes q_1^*(\widetilde{\mathcal U})\overset{\id\otimes q_1^*(u)}{\to}p_1^*(\mathcal O_X(-1))\otimes\Hom(\mathcal O_X(-1),\mathcal U)\overset{p_1^*(e_1)}{\to}p_1^*(\mathcal U).
\end{equation}
Define $\mathbb I_1$ as the cokernel of composition (\ref{comp}). The family $\mathbb I_1$ induces a bijective modular morphism $\Phi_1:\mathrm{Gr}(2,5)\to\mathcal M_X(v)$ for $v=\mathrm{ch}(\mathcal I_{C,H})$ (since restrictions of (\ref{comp}) to fibers $X\times\{p\},p\in\mathrm{Gr}(2,5)$ bijectively correspond to exact triples (\ref{triple26}) up to isomorphism). In order to conclude, it suffices to prove that $\mathcal M_X(v)$ is smooth. Indeed, since we work in characteristic 0, the bijective map $\Phi_1$ is birational, and if it not biregular, then we get a contradiction with \cite[Ch. 2, \S 4.4, Theorem 2.16]{Sh}.

Note that $\dim \mathcal M_X(v)=6$, because $\Phi_1$ is bijective. Since $\mathcal I_{C,H}$ is Gieseker stable (being a pure sheaf of rank 1 on $H$), it suffices to prove that $\mathrm{ext}^1(\mathcal I_{C,H},\mathcal I_{C,H})=6$ for any $[\mathcal I_{C,H}]\in\mathcal M_X(v)$. This follows from a direct calculation using (\ref{triple15}).

\item 
By the proof of Lemma \ref{0 1 -7/10} there are exact triples
\begin{equation}\label{triple27}0\to\mathcal O_X(-1)\overset{f_2}{\to}\mathcal I_L\to \mathcal I_{L,H}\to 0,\end{equation}
\begin{equation}\label{triple28}0\to\mathcal U\overset{g}{\to}\mathcal Q^\vee\to \mathcal I_L\to 0.\end{equation}
Since $\mathcal I_L$ is torsion-free, any nonzero $f_2:\mathcal O_X(-1)\to\mathcal I_L$ is a monomorphism. The argument in Lemma \ref{0 1 -7/10} implies that the cokernel of such $f_2$ is isomorphic to $\mathcal I_{L,H}$ for some $L\subset H\subset X$. Moreover, by \cite[Lemma 4.2]{K} any nonzero $g:\mathcal U\to\mathcal Q^\vee$ is a monomorphism and its cokernel is isomorphic to $\mathcal I_L$ for some line $L\subset X$. The Fano scheme of lines on $X$ is isomorphic to $\mathbb P^2=\mathbb P(\Hom(\mathcal U,\mathcal Q^\vee))$.

Suppose that there is an isomorphism $i:\mathcal I_{L,H}\to\mathcal I_{L',H'}$, where $\mathcal I_{L',H'}$ is the cokernel of a monomorphism $f_2':\mathcal O_X(-1)\to\mathcal I_L$. The exact triple (\ref{triple27}) and the vanishing of $\mathrm{Ext}^1(\mathcal I_L,\mathcal O_X(-1))$ imply that $i$ is induced by a (nonzero) morphism $j:\mathcal I_L\to\mathcal I_{L'}$. Since $\mathcal I_L$ and $\mathcal I_{L'}$ are stable sheaves with equal Hilbert polynomials, $j$ is an isomorphism. Therefore, $f_2$ and $f_2'$ are conjugate by an action of $\Aut \mathcal O_X(-1)\times\Aut\mathcal I_L$, that is, they are proportional. It follows that giving the isomorphism class of $\mathcal I_{L,H}$ is equivalent to giving a point of $\mathbb P^2=\mathbb P(\Hom(\mathcal U,\mathcal Q^\vee))$ corresponding to some line $L$ and a point of $\mathbb P(H^0(\mathcal I_L(1)))$.

Consider the product $X\times\mathbb P(\Hom(\mathcal U,\mathcal Q^\vee))$ with projections $p_l,q_l$ on the first and on the second factor, respectively. There are a canonical morphism $i_1:\mathcal O_{\mathbb P^2}(-1)\to\Hom(\mathcal U,\mathcal Q^\vee)\otimes\mathcal O_{\mathbb P^2}$ (from the Euler exact sequence) and an evaluation morphism $e_2:\Hom(\mathcal U,\mathcal Q^\vee)\otimes\mathcal U\to\mathcal Q^\vee$. Define a sheaf $\mathbb I_l$ as the cokernel of the composition
\begin{equation}\label{I_l}p_l^*(\mathcal U)\otimes q_l^*(\mathcal O_X(-1))\overset{\id\otimes q_l^*(i_1)}{\to}p_l^*(\mathcal U)\otimes\Hom(\mathcal O_X(-1),\mathcal U)\overset{p_l^*(e_2)}{\to}p_l^*(\mathcal Q^\vee).
\end{equation}
The sheaf $\mathbb I_l$ is a (flat) family of ideal sheaves $\mathcal I_L$. By (\ref{triple28}) we find that $h^0(\mathcal I_L(1))=5$ for all $L$. Proper base change theorem implies that $\mathcal F:=q_{l,*}(\mathbb I_l\otimes p_l^*(\mathcal O_X(1)))$ is rank 5 vector bundle on $\mathbb P^2$. Define 
\begin{equation}\label{bbF}\mathbb F:=\mathbb P(\mathcal F^\vee).\end{equation} 
Points of $\mathbb F$ correspond bijectively to pairs $(L,H)$ of a line $L\subset X$ and a hyperplane section $H\subset X$ containing $L$.
 
Denote by $\pi:\mathbb F\to\mathbb P^2$ the canonical projection and by $\widetilde{\pi}$ the map $(\id,\pi):X\times\mathbb F\to X\times\mathbb P^2$. On $\mathbb F=\mathbb P(\mathcal F^\vee)$ there is a (dualized) Grothendieck sheaf $\mathcal O_{\mathbb F}(-1)$ with a canonical map $i_2:\mathcal O_{\mathbb F}(-1)\to\pi^*(\mathcal F)$. Denote by $e_3$ the evaluation morphism ${q_l}^*q_{l,*}(\mathbb I_l\otimes p_l^*(\mathcal O_X(1))\to\mathbb I_l\otimes p_l^*(\mathcal O_X(1))$. Consider the composition
\begin{multline}\label{comp3} p_2^*\mathcal O_X(-1)\otimes q_2^*\mathcal O_{\mathbb F}(-1)\overset{\id\otimes q_2^*(i_2)}{\to}p_2^*\mathcal O_X(-1)\otimes q_2^*\pi^*(\mathcal F)\overset{\id\otimes\widetilde{\pi}^*(e_3)}{\to}\\
\overset{\id\otimes\widetilde{\pi}^*(e_3)}{\to}p_2^*\mathcal O_X(-1)\otimes\widetilde{\pi}^*(\mathbb I_l\otimes p_l^*\mathcal O_X(1))\cong\widetilde{\pi}^*(\mathbb I_l).\end{multline}
Denote by $\mathbb I_2$ the cokernel of the composition (\ref{comp3}). The family $\mathbb I_2$ induces a bijective modular morphism $\Phi_2:\mathbb F\to\mathcal M_X(\mathrm{ch}(\mathcal I_{L,H}))$, which is an isomorphism, since $\mathrm{ext}^1(\mathcal I_{L,H},\mathcal I_{L,H})=6$ (in view of (\ref{triple4.1})).

\item 
In this case we can consider the family $\mathbb I_3$ of sheaves $\mathcal O_H$ with the base $\mathbb P^6=\mathbb P(H^0(\mathcal O_X(1)))$, which induces an isomorphism $\Phi_3:\mathbb P^6\to\mathcal M_X(\mathrm{ch}(\mathcal O_H))$ (again we have $\mathrm{ext}^1(\mathcal O_H,\mathcal O_H)=6$).

\item 
By Lemma \ref{0 1 -3/10} sheaves $\mathbb D(\mathcal I_{L,H})$ can be included into exact triples
\begin{equation}\label{triple29}0\to\mathcal Q\to\mathcal O_X(1)\oplus\mathcal U(1)\to \mathbb D(\mathcal I_{L,H})\to 0.
\end{equation}
By \cite[proof of Lemma 4.2]{K} for any nonzero $a:\mathcal Q\to\mathcal U(1)$ there is an exact sequence of the form
\begin{equation}\label{sequence1}0\to\mathcal O_X\to\mathcal Q\overset{a}{\to}\mathcal U(1)\to\mathcal O_L\to 0.
\end{equation}
Fix a monomorphism $f:\mathcal O_X(1)\hookrightarrow\mathcal O_X(1)\oplus\mathcal U(1)$ (it is unique up to proportionality). We obtain a commutative diagram
$$\label{diagram}
\xymatrix{
		0 \ar[r] & 0 \ar[r]\ar[d] & \mathcal O_X(1)\ar@{=}[r]\ar@{^{(}->}[d]^f & \mathcal O_X(1)\ar[r]\ar[d]^g & 0 \\
		0\ar[r] & \mathcal Q\ar[r] & \mathcal O_X(1)\oplus\mathcal U(1)\ar[r] & \mathbb D(\mathcal I_{L,H})\ar[r] & 0 }
$$
Snake Lemma and the exact sequence (\ref{sequence1}) imply that $\ker g\cong\mathcal O_X,\coker g\cong\mathcal O_{L'}$. Therefore, we get an exact triple
\begin{equation}\label{extension}0\to\mathcal O_{H'}(1)\to\mathbb D(\mathcal I_{L,H})\to\mathcal O_{L'}\to 0
\end{equation}
for a hyperplane section $H'\subset X$ and a line $L'\subset X$. Note that $H=\mathrm{supp}(\mathcal I_{L,H})=\mathrm{supp}(\mathbb D(\mathcal I_{L,H}))=H'$. Also, the line $L$ can be described as the locus of points $x\in X$, for which the rank of the map $\mathcal U(x)\to\mathcal Q^\vee(x)$ induced by (\ref{triple28}) is smaller than $2$. Meanwhile, the line $L'$ is the locus of points, where the rank of the map $\mathcal Q(x)\to\mathcal U(1)(x)$ induced by (\ref{triple29}) is smaller than $2$. Since these maps are dual to each other by construction, we have $L=L'$.

Since $\mathbb D(\mathcal I_{L,H})$ is tilt-semistable, the extension (\ref{extension}) should be nontrivial. The long exact sequence of $\mathrm{Ext}$ groups associated with an exact triple $0\to\mathcal O_X\to\mathcal O_X(1)\to\mathcal O_H(1)\to 0$ implies that $\mathrm{Ext}^1(\mathcal O_L,\mathcal O_H(1))$ is isomorphic to the kernel of the induced morphism $h:\mathrm{Ext}^2(\mathcal O_L,\mathcal O_X)\to\mathrm{Ext}^2(\mathcal O_L,\mathcal O_X(1))$. The spectral sequence of local and global Ext's together with the ``fundamental local isomorphism'' \cite[Theorem 4.5]{AK} implies that $\mathrm{Ext}^2(\mathcal O_L,\mathcal O_X)\cong H^0(\mathcal Ext^2(\mathcal O_L,\mathcal O_X))\cong H^0(\Lambda^2\mathcal N_{L/X})$. 
As we saw in the proof of Lemma \ref{2 -1 -3/10}, $\Lambda^2\mathcal N_{L/X}\cong\mathcal O_L$. We also get $\mathrm{Ext}^2(\mathcal O_L,\mathcal O_X(1))\cong H^0(\mathcal Ext^2(\mathcal O_L,\mathcal O_X(1)))\cong H^0(\mathcal O_L(1))$. Note that the morphism $H^0(\mathcal O_L)\cong\mathrm{Ext}^2(\mathcal O_L,\mathcal O_X)\overset{h}{\to}\mathrm{Ext}^2(\mathcal O_L,\mathcal O_X(1))\cong H^0(\mathcal O_L(1))$ is given by multiplication with the equation of $H$. We obtain that $\mathrm{ext}^1(\mathcal O_L,\mathcal O_H(1))=1$ if $L\subset H$, and equals $0$ otherwise. So, we get a bijection between isomorphism classes of the sheaves $\mathbb D(\mathcal I_{L,H})$ and the points of $\mathbb F$ via exact triples (\ref{extension}). 

In order to construct a family of the sheaves $\mathbb D(\mathcal I_{L,H})$ with the base $\mathbb F$, we firstly construct families of the sheaves $\mathcal O_L$ and $\mathcal O_H$ with the same base. Starting from the sheaf $\mathbb I_l$ on $X\times\mathbb P^2$ constructed above, we can consider the sheaf $\mathbb I_l^{\vee\vee}$, which is a reflexive sheaf of rank one, hence it is a line bundle. The sheaf $\mathbb I_l$ is the cokernel of the composition morphism (\ref{I_l}), which is a monomorphism, and we calculate $c_1(\mathbb I_l)=(0,2)$. Since $\mathbb I_l$ is locally free in the complement of a closed subset of codimension 2, we have $c_1(\mathbb I_l^{\vee\vee})=c_1(\mathbb I_l)$, so $\mathbb I_l^{\vee\vee}\cong q^*(\mathcal O_{\mathbb P^2}(2))$. In particular, $\mathbb I_l^{\vee\vee}$ is trivial on fibers $X\times\{p\}$ for $p\in\mathbb P^2$. So, we define $\mathbb I_{o,l}$ as the cokernel of the natural map $\mathbb I_l\to\mathbb I_l^{\vee\vee}$. Then, we put $\mathbb I'_{o,l}:=\widetilde{\pi}^*(\mathbb I_{o,l})$, it is the family of the sheaves $\mathcal O_L$ that we need. Also, we define $\mathbb I_h$ as the cokernel of the composition morphism (\ref{comp3}), composed with the natural map $\widetilde{\pi}^*(\mathbb I_l)\to\widetilde{\pi}^*(\mathbb I_l^{\vee\vee})$, it the family of $\mathcal O_H$ with the base $\mathbb F$ that we need.

Since the variety $\mathbb F$ is smooth and $\mathrm{ext}^1(\mathcal O_L,\mathcal O_H(1))=1$ independently of the point $(L,H)\in\mathbb F$, by \cite[Satz 3.(ii)]{BPS}, \cite[Theorem 1.4]{Lan} there is a line bundle
$$\mathcal A_{l,h}=\mathcal Ext_{q_4}^1(\mathbb I'_{o,l},\mathbb I_h\otimes p_4^*(\mathcal O_X(1)))$$
on $\mathbb F$, such that the fiber of $\mathcal A_{l,h}$ over the point $(L,H)$ is naturally isomorphic to $\mathrm{Ext}^1(\mathcal O_L,\mathcal O_H(1))$. Also, we have $\mathcal Ext_{q_4}^0(\mathbb I'_{o,l},\mathbb I_h\otimes p_4^*(\mathcal O_X(1)))=0$. Now \cite[Corollary 4.5]{Lan} implies that there is an universal extension $\mathbb I_4$ on $\mathbb P(\mathcal A_{l,h}^\vee)\times X\times \mathbb F\cong X\times \mathbb F$
$$0\to\mathbb I_h\otimes p_4^*(\mathcal O_X(1))\to\mathbb I_4\to \mathbb I'_{o,l}\to 0,$$
such that the fiber of $\mathbb I_4$ over the point $(L,H)\in\mathbb F$ is a nontrivial extension of $\mathcal O_L$ by $\mathcal O_H(1)$, which is isomorphic to $\mathbb D(\mathcal I_{L,H})$. The family $\mathbb I_4$ induces a bijective morphism $\Phi_4:\mathbb F\to\mathcal M_X(v))$ (here $v=\mathrm{ch}(\mathbb D(\mathcal I_{L,H})$), which is an isomorphism, since $\mathrm{ext}^1(\mathbb D(\mathcal I_{L,H}),\mathbb D(\mathcal I_{L,H}))=6$ (by duality).

\item 
By Lemma \ref{0 1 -1/10} sheaves $\mathbb D(\mathcal I_{C,H})$ can be included into exact triples
\begin{equation}\label{triple30}0\to\mathcal U(1)\overset{f_1^\vee}{\to}\mathcal O_X(1)^{\oplus 2}\to \mathbb D(\mathcal I_{C,H})\to 0,\end{equation}
where $f_1^\vee$ is dual to the morphism $f_1$ from (\ref{triple26}). In this case we can simply dualize the composition (\ref{comp}) to obtain a morphism
\begin{equation}\label{comp4}p_1^*(\mathcal U(1))\overset{(p_1^*(e_1))^\vee}{\to}p_1^*(\mathcal O_X(1))\otimes\Hom(\mathcal O_X(-1),\mathcal U)^\vee\overset{(\id\otimes q_1^*(u))^\vee}{\to}p_1^*(\mathcal O_X(1))\otimes q_1^*(\widetilde{\mathcal U}^\vee). 
\end{equation}
We define $\mathbb I_5$ as the cokernel of the composition (\ref{comp4}). Since the source and the target of the composition (\ref{comp4}) are locally free, its restrictions to fibers $X\times\{p\}$ over points $p\in\mathrm{Gr}(2,5)$ are dual to restrictions of (\ref{comp}). Therefore, $\mathbb I_5$ is a family of sheaves $\mathbb D(\mathcal I_{C,H})$, which induces a bijective modular morphism $\Phi_5:\mathrm{Gr}(2,5)\to\mathcal M_X(v)$ ($v=\mathrm{ch}(\mathbb D(\mathcal I_{C,H}))$). Since by duality $\mathrm{ext}^1(\mathbb D(\mathcal I_{C,H}),\mathbb D(\mathcal I_{C,H}))=6$, $\Phi_5$ is an isomorphism.
\end{enumerate}\qed

\subsection{General case with $c_1=-1$}\label{section -1} In this section we describe moduli spaces of Gieseker semistable rank 2 sheaves $E$ on $X$ with $c_1=-1,c_2\ge 6$ and maximal $c_3=\frac{c_2^2}{5}-10\varepsilon(\frac 12-\frac{c_2}{5})$. By Theorem \ref{intro} any such sheaf can be included into an exact sequence (\ref{extension2}).

\begin{lemma}\label{Gr}Choosing an isomorphism class $[E]$ of a slope stable sheaf $E$ that can be written as an extension (\ref{extension2}) is equivalent to choosing an isomorphism class of a sheaf $G_i$ and a two-dimensional subspace of $\mathrm{Ext}^1(G_i,\mathcal O_X(-1))$.
\end{lemma}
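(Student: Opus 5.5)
Given a slope stable $E$ fitting into (\ref{extension2}), I will first show that the subsheaf $\mathcal O_X(-1)^{\oplus 2}\subset E$ is canonical. The plan is to prove that it coincides with the image of the evaluation map $\Hom(\mathcal O_X(-1),E)\otimes\mathcal O_X(-1)\to E$, and that $\hom(\mathcal O_X(-1),E)=2$.

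To compute this dimension, apply $\Hom(\mathcal O_X(-1),-)$ to (\ref{extension2}). It then suffices to check $\Hom(\mathcal O_X(-1),G_i)=H^0(G_i(1))=0$ for the allowed twists $m$. For $G_3=\mathcal O_H(m)$ with $m\le-2$ this is immediate. For $\mathcal I_{C,H}(m)$ and $\mathcal I_{L,H}(m)$ with $m\le-1$ it follows from $\mathcal I_{*,H}(m+1)\subset\mathcal O_H(m+1)$ and $m+1\le 0$; the case $m+1=0$ needs $H^0(\mathcal I_{C,H})=0$, which holds since the ideal is nontrivial. For the duals, use the triples (\ref{triple24}) and (\ref{triple25}), twisted so that $\mathbb D(\mathcal I_{\ast,H})(m+1)$ with $m+1\le-2$ is a quotient of sheaves with no sections and no $H^1$ (these are ACM bundles with negative twists). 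Alternatively, use tilt-stability: $W(\mathcal O_X(-1),G_i)$ lies where $G_i$ is stable. Together with $\Hom(\mathcal O_X(-1),\mathcal O_X(-1))=\mathbb C$, this gives $\hom(\mathcal O_X(-1),E)=2$, and the evaluation map recovers the subsheaf, so $G_i\cong E/\mathrm{im}(\mathrm{ev})$ is determined by $E$. Hence $[E]$ determines $[G_i]$.

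Next I identify the extension data. The extension class lies in $\mathrm{Ext}^1(G_i,\mathcal O_X(-1)^{\oplus 2})=\mathrm{Ext}^1(G_i,\mathcal O_X(-1))\otimes\mathbb C^2$, that is, it is a linear map $\xi:\mathbb C^{2\vee}\to\mathrm{Ext}^1(G_i,\mathcal O_X(-1))$. I claim that $\xi$ must be injective. If it were not, some component would give a split summand, producing a surjection $E\to\mathcal O_X(-1)$; since $\mu(\mathcal O_X(-1))=-1<-\frac12=\mu(E)$, this contradicts slope stability of $E$. So $\mathrm{im}\,\xi$ is a two-dimensional subspace. Two isomorphic extensions $E\cong E'$ induce, by canonicity of the subsheaf and the vanishing $\Hom(\mathcal O_X(-1),G_i)=0$, an automorphism of $\mathcal O_X(-1)^{\oplus 2}$, which lies in $\mathrm{GL}(2)$, together with an automorphism of $G_i$, which is a scalar because $G_i$ is stable. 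Therefore $\xi$ and $\xi'$ differ by $\mathrm{GL}(2)\times\mathbb C^*$ and have the same image. Conversely, given $G_i$ and a two-dimensional subspace $V\subset\mathrm{Ext}^1(G_i,\mathcal O_X(-1))$, the universal extension by $V^\vee\otimes\mathcal O_X(-1)$ gives a sheaf $E$, and a choice of basis changes it only by isomorphism.

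The main obstacle is the remaining converse: showing that every such $E$ built from a two-dimensional subspace is slope stable. The plan is to use tilt-stability. By Theorem \ref{main}~(1.5), the unique wall for $E$ is $W(E,\mathcal O_X(-1))$, and the only possible destabilizers there are $\mathcal O_X(-1)$ or $\mathcal O_X(-1)^{\oplus2}$ as subobjects, by an argument as in Lemma \ref{Jordan-Holder}. For $E$ to be tilt-semistable above the wall, we need $\Hom(E,\mathcal O_X(-1)[1]\text{-type quotients})$, equivalently no quotient $E\to G'$ with $G'$ an extension of $G_i$ by a single $\mathcal O_X(-1)$. Such a quotient would force some nonzero vector of $V^\vee$ to be killed by $\xi$, i.e. a split component, which is excluded by injectivity. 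Then Proposition \ref{2-stability} gives 2-semistability, and slope stability follows since $\rk E=2$ and $c_1=-1$ are coprime. If $W(E,\mathcal O_X(-1))$ is instead checked via Lemma \ref{Jordan-Holder}-type classification of linear combinations of $\mathrm{ch}_{\le2}(\mathcal O_X(-1))$ and $\mathrm{ch}_{\le2}(G_i)$, I expect the same short list.
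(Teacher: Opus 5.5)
Your forward direction is the paper's argument: canonicity of $\mathcal O_X(-1)^{\oplus 2}\subset E$ from $\Hom(\mathcal O_X(-1),G_i)=0$, and linear independence of the two classes from slope stability. Your case-by-case vanishing of $H^0(G_i(1))$ and the remark that $\mathrm{Aut}(G_i)=\mathbb C^*$ are correct and add detail the paper omits.

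The converse is meant to follow the paper's route too, but the key step has the direction of destabilization reversed, and as written it fails. Write $\mathrm{ch}_{\le 2}(E)=2-H+dH^2$. Then $W(E,\mathcal O_X(-1))$ is the semicircle with center $d-1$ and radius $|d|$, which lies in $\beta<-1$. Below it, the subobject $\mathcal O_X(-1)^{\oplus 2}$ destabilizes. Above it, $\nu_{\alpha,\beta}(\mathcal O_X(-1))<\nu_{\alpha,\beta}(E)$, so subobjects $\mathcal O_X(-1)^{\oplus k}$ are harmless there, and $\mathcal O_X(-1)[1]$ plays no role at all.

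Your proposed criterion, ``no quotient $E\to G'$ with $G'$ an extension of $G_i$ by a single $\mathcal O_X(-1)$'', is false. For every line in $\mathbb C^2$ the corresponding quotient $E\twoheadrightarrow E/\mathcal O_X(-1)$ is such an extension. It exists whatever $\xi$ is, and it never destabilizes above the wall, because its kernel $\mathcal O_X(-1)$ has smaller $\nu$ there. So the inference that such a quotient forces a nonzero vector into $\ker\xi$ is invalid. Since this is exactly the step you flagged as the main obstacle, it needs to be redone.

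What is needed, and what the paper does, is the following. Above the unique wall, a destabilizing \emph{quotient} $E\twoheadrightarrow F$ must have $\mu(F)<\mu(E)$ (compare $\nu_{\alpha,\beta}$ for $\alpha\gg 0$). The classification from the proof of Theorem \ref{main} and Lemma \ref{Jordan-Holder} then gives $F\cong\mathcal O_X(-1)$ or $\mathcal O_X(-1)^{\oplus 2}$, hence some $0\neq\phi\in\Hom(E,\mathcal O_X(-1))$.

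Since $G_i$ is torsion, $\Hom(G_i,\mathcal O_X(-1))=0$, so the restriction $\psi=(a,b)$ of $\phi$ to $\mathcal O_X(-1)^{\oplus 2}$ is nonzero. Exactness of $\Hom(E,\mathcal O_X(-1))\to\Hom(\mathcal O_X(-1)^{\oplus 2},\mathcal O_X(-1))\to\mathrm{Ext}^1(G_i,\mathcal O_X(-1))$ then gives $a\xi_1+b\xi_2=0$, contradicting injectivity of $\xi$. Put differently: given $\Hom(G_i,\mathcal O_X(-1))=0$, injectivity of $\xi$ is equivalent to $\Hom(E,\mathcal O_X(-1))=0$, and this one equivalence drives both directions of the lemma.

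You should also say why $E$ is tilt-semistable along $W(E,\mathcal O_X(-1))$ in the first place. It is an extension of $\mathcal O_X(-1)^{\oplus 2}$ and $G_i$, both semistable of the same tilt slope there; $G_i$ is semistable there because this wall lies above the unique wall of $G_i$. After that, your conclusion via Proposition \ref{2-stability} and coprimality of $\rk(E)$ and $c_1(E)$ is fine.
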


\begin{proof}We have $\mathrm{Hom}(\mathcal O_X(-1),G_i)=0$, hence a monomorphism $\mathcal O_X(-1)^{\oplus 2}\hookrightarrow E$ is uniquely determined up to an action of $\mathrm{GL}(2)$, and $G_i$ is uniquely determined by $[E]$ up to isomorphism. Extensions (\ref{extension2}) are classified by a pair of elements of $\mathrm{Ext}^1(G_i,\mathcal O_X(-1))$. If these elements are linearly dependent, then one can (e.~g. using the axiom (TR 3) for distinguished triangles $G_i\to\mathcal O_X(-1)^{\oplus 2}[1]\to E[1]\to G_i[1],0\to\mathcal O_X(-1)[1]\to\mathcal O_X(-1)[1]\to 0$) construct a nonzero morphism $E\to\mathcal O_X(-1)$, contradicting slope stability of $E$. So, the two elements are linearly independent, and, factoring by $\mathrm{GL}(2)$, we get a two-dimensional subspace of $\mathrm{Ext}^1(G_i,\mathcal O_X(-1))$.
	
Conversely, given a two-dimensional subspace of $\mathrm{Ext}^1(G_i,\mathcal O_X(-1))$, we can choose its basis and obtain an extension (\ref{extension2}). Its middle term $E$ is tilt-semistable, at least at $W(E,\mathcal O_X(-1))$. If $E$ is not semistable above this wall, then the classification of destabilizing subobjects and quotients from the proof of Theorem \ref{main} and Lemma \ref{Jordan-Holder} implies that there is a quotient $E\twoheadrightarrow\mathcal O_X(-1)$ or $E\twoheadrightarrow\mathcal O_X(-1)^{\oplus 2}$ (if a quotient $E\twoheadrightarrow F$ destabilizes $E$ above $W(E,F)$, then $\mu(F)<\mu(E)$). In any case we get a nonzero morphism $E\to\mathcal O_X(-1)$, which induces a nonzero morphism $\mathcal O_X(-1)^{\oplus 2}\to\mathcal O_X(-1)$ (since there are no nonzero morphisms from a torsion sheaf $G_i$ to $\mathcal O_X(-1)$). In this situation we again can apply the axiom (TR 3) to get a contradiction with linear independence of two elements of $\mathrm{Ext}^1(G_i,\mathcal O_X(-1))$ corresponding to (\ref{extension2}). Since $W(E,\mathcal O_X(-1))$ is the unique wall for $E$, we get that $E$ should be $\nu_{\alpha,\beta}$-semistable for $\beta<-\frac 12,\alpha\gg 0$, hence $E$ is a 2-semistable sheaf by Proposition \ref{2-stability}, hence $E$ is slope semistable. Since $\rk(E)$ and $c_1(E)$ are coprime, $E$ is in fact slope stable.
\end{proof}

A calculation using long exact sequences of Ext groups associated with exact triples (\ref{triple26}, \ref{triple27}, \ref{triple28}, \ref{triple29}, \ref{triple30}) (twisted with $\mathcal O_X(m)$) shows that $\mathrm{ext}^1(G_i,\mathcal O_X(-1))$ is independent of $G_i$ (for fixed $i$ and $m$). In this calculation we use Grothendieck--Riemann--Roch theorem, Serre duality and the fact that $\mathcal O_X,\mathcal U$ and $\mathcal Q$ are ACM sheaves. Also, 
one can check that $\mathrm{ext}^2(G_i,\mathcal O_X(-1))=0$.

In the notation of Section \ref{torsion} let 
\begin{equation}\label{A}\mathcal A_i:=\mathcal Ext_{q_i}^1(\mathbb I_i\otimes p_1^*(\mathcal O_X(m)),p_1^*(\mathcal O_X(1))),\quad 1\le i\le 5.\end{equation}  
By \cite[Satz 3.(ii)]{BPS}, \cite[Theorem 1.4]{Lan} sheaves $\mathcal A_i$ are locally free and the fiber of $\mathcal A_i$ over an arbitrary point $p\in B_i$ is canonically isomorphic to the space $\mathrm{Ext}^1(G_i,\mathcal O_X(-1))$ for the sheaf $G_i$ corresponding to the point $p$. Now we can prove Theorem \ref{c=-1}.

\textit{Proof of Theorem \ref{c=-1}}. Consider varieties $P=\mathbb P(\mathcal A_i^\vee),X_P=X\times P$ and canonical projections $p_P:X_P\to P,\rho_P:X_P\to X\times B_i$. By \cite[Corollary 4.5]{Lan} there is a universal family of extensions on $X_P$:
\begin{equation}0\to p_P^*(\mathcal O_P(1))\otimes\rho_P^*(p_i^*(\mathcal O_X(-1)))\to\mathcal V\to\rho_P^*(\mathbb I_i\otimes p_i^*(\mathcal O_X(m)))\to 0.
\end{equation}
Applying the functor $\rho_{P,*}$ to this triple, we get a universal extension on $X\times B_i$:
\begin{equation}\label{W}0\to q_i^*(\mathcal A^\vee)\otimes p_i^*(\mathcal O_X(-1))\to\mathcal W\to\mathbb I_i\otimes p_i^*(\mathcal O_X(m))\to 0.
\end{equation}
A sheaf $E$ from (\ref{extension2}) is given by an element $$\xi\in\mathrm{Ext}^1(G_i,\mathcal O_X(-1)^{\oplus 2})\cong\Hom(\mathrm{Ext}^1(G_i,\mathcal O_X(-1))^\vee,\mathbb C^2).$$ By construction the extension (\ref{W}), restricted to a fiber $X\times\{p\},p\in B_i$, together with the extension (\ref{extension2}), where $E=E_\xi$, can be included into a commutative pushout diagram
\begin{equation}\label{comm}
	\xymatrix{
		0 \ar[r] & \mathrm{Ext}^1(G_i,\mathcal O_X(-1))^\vee\otimes\mathcal O_X(-1) \ar[r]\ar[d]^\xi & \mathcal W|_{X\times\{p\}} \ar[r]\ar[d] & G_i\ar@{=}[d]\ar[r] & 0 \\
		0 \ar[r] & \mathcal O_X(-1)^{\oplus 2} \ar[r] & E_\xi \ar[r] & G_i\ar[r] & 0}
\end{equation}

Lemma \ref{Gr} implies that stable sheaves $E_\xi$ correspond to surjective maps $\xi$ up to an action of $\mathrm{GL}(2)$, that is, to points of Grassmanization $\mathcal Gr(\mathcal A_i^\vee,2)$ of two-dimensional quotients of fibers of $\mathcal A_i^\vee$. We globalize the diagram (\ref{comm}) as follows. Let $\widetilde{p}:X\times \mathcal Gr(\mathcal A_i^\vee,2)\to X,\widetilde{q}:X\times \mathcal Gr(\mathcal A_i^\vee,2)\to\mathcal Gr(\mathcal A_i^\vee,2)$ be canonical projections. Recall that there is also a canonical projection $\Pi:\mathcal Gr(\mathcal A_i^\vee,2)\to B_i$. Let $\Pi^*(\mathcal A_i^\vee)\twoheadrightarrow\widetilde{\mathcal Q}$ be the tautological quotient bundle on $\mathcal Gr(\mathcal A_i^\vee,2)$. Then (\ref{W}) induces a commutative pushout diagram with exact rows
\begin{equation}\label{U}\xymatrix{
		 \widetilde{q}^*\Pi^*(\mathcal A_i^\vee)\otimes\widetilde{p}^*(\mathcal O_X(-1)) \ar[d]\ar@{^{(}->}[r] & (\id\times\Pi)^*(\mathcal W) \ar@{>>}[r]\ar[d] & (\id\times\Pi)^*(\mathbb I_i\otimes p_i^*(\mathcal O_X(m)))\ar@{=}[d]  \\
		 \widetilde{q}^*(\widetilde{\mathcal Q}) \ar@{^{(}->}[r] & \mathcal E \ar@{>>}[r] & (\id\times\Pi)^*(\mathbb I_i\otimes p_i^*(\mathcal O_X(m))) }
\end{equation}
Here $\mathcal E$ is a family of sheaves from $\mathcal M_X(v)$ over the base $\mathcal Gr(\mathcal A_i^\vee,2)$, which induces a bijective modular morphism
\begin{equation}\Phi:\mathcal Gr(\mathcal A_i^\vee,2)\to \mathcal M_X(v),
\end{equation}
\begin{equation}\dim \mathcal M_X(v)=\dim \mathcal Gr(\mathcal A_i^\vee,2)=2(\mathrm{ext}^1(G_i,\mathcal O_X(-1))-2)+6.
\end{equation}
In order to conclude, it suffices to prove that $\mathcal M_X(v)$ is smooth, as in Section \ref{torsion}. 
For this it suffices to prove that $\mathrm{ext}^1(E,E)=\dim \mathcal M_X(v)$ for any $[E]\in\mathcal M_X(v)$.
Stability of $E$ implies that $\mathrm{Hom}(E,\mathcal O_X(-1)^{\oplus 2})=0,\mathrm{Hom}(E,E)\cong\mathbb C$. 
The arguments from Section \ref{torsion} imply that $\mathcal I_{C,H}$ and $\mathcal I_{L,H}$ are simple sheaves, hence $\mathbb D(\mathcal I_{C,H})$ and $\mathbb D(\mathcal I_{L,H})$ are simple, and $\mathcal O_H$ is simple too, so $\mathrm{Hom}(G_i,G_i)\cong\mathbb C$. 
Also $\mathrm{Hom}(\mathcal O_X(-1)^{\oplus 2},G_i)=0$, hence $\Hom(E,G_i)\cong\mathbb C$. And $\mathrm{Ext}^2(E,\mathcal O_X(-1)^{\oplus 2})=0$, since $\mathrm{Ext}^2(G_i,\mathcal O_X(-1)^{\oplus 2})=0$. It follows that the long exact sequence of $\mathrm{Ext}$ groups associated with (\ref{extension2}) induces a short exact sequence
\begin{equation}\label{ext1}0\to\mathrm{Ext}^1(E,\mathcal O_X(-1)^{\oplus 2})\to\mathrm{Ext}^1(E,E)\to\mathrm{Ext}^1(E,G_i)\to 0.
\end{equation}
Analogously we get a short exact sequence
$$0\to\mathrm{Hom}(\mathcal O_X(-1)^{\oplus 2},\mathcal O_X(-1)^{\oplus 2})\to\mathrm{Ext}^1(G_i,\mathcal O_X(-1)^{\oplus 2})\to\mathrm{Ext}^1(E,\mathcal O_X(-1)^{\oplus 2})\to 0,$$
which implies that $\mathrm{ext}^1(E,\mathcal O_X(-1)^{\oplus 2})=2(\mathrm{ext}^1(G_i,\mathcal O_X(-1))-2)$. 

We noted that $\Hom(\mathcal O_X(-1)^{\oplus 2},G_i)=0$ and one can check that $\mathrm{Ext}^1(\mathcal O_X(-1)^{\oplus 2},G_i)=0$, hence $\mathrm{ext}^1(E,G_i)=\mathrm{ext}^1(G_i,G_i)=6$ (about the last equality see Section \ref{torsion}).  
Now (\ref{ext1}) implies that $\mathrm{ext}^1(E,E)=\dim \mathcal Gr(\mathcal A_i^\vee,2)=\dim \mathcal M_X(v)$. This dimension may be calculated as a function of $m$ using the calculation of $\mathrm{ext}^1(G_i,\mathcal O_X(-1))$, which was mentioned before this proof. \qed

\subsection{General case with $c_1=0$}\label{section 0} In this section we describe moduli spaces of Gieseker semistable rank 2 sheaves $E$ on $X$ with $c_1=0,c_2\in\{5, 7\}\cup\mathbb Z_{\ge 9}$ and maximal $c_3=\frac{c_2^2+c_2+4}{5}-10\varepsilon(-\frac{c_2}{5})$. 
Again Proposition \ref{2-stability} and Theorem \ref{main} imply that any such sheaf $E$ can be included into an exact triple of the form (\ref{extension3}).

\begin{lemma}\label{P}Choosing an isomorphism class $[E]$ of a Gieseker semistable sheaf $E$ that can be written as an extension (\ref{extension3}) is equivalent to choosing an isomorphism class of a sheaf $G'_i$ and a one-dimensional subspace of $\mathrm{Ext}^1(G'_i,\mathcal U)$.
\end{lemma}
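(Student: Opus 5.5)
The plan is to mirror the proof of Lemma \ref{Gr}, with $\mathcal U$ in place of $\mathcal O_X(-1)^{\oplus 2}$; this makes the extension data a line instead of a plane.

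\textbf{From $[E]$ to the data.} First show that the subsheaf $\mathcal U\hookrightarrow E$ is determined by $[E]$ up to scalar, and that $G'_i$ is then determined up to isomorphism. Apply $\Hom(\mathcal U,-)$ to (\ref{extension3}). We have $\Hom(\mathcal U,\mathcal U)\cong\mathbb C$. The group $\Hom(\mathcal U,G'_i)$ should vanish; the check is a tilt-slope comparison. Since $\mathcal U$ and $G'_i$ are both tilt-semistable along $W(E,\mathcal U)$, and $G'_i$ is stable above its unique wall (which lies below $W(E,\mathcal U)$), a nonzero map $\mathcal U\to G'_i$ should be impossible at a point just above the wall. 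If that is awkward, I would instead compute it directly from the resolutions (\ref{triple26})--(\ref{triple30}), using ACM vanishings. It follows that $\hom(\mathcal U,E)=1$. Hence the extension class $\xi\in\mathrm{Ext}^1(G'_i,\mathcal U)$ is well defined up to $\Aut\mathcal U\times\Aut G'_i=\mathbb C^*\times\mathbb C^*$, using that $G'_i$ is simple as noted in Section \ref{torsion}. This gives a point of $\mathbb P(\mathrm{Ext}^1(G'_i,\mathcal U))$.

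Next, $\xi\neq0$. If $\xi=0$ then $E\cong\mathcal U\oplus G'_i$, which has torsion and is not semistable. Conversely, isomorphic extensions with proportional classes give isomorphic $E$. This settles the direction from sheaves to data.

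\textbf{From the data to $[E]$.} Given a nonzero $\xi$, form the extension $E$. It is tilt-semistable along $W(E,\mathcal U)$, since both $\mathcal U$ and $G'_i$ are semistable there with equal slope. We must show $E$ stays semistable above this wall. By the proof of Theorem \ref{main}(2), via Lemmas \ref{0 1 2} and \ref{one}, every semicircular wall for $E$ is induced by a rank two subobject or quotient with $\mathrm{ch}_{\le 2}=\mathrm{ch}_{\le 2}(\mathcal U)$. The maximal-$\mathrm{ch}_3$ analysis then forces this object to be $\mathcal U$ itself, so $W(E,\mathcal U)$ is the unique wall.

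If $E$ were destabilized above it, then the argument used in Lemma \ref{Jordan-Holder} (linear dependence of $\mathrm{ch}_{\le2}$ along the wall, plus $\overline{\Delta}_H$ bounds) would produce a quotient $E\twoheadrightarrow\mathcal U$ with $\mu(\mathcal U)<\mu(E)$. Its restriction to $\mathcal U$ is either nonzero, hence an isomorphism, or zero. If it is an isomorphism, the sequence splits and $\xi=0$. If it is zero, the map factors through $G'_i$, which is torsion, so it vanishes. Either way we get a contradiction.

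Therefore $E$ is $\nu_{\alpha,\beta}$-semistable for $\beta<0$ and $\alpha\gg0$. By Proposition \ref{2-stability}, $E$ is a 2-semistable sheaf. For Gieseker semistability, the $\mathrm{ch}_3$ comparison needs care when $\mu$ equals $0$. The $\mathrm{ch}_{\le2}$ of a saturated rank one subsheaf cannot be proportional, by Lemma \ref{tilt-stable} together with primitivity of $\mathrm{ch}_{\le 2}(E)=(2,0,d)$ (for $d$ not in $\mathbb Z$; for integer $d$ handle the rank one subsheaf $\mathcal I$ with $\mathrm{ch}_2=d/2$ directly). This would give stability.

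\textbf{Main obstacle.} I expect the hardest step to be the wall-uniqueness and quotient classification for the $c_1=0$ case, i.e.\ the analogue of Lemma \ref{Jordan-Holder}, together with the vanishing $\Hom(\mathcal U,G'_i)=0$. I would try the linear-dependence argument first, as in Lemma \ref{Jordan-Holder}.
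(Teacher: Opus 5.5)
Your overall route matches the paper's proof. $\mathrm{Hom}(\mathcal U,G'_i)=0$ pins down $\mathcal U\subset E$ and $G'_i$, and the extension class is nonzero by torsion-freeness. Conversely, $W(E,\mathcal U)$ is the only wall, a destabilizing quotient $E\twoheadrightarrow\mathcal U$ would split the extension, and tilt-stability above the wall then gives 2-stability and hence Gieseker stability.

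There are two small slips. First, the slope argument for $\mathrm{Hom}(\mathcal U,G'_i)=0$ has to be made just \emph{below} $W(E,\mathcal U)$, where $\nu_{\alpha,\beta}(\mathcal U)>\nu_{\alpha,\beta}(G'_i)$. Just above the wall you have $\nu_{\alpha,\beta}(\mathcal U)<\nu_{\alpha,\beta}(E)<\nu_{\alpha,\beta}(G'_i)$, which excludes nothing. Second, for $c_2\in\{5,7,9\}$ the wall classification comes from Lemmas \ref{2 0 -1}, \ref{2 0 -7/5} and \ref{2 0 -9/5}, not from Lemmas \ref{0 1 2} and \ref{one}, which require $d\le -2$.

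The genuine gap is in the last step, because your primitivity criterion is wrong. Here $d=-c_2/5$ and $(2,0,d)=2\cdot(1,0,\frac d2)$. Whenever $c_2$ is even, $\frac d2\in\frac15\mathbb Z$, so $(1,0,\frac d2)$ is the truncated Chern character of a rank one object with $c_1=0$. For example, $c_2=12$ gives $d=-\frac{12}{5}$, and $(1,0,-\frac 65)$ is $\mathrm{ch}_{\le 2}$ of the ideal sheaf of a degree $6$ curve. In the lattice $\Lambda=\mathbb Z^2\oplus\frac{1}{10}\mathbb Z$ of Lemma \ref{tilt-stable}, the vector $(2,0,d)$ is never primitive.

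So for every even $c_2$, not only for $d\in\mathbb Z$, Lemma \ref{tilt-stable} leaves open a strictly semistable $E$ with a subobject $F$ satisfying $\mathrm{ch}_{\le 2}(F)=\mathrm{ch}_{\le 2}(E/F)=(1,0,\frac d2)$. This is exactly the case that would break Gieseker (semi)stability, and ``handle it directly'' gives no argument for it.

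The missing idea, which is how the paper finishes, is to use maximality of $\mathrm{ch}_3$:
\begin{itemize}
\item $F$ and $E/F$ are tilt-semistable of rank one with $c_1=0$.
\item The rank-one bound of \cite[Lemma 4.2]{Vass}, as used in Lemma \ref{one}, gives $\mathrm{ch}_3\le(\frac{d^2}{8}-\frac d4)H^3$ for each of them.
\item Adding these gives $e\le\frac{d^2}{4}-\frac d2$.
\item This is strictly smaller than the maximal value $\frac{d^2}{2}-\frac{d}{10}+\frac{2}{25}-\varepsilon(d)$ for all even $c_2\ge 10$. For instance, $2<\frac{11}{5}$ at $d=-2$ and $\frac{66}{25}<\frac{16}{5}$ at $d=-\frac{12}{5}$.
\end{itemize}
For odd $c_2$ the case is vacuous, since $\frac d2\notin\frac15\mathbb Z$. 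With this contradiction, $E$ is tilt-stable above $W(E,\mathcal U)$, hence 2-stable and Gieseker stable, which is what the lemma needs.
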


\begin{proof}We have $\mathrm{Hom}(\mathcal U,G'_i)=0$, hence a monomorphism $\mathcal U\hookrightarrow E$ is uniquely determined up to proportionality, and $[G'_i]$ is uniquely determined by $[E]$. An extension (\ref{extension3}) is given by an element of $\mathrm{Ext}^1(G'_i,\mathcal U)$. Since a rank two semistable sheaf has to be torsion-free, this element is nonzero, and we get a one-dimensional subspace of $\mathrm{Ext}^1(G'_i,\mathcal U)$.

Conversely, given a one-dimensional subspace of $\mathrm{Ext}^1(G'_i,\mathcal U)$, we obtain an extension of the form (\ref{extension3}). Its middle term $E$ is tilt-semistable at $W(E,\mathcal U)$. If $E$ is not semistable above this wall, then the description of destabilizing subobjects and quotients from Theorem \ref{main} implies that there is a quotient $E\twoheadrightarrow\mathcal U$. Since $\mathrm{Hom}(G'_i,\mathcal U)=0$ and $\mathrm{Hom}(\mathcal U,\mathcal U)\cong\mathbb C$, in this case the extension (\ref{extension3}) splits. Therefore, $E$ has to be tilt-semistable above the unique wall $W(E,\mathcal U)$. If $E$ is not tilt-stable above this wall, then in view of Lemma \ref{tilt-stable} $E$ is destabilized by a subobject $F$ with $\mathrm{ch}_{\le 2}(F)=\frac 12\cdot\mathrm{ch}_{\le 2}(E)$. As in Theorem \ref{main}, denote $\mathrm{ch}(E)=1+dH^2+eH^3$, then $\mathrm{ch}_{\le 2}(F)=\mathrm{ch}_{\le 2}(E/F)=1+\frac d2H^2$. In this case \cite[Lemma 4.2]{Vass} gives a bound $e\le\frac{d^2}{4}-\frac d2$, but $\frac{d^2}{4}-\frac d2<\frac{d^2}{2}-\frac{d}{10}-\frac{1}{25}$ for $d\le-\frac{9}{10}$.  
Hence, $E$ is tilt-stable above $W(E,\mathcal U)$, so $E$ is a 2-stable sheaf by Proposition \ref{2-stability}, hence it is Gieseker stable.
\end{proof}

As in the case $c_1=-1$, a calculation with exact triples (\ref{triple26}, \ref{triple27}, \ref{triple28}, \ref{triple29}, \ref{triple30}) shows that $\mathrm{ext}^1(G'_i,\mathcal U)$ is independent of $G'_i$ (for fixed $i$ and $m$). In the cases $i\in\{1, 2, 3, 5\}$ the calculation uses additionally the fact that $\mathcal U\otimes\mathcal U$ is an ACM sheaf (\cite[proof of Proposition 5.6]{Faenzi}). In the case $i=4$ the calculation of this Ext group is non-trivial and uses the theory of stability conditions, so we include it here.

The exact triple (\ref{triple29}), twisted by $\mathcal O_X(m)$, induces an exact sequence
\begin{equation}\label{ext}0\to\mathrm{Hom}(\mathcal O_X(m+1)\oplus\mathcal U(m+1),\mathcal U)\to\mathrm{Hom}(\mathcal Q(m),\mathcal U)\to\mathrm{Ext}^1(\mathbb D(\mathcal I_{L,H})(m),\mathcal U)\to 0.\end{equation}
The dimension of the Hom group from the left equals $h^0(\mathcal U(-m-1))+h^0(\mathcal U\otimes\mathcal U(-m))=\chi(\mathcal U(-m-1))+\chi(\mathcal U\otimes\mathcal U(-m))$ (we use the ACM property of $\mathcal U$ and $\mathcal U\otimes\mathcal U$ together with Serre duality, stability of $\mathcal U$ and the exact triple (\ref{tautological}), tensored with $\mathcal U(m)$). It remains to find $h^0(\mathcal Q^\vee\otimes\mathcal U(-m))$.

The exact triple (\ref{tautological}), dualized and tensored with $\mathcal U(-m)$, induces an exact sequence
\begin{equation}\label{cohomology2}0\to H^0(\mathcal Q^\vee\otimes\mathcal U(-m))\to H^0(\mathcal U(-m)^{\oplus 5})\to H^0(\mathcal U\otimes\mathcal U(1-m))\to H^1(\mathcal Q^\vee\otimes\mathcal U(-m))\to 0.
\end{equation}
Serre duality implies that $h^1(\mathcal Q^\vee\otimes\mathcal U(-m))=h^2(\mathcal Q\otimes\mathcal U^\vee(m-2))$. Note that $H^2(\mathcal Q\otimes\mathcal U^\vee(m-2))\cong\mathrm{Ext}^2(\mathcal U,\mathcal Q(m-2))$, and (\ref{triple29}) induces an isomorphism $\mathrm{Ext}^2(\mathcal U,\mathcal Q(m-2))\cong\mathrm{Ext}^1(\mathcal U,\mathbb D(\mathcal I_{L,H})(m-2))$. However, in the proof of Theorem \ref{main} we established that this Ext group vanishes. Therefore, (\ref{cohomology2}) implies that $h^0(\mathcal Q^\vee\otimes\mathcal U(-m))=h^0(\mathcal U(-m)^{\oplus 5})-h^0(\mathcal U\otimes\mathcal U(1-m))=\chi(\mathcal U(-m)^{\oplus 5})-\chi(\mathcal U\otimes\mathcal U(1-m))$. Now (\ref{ext}) and Grothendieck--Riemann--Roch allow to compute $\mathrm{ext}^1(\mathbb D(\mathcal I_{L,H})(m),\mathcal U)=5m^2+2m-1$.

Let 
\begin{equation}\label{tilde A}\widetilde{\mathcal A}_i:=\mathcal Ext_{q_i}^1(\mathbb I_i\otimes p_1^*(\mathcal O_X(m)),p_1^*(\mathcal U)),\quad 1\le i\le 5.\end{equation}  
By \cite[Satz 3.(ii)]{BPS} or \cite[Theorem 1.4]{Lan} sheaves $\widetilde{\mathcal A}_i$ are locally free and the fiber of $\widetilde{\mathcal A}_i$ over an arbitrary point $p\in B_i$ is canonically isomorphic to the space $\mathrm{Ext}^1(G'_i,\mathcal U)$ for the sheaf $G'_i$ corresponding to the point $p$ (\cite[Theorem 1.4]{Lan} can be applied in this situation, because $\mathrm{Ext}^2(G'_i,\mathcal U)=0$). Now we can prove Theorem \ref{c=0}.

\textit{Proof of Theorem \ref{c=0}.} Consider varieties $\widetilde{P}=\mathbb P(\widetilde{\mathcal A}_i^\vee),X_{\widetilde{P}}=X\times \widetilde{P}$ and canonical projections $p_{\widetilde{P}}:X_{\widetilde{P}}\to \widetilde{P},\rho_{\widetilde{P}}:X_{\widetilde{P}}\to X\times B_i$. By \cite[Corollary 4.5]{Lan} (which can be applied, since $\mathrm{Hom}(G'_i,\mathcal U)=0$) there is a universal family of extensions on $X_{\widetilde{P}}$:
	\begin{equation}0\to p_{\widetilde{P}}^*(\mathcal O_{\widetilde{P}}(1))\otimes\rho_{\widetilde{P}}^*(p_i^*(\mathcal O_X(-1)))\to\widetilde{\mathcal V}\to\rho_{\widetilde{P}}^*(\mathbb I_i\otimes p_i^*(\mathcal O_X(m)))\to 0.
	\end{equation}
	
This family induces a bijective modular morphism
\begin{equation}\widetilde{\Phi}:\mathbb P(\widetilde{\mathcal A}_i^\vee)\to \mathcal M_X(v),
\end{equation}
\begin{equation}\dim \mathcal M_X(v)=\dim \mathbb P(\widetilde{\mathcal A}_i^\vee)=\mathrm{ext}^1(G'_i,\mathcal O_X(-1))+5.
\end{equation}
Again it suffices to prove that $\mathcal M_X(v)$ is smooth. We need to check that $\mathrm{ext}^1(E,E)=\dim \mathcal M_X(v)$ for any $[E]\in\mathcal M_X(v)$.
Stability of $E$ implies that $\mathrm{Hom}(E,\mathcal U)=0,\mathrm{Hom}(E,E)\cong\mathbb C$. 
Also $\mathrm{Hom}(\mathcal U,G'_i)=0$, hence $\Hom(E,G'_i)\cong\mathbb C$. We have $\mathrm{Ext}^2(E,\mathcal U)=0$, since $\mathrm{Ext}^2(G'_i,\mathcal U)=0$. It follows that the long exact sequence of $\mathrm{Ext}$ groups associated with (\ref{extension3}) induces a short exact sequence
\begin{equation}\label{ext2}0\to\mathrm{Ext}^1(E,\mathcal U)\to\mathrm{Ext}^1(E,E)\to\mathrm{Ext}^1(E,G'_i)\to 0.
\end{equation}
Analogously we get a short exact sequence
$$0\to\mathrm{Hom}(\mathcal U,\mathcal U)\to\mathrm{Ext}^1(G'_i,\mathcal U)\to\mathrm{Ext}^1(E,\mathcal U)\to 0,
$$
which implies that $\mathrm{ext}^1(E,\mathcal U)=\mathrm{ext}^1(G'_i,\mathcal O_X(-1))-1$. 

We proved in Theorem \ref{main} that $\mathrm{Ext}^1(\mathcal U,G'_i)=0$, hence $\mathrm{Ext}^1(E,G'_i)\cong\mathrm{Ext}^1(G'_i,G'_i)=6$. Now (\ref{ext2}) implies that $\mathrm{ext}^1(E,E)=\mathrm{ext}^1(G'_i,\mathcal O_X(-1))+5=\dim \mathbb P(\widetilde{\mathcal A}_i^\vee)=\dim \mathcal M_X(v)$. We showed how to calculate this number in the case $i=4$, and in other cases it can be calculated directly. \qed

Calculations performed in Theorems \ref{c=-1} and \ref{c=0} imply that $\mathrm{ext}^1(G_i,\mathcal U)>0$ and $\mathrm{ext}^1(G'_i,\mathcal O_X(-1))>1$. From this fact and from Propositions \ref{existence} and \ref{existence2} the following corollary follows.

\begin{corollary} The bounds of Theorem \ref{intro} are exact, that is, in all cases there exist Gieseker semistable rank two sheaves $E$ with maximal $c_3(E)$, which is permitted by Theorem \textit{loc. cit.}
\end{corollary}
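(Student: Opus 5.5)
The corollary follows by splitting the cases of Theorem \ref{intro} into the special low-$c_2$ cases and the general cases (1.5) and (2.8), and exhibiting in each a Gieseker semistable sheaf realizing the bound. For the special cases nothing new is needed. Cases (1.1), (1.2), (1.4) and (2.1)--(2.7) are covered by Proposition \ref{existence}, where reflexive, hence slope stable, sheaves are produced via Lemma \ref{reflexive} or explicit constructions. The remaining special case (1.3) is covered by Proposition \ref{existence2}, which uses a nontrivial extension of $\mathcal O_L(-2)$ by the slope stable cokernel $F$.

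For the general cases, existence reduces to nonemptiness of the moduli spaces described in Theorems \ref{c=-1} and \ref{c=0}. For $c_1=-1$, Lemma \ref{Gr} shows that every two-dimensional subspace of $\mathrm{Ext}^1(G_i,\mathcal O_X(-1))$ yields a slope stable sheaf $E$ in (\ref{extension2}). It therefore suffices that $\mathrm{ext}^1(G_i,\mathcal O_X(-1))\ge 2$. Since $B_i$ is nonempty by Theorem \ref{torsion thm}, the fibre Grassmannian $\mathcal Gr(\mathcal A_i^\vee,2)$ is then nonempty. For $c_1=0$, Lemma \ref{P} shows that every nonzero element of $\mathrm{Ext}^1(G'_i,\mathcal U)$ gives a Gieseker stable $E$ in (\ref{extension3}). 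So it suffices that $\mathrm{ext}^1(G'_i,\mathcal U)\ge 1$.

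These positivity statements I would read off from the dimension formulas. By the proof of Theorem \ref{c=-1}, $\dim\mathcal M_X(v)=2(\mathrm{ext}^1(G_i,\mathcal O_X(-1))-2)+6$. Part (ii) gives this dimension as a quadratic in $m$, for instance $5m^2-9m+6$ when $i=1$. Hence
$$\mathrm{ext}^1(G_i,\mathcal O_X(-1))=\tfrac12(5m^2-9m)+2,$$
with the analogous formulas for $i=2,\dots,5$. Similarly, in the $c_1=0$ case $\mathrm{ext}^1(G'_i,\mathcal U)$ equals the dimension from Theorem \ref{c=0}(ii) minus $5$, which was computed as $5m^2+2m-1$ for $i=4$.

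The main obstacle is purely arithmetic: checking these inequalities over the full allowed ranges of $m$. These are $m\le -1,-1,-2,-3,-3$ in (\ref{extension2}) and $m\le -1,-2,-1,-3,-2$ in (\ref{extension3}). All the quadratics have positive leading coefficient. The linear terms are negative for $i=1,2,3$, which only helps when $m<0$, and positive for $i=4,5$. So I would evaluate each at the boundary value of $m$ and note that it increases as $m$ decreases; for example $5m^2+2m-1=38$ at $m=-3$. I would also double-check the $i=4$ value for $c_1=0$, since its derivation relied on the vanishing of $\mathrm{Ext}^1(\mathcal U,\mathbb D(\mathcal I_{L,H})(m-2))$ from the proof of Theorem \ref{main}.

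Finally, the correspondence between $c_2\bmod 5$ and the pair $(i,m)$ is the one recorded in Theorem \ref{intro} via (\ref{ch}). This ensures that every admissible $c_2$ is realized, which proves the corollary.
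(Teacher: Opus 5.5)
Your proposal is correct and follows the paper's own argument. The special cases come from Propositions \ref{existence} and \ref{existence2}. The general cases (1.5) and (2.8) come from Lemmas \ref{Gr} and \ref{P}, combined with the positivity $\mathrm{ext}^1(G_i,\mathcal O_X(-1))\ge 2$ and $\mathrm{ext}^1(G'_i,\mathcal U)\ge 1$ read off from the calculations behind Theorems \ref{c=-1} and \ref{c=0}; the paper's one-line justification writes these with $G_i$ and $G'_i$ interchanged, and you have them the right way round.
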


\subsection{Proof of a conjecture}\label{conjecture} Now we can prove a conjecture that we posed in \cite[Conjecture 4.5]{Vass}. Namely, we have the following result.

\begin{proposition}\label{conj proof}Suppose that $E\in\mathrm{Coh}(X)$ is a Gieseker semistable sheaf on $X$ with $\mathrm{ch}(E)=2+cH+dH^2+eH^3$. If $c=-1, d\le-\frac{7}{10}$ and $e$ is maximal, then a general such sheaf may be included into an exact triple $0\to\mathcal O_X(-1)^{\oplus 2}\to E\to\mathcal O_S(D)\to 0,$ where $S\subset X$ is smooth hyperplane section and $D$ is a divisor on $S$ defined in \cite[Proposition 4.4]{Vass}. Respectively, for $c=0, d\le-\frac 95$ and maximal $e$ a general such sheaf $E$ may be included into an exact triple $0\to\mathcal U\to E\to\mathcal O_S(D)\to 0$ for the corresponding smooth hyperplane section $S$ and a divisor $D$ on $S$.
\end{proposition}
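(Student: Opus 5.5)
The plan is to combine the explicit description of the moduli spaces from Theorems \ref{c=-1} and \ref{c=0} with a generality argument on the torsion quotients. By Theorem \ref{intro}, every sheaf $E$ with maximal $e$ in the stated ranges sits in an extension (\ref{extension2}) or (\ref{extension3}). The quotient $G_i$ (resp. $G'_i$) is a twist of one of $\mathcal I_{C,H},\mathcal I_{L,H},\mathcal O_H,\mathbb D(\mathcal I_{L,H}),\mathbb D(\mathcal I_{C,H})$.

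Theorems \ref{c=-1} and \ref{c=0} identify $\mathcal M_X(v)$ with a Grassmannization or projectivization over $B_i$. The projection $\Pi$ (resp. $\widetilde\Pi$) sends $[E]$ to $[G_i]$ and is surjective with irreducible fibers. It therefore suffices to show that for a general point of $B_i$ the sheaf $G_i$ is a line bundle on a \emph{smooth} hyperplane section $S$. A general point of $\mathcal M_X(v)$ then lies over such a point, and the resulting extension has the required form. Since being supported on a smooth section and being a line bundle there are open conditions in the flat families $\mathbb I_i$, it is enough to exhibit one point of each $B_i$ with this property. Irreducibility of $B_i$ then makes these points general.

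Next I will check the five cases of Theorem \ref{torsion thm} separately.
\begin{enumerate}
\item For $B_3=\mathbb P^6$ a general hyperplane section $H$ is smooth by Bertini, and $\mathcal O_H$ is trivially a line bundle on it.
\item For $B_2=\mathbb F$ the points are pairs $L\subset H$. For a fixed line $L$, a general hyperplane containing $L$ cuts out a smooth del Pezzo surface $S$. I will justify this by Bertini applied to the linear system $|\mathcal I_L(1)|$ away from $L$, together with a local check along $L$ using that the normal bundle sequence gives enough sections. On such an $S$ the line $L$ is a Cartier divisor, so $\mathcal I_{L,S}=\mathcal O_S(-L)$.
\item For $B_1=\mathrm{Gr}(2,5)$, the triple (\ref{triple26}) shows that $C$ is the zero locus of a two-dimensional space of sections of $\mathcal U^\vee$. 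Choosing a smooth conic $C$ contained in a smooth section $S$ gives $\mathcal I_{C,S}=\mathcal O_S(-C)$. Such configurations exist because smooth hyperplane sections of $X$ are del Pezzo surfaces of degree $5$ containing conics (pullbacks of lines through one blown-up point, or the class $h-e_i$).
\item For the dual cases $B_4,B_5$, observe that when $S$ is smooth and $G=\mathcal O_S(D)$, Grothendieck duality gives $\mathbb D(\mathcal O_S(D))=\mathcal Ext^1(\mathcal O_S(D),\mathcal O_X)=\mathcal O_S(-D)\otimes\mathcal O_S(S)=\mathcal O_S(H|_S-D)$, again a line bundle on $S$. Since $B_4=B_2$ and $B_5=B_1$ via $\mathbb D$, generality transfers.
\end{enumerate}
Twisting by $\mathcal O_X(m)$ preserves everything, and computing $\mathrm{ch}$ identifies $D$ with the divisor class from \cite[Proposition 4.4]{Vass}.

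I expect the main obstacle to be step (2)–(3): showing that the locus in $B_1$ (resp. $\mathbb F$) where $H$ is singular is a proper closed subset, i.e.\ that a smooth section containing a given line or conic exists. I would first attempt a dimension count on the incidence $\{(L,H): H \text{ singular at some point}\}$, which has expected codimension $\ge1$. If that proves delicate along $L$ itself, I would fall back to exhibiting one explicit smooth del Pezzo $S$ with a line and a conic, and then invoke openness.
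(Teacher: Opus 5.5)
Your proposal is correct, but it takes a different route from the paper.

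\textbf{The paper's route.} The paper first shows that the support is smooth generically. The support of $\mathbb I_i$ is a divisor in $X\times B_i$, flat over $B_i$, which gives a morphism $f_i\colon B_i\to\mathbb P^6$, asserted to be surjective. Bertini then makes $\Pi^{-1}(f_i^{-1}(U))$ dense. The paper next proves that \emph{every} $G_i$ with smooth support $S$ is a line bundle on $S$. Purity (Lemma \ref{minimal}) and $c_1(G_i)=H$ give $G_i\cong i_*i^*G_i=i_*\widetilde G_i$, with $\widetilde G_i$ torsion-free of rank one on $S$. If $\widetilde G_i$ were not reflexive, $i_*(\widetilde G_i^{\vee\vee})$ would be tilt-semistable with the same $\mathrm{ch}_{\le 2}$ and strictly larger $\mathrm{ch}_3$, contradicting Lemmas \ref{0 1 -1/2}--\ref{0 1 -3/10}.

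\textbf{Your route.} You replace this maximality argument by openness in the flat families plus one explicit witness point in each $B_i$. The witness points come from lines and conics on a smooth quintic del Pezzo surface. Relative duality, $\mathbb D(i_*\mathcal L)\cong i_*(\mathcal L^{\vee}\otimes\mathcal O_S(S))$, handles $B_4$ and $B_5$.

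\textbf{Comparison.} The paper's argument is uniform in $i$ and identifies the good locus exactly as the preimage of the smooth sections. Yours only reaches a \emph{general} $E$, which is all the statement asks. In exchange it is more elementary and gives $D$ explicitly: $mH|_S-C$, $mH|_S-L$, $mH|_S$, $(m+1)H|_S+L$, $(m+1)H|_S+C$ for $i=1,\dots,5$. Your witness points also give a direct reason why $f_i^{-1}(U)\neq\emptyset$. The paper takes this for granted when it asserts that $f_i$ is surjective.

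\textbf{Openness needs a short proof.} Each $G_p$ has a two-term locally free resolution with a square presentation matrix. These are twists of (\ref{triple15}), (\ref{triple4.1}), (\ref{triple24}), (\ref{triple25}), together with $0\to\mathcal O_X(-1)\to\mathcal O_X\to\mathcal O_H\to 0$.
\begin{enumerate}
\item By flatness, $\mathrm{Fitt}_0(\mathbb I_i)$ is a relative Cartier divisor, compatible with base change, whose fibres are hyperplane sections.
\item Smoothness of these fibres is an open condition on $B_i$.
\item If the determinant cuts out a smooth surface, the presentation has corank at most one everywhere, since otherwise the determinant would lie in $\mathfrak m_x^2$.
\item Hence $G_p$ is locally cyclic with annihilator $\mathrm{Fitt}_0(G_p)$, so it is a line bundle on $S_p$.
\end{enumerate}
So the single condition "smooth Fitting support" already gives everything you need.

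\textbf{A small inaccuracy in step (3).} A two-dimensional $V\subset H^0(\mathcal U^\vee)$ does not cut out $C$. Each nonzero $s\in V$ cuts out a conic, these conics form a pencil on $H=\{s_1\wedge s_2=0\}$, and $\mathcal I_{C,H}$ depends only on $V$. This does not affect your argument, because $\mathcal O_S(-C)$ depends only on the linear equivalence class of $C$.
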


\begin{proof}According to Theorem \ref{main} any such sheaf can be included into an exact triple (\ref{extension2}) for $c=-1$, resp., (\ref{extension3}) for $c=0$. Firstly, we prove that for a general $E$ the support of $G_i$ (resp., of $G'_i$) is smooth. Indeed, the support $Z_i$ of the sheaf $\mathbb I_i$ is a Cartier divisor in $X\times B_i$, which has some bidegree $(n_i,m_i)\in\mathrm{Pic}(X\times B_i)\cong\mathbb Z\oplus\mathbb Z$. Since the intersection of $Z_i$ with the fiber over any $p\in B_i$ is set-theoretically a hyperplane section $H\subset X$, the fiber of $Z_i$ over $p$ is given by the equation of $H$ raised to the power of $n_i$. In any case $Z_i$ is flat over $B_i$, and we get a regular surjective map $f_i:B_i\to\mathbb P^6$, where $\mathbb P^6$ is the Hilbert scheme of hyperplane sections of $X$. By Bertini's Theorem there is a dense open subset $U\subset\mathbb P^6$ corresponding to smooth hyperplane sections $S\subset X$. It follows that for any $[E]$ from $\Pi^{-1}(f_i^{-1}(U))$ (resp. $\widetilde{\Pi}^{-1}(f_i^{-1}(U))$) the support of $G_i$ (resp. $G'_i$) is smooth, hence it is a del Pezzo surface of degree 5.
	
It remains to prove that $G_i$ (or, equivalently, $G'_i$) are (direct images of) line bundles on $S$. Let $i:S=\mathrm{supp}(G_i)\to X$ be the canonical injection. Since $G_i$ are pure (by Lemma \ref{minimal}) and $c_1(G_i)=H$, it follows that canonical morphisms $g_i:G_i\to i_*i^*(G_i)$ are isomorphisms. Indeed, $i_*i^*(G_i)\cong G_i\otimes\mathcal O_S$ and $g_i$ are epimorphisms (it follows from $i$ being a closed embedding and $G_i$ being quasi-coherent sheaves). Hence, the support of $i_*i^*(G_i)$ also equals $S$, and $c_1(i_*i^*(G_i))=H$ (since $c_1(\ker g_i)$ cannot be negative, and since $c_1(i_*i^*(G_i))$ cannot vanish in view of Asymptotic Riemann--Roch \cite[Theorem 1.1.24]{Laz}) on $S$ and Riemann--Roch on $X$). So, $\ker g_i$  has vanishing rank and $c_1$, and for $\ker g_i\neq 0$ this contradicts the purity of $G_i$. We obtain that $G_i=i_*(\widetilde{G}_i)$ for a torsion-free rank one sheaf $\widetilde{G}_i$ on $S$. If $\widetilde{G}_i$ is not a line bundle, then it is not reflexive (see e.~g. \cite[p.~76]{OSS}) and we can consider an exact triple
\begin{equation}\label{non-refl}0\to\widetilde{G}_i\to\widetilde{G}_i^{\vee\vee}\to T\to 0,\ \dim T=0.\end{equation}
After applying the functor $i_*$ the sequence (\ref{non-refl}) remains exact. It follows that $\nu_{\alpha,\beta}(i_*(\widetilde{G}_i^{\vee\vee}))=\nu_{\alpha,\beta}(i_*(\widetilde{G}_i))$ for all $(\beta,\alpha)\in\mathbb H$. If $i_*(\widetilde{G}_i^{\vee\vee})$ is not tilt-semistable, then there is a quotient $i_*(\widetilde{G}_i^{\vee\vee})\twoheadrightarrow Q$, where $Q$ is a tilt-semistable object with $\nu_{\alpha,\beta}(Q)<\nu_{\alpha,\beta}(i_*(\widetilde{G}_i^{\vee\vee}))$. However, then there is a nonzero induced map $G_i\cong i_*(\widetilde{G}_i)\to Q$ (it is nonzero, since $\nu_{\alpha,\beta}(Q)$ is finite, hence $\Hom(T,Q)=0$), contradicting tilt-semistability of $G_i$. So, $i_*(\widetilde{G}_i^{\vee\vee})$ is tilt-semistable, but $\mathrm{ch}_{\le 2}(i_*(\widetilde{G}_i^{\vee\vee}))=\mathrm{ch}_{\le 2}(G_i)$ and $\mathrm{ch}_3(i_*(\widetilde{G}_i^{\vee\vee}))>\mathrm{ch}_3(G_i)$, which contradicts Lemmas \ref{0 1 -1/2}--\ref{0 1 -3/10}. Hence $G_i\cong\mathcal O_S(D)$. The divisor $D$ may be determined using the calculation of Chern characters from \cite[Proposition 4.4]{Vass}.
\end{proof}

	\vspace{5mm}
	
	\noindent
	Danil A.~Vassiliev\\
	National Research University Higher School of Economics, Russian Federation\\
	AG Laboratory, HSE, 6 Usacheva str., Moscow, Russia, 119048\\
	Steklov Mathematical Institute of Russian Academy of Sciences, Moscow, Russia\\
	8 Gubkina St., Moscow 119333, Russia\\
	\textit{E-mail}:{\ danneks@yandex.ru}

\end{document}